\documentclass{amsart}
\usepackage{amssymb,graphicx,tikz,pgfplots}
\usetikzlibrary{math} 
\usetikzlibrary{patterns}
\usetikzlibrary{intersections, pgfplots.fillbetween}
\definecolor{imayou}{RGB}{154, 154, 235}
\definecolor{usuai}{RGB}{9, 150, 126}
\definecolor{persred}{RGB}{154,63,63}
\definecolor{sand}{RGB}{201, 177, 60}
\usepackage{xcolor}
\usepackage{bbm,mathrsfs}
\pgfplotsset{compat=1.18}
\usepackage{mathtools}
\mathtoolsset{showonlyrefs=true}
\usepackage{hyperref}

\newcommand{\ep}{\varepsilon}
\renewcommand{\d}{\delta}
\newcommand{\norm}[1]{\left\Vert #1 \right\Vert}

\newcommand{\Fc}{\mathcal{F}}
\newcommand{\arc}{\text{Arc}}
\newcommand{\ccap}{\text{Cap}}
\newcommand{\ltwo}[1]{\norm{#1}_{L^2}}

\newcommand{\lp}[2]{\norm{#1}_{L^{#2}}}

\newcommand{\abs}[1]{\left\vert #1 \right\vert}
\newcommand{\ip}[2]{\left\langle #1,#2 \right\rangle}

\newcommand{\Rb}{\mathbb{R}}
\newcommand{\ra}{\rightarrow}

\newcommand{\<}{\left<}
\renewcommand{\>}{\right>}
\newcommand{\Lc}{\mathcal{L}}

\newcommand{\p}{\partial}

\newcommand{\supp}{\text{supp }}

\newcommand{\Ac}{\mathcal{A}}
\newcommand{\Acr}{\mathcal{A}}
\newcommand{\pAcr}{\mathcal{A}}

    \newcommand{\ti}{\widetilde}

\DeclareMathOperator{\dist}{dist}
\newcounter{thmabc}
\newtheorem{theorem}{Theorem}

\newtheorem{lemma}{Lemma}[section]
\newtheorem{corollary}{Corollary}[theorem]
\newtheorem{proposition}[lemma]{Proposition}
\theoremstyle{definition}
\newtheorem{remark}[lemma]{Remark}
\newtheorem{question}{Question}
\newtheorem{definition}[lemma]{Definition}

\numberwithin{equation}{section}
\title{Observability of Schr\"odinger equations in Euclidean space}
\author{Walton Green}
\author{Perry Kleinhenz}

\begin{document}
\begin{abstract}
    In this paper we introduce a new dynamical condition, the comb geometric control condition, which is sufficient for observability of the Schr\"odinger equation in Euclidean space. We provide examples which show this condition is strictly weaker than the observation set being open and periodic. We also prove for the fractional Schr\"odinger equation that for observation functions which are uniformly continuous, the geometric control condition is equivalent to observability and implies arbitrary time observability. The proofs rely on uncertainty principles for frequency localized functions which are proved using a semiclassical propagation of singularities approach. 
\end{abstract}
\maketitle

\section{Introduction}
Consider the Schr\"odinger equation on $\mathbb R^d$:
	\begin{equation}\label{e:schro} iu_t -\Delta u=0, \quad u(\cdot,0)=u_0.\end{equation}
We are motivated by the question of observability of this equation, namely for which sets $E$ does there exists $C,T>0$ such that the observability inequality
	\begin{equation}\label{e:ob} \int_{\mathbb R^d} |u(x,0)|^2 \, dx \le C \int_0^T \int_E |u(x,t)|^2 \, dx \, dt\end{equation}
	holds for all initial data $u_0$ in $L^2$ and $u$ satisfying \eqref{e:schro}? This problem has been very well-studied in the setting of compact manifolds, most comprehensively for the torus $\mathbb T^d$ \cite{Har89, komornik92, Jak97, Mac10, BZ12, BBZ13, AM14,BZ19, BH2025Rough}. At this point, it is well-known that any open subset of $\mathbb T^d$ is an observation set, and in fact the observation time $T$ can be any (small) positive value \cite{AM14}. Furthermore, in this case many results continue to hold with the presence of a potential \cite{BZ12, BBZ13, AM14, BZ19, BH2025Rough}.
    
    In noncompact settings such as $\mathbb R^d$, the known results are not as strong. However, in the special case of $\mathbb R$, the problem is completely solved \cite{HWW22,SSY25}, and observability is equivalent to $E$ being \textit{relatively dense}, which means 
		\[ \inf_{x \in \mathbb R} |E \cap [x,x+L]| > 0 \]
for some $L>0$. Furthermore, in this case the control time $T$ can be arbitrarily small, and one may add a bounded potential. When $d>1$ however, the results are not as decisive \cite{Wun17, WWZ19, MPS21,Tau23,LBM23, FGW26, LBY26} all of which provide different classes of sets $E$ for which \eqref{e:ob} does hold. See also \cite{HWW22, Pro25} for the case of unbounded potentials. Our goal herein is to provide a systematic study of the observability inequality \eqref{e:ob} for the Schr\"odinger equation on $\mathbb R^d$.
	
Our first stop is to explore the fractional version on \eqref{e:schro}, which, as we will demonstrate, is simpler. Let $\beta$ be a parameter between $0$ and $1$ and we consider the fractional equation
		\[ i u_t = (-\Delta)^{\frac{\beta+1}{2}}u.\] 
We will write $S_\beta(t)$ for the semigroup $\exp(-i (-\Delta)^{\frac{\beta+1}{2}}t)$. In this setup, when $\beta=1$ we recover the Schr\"odinger equation \eqref{e:schro} and when $\beta=0$ we obtain the so-called half wave equation. The half wave equation is well-known \cite{Mil12} to be observable precisely from sets satisfying the Geometric Control Condition (GCC) \cite{BJ16,RT74} as long as the observation set has some minimal regularity, which we will discuss later. To avoid this technicality at this point in the discussion, let us rephrase fractional observability in terms of functions $a$. We say a function $a$ satisfies the GCC if there exists $\overline{C}, L>0$ such that for all line segments $\ell$ of length $|\ell| \geq L$, the line integrals
	\[ \frac{1}{|\ell|} \int_\ell a \, ds \geq \overline{C}. \]

\begin{theorem}\label{thm:gcc}
Let $a:\mathbb R^d \to [0,1]$ be a uniformly continuous function and $\beta \in [0,1)$. The following are equivalent
\begin{itemize}
\item There exist $C,T>0$ such that
	\begin{equation}\label{e:frac-ob}
    \int_{\mathbb R^d} |u_0(x)|^2 \, dx \le C \int_0^T \int_{\mathbb R^d}  a(x) |S_\beta(t)u_0(x)|^2 \, dx.
    \end{equation}
\item $a$ satisfies the GCC.
\end{itemize}
Furthermore, for $\beta \in (0,1]$ if $a$ satisfies the GCC then there exists $C>0$ such that for all $T>0$
\begin{equation}\label{e:frac-ob-arb}
    \int_{\mathbb R^d} |u_0(x)|^2 \, dx \le C \exp\left(C {T^{-\frac{2}{\beta}}} \right)\int_0^T \int_{\mathbb R^d} a(x) |S_\beta(t)u_0(x)|^2  \, dx,
\end{equation}
that is the fractional Schr\"odinger equation is observable from $a$ in arbitrary time. 
\end{theorem}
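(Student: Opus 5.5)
The plan is to pass through frequency-localized estimates and a semiclassical propagation argument, treating the two directions of the equivalence separately and then proving the arbitrary-time bound by a Lebeau--Robbiano/Miller type iteration.

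\emph{Necessity (observability implies GCC).} We argue by contradiction using coherent states. If the GCC fails, there are segments $\ell_n$ of length $L_n\to\infty$ whose normalized line integrals of $a$ tend to zero. Choose Gaussian wave packets $u_0^{(n)}$ with frequency $\xi_n=h_n^{-1}\omega_n$, where $\omega_n$ is the direction of $\ell_n$, centred at the initial endpoint of $\ell_n$. The group velocity of $S_\beta$ at frequency $|\xi|$ is $(\beta+1)|\xi|^\beta\omega$. We tune $h_n$ so that over $[0,T]$ the packet travels exactly along $\ell_n$, namely $(\beta+1)h_n^{-\beta}T\simeq L_n$. For $\beta>0$ this lets $h_n\to0$. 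For $\beta=0$ the speed is fixed, so instead we translate and use compactness plus uniform continuity. Concretely, we extract a locally uniform limit of translates of $a$ vanishing on a full line; this is a standard Arzelà--Ascoli argument, and the uniform continuity hypothesis is exactly what makes it possible.

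The packet spreads by dispersion at a rate of order $h^{\beta-1}$ in the transverse and longitudinal widths. Choosing the spatial width $\sqrt{h}$-type, i.e. between $h$ and $1$, keeps the packet concentrated in a tube of width $o(1)$ around $\ell_n$ over time $T$. Uniform continuity of $a$ then gives
\[\int_0^T\!\int a|S_\beta u_0^{(n)}|^2\approx \frac{T}{L_n}\int_{\ell_n}a\,ds\to 0,\]
which contradicts \eqref{e:frac-ob}.

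\emph{Sufficiency (GCC implies observability).} Decompose $u_0$ into a low-frequency part and dyadic pieces $|\xi|\sim h^{-1}$. For a dyadic piece, $S_\beta$ is a semiclassical propagator that moves along straight lines at speed $\sim h^{-\beta}$. We use rescaled time $t=h^{\beta}s$ with $s\in[0,L']$, $L'\gg L$. A semiclassical defect measure, or an Egorov plus positive commutator argument, then gives
\[\|v\|^2\le C h^{-\beta}\int_0^{h^\beta L'}\!\int a|S_\beta v|^2+O(h)\|v\|^2 .\]
The key input is that the averages of $a$ along all unit-speed rays of length $L'$ are at least $\overline C$. Uniform continuity is needed so that $a$ can be mollified at scale $\epsilon$ without changing the averages much, after which it is a legitimate symbol. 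Since this bound uses time only $h^\beta L'\le T$, summing over time translates gives observability in time $T$ for all high frequencies. The low frequencies and the orthogonality between dyadic pieces are handled by the usual compactness-uniqueness trick. Noncompactness of $\mathbb R^d$ breaks compactness, so instead we use translation invariance: we take weak limits of translated data and translated $a$ (again Arzelà--Ascoli), reducing to a unique continuation statement. That statement is immediate, because a limit $a_\infty$ still satisfies the GCC and hence is nonzero on an open set, while an $L^2$ solution that is frequency-bounded, i.e. analytic in $x$, and vanishes there must be zero.

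\emph{Arbitrary time for $\beta>0$.} Here we establish a spectral inequality $\|\Pi_{\le\lambda}f\|^2\le Ce^{C\lambda}\|a^{1/2}\Pi_{\le\lambda}f\|^2$ for frequencies up to $\lambda$. It follows from the GCC through a Logvinenko--Sereda/Kovrijkine type argument: GCC forces $\{a>\overline C/2\}$ to be thick, using uniform continuity once more. Combined with the high-frequency estimate above at times $h^\beta$, the Miller/Phung abstract method (the Lebeau--Robbiano argument for unitary groups) yields a cost $\exp(CT^{-\alpha})$. We take $\alpha$ from the balance $\lambda\sim T^{-1/\beta}$, adjusted to match the $T^{-2/\beta}$ exponent in \eqref{e:frac-ob-arb}.

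The main obstacle is the sufficiency propagation estimate uniformly on the noncompact space. Its remainder must be controlled uniformly in translations, so I would run the defect-measure argument along sequences with translations built in from the start.
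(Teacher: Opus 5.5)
\textbf{The genuine gap is the arbitrary-time part.}

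A Lebeau--Robbiano iteration needs dissipation: high frequencies must decay during the passive phases. $S_\beta(t)$ is unitary, so no such iteration exists for it. Your two ingredients give only the weak inequality
\[ \norm{u_0}^2 \le \frac{C}{T}\int_0^T \norm{a^{1/2}S_\beta(t)u_0}^2\,dt + C\norm{\Pi_{\le\lambda}u_0}^2, \qquad \lambda\gtrsim T^{-1/\beta}. \]
The spectral inequality cannot remove the last term. It controls $\Pi_{\le\lambda}S_\beta(t)u_0$ through $a^{1/2}\Pi_{\le\lambda}S_\beta(t)u_0$, not through the observation of the full solution. Write $X=\norm{(1-\Pi_{\le\lambda})u_0}^2$, $Y=\norm{\Pi_{\le\lambda}u_0}^2$, and $O$ for the time-averaged observation. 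You then get $X\le C(O+Y)$ and $Y\le e^{C\lambda}(O+X)$, and these do not close.

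On a compact manifold, compactness--uniqueness would finish, since $\Pi_{\le\lambda}$ has finite rank; on $\mathbb R^d$ it does not. Pushing the spectral inequality into the resolvent condition instead gives a low-frequency constant of size $e^{C\lambda}$. Theorem \ref{thm:Miller} then yields only a long observation time.

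The missing idea is a transmutation from the heat semigroup. Take $\mathcal A=(-\Delta)^{\frac{\beta+1}{2}}$, shifted so that $\mathcal A\ge 1$. The paper gets heat observability for $e^{-t\mathcal A}$ from the decaying resolvent estimate by \cite{DM12}. Your Kovrijkine inequality plus Lebeau--Robbiano, applied to the heat semigroup, would also do. Then, following \cite{SSY25}, the Gaussian transform $\mathcal T_h$ turns $\chi(t)S_\beta(t)f$ into a backward heat solution in the imaginary variable, and Lemma \ref{l:schroArbObsWeak} gives
\[ \norm{\mathcal A^{-1}u_0}^2 \le Ch\norm{u_0}^2 + C\frac{T^2}{h}e^{T^2/h}r(T/10)\int_0^T\norm{a^{1/2}S_\beta(t)u_0}^2\,dt . \]
Since $\norm{\Pi_{\le\lambda}u_0}\lesssim\lambda^{\beta+1}\norm{\mathcal A^{-1}u_0}$, this closes the weak inequality with $h\sim\lambda^{-2(\beta+1)}$ and $\lambda\sim T^{-1/\beta}$. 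The cost $e^{CT^2\lambda^{2(\beta+1)}}=e^{CT^{-2/\beta}}$ is exactly the exponent in \eqref{e:frac-ob-arb}. It does not come from balancing against $e^{C\lambda}$, which is why you had to adjust it by hand.

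\textbf{Necessity.} Your argument is a valid time-domain alternative to the paper's chain:
\begin{itemize}
\item Theorem \ref{thm:Miller} turns observability into a resolvent estimate;
\item Lemma \ref{l:equiv} turns that into the annulus uncertainty principle;
\item testing on functions Fourier supported in a rectangle inscribed in $A_{\beta,\lambda}$ gives $a_{\beta,L}>0$;
\item Lemma \ref{l:rect-to-gcc} gives GCC.
\end{itemize}
However, your scaling is off. The Hessian of $|\xi|^{\beta+1}$ at $|\xi|=h^{-1}$ has size $h^{1-\beta}$, not $h^{\beta-1}$. The spread over time $T$ is therefore $Th^{1-\beta}/w$, and you need $h^{1-\beta}\ll w\ll 1$, for example $w=h^{\frac{1-\beta}{2}}$. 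These are the paper's $\lambda^\beta\times\lambda^{\frac{\beta-1}{2}}$ rectangles. Your choice $w=\sqrt h$ fails for $\beta\ge\frac12$.

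\textbf{Sufficiency.} Your compactness--uniqueness step ignores the vanishing case. There the mass of the low-frequency part spreads out, every translated weak limit is $0$, and uniqueness says nothing. Your claim that a nonzero limit is frequency bounded is also unjustified, because the observation does not decouple frequency pieces.

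The paper avoids defect measures altogether and works with resolvent estimates, where frequency pieces separate by the triangle inequality as in \eqref{e:triangle}.
\begin{itemize}
\item Low frequencies are handled quantitatively by PLS (Lemma \ref{l:lowFreqResolve}).
\item High frequencies use the $\beta=0$ case of Theorem \ref{thm:frac}: rescaled PLS on frequency rectangles of radial width $\sim 1$ and tangential length $\sim\lambda^{1/2}$, glued together by Lemma \ref{l:almost}.
\item Lemma \ref{l:equiv} with $\gamma=\beta+1$ then produces the decaying factor $\lambda^{-\frac{\beta}{\beta+1}}$ needed for the arbitrary-time statement.
\end{itemize}
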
 
As pointed out in \cite[Proposition 2.11]{MPS21}, the results \cite[Theorem 1]{GJM22} and \cite[Corollary 2.17]{Mil12} together show that for uniformly continuous $a$ the geometric control condition implies observability for the fractional Schr\"odinger equation on $\Rb^d$. We improve on this by showing the GCC is necessary when $a$ is uniformly continuous and by providing an explicit formula for the observability cost. On compact manifolds, if an open set satisfies the GCC then it observes the fractional Schr\"odinger equation and this is still true with an $L^{\infty}$ potential \cite[Theorem 1]{Mac21}. In that same paper it was shown that on $\mathbb{S}^d$, the GCC is necessary for observability of the fractional Schr\"odinger equation. However to our knowledge, when $\beta>0$ a result providing a non-trivial necessary condition on general manifolds is not known.

Our proof of this theorem goes through an \textit{a priori} weaker condition than GCC, which is a type of density defined over long, thin rectangles of dimensions $\sim \lambda^\beta \times \lambda^{\frac{\beta-1}{2}}$, with $\lambda$ large. See the start of Section \ref{s:fractional} for the precise statement. Without the condition that $a$ be uniformly continuous, this condition is necessary for \eqref{e:frac-ob} and strictly weaker than GCC. But as it stands, very few observability results for wave and Schr\"odinger equations hold without regularity constraints on the observation sets/functions; see notable exceptions \cite{Rou24,BG20,LBY26,LBM23,BZ19,JS25}. We hope our theorem makes it clear that the imposition of uniform continuity blurs the geometric differences between the different values of $\beta \in [0,1)$.

The open and shut nature of the case $\beta \in [0,1)$ focuses our attention on the case $\beta=1$. Developing the propagation of singularities technique from compact manifolds in the unbounded Euclidean setting we obtain a new geometric condition, which we call comb GCC. This condition is succinctly explained by taking an observation function $a:\mathbb R^2 \to [0,1]$ which is periodic. Then in general the GCC fails along the horizontal and vertical directions. However, for a relatively dense subset of horizontal or vertical lines, the GCC is satisfied, see Figure \ref{fig:combGCCintro}. In this case we would say that $a$ satisfies the comb GCC in the vertical and horizontal directions.

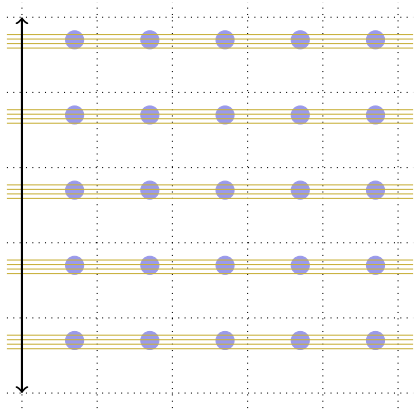
\begin{figure} \label{fig:combGCCintro}
\begin{tikzpicture}
	\draw[dotted] (-.2, -.2) grid (5.2,5.2);
	\foreach \x in {0.5,1.5,...,4.5}{
  	\foreach \y in {0.5,1.5,...,4.5}{
    \filldraw[imayou] (\x+0.2,\y+0.2) circle (0.12);
  }   
}
	\foreach \x in {1,3,5,7}{
  	\foreach \y in {0.5,1.5,...,4.5}{
    \draw[sand] (-.2,-0.03*\x+\y+0.3) -- (5.2,-0.03*\x+\y+0.3);
  }   
}
\draw[thick,<->] (0,0) -- (0,5);
\end{tikzpicture}
\caption{A periodic set which satisfies the GCC along the yellow horizontal lines. In the vertical direction, the yellow lines satisfy the GCC.}
\end{figure}

Another way to understand the comb GCC in a particular direction, is that the function may be ``smeared'' by a finite amount in said direction to form a function that is invariant in that direction, and satisfies the GCC in the perpendicular direction; see the non-example in Figure \ref{fig:introNon} below. Our two main families of examples: periodic sets and  certain product-type sets satisfy this smearing condition along a finite collection of directions, and in fact in all other dirctions satisfy the GCC for large $L$ which may depend on the particular direction. In the compact setting, one could use in the infinite speed of propagation principle of defect measures to effectively allow $L=\infty$. In our non-compact setting, $L$ may still increase without bound over various directions. However, we are only able to obtain observability resolvent estimates for wavepackets frequency localized to a sector of width $o(L^{-1})$; see \eqref{e:comb-eff} for the precise limitation which this imposes.

In summary, for now, the comb GCC captures, quantitatively, the failure of the GCC in two admissible ways:
\begin{itemize}
    \item[(A1)] The GCC may fail in certain directions as long as the set can be smeared by some finite amount to generate an invariant set satisfying GCC in the perpendicular direction. 
    \item[(A2)] The GCC length $L$ may blow-up (in a controlled way) as one approaches particular ``bad'' directions. 
\end{itemize}
Our second main result is this new sufficient condition for observability in the case $\beta=1$.

\begin{theorem}\label{thm:effect}
If $a: \Rb^d \ra [0,1]$ is uniformly continuous and satisfies the comb GCC, then there exist $C,T>0$ such that
	\[ \int_{\mathbb R^2} |u(x,0)|^2 \, dx \le C \int_0^T \int_{\mathbb R^2} |a(x) u(x,t)|^2 \, dx \, dt \]
holds for all $u$ satisfying \eqref{e:schro}.
\end{theorem}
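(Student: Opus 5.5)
The plan is to reduce observability to a resolvent (uncertainty-principle) estimate and to prove that estimate by semiclassical propagation along wavepackets. By the Hautus/Miller-type resolvent characterization, it suffices to find $C>0$ such that for all $\lambda \ge 0$ and $u\in H^2(\Rb^d)$,
\begin{equation}
\ltwo{u} \le C\ltwo{(-\Delta-\lambda)u} + C\ltwo{a u}.
\end{equation}
Low frequencies $\lambda\lesssim 1$ follow from the same argument with $h\sim 1$, or from a unique-continuation and compactness argument for uniformly continuous $a$ with positive averages. For large $\lambda$ we set $h=\lambda^{-1/2}$ and consider the semiclassical operator $P=-h^2\Delta-1$. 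The goal then becomes $\ltwo{u}\le Ch^{-1}\ltwo{Pu}+C\ltwo{au}$. The loss $h^{-1}$ reflects unit-length propagation in time $O(h)$, and is acceptable because $T$ need not be small.

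First I would microlocalize $u$ near the characteristic set $\{|\xi|=1\}$; away from it, ellipticity gives the bound trivially. Next I would split the sphere of directions using a partition of unity in $\xi$ into sectors. Let $\Theta$ be the finite set of bad directions from the comb GCC.

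\textbf{Generic sectors (A2).} For a sector of width $\delta$ at distance $\gg\delta$ from $\Theta$, GCC holds with some length $L(\delta)$. I would localize further into wavepackets of spatial size $\sim L$ along $\xi$ and angular width $o(L^{-1})$, as dictated by \eqref{e:comb-eff}. On such a packet the transport $\xi\cdot\nabla$ is accurate up to length $L$. Integrating the identity $\frac{d}{ds}\ip{u}{\,\cdot\,}$ along segments, in the manner of a positive commutator argument $\ip{[P,B]u}{u}$ with $B$ a quantized primitive of $a$ along $\xi$, yields the packet estimate. Uniform continuity of $a$ controls the error of replacing $a$ by its mollification at scale $\sim h^{1/2}$. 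Almost-orthogonality of the packets then lets us sum in $\ell^2$.

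\textbf{Bad sectors (A1).} Near a direction $\theta\in\Theta$, the comb condition says that the average $\tilde a=\frac1R\int_0^R a(x+s\theta)\,ds$ is $\theta$-invariant and satisfies GCC in the perpendicular hyperplane. Propagation along $\theta$ over length $R$ converts $\ltwo{au}$ into control by $\ltwo{\tilde a u}$, up to $Ch^{-1}\ltwo{Pu}$ and angular errors. Since $\tilde a$ depends only on $x_\perp$, we can partially Fourier transform in $x\cdot\theta$. The problem reduces to a resolvent estimate on $\Rb^{d-1}$ for $-h^2\Delta_\perp-(1-\eta^2)$, where the effective frequency $\sqrt{1-\eta^2}$ is small. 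There the GCC for $\tilde a$, together with induction on dimension (or the $d=1$ relatively dense result), gives the bound. The losses are uniform because a lower frequency only helps.

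The main obstacle is this matching at the sector boundaries. The widths of the generic sectors shrink as their $L$ blows up, and the bad sectors must absorb exactly the region where the generic argument fails. One has to check that the errors $O(h\,L(\delta)/\delta)$ from the commutator and angular uncertainty stay small while the pieces are summed. I would handle this by choosing $\delta=\delta(h)$ dyadically according to the growth rate in \eqref{e:comb-eff}.
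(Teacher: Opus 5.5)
Your architecture is the same as the paper's: sectors from the effective covering, a commutator along $\theta$ that trades $a$ for a $\theta$-invariant minorant, partial Fourier transform with induction on dimension for the transverse problem, almost-orthogonal gluing, then PLS and Miller's theorem. But you run it at the wrong precision.

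With $h=\lambda^{-1/2}$ one has $-\Delta-\lambda=h^{-2}P$. So the bound you need is $\ltwo{u}\le Ch^{-2}\ltwo{Pu}+C\ltwo{au}$, not $Ch^{-1}\ltwo{Pu}$. The $h^{-1}$ bound is the GCC-strength estimate (the case $\beta=0$ of Lemma \ref{l:equiv} and Theorem \ref{thm:frac}), and it fails whenever GCC fails. For instance, let $a$ be continuous, periodic, nonzero, and vanishing on $\mathbb R\times(0,c)$; it satisfies the comb GCC by Proposition \ref{p:period}. Take $\lambda=k^2$ and $u=e^{ikx_1}\psi(x_1/R)\chi(x_2)$ with $\supp\chi\subset(0,c)$. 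Then $au=0$, while $h^{-1}\ltwo{Pu}=k^{-1}\ltwo{(-\Delta-k^2)u}\lesssim(R^{-1}+k^{-1})\ltwo{u}$.

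The same slip is behind ``a lower frequency only helps''. After the partial Fourier transform, the transverse spectral parameter $\mu=\lambda-\xi_\theta^2$ runs down to $O(1)$. There the best bound uniform in $\mu$ is $\norm{w}\le C\norm{(-\Delta_\perp-\mu)w}+C\norm{a_{\theta,M}w}$ (PLS in $d=2$, as in Lemma \ref{l:lowFreqResolve}), which is exactly an $h^{-2}$ loss. So, as in the paper, you must work with $\hat u$ localized to width $\delta\lambda^{-1/2}$ around $|\xi|=\sqrt\lambda$.

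The second gap is the propagation step, which is where the paper's real work lies. The sector matching you single out is routine via Lemma \ref{l:almost}, since \eqref{e:comb-eff} builds in $\ep_\theta|\xi|\gg1$. In a comb sector, at this precision, $u$ cannot be cut along $\theta$ into pieces of fixed length $R$: $[\Delta,\chi(x\cdot\theta/R)]u$ has size $\sim R^{-1}\sqrt\lambda\,\ltwo{u}$, while the estimate only tolerates errors $\ll\ltwo{u}$. So the commutant $B$ must act globally. Because you estimate $|\ip{[B,P]u}{u}|\le 2\norm{B}\norm{Pu}\norm{u}$, it must also be bounded.

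A ``quantized primitive of $a$'' grows linearly. Even the transfer function $A_\theta$ of Section \ref{s:prep} is unbounded in general, by the example given there. The paper overcomes this by replacing $a$ with a smooth minorant of almost periodic density whose transfer function is bounded (Lemmas \ref{l:B}, \ref{l:smoothing}, \ref{l:extract}). Your proposal does not address this at all.

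One remedy that matches your running average: for smooth $a$, set $b(x)=M^{-2}\int_0^M(M-s)\,a(x+s\theta)\,ds$. Then $|b|\le\frac12$ and $M\partial_\theta b=\frac1M\int_0^M a(x+s\theta)\,ds-a\ge a_{\theta,M}-a$. That suffices, since you only need an upper bound on $\ip{a_{\theta,M}u}{u}$. Note that the running average itself is not $\theta$-invariant, so it is $a_{\theta,M}$ that enters the transverse problem.

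Two smaller points:
\begin{itemize}
\item On $\mathbb R^d$, low frequencies need relative density (Remark \ref{r:rel-dense}) and PLS, not compactness.
\item The comb GCC gives no fixed finite set of bad directions. The sectors must come from the $\lambda$-dependent effective covering, with GCC directions being just the case $a_{\theta,M}\ge\eta$.
\end{itemize}
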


This condition, while we do not believe it to be necessary on $\mathbb R^d$, is in our opinion the analogue of the ``any open set'' condition from $\mathbb T^d$. We support this by showing that any non-trivial periodic function $a$ on $\Rb^2$ indeed satisfies the comb GCC. That Schr\"odinger observability holds for a periodic function is already contained implicitly in \cite{Wun17} and explicitly in \cite{Tau23,LBM23}. In those approaches, one periodizes the problem on $\mathbb R^d$ and directly applies the $\mathbb T^d$ result \cite{jaffard90,komornik92}.

Moreover, to demonstrate the power of the comb GCC, we also provide a class of sets satisfying the comb GCC which were until now unknown to be observation sets.
\begin{proposition}\label{p:product-intro}
If $E,F \subset \mathbb R$ satisfy
	\[ \inf_{x \in \mathbb R} |E \cap [x,x+1]| + \inf_{y \in \mathbb R} |F \cap [y,y+1]| > 1.\]
Then $\mathbbm 1_{E \times F}$ satisfies the effective comb GCC. In particular, if $a \geq \mathbbm 1_{E \times F} $ is uniformly continuous, then the Schr\"odinger equation is observable from $a$.
\end{proposition}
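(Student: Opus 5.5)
We have not yet seen the precise definition of the (effective) comb GCC, so we work with the informal description (A1)--(A2). The goal is to find finitely many ``bad'' directions (here the two coordinate directions $e_1,e_2$) along which (A1) holds. In every other direction $\theta$, the function $\mathbbm 1_{E\times F}$ should satisfy the GCC with a length $L(\theta)$ that blows up at a controlled rate as $\theta$ approaches $e_1$ or $e_2$, as in (A2). Write $\alpha=\inf_x|E\cap[x,x+1]|$ and $\gamma=\inf_y|F\cap[y,y+1]|$, so that $\alpha+\gamma>1$ and both $\alpha,\gamma>0$. The last sentence of the proposition then follows from Theorem \ref{thm:effect}, since a uniformly continuous $a\ge \mathbbm 1_{E\times F}$ inherits the comb GCC by monotonicity of line integrals.

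\emph{Step 1: coordinate directions (A1).} Consider horizontal lines $\{y=y_0\}$. If $y_0\in F$, then the line integral of $\mathbbm 1_{E\times F}$ over a horizontal segment of length $\ell\ge 2$ is at least $|E\cap I|\ge \alpha\lfloor \ell\rfloor\gtrsim \alpha \ell$. So the GCC holds along the horizontal lines indexed by $F$. Since $F$ is relatively dense, these lines form the ``comb'' in Figure \ref{fig:combGCCintro}. Equivalently, smearing $\mathbbm 1_{E\times F}$ horizontally over a finite length produces a function $\gtrsim \alpha\mathbbm 1_F(y)$, which is invariant in $x$ and satisfies the GCC vertically because $F$ is relatively dense. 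The vertical direction is handled symmetrically using $E$ and $\gamma$.

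\emph{Step 2: oblique directions (A2).} Take a segment $t\mapsto p+t(\cos\theta,\sin\theta)$ with $0<\theta<\pi/2$; other quadrants follow by reflection. The line integral equals
\[
\int_0^{\ell}\mathbbm 1_E(p_1+t\cos\theta)\,\mathbbm 1_F(p_2+t\sin\theta)\,dt .
\]
Use $\mathbbm 1_E\mathbbm 1_F\ge \mathbbm 1_E+\mathbbm 1_F-1$. Over a length $\ell$ that is a multiple of both $1/\cos\theta$ and $1/\sin\theta$ (up to an $O(1/\sin\theta+1/\cos\theta)$ error), the first term contributes at least $\alpha\ell$ and the second at least $\gamma\ell$. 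This gives
\[
\frac1\ell\int_\ell \mathbbm 1_{E\times F}\ \ge\ (\alpha+\gamma-1)-O\!\left(\frac{1}{\ell\sin\theta\cos\theta}\right).
\]
The hypothesis $\alpha+\gamma>1$ is used exactly here. Hence the GCC holds in direction $\theta$ with constant $\overline C=(\alpha+\gamma-1)/2$ and length $L(\theta)\sim (\sin\theta\cos\theta)^{-1}$. This length blows up like the reciprocal of the angular distance to the bad directions $e_1,e_2$.

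\emph{Step 3: matching the effective definition.} Finally, we check that this blow-up rate is admissible in the effective comb GCC, whose quantitative constraint is recorded in \eqref{e:comb-eff}. I expect this to be the main obstacle. The condition will require $L(\theta)$ to be small compared with the inverse width of the frequency sectors near the bad directions, and those sectors must also be covered by the comb condition of Step 1. The inverse-linear rate $L(\theta)\lesssim|\theta-\theta_{\rm bad}|^{-1}$ is presumably exactly what is permitted, given the $o(L^{-1})$ sector width. If instead a slightly better rate is needed, I would get it by averaging over nearby parallel segments. This is permissible because the comb/GCC statements only need to hold on average over the frequency-localized wavepackets.
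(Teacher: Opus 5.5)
Your Steps 1 and 2 are correct and are the paper's ingredients. Step 1 is the smearing bound \eqref{e:prod-comb}. Step 2 reproduces Lemma \ref{l:productGCC}: your pointwise inequality $\mathbbm 1_E\mathbbm 1_F\ge \mathbbm 1_E+\mathbbm 1_F-1$ is a tidier form of the paper's projection/inclusion--exclusion argument, with the same constant $(\alpha+\gamma-1)/2$ and a GCC length $\sim 1/\ep$ for directions $\ep$-far from the axes.

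The gap is Step 3, which is the actual content of the claim. You never exhibit, for each $\rho>0$ and all $\lambda\ge\lambda_0(\rho)$, a $(\rho,\lambda)$ effective covering satisfying \eqref{e:comb-eff}. The mechanism you guess for it is also the wrong one. An inverse-linear blow-up of $L(\theta)$ is not ``exactly what is permitted'' on its own. Suppose a neighbourhood of an axis had to be covered by arcs centred at oblique directions at angle $\ep$ from it. Then you would need $\ep_\theta\gtrsim \ep$ and $M_\theta\sim \ep^{-1}$, so $\ep_\theta M_\theta\gtrsim 1$. This is exactly how the paper's non-example in Figure \ref{fig:introNon} fails. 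Your fallback of averaging over nearby parallel segments is neither justified nor needed.

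The missing idea is that Step 1 makes the blow-up rate irrelevant. The axes satisfy the comb GCC with a length $M$ that depends on neither $\rho$ nor $\lambda$. So you may give them arcs of fixed width $\ep_1=\rho/(4M)$, which gives $\ep_1M=\rho/4$. The remaining directions lie at angular distance $\gtrsim \ep_1$ from the axes. There Step 2 gives the GCC with a single length
\[
L=L(\rho)\lesssim \frac{M}{\rho(\alpha+\gamma-1)}.
\]
Cover these directions by arcs of width $\ep_2=\rho/(4L)$ with $M_\theta=L$. Then $\ep_\theta M_\theta\le \rho/4$ for every $\theta$. Moreover $(\ep_\theta\lambda)^{-1}\le\rho/2$ as soon as $\lambda\ge 8\max\{L,M\}/\rho^2$, so \eqref{e:comb-eff} holds. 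This is precisely the paper's proof of Proposition \ref{p:product}. It uses only that the GCC length is finite on direction sets bounded away from the axes, never its rate of growth. Your reduction of the final sentence to Theorem \ref{thm:effect}, via monotonicity of the comb GCC in $a$, is fine.
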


As mentioned above, we do not know that comb GCC is necessary for Schr\"odinger observability. At this time,  the only known necessary geometric condition on the observation function $a :\mathbb R^d \to [0,1]$ that is necessary for Schr\"odinger observability to hold is that $a$ be relatively dense which means there exists $L,\eta>0$ such that
    \[ \frac{1}{L^d} \int_Q a(z) \, dz \ge \eta \]
for all cubes $Q \subset \mathbb R^d$ of side length $L$, \cite[Theorem 2.6]{MPS21} and \cite[Remark 2.2]{HWW22}. By rescaling $a$ and $u$, we can work with $L=1$ when it is more convenient (as in Proposition \ref{p:product-intro} above). Relative density is a low-frequency phenomenon, which is in fact necessary for nearly any observation inequality which possesses translation invariance. While relative density is a weak condition, it has been shown to be necessary and sufficient for both the observability of the heat equation in $\mathbb R^d$ \cite{EV18,WWZZ19} as well as for logarithmic energy decay of the damped wave equation on unbounded manifolds with damping $a$ \cite{BJ16,IS23}.

\begin{question}\label{q:1}
    Is the Schr\"odinger equation observable by any bounded, uniformly continuous $a$ which is relatively dense?
\end{question}

Experts will realize that an affirmative answer to Question \ref{q:1} would also affirmatively answer the following question.

\begin{question}\label{q:2}
Does the energy of damped wave equation on $\mathbb R^d$ with relatively dense damping decay polynomially?
\end{question}
Let us conclude our introduction by giving an example of a relatively dense set that does not satisfy comb GCC. Consider the sets $E_{\text{L}}$ and $E_{\text{R}}$ which are the left (resp. right) half of all vertical integer strips. Then, create $E$ to be $E_{\text{L}}$ on the upper half plane and $E_{\text{R}}$ on the lower half plane; see Figure \ref{fig:introNon}. Then, as Figure \ref{fig:introNon} demonstrates, principle (A1) is not satisfied--not only does the GCC fail in the vertical direction, but the comb GCC fails. As we will see below, we could salvage the vertical direction $\theta_0=(0,1)$ if it was $o(L^{-1})$ distance from another direction $\theta_1$ which satisfied the GCC in time $L$. But notice that for directions $\ep$ away from vertical the GCC is satisfied in precisely time $\ep^{-1} \ne o(\ep^{-1})$. This in the precise sense in which (A2) is also not satisfied. Therefore, this example cannot satisfy our comb GCC, but it is clearly relatively dense.

\begin{figure}

\begin{tikzpicture}[scale=0.6]
\begin{scope}
	\foreach \x in {0.5,1.5,...,7.5}{
    \filldraw[imayou] (\x,4) rectangle (\x-0.5,8.2);
    \filldraw[imayou] (\x,4) rectangle (\x+0.5,-0.2);
  }   
	\draw[dotted] (-.2, -.2) grid (8.2,8.2);
\end{scope}
\begin{scope}[xshift=10cm]
	\foreach \x in {0.5,1.5,...,7.5}{
    \filldraw[imayou] (\x,4) rectangle (\x-0.5,8.2);
    \filldraw[imayou] (\x,7) rectangle (\x+0.5,-0.2);
  }  
	\draw[dotted] (-.2, -.2) grid (8.2,8.2);
\end{scope}
\end{tikzpicture}
\caption{A relatively dense set $E$ which cannot be smeared a finite amount in the vertical direction to create an invariant set.} \label{fig:introNon}
\end{figure}
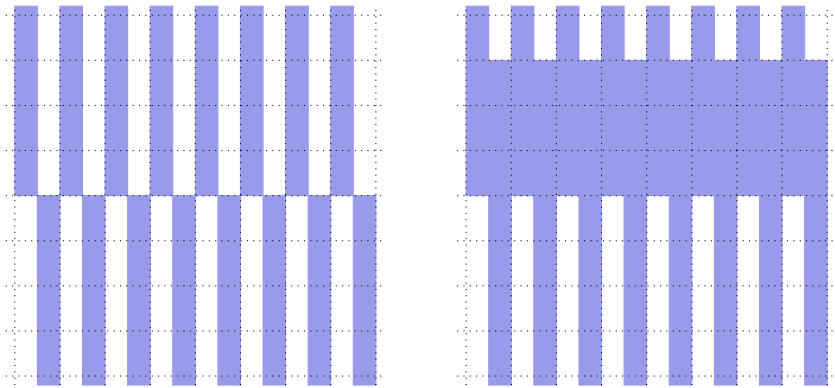

\subsection{Literature Review}
On $\Rb^d$ for uniformly continuous $a$, a  sufficient condition for Schr\"odinger observability for any (small) $T>0$, is that $a$ satisfies the geometric control condition \cite[Theorem 1.2]{BJ16} and \cite[Theorem 9]{Tau23}. We note that every $a$ which satisfies the geometric control condition also satisfies our comb GCC, as we show in Lemma \ref{l:rd-comb}.
    
    \begin{remark} We point out that these results can straightforwardly be extended to add a bounded potential $V$ and compute an explicit observability cost using our Theorem \ref{thm:resolventArbObs}. 
    \end{remark}

A necessary condition for any $a$ for Schr\"odinger observability is that $a$ be relatively dense \cite[Theorem 2.6]{MPS21} and \cite[Remark 2.2]{HWW22}.

When $d=1$ relative density is necessary and sufficient for Schr\"odinger observability \cite[Theorem 2.7]{MPS21} and \cite[Theorem 1.1]{HWW22}. And in fact, relative density is sufficient for arbitrary time observability \cite[Theorem 1.2]{SSY25}. 
It is worth emphasizing that in $d=1$ relative density and the geometric control condition coincide.

There are conditions weaker than the geometric control condition which guarantee Schrodinger observability. On $\Rb^d$ if $a$ is periodic and its support contains an open set, then finite time Schr\"odinger observability holds \cite{Wun17}, \cite[Theorem 2]{Tau23}. We show that every such $a$ satisfies our comb GCC in Proposition \ref{p:period}.

On $\Rb^2$ the assumption on $a$ can be relaxed to merely be periodic and positive on a measurable set, and a bounded potential can be added \cite[Theorem 1.2]{LBM23}. We point out that the analogous results hold for tori based on their dimension. That is measurable sets can be used to observe the Schr\"odinger equation on $\mathbb{T}^2$ \cite{BZ19}, but on $\mathbb{T}^d$ it is only known that open sets observe the Schr\"odinger equation \cite{AM14}. 

For non-periodic, non GCC results: \cite[Theorem 1.1, Remark a5]{WWZ19} shows that if $\{a=0\}$ is a finite measure set, then the Schr\"odinger equation is observable from $a$. Recently \cite{LBY26} have provided a sufficient condition which is a strengthening  of relative density that requires the set to be dense at finer scales towards infinity. 

\subsection{Paper Outline}
In Section \ref{s:fractional} we prove that the geometric control condition is equivalent to observability for the fractional Schr\"odinger equation, Theorem \ref{thm:gcc}. In Section \ref{s:fracPLSProof} we prove an uncertainty principle for functions supported on an annulus, Theorem \ref{thm:frac} which is a key ingredient in the proof of Theorem \ref{thm:gcc}.

In Section \ref{s:combGCC2} we introduce the comb GCC in two dimensions and use it to prove Theorem \ref{thm:comb} in two dimensions. In Section \ref{s:product} we prove that products of relatively dense sets with sufficient density satisfy the comb GCC, Proposition \ref{p:product-intro}. In Section \ref{s:periodic} we prove that in $\Rb^2$ nontrivial periodic sets satisfy the comb GCC. In Section \ref{s:higher} we inductively define the comb GCC in higher dimensions. 

In Section \ref{s:comb-proof} we inductively prove an observability resolvent estimate, Proposition \ref{p:main}, via a similar estimate for frequency localized functions, Proposition \ref{p:main-microlocal}. It is then straightforward to conclude Theorem \ref{thm:comb}.

In Section \ref{s:resolventConnection} we state two results connecting observability estimates to observability resolvent estimates. We prove one of these results, Theorem \ref{thm:resolventArbObs}, which states that a decaying in $\lambda$ observability resolvent estimate is sufficient for arbitrary time observability.

\textbf{Acknowledgments} The authors would like to thank Nicolas Burq, Patrick G\'erard, Jean Lagac\'e, Matthieu L\'eautaud, Chenmin Sun, and Jared Wunsch for helpful conversations. 

\textbf{Funding:} This work was partially supported by the National Science Foundation [DMS-2530465 to P.K.] and Illinois State University [New Faculty Initiative Grant to P.K.].

\section{Fractional case}\label{s:fractional}
Given $s,t>0$, we denote the anisotropic dilation $D_{s,t}$ by
        \begin{equation} \label{eq:dilationdef}
             D_{s,t}(z) = (sz_1,sz_2,\ldots, sz_{d-1}, tz_d) = (sz', tz_d).
        \end{equation}
Note that $D_{s,t}$ applied to a cube is a rectangle, and this is always what we will  mean by rectangles. Let $\mathsf R_{\beta}^L$ be the collection of all rectangles with one side of length $L\lambda^\beta$ and the rest of length $L\lambda^{\frac{\beta-1}{2}}$ for $\lambda \ge 1$. To state things precisely, we introduce the shorthand for cubes
    \begin{equation}\label{e:cubeShorthand} [z,z+L]^d = [z_1,z_1+L] \times \cdots \times [z_d,z_d+L]. \end{equation}
Furthermore, given $\theta \in \mathbb S^{d-1}$ we use $\theta$ to denote the rotation action on $\mathbb R^d$ which sends $ (0,\ldots,0,1) \mapsto \theta $. Then,
    \[ \mathsf{R}_\beta^L = \left\{ \theta \left(D_{s,t} [z,z+L]^d\right) : z \in \mathbb R^d, \ \theta \in \mathbb S^{d-1}, \ s=L\lambda^{\frac{\beta-1}{2}}, \ t = L\lambda^\beta\right\}.\]
Given a function $a:\mathbb R^d \to [0,1]$, we define the lower $(\beta,L)$ density
        \[ a_{\beta,L} = \inf_{R \in \mathsf R_\beta^L} \frac{1}{|R|} \int_R a(x) \, dx.\]
    Notice that $a_{0,L} >0$, is equivalent to $a$ satisfying the GCC and $a_{1,L} >0$, is equivalent to $a$ being relatively dense. Furthermore, a covering argument shows that we could equivalently consider the larger class of rectangles with side lengths \textit{at least} $L\lambda^\beta$ and $L\lambda^{\frac{\beta-1}{2}}$.

    We will connect $a_{\beta,L}$ to uncertainty principles for functions supported in the annulus
        \[ A_{\beta,\lambda} = \{ \xi \in \mathbb R^d : \lambda-\d \lambda^{-\beta} \le |\xi| \le \lambda + \delta\lambda^{-\beta} \}.\]
Throughout, $\delta$ is a fixed positive number. 
    \begin{theorem}\label{thm:frac}
         Let $a: \Rb^d \ra [0,1]$ be uniformly continuous and $\beta \in [0,1)$. $a_{\beta,L} >0$ if and only if there exists $C>0$ such that
            \begin{equation} \norm{u}_{L^2} \le C \norm{au}_{L^2}\end{equation}
        for all $\lambda \ge 1$ and all $u$ with $\supp \hat u \subset A_{\beta,\lambda}$.
    \end{theorem}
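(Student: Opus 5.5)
Both directions work at the scale of the rectangles in $\mathsf R_\beta^L$, which are exactly the uncertainty-principle scales of the annulus $A_{\beta,\lambda}$.

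\emph{Necessity.} Suppose the inequality holds but $a_{\beta,L}=0$ for every $L$. We build a wavepacket concentrated on a thin rectangle where $a$ has small average. Fix $\lambda$, a direction $\theta$ and a rectangle $R=\theta(D_{s,t}[z,z+L]^d)\in\mathsf R_\beta^L$. Take
\[
u(x)=e^{i\lambda\theta\cdot x}\,\chi\bigl(D_{s,t}^{-1}\theta^{-1}(x-x_0)\bigr),
\qquad \hat\chi\in C_c^\infty \text{ supported in a small ball}.
\]
Then $\hat u$ is supported in a box centered at $\lambda\theta$, with width $\sim (L\lambda^\beta)^{-1}$ along $\theta$ and $\sim (L\lambda^{(\beta-1)/2})^{-1}$ transversally. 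This box lies in $A_{\beta,\lambda}$ when $L$ is large compared to $\delta^{-1}$. Indeed, the radial thickness of the annulus is $\delta\lambda^{-\beta}$, and a transversal displacement $\eta$ changes $|\xi|$ by about $\eta^2/\lambda\sim L^{-2}\lambda^{-\beta}$. The problem is that $\chi$ is not compactly supported, so $\|au\|^2$ includes tails. Split $\|au\|^2$ into the part on a dilate $MR$ of the rectangle, bounded by $\int_{MR}a\,|\chi|^2$, and the tail, which is $O(M^{-N})\|u\|^2$. Uniform continuity of $a$ lets us pass from $a=0$ in average on rectangles to $a$ uniformly small on a slightly shrunken rectangle. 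Concretely, if the averages over $R\in\mathsf R^{L'}_\beta$ tend to $0$ with $L'\gg ML$, then $a$ is small on most of a subrectangle of size $ML$; since $a\le1$, the $L^1$ average is controlled and
\[
\int a\,|u|^2\le \|\chi\|_\infty^2\int_{MR}a+\text{tail}.
\]
Normalizing, this makes $\|au\|\le\|a^{1/2}u\|$ arbitrarily small relative to $\|u\|$. This is a contradiction, so $a_{\beta,L}>0$ for some $L$.

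\emph{Sufficiency.} This is the main part and I would argue by contradiction with semiclassical defect measures. Suppose there are $\lambda_n\ge1$ and $u_n$ with $\operatorname{supp}\hat u_n\subset A_{\beta,\lambda_n}$, $\|u_n\|=1$ and $\|au_n\|\to0$.
\begin{itemize}
\item If $\lambda_n$ stays bounded, the supports lie in a fixed compact set. Relative density of $a$, which follows from $a_{\beta,L}>0$ after covering a cube by rectangles, together with the Logvinenko--Sereda inequality then gives a contradiction.
\item If $\lambda_n\to\infty$, rescale $x\mapsto\lambda_n^{(\beta-1)/2}x$ transversally, after first microlocalizing $u_n$ into $O(1)$ many angular sectors of width $\sim\lambda_n^{-(1+\beta)/2}$, so that each piece is essentially a function of frequency $\sim\lambda_n$ in a single direction. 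In the rotated and dilated coordinates $D_{s,t}$ each piece becomes an $h$-semiclassical function with $h=\lambda_n^{-\beta}$ along $\theta$. Its frequency lies in a band of width $\delta h$ about $1$, so it is quasimode-like for the transport operator $\theta\cdot D$.
\end{itemize}
The point is that $\widehat{u}$ in the annulus means $(|D|-\lambda)u=O(\delta\lambda^{-\beta})u$. By Egorov and propagation along straight lines (the Hamilton flow of $|\xi|$), the mass of $u$ on a tube is then approximately conserved along segments of length $\sim\lambda^\beta$ with tube width $\lambda^{(\beta-1)/2}$, up to errors controlled by $\delta$ and the uniform continuity of $a$. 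Averaging $\int a|u|^2$ over translates along the flow for time $L\lambda^\beta$ gives
\[
\int a|u|^2\gtrsim a_{\beta,L}\|u\|^2-C\delta L\|u\|^2-o(1)\|u\|^2 .
\]

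The hardest step is making this averaging uniform in $\lambda$. One option is to replace $a$ by its convolution against the rectangle kernel, which is bounded below by $a_{\beta,L}$, and control the commutator $[a, e^{is(|D|-\lambda)}]$ for $|s|\le L\lambda^\beta$. This is where the uniform continuity is needed: it makes $a$ slowly varying on the scale $\lambda^{(\beta-1)/2}\to0$. It may also force $\delta$ small relative to $L$; otherwise the annulus would be covered by finitely many thinner annuli by orthogonality and the argument run on each.
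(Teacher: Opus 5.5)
Your necessity argument is essentially the paper's. It uses a wavepacket whose Fourier support is a box inscribed in $A_{\beta,\lambda}$, concentrated on a rectangle of small $a$-average, with the dilation $M$ chosen before $\epsilon$. Uniform continuity is not needed there.

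The sufficiency direction has genuine gaps. First, sectors of angular width $\lambda^{-(1+\beta)/2}$ are not $O(1)$ in number: there are about $\lambda^{(d-1)(1+\beta)/2}$ of them, and nothing in your sketch reassembles them. Bounds $\ltwo{u_j}\lesssim\ltwo{au_j}$ on the pieces only give $\ltwo{u}^2\lesssim\sum_j\ltwo{au_j}^2$. Controlling the right side by $\ltwo{au}^2$ is not automatic, because multiplication by $a$ does not commute with the sector cut-offs. This reassembly is the core of the paper's proof (Lemma \ref{l:almost}):
\begin{itemize}
\item the pieces are $u*\varphi_j$ with $\varphi_j$ of small compact physical support, so on $E$ one has $u*\varphi_j=(u\mathbbm 1_{E'})*\varphi_j$ for a slight enlargement $E'$ of $E$;
\item the tails of $\hat\varphi_j$ outside each dual rectangle cost only $T^{-1}\ltwo{u}$ with $T\sim\lambda^{(1-\beta)/2}$, which can be absorbed precisely because $\beta<1$.
\end{itemize}
If you instead intend a global Egorov--G{\aa}rding argument, you would need a calculus for symbols varying on the direction-dependent scales $\lambda^{(\beta-1)/2}\times\lambda^{\beta}$. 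Proving G{\aa}rding in that class amounts to the same reassembly.

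Second, your lower bound $a_{\beta,L}\ltwo{u}^2-C\delta L\ltwo{u}^2$ is empty unless $\delta L$ is small compared with $a_{\beta,L}$. The theorem, however, is asserted for an arbitrary fixed $\delta$. Splitting $A_{\beta,\lambda}$ into thinner annuli does not repair this. The thin annuli are adjacent, so there is no frequency separation to make their interaction through $a$ almost orthogonal, and $\ltwo{au}^2\neq\sum_k\ltwo{au_k}^2$ in general.

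In fact, for $\beta=0$ that repair would give a constant independent of $\delta$, which is false. For $\lambda\gg\delta$ and $\delta r\gg 1$, the function $\mathcal F^{-1}\mathbbm 1_{A_{0,\lambda}}$ keeps all but a fraction $O((\delta r)^{-1})$ of its mass in $B(0,r)$. So if $a$ is uniformly continuous, satisfies the GCC, and vanishes on some ball of radius $r$ (translate $u$ there), the best constant is $\gtrsim\sqrt{\delta r}$. For $\beta=0$ there is no small parameter ($h=\lambda^{-\beta}=1$), and the averaging window cannot be shorter than the GCC length $L_0$. The propagation error $\delta L_0$ therefore cannot be absorbed unless $\delta$ is small.

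The paper avoids this by applying the rescaled PLS theorem (Corollary \ref{cor:rect}) on dual rectangles of radial width $\sim\delta\lambda^{-\beta}$ and tangential length $\sim\lambda^{(1-\beta)/2}$ covering the annulus. PLS holds for frequency boxes of any fixed size, with a constant depending on that size, so no smallness of $\delta$ is needed.

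For $\beta\in(0,1)$ there is another way around the $\delta$ issue. You could deduce the GCC from Lemma \ref{l:rect-to-gcc} and average only over $s\le\tau\lambda^\beta$ with $\tau\delta$ small, which is admissible once $\tau\lambda^\beta$ exceeds the GCC length. That is not the fix you propose, though, and it leaves the reassembly problem untouched.
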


    Our proof strategy is similar to \cite{GJM22} which covers the case $\beta=0$. We prove Theorem \ref{thm:frac} in Section \ref{s:fracPLSProof}.

It is well-known by now that such uncertainty principle inequalities (elsewhere sometimes called wavepacket estimates \cite{Mil12}) are equivalent to resolvent estimates \cite{BZ04,Gre20}. The following lemma states a general version of this principle for applications of statements like Theorem \ref{thm:frac} to Schr\"odinger observability inequalities.

In particular, taking $\gamma=\beta+1$ in Lemma \ref{l:equiv}, and using the well-known equivalence between long-time observability and resolvent estimates (see Appendix), we obtain that Theorem \ref{thm:frac} states that $a_{\beta,L}>0$ is equivalent to large-time fractional Schr\"odinger observability \eqref{e:frac-ob}. Subsequently, Theorem \ref{thm:gcc} will follow by relating $a_{\beta,L}$ to the GCC in section \ref{s:gcc}. 
\begin{lemma}\label{l:equiv}
Let $a: \Rb^d \ra [0,1]$. 
If there exists $\d, \beta, C>0, \lambda_0 \geq 1$ such that for all $\lambda \geq \lambda_0$ and $u$ with $\supp \hat{u} \subset A_{\beta,\lambda}$ we have 
\begin{equation} \label{e:uncp}
    \|u\|_{L^2} \leq C \|a^{1/2} u\|_{L^2},
\end{equation}
then for all $\gamma>0$ there exists $B>0$ such that for all $u \in H^{\gamma}$, $\lambda \geq \lambda_0$
we have
	\begin{equation}\label{e:resolve}
            \ltwo{u} \leq C \ltwo{a^{1/2} u} + CB \lambda^{1+\beta-\gamma} \ltwo{\left( (-\Delta)^{\frac \gamma 2}-\lambda^\gamma \right)u}.
        \end{equation}
On the other hand, if for some $C,B, \beta, \gamma>0$, $\lambda_0 \geq 1$ \eqref{e:resolve} holds for $\lambda \geq \lambda_0$, then there exists $\delta>0$ such that \eqref{e:uncp} holds for $\lambda \geq \lambda_0$.
\end{lemma}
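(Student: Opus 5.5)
We argue on the Fourier side with a frequency cutoff adapted to the annulus $A_{\beta,\lambda}$. Let $P_\lambda$ be the Fourier multiplier with symbol $\mathbbm 1_{A_{\beta,\lambda}}(\xi)$, so $P_\lambda$ is an orthogonal projection commuting with $(-\Delta)^{\gamma/2}$. For the forward direction, fix $u \in H^\gamma$ and split $u = P_\lambda u + (I-P_\lambda)u$. The hypothesis \eqref{e:uncp} gives
\[ \ltwo{P_\lambda u} \le C\ltwo{a^{1/2}P_\lambda u} \le C\ltwo{a^{1/2}u} + C\ltwo{(I-P_\lambda)u}, \]
using $0\le a\le 1$. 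Hence
\[ \ltwo{u}\le C\ltwo{a^{1/2}u}+(C+1)\ltwo{(I-P_\lambda)u}, \]
and it suffices to show $\ltwo{(I-P_\lambda)u}\le B\lambda^{1+\beta-\gamma}\ltwo{((-\Delta)^{\gamma/2}-\lambda^\gamma)u}$. By Plancherel this reduces to the symbol bound
\[ \abs{|\xi|^\gamma-\lambda^\gamma}\ge B^{-1}\lambda^{\gamma-1-\beta} \quad\text{for } \xi\notin A_{\beta,\lambda}, \]
that is, for $\abs{|\xi|-\lambda}>\d\lambda^{-\beta}$.

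To prove the symbol bound, set $r=|\xi|$. The function $r\mapsto |r^\gamma-\lambda^\gamma|$ is increasing in $|r-\lambda|$ on each side of $\lambda$, so it suffices to check the two endpoints $r=\lambda\pm\d\lambda^{-\beta}$. By the mean value theorem,
\[ |r^\gamma-\lambda^\gamma| = \gamma\, \rho^{\gamma-1}\,\d\lambda^{-\beta} \]
with $\rho$ between $\lambda-\d\lambda^{-\beta}$ and $\lambda+\d\lambda^{-\beta}$. Since $\lambda\ge1$ and we may assume $\d\le 1/2$ (shrinking $\d$ only weakens \eqref{e:uncp}), we have $\rho\in[\lambda/2,2\lambda]$. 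Thus $\rho^{\gamma-1}\ge c_\gamma\lambda^{\gamma-1}$ whatever the sign of $\gamma-1$. This gives $B=(c_\gamma\gamma\d)^{-1}$, and the constant in front of the resolvent term is $(C+1)B$, which is harmless.

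The only point needing care is the case $\d>1/2$, where the lower endpoint may approach $0$ when $\lambda$ is near $1$. There I would simply handle the bounded range $r\le\lambda/2$ separately: on it $|r^\gamma-\lambda^\gamma|\ge(1-2^{-\gamma})\lambda^\gamma\ge c\lambda^{\gamma-1-\beta}$, since $\lambda\ge1$. So the estimate holds for every $\d$.

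For the converse, take $u$ with $\supp\hat u\subset A_{\beta,\lambda}$. On this support the same mean value computation gives $\abs{|\xi|^\gamma-\lambda^\gamma}\le C_\gamma\d\lambda^{\gamma-1-\beta}$, using $\rho\le2\lambda$ together with a lower bound on $\rho$ when $\gamma<1$, which needs $\d\le1/2$. Plancherel then yields
\[ \ltwo{((-\Delta)^{\gamma/2}-\lambda^\gamma)u}\le C_\gamma\d\lambda^{\gamma-1-\beta}\ltwo{u}. \]
Inserting this into \eqref{e:resolve} gives
\[ \ltwo{u}\le C\ltwo{a^{1/2}u}+CBC_\gamma\d\ltwo{u}. \]
Choosing $\d$ so small that $CBC_\gamma\d\le1/2$ lets us absorb the last term, and \eqref{e:uncp} follows with constant $2C$. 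I expect no genuine obstacle; the only bookkeeping is the uniformity of the mean value constants in $\lambda\ge\lambda_0\ge1$.
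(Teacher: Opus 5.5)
Your proof is correct and is essentially the paper's argument. The paper splits $u$ along the sublevel set $A^*_\ep=\{\lambda^\gamma-\ep\lambda^{\gamma-\beta-1}\le|\xi|^\gamma\le\lambda^\gamma+\ep\lambda^{\gamma-\beta-1}\}$ and proves $A^*_\ep\subset A_{\beta,\lambda}$ by a Taylor expansion, which is the contrapositive of your symbol lower bound off $A_{\beta,\lambda}$; its converse is the same containment-plus-absorption step with $\delta$ small. Your mean value bookkeeping, including the reduction to $\delta\le 1/2$ and the harmless constant $(C+1)B$, is if anything cleaner than the paper's.
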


\begin{proof}
Assume \eqref{e:uncp} holds. Fix $\lambda,\gamma$ and let 
\begin{equation}
    A^*_\ep = \{ \lambda^\gamma-\ep \lambda^{\gamma-\beta-1} \le |\xi|^{\gamma} \le \lambda^{\gamma} + \ep\lambda^{\gamma-\beta-1}\}.
\end{equation}
We will show $A^*_{\ep} \subset A_{\beta,\lambda}$ for suitable $\ep>0$. Let us first remind the reader of the standard derivation of the forward direction of the Lemma from that containment. 
Suppose $u \in H^{\gamma}$, then by the containment $A^*_{\ep} \subset A_{\beta,\lambda}$, $u=g+f$ where $\hat g$ is supported in $A_{\beta,\lambda}$ and $\hat f = \mathbbm{1}_{(A^*_{\ep})^c} \hat u$. Then, by two applications of the triangle inequality, and the assumed uncertainty principle \eqref{e:uncp} for $g$,
	\begin{equation}\label{e:triangle} \ltwo{u} \le \ltwo{f} + \ltwo{g} \le \ltwo{f} + C\ltwo{ag} \le \left(C +1\right) \ltwo{f} + C \ltwo{au}.\end{equation}
Now since $\hat f = \mathbbm{1}_{(A^*_{\ep})^c} \hat u$, applying Plancherel's theorem twice 
	\begin{equation}\label{e:Delta-gamma}
    \ep \lambda^{\gamma-\beta-1} \norm{f}_{L^2} \leq \norm{ \left((-\Delta)^{\frac{\gamma}{2}}-\lambda^\gamma\right) f }_{L^2} \le \norm{ \left((-\Delta)^{\frac{\gamma}{2}}-\lambda^\gamma\right) u }_{L^2} .\end{equation}
Combining this with the previous inequality gives  
\begin{equation}
    \ltwo{u} \leq \frac{C+1}{\ep} \lambda^{1+\beta-\gamma} \ltwo{\left( (-\Delta)^{\frac{\gamma}{2}} - \lambda^{\gamma}\right) u} + C \ltwo{au},
\end{equation}
as desired. 

Now, to show $A^*_{\ep} \subset A_{\beta,\lambda}$, notice that for $\xi \in A^*_{\ep}$ clearly $(\lambda^{\gamma}-\ep \lambda^{\gamma-\beta-1})^{\frac{1}{\gamma}} \le |\xi| \le (\lambda^{\gamma}+\ep\lambda^{\gamma-\beta-1})^{\frac{1}{\gamma}}$. Then, by Taylor expansion
	\[ (\lambda^{\gamma}\pm \ep\lambda^{\gamma-\beta-1})^{\frac{1}{\gamma}} = \lambda \pm \frac{\ep \lambda^{\gamma-\beta-1}}{\gamma}t^{\frac{1-\gamma}{\gamma}}, \quad \lambda^{\gamma} - \ep \lambda^{\gamma-\beta-1} \le t \le \lambda + \ep\lambda^{\gamma-\beta-1}.\]
Since $\lambda \ge 1$ and $\frac{\gamma-\beta-1}{\gamma}\le 1$, as long as $\ep<\frac{1}{4}$ we have $\frac 12 \lambda^{\gamma} \le t \le 2\lambda^{\gamma}$ and hence
	\[ t^{\frac{1-\gamma}{\gamma}} \le 2^{\abs{\frac{1-\gamma}{\gamma}}} \lambda^{1-\gamma} .\] 
So now choosing $0<\ep< \frac 14$ such that
	\[ \frac{\ep 2^{\abs{\frac{1-\gamma}{\gamma}}}}{\gamma} \le \delta, \]
we obtain
	\[ \Bigl| |\xi|-\lambda \Bigr| \leq \d \lambda^{-\beta}.\]
Thus $A^*_{\ep} \subset A_{\beta,\lambda}$.

To show the other direction, we choose $\ep_0< \frac{1}{2CB}$ so that if $\hat u$ is supported in 
	\[ A_* = \{ \lambda^{\gamma}- \ep_0\lambda^{\gamma-\beta-1} \le |\xi|^\gamma \le \lambda^{\gamma} + \ep_0 \lambda^{\gamma-\beta-1}\} \]
and \eqref{e:resolve} holds, then
	\[ \ltwo{u} \le C\ltwo{au} + \frac{1}{2} \ltwo{u}.\]
Thus we can absorb the second term on the right back into the left hand side and obtain \eqref{e:uncp}.
Now, by the same steps that were used to show the containment $A^*_{\ep} \subset A_{\beta,\lambda}$, we may choose $\delta$ small enough that $A_{\beta,\lambda} \subset A_*$.
\end{proof}

 \subsection{Connection to Geometric Control Condition}\label{s:gcc}
  For general $a$ and $\beta \ge 0$, the rectangle condition $a_{\beta,L} \ge \eta$ is weaker than the geometric control condition. 
\begin{lemma}\label{l:gcc-to-rect}
    Suppose $a$ satisfies the GCC with constants $L$ and $\overline{C}$, then for each $\beta \ge 0$, $a_{\beta,L} > \overline{C}$.
\end{lemma}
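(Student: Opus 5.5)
The plan is to slice each rectangle into parallel segments of length at least $L$ and average the GCC bound over the slices with Fubini. Fix $\beta \ge 0$, $\lambda \ge 1$, and a rectangle
\[ R = \theta\bigl(D_{s,t}[z,z+L]^d\bigr) \in \mathsf R_\beta^L, \qquad s = L\lambda^{\frac{\beta-1}{2}},\quad t = L\lambda^\beta .\]
Since $\theta$ sends $(0,\dots,0,1)$ to $\theta$, the side of length $tL$ points in direction $\theta$. We have $tL \ge L$ because $\lambda^\beta \ge 1$ and, after normalizing, $L \ge 1$. Even if one prefers not to normalize, the side has length $L^2\lambda^\beta$, and the covering remark after the definition of $a_{\beta,L}$ allows us to treat rectangles whose long side is at least $L$. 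So it suffices to show that the average of $a$ over any rectangle whose long side $\ell_0$ has length $|\ell_0| \ge L$ is at least $\overline C$.

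Parametrize $R$ as
\[ R = \{ y + r\theta : y \in P,\ r \in [0,|\ell_0|] \}, \]
where $P$ is the $(d-1)$-dimensional face of $R$ orthogonal to $\theta$. By Fubini,
\[ \int_R a\,dx = \int_P \int_0^{|\ell_0|} a(y + r\theta)\,dr\,dy = \int_P \int_{\ell_y} a\,ds\,dy, \]
where $\ell_y$ is the segment from $y$ to $y + |\ell_0|\theta$. Each $\ell_y$ has length $|\ell_0| \ge L$, so the GCC gives $\int_{\ell_y} a\,ds \ge \overline C |\ell_0|$. Integrating over $P$ yields
\[ \int_R a\,dx \ge \overline C\, |\ell_0|\, |P| = \overline C |R| . \]
Taking the infimum over $R$ gives $a_{\beta,L} \ge \overline C$.

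The main obstacle is the strict inequality, which an infimum of averages each $\ge \overline C$ does not directly produce. I would handle it by replacing $\overline C$ with any slightly smaller admissible GCC constant: if $a$ satisfies the GCC with constant $\overline C$, it also satisfies it with $\overline C' < \overline C$, and the argument above then gives $a_{\beta,L} \ge \overline C > \overline C'$. Alternatively, I would simply record the non-strict bound $a_{\beta,L} \ge \overline C > 0$, which is all that is needed downstream. A secondary bookkeeping point is matching the side length $tL$ against the GCC length $L$. This is handled by assuming $L \ge 1$, which costs nothing since enlarging $L$ preserves the GCC, or by invoking the covering remark.
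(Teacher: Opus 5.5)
Your argument is correct and is the same as the paper's. The paper also reduces by rotation and translation to an axis-parallel rectangle, integrates along the long side by Fubini, and applies the GCC on each fiber to get $\frac{1}{|R|}\int_R a \ge \overline{C}$.

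The paper's proof likewise only yields the non-strict bound $a_{\beta,L}\ge \overline{C}$; the strict inequality fails, e.g., for $a\equiv\overline{C}$, so you are right to record the non-strict version. The paper also takes the rectangles to have sides $L\lambda^\beta$ and $L\lambda^{\frac{\beta-1}{2}}$. Then the long side is automatically at least $L$, and your normalization $L\ge 1$ is unnecessary; it would in any case change the $L$ appearing in $a_{\beta,L}$.
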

\begin{proof}
    By rotation and translation invariance of our conditions, to bound $a_{\beta,L}$ from below we may consider only $R=D_{\lambda^{\frac{\beta-1}{2}},\lambda^\beta}[0,L]^d$. 
Then since $a$ satisfies the geometric control condition
    \begin{align}
        \frac{1}{|R|} \int_R a(z) dz &= \frac{1}{|R|} \int_{[0,L\lambda^{\frac{\beta-1}{2}}]^{d-1}} \int_0^{L\lambda^\beta} a(z',z_d) dz_d dz' \\
        & \geq \frac{1}{|R|} \int_{[0,L\lambda^{\frac{\beta-1}{2}}]^{d-1}}  \overline{C}L\lambda^{\beta} dz' \\
        &= 
        \overline{C}.
    \end{align}
\end{proof}
In fact, $a_{\beta,L} >0$ is strictly weaker than the GCC as the following example shows. Let $a(x,y)$ be the indicator function of
    \begin{align}
        E_\beta = \{(x,y): |x|>1, |y| > |x|^{\frac{\beta-1}{2\beta}}\} \cup \{(x,y): |x|<1, |y|^{\frac{\beta-1}{2\beta}}<|x|\} \subset \mathbb R^2.
    \end{align}
    \begin{figure}
        \centering
        \begin{tikzpicture}
        
            \filldraw[imayou,domain=1:12,samples=50] plot (\x,{\x^(-0.5)}) |- (1,1.5);
            \filldraw[imayou,domain=1:12,samples=50] plot (\x,{-1*\x^(-0.5)}) |- (1,-1.5);
            \draw[dotted] (1,-1.5) grid (12,1.5);
            \draw[ultra thick,sand] (1,-0.5) rectangle (10,0.5);
            \draw[ultra thick,persred](1,-1) rectangle (5.5,1);
            \draw (0.5,0) node {$L\lambda^\beta$};
            \draw (6.5,0.8) node {$L\lambda^{\frac{\beta-1}{2}}$};
            \draw (8,-1) node {$|y|^\beta >|x|^{\frac{\beta-1}{2}}$};
        \end{tikzpicture}
        \caption{$E_\beta$ for $\beta=\frac 12$ and $1 < x < 12$, $-1<y<1$. Also pictured are two rectangles from $\mathsf R_{\beta,3}$ $\lambda=\sqrt{3}$ and $\lambda = \sqrt{ \frac 32}$.}
        \label{fig:rectangles}
    \end{figure}
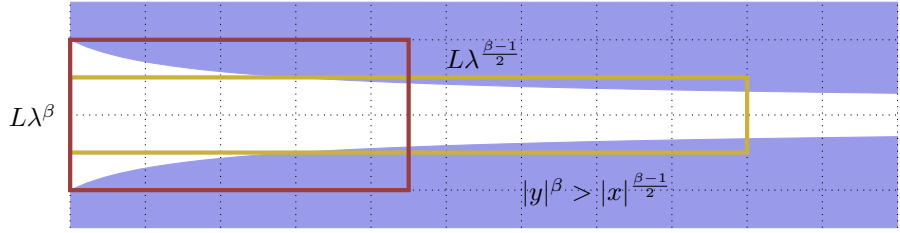
    
    This satisfies $a_{\beta,3}>0$, see Figure \ref{fig:rectangles}, but the $x$ and $y$ axes do not intersect this set, so the GCC does not hold. 
However, our result Theorem \ref{thm:frac} only applies to $a$ which are uniformly continuous. And under this regularity assumption, actually $a_{\beta,L} \ge \eta$ for any $\beta \in [0,1)$ is equivalent to GCC.
  \begin{lemma}\label{l:rect-to-gcc}
  Suppose $a: \Rb^d \ra [0,1]$ is uniformly continuous and satisfies $a_{\beta,L} \ge \eta$ for some $\beta \in [0,1)$ and $L,\eta>0$. Then $a$ satisfies the GCC.
  \end{lemma}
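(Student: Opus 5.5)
The plan is to use uniform continuity to pass from averages over the thin rectangles in $\mathsf R_\beta^L$ to averages over line segments. The point is that, since $\beta<1$, the short sides $L\lambda^{\frac{\beta-1}{2}}$ of these rectangles tend to $0$ as $\lambda\to\infty$. The long side $L\lambda^\beta$ stays bounded below by $L$ (it grows if $\beta>0$, and equals $L$ if $\beta=0$). So for large $\lambda$ a rectangle in $\mathsf R^L_\beta$ is, at the scale on which $a$ varies, indistinguishable from one of its long edges.

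\textbf{Step 1 (one fixed length).} Let $\omega$ be a modulus of continuity for $a$. Fix a segment $\ell$ of length $t=L\lambda^\beta$. By rotation and translation invariance of both conditions, we may assume $\ell=\{0\}\times[0,t]$. Take $R=D_{s,t}[0,L]^d$ with $s=L\lambda^{\frac{\beta-1}{2}}$; then $R\in\mathsf R^L_\beta$ and $\ell$ is one of its long edges. For each $z'\in[0,s]^{d-1}$ and each $z_d$ we have $|a(z',z_d)-a(0,z_d)|\le \omega(\sqrt{d-1}\,s)$. Averaging over $R$, exactly as in the proof of Lemma \ref{l:gcc-to-rect} but in the reverse direction, gives
\[
\frac{1}{|\ell|}\int_\ell a\,ds \;\ge\; \frac1{|R|}\int_R a\,dz-\omega\bigl(\sqrt{d-1}\,L\lambda^{\frac{\beta-1}{2}}\bigr)\;\ge\;\eta-\omega\bigl(\sqrt{d-1}\,L\lambda^{\frac{\beta-1}{2}}\bigr).
\]
Since $\frac{\beta-1}{2}<0$, we can choose $\lambda_0\ge1$ so large that the $\omega$ term is at most $\eta/2$. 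Setting $L'=L\lambda_0^\beta$, every segment of length exactly $L'$ then has average of $a$ at least $\eta/2$.

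\textbf{Step 2 (all longer lengths).} Given a segment $\ell$ with $|\ell|\ge 2L'$, write it as $N=\lfloor |\ell|/L'\rfloor\ge 2$ consecutive subsegments of length $L'$ plus a remainder. Since $a\ge0$, we may discard the remainder. Then
\[
\frac1{|\ell|}\int_\ell a\,ds\;\ge\;\frac{N L'}{|\ell|}\cdot\frac{\eta}{2}\;\ge\;\frac{\eta}{4},
\]
because $NL'\ge |\ell|/2$ when $N\ge1$ and $|\ell|\ge L'$. Hence $a$ satisfies the GCC with length $2L'$ and constant $\overline C=\eta/4$.

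The only step that needs care is Step 1, specifically the choice of $\lambda_0$ uniformly over all rectangles. This is exactly where uniform (rather than mere) continuity is needed, since the estimate must hold at all positions and orientations simultaneously. The counterexample $E_\beta$ above shows that without it the implication fails. For $\beta=0$ the lengths involved are fixed at $L'=L$, so the argument is the same with no dependence of the long side on $\lambda$.
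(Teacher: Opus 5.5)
Your proof is correct, and it is essentially the paper's argument run directly instead of by contradiction. The paper starts from a bad long segment, extracts a bad subsegment of length exactly $L\lambda^\beta$ using the same chopping as your Step 2, and then applies uniform continuity to show that the thin rectangle in $\mathsf R^L_\beta$ along that subsegment has average at most $\eta/2$; this last step is your Step 1 in contrapositive form.

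There is one harmless notational slip. For $\ell$ of length $L\lambda^\beta$ to be a long edge of $R$, you want $R=D_{\lambda^{\frac{\beta-1}{2}},\lambda^\beta}[0,L]^d$ as in Lemma \ref{l:gcc-to-rect}. Your $D_{s,t}[0,L]^d$, with $s,t$ already carrying the factor $L$, has sides $L^2\lambda^{\frac{\beta-1}{2}}$ and $L^2\lambda^\beta$; this mirrors a typo in the paper's own displayed definition of $\mathsf R^L_\beta$.
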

  \begin{proof}
Suppose towards a contradiction that $a_{\beta,L} \ge \eta$ but $a$ fails the GCC. 
By uniform continuity, we can find $\ep \in (0,1)$ such that $|a(x)-a(y)| \le \frac{\eta}{4}$ for $|x-y| \le \ep \sqrt{d-1}$. Since $\beta<1$, we can find $\lambda$ large enough that $L\lambda^{\frac{\beta-1}{2}} < \ep$. 

On the other hand, since $a$ fails the GCC, we can find a line segment $\ell_0$ such that
  	\[ \frac{1}{|\ell_0|} \int_{\ell_0} a \le \frac{\eta}{16}\]
and $|\ell_0| \ge L\lambda^{\beta}$. We will now find an $\ell \subset \ell_0$ with $L\lambda^\beta = |\ell|$ such that the above display almost holds replacing $\ell_0$ by $\ell$. If $\ell_0$ already satisfies this condition we take $\ell=\ell_0$. Otherwise $\ell_0$ contains $J=\lfloor \frac{|\ell_0|}{L\lambda^\beta} \rfloor \ge 1$ disjoint line segments $\ell_1,\dots,\ell_J$ of length $L\lambda^\beta$. One such segment must satisfy
	\begin{equation}\label{e:ellj}\frac{1}{|\ell_j|} \int_{\ell_j} a \le \frac{\eta}{4},\end{equation}
since if all failed the above condition, then
	\[ \int_{\ell_0} a \ge \sum_{j=1}^J \int_{\ell_j} a \ge \frac{\eta}{4} J L \lambda^\beta \ge \frac{\eta}{4} \frac{J+1}{2} L \lambda^\beta \ge \frac{|\ell_0| \eta}{8},\]
	which contradicts the choice of $\ell_0$. Take $\ell$ to be one such $\ell_j$ satisfying \eqref{e:ellj}. 
	Now, letting $R$ be the square neighborhood of $\ell$ of width $L\lambda^{\frac{\beta-1}{2}}$, $R$ indeed belongs to $\mathsf R_{\beta,L}$. 
And, for $x \in R$, let $\pi(x)$ denote the closest point to $x$ in $\ell$. Since $|x-\pi(x)| \leq L \lambda^{\frac{\beta-1}{2}} \sqrt{d-1} < \ep \sqrt{d-1}$, we have
	\[ a(x) \le a(\pi(x)) + \frac{\eta}{4}.\]
Then using \eqref{e:ellj} we have
	\[ \int_{R} a \le \int_{t \in \ell} \int_{x \in \pi^{-1}(t)} a(x) \, dx \, dt\le  (L \lambda^{\frac{\beta-1}{2}})^{d-1} \int_{t \in \ell} \left[ a(t) + \frac{\eta}{4}\right] \, dt \le |R| \frac{\eta}{2}.\]
But this contradicts the fact that $\frac{1}{|R|} \int_R a \ge \eta$.
  \end{proof}
Now we can assemble Theorem \ref{thm:frac} and Lemmas \ref{l:equiv} and \ref{l:rect-to-gcc} to prove Theorem \ref{thm:gcc} on the equivalence between observability of the fractional Schr\"odinger equation and the GCC. We rely heavily on the well-known equivalence between the resolvent estimates and observability; which we recall for the reader's convenience in the Appendix as Theorem \ref{thm:Miller}.
\begin{proof}[Proof of Theorem \ref{thm:gcc}]
For the sake of clarity (at the expense of notation), consider the fractional Schr\"odinger equation with exponent $\frac{\beta_*+1}{2}$ for $\beta_* \in [0,1)$. First, if $0 \le a(x) \le 1$ is uniformly continuous and satisfies the GCC, then by Lemma \ref{l:gcc-to-rect}, $a_{0,L} >0$ for some $L>0$. Then by Theorem \ref{thm:frac} and Lemma \ref{l:equiv} with $\beta=0$ and $\gamma=\beta_*+1$, we obtain the resolvent estimate
	\begin{align}
    \label{e:resolvent-beta-star} 
    &\ltwo{u} \le C \left( \ltwo{au} + \lambda^{-\beta_*} \ltwo{ \left[ (-\Delta)^{\frac{\beta_*+1}{2}} - \lambda^{\beta_*+1} \right] u} \right), \quad \lambda \ge 1 \text{ or }\\
    &\ltwo{u} \leq C \left(\ltwo{au} + \lambda^{-\frac{\beta_*}{\beta_*+1}} \ltwo{((-\Delta)^{\frac{\beta_*+1}{2}}-\lambda)u} \right), \qquad \lambda \geq 1.
    \end{align}
Since $\beta_* \ge 0$, $C \lambda^{-\frac{\beta_*}{\beta_*+1}}$ is uniformly bounded so by Theorem \ref{thm:Miller} and Lemma \ref{l:lowFreqResolve}, we obtain the observability inequality \eqref{e:ob}. To conclude arbitrary time observability we apply Theorem \ref{thm:resolventArbObs} with $\ep = \frac{2\beta_*}{\beta_*+1}$ which lies in $(0,1)$  

as long as $\beta_* \in(0,1)$. 

On the other hand, if we assume observability, then by Theorem \ref{thm:Miller} we have the resolvent estimate 
\begin{equation}
    \ltwo{u} \leq C \left( \ltwo{au} + \ltwo{((-\Delta)^{\frac{\beta_*+1}{2}}- \lambda) u} \right) \quad \text{ for all } u \in H^2, \lambda \in \Rb.
\end{equation}
Applying Lemma \ref{l:equiv} with $\gamma=\beta_{*}+1$ and $\beta= \beta_*$ we obtain the uncertainty principle \eqref{e:uncp}. Then by Theorem \ref{thm:frac} we have $a_{\beta_*,L} >0$ for some $L$. This implies GCC by Lemma \ref{l:rect-to-gcc} since $a$ is uniformly continuous.
\end{proof}

    \begin{remark}\label{r:potential}
    By the triangle inequality, decaying resolvent estimates such as \eqref{e:resolvent-beta-star} for $\beta_*>0$ afford the addition of a bounded potential $V$ for large $\lambda$. Thus, if $a$ is uniformly continuous and satisfies the GCC, then we obtain the high-frequency resolvent estimate
        \begin{equation}\label{e:high-potential} \ltwo{u} \le C \left( \ltwo{au} + \lambda^{-\frac{\beta_*}{\beta_*+1}} \ltwo{ \left[ (-\Delta)^{\frac{\beta_*+1}{2}} + V - \lambda \right] u} \right), \quad \lambda \ge \lambda_0,
        \end{equation}
    for some large $\lambda_0>0$. 
    If we also had the low frequency counterpart to \eqref{e:high-potential}, namely that for each $\lambda_0>0$ there exists $C>0$ such that
        \begin{equation}\label{e:low-potential} \ltwo{u} \le C \left( \ltwo{au} + \ltwo{ \left[ (-\Delta)^{\frac{\beta_*+1}{2}} + V - \lambda \right] u} \right), \quad |\lambda| \le \lambda_0.
        \end{equation}
        then, by Theorem \ref{thm:resolventArbObs} below, the second statement in Theorem \ref{thm:gcc} would hold for fractional Schr\"odinger equations with bounded potentials as well. Note that without a potential, this is an immediate consequence of the PLS theorem (see Lemma \ref{l:lowFreqResolve} below), while if there is a potential and $\beta^*=1$, then the associated heat equation is observable from relatively dense sets \cite{DWZ20,LBM25}, and hence the low-frequency resolvent estimate \eqref{e:low-potential} indeed holds. But for general $\beta^* \in (0,1)$ we believe this to be an open problem.
    \end{remark}
    
    \section{Proof of Theorem \ref{thm:frac}}\label{s:fracPLSProof}
        It will be helpful to frequently switch between $a$ and certain indicator functions. And this is nearly always possible in the following sense. For any $0 \le a(x) \le 1$, $\eta,\ep>0$ and $F \subset \mathbb R^d$:
        	\begin{equation}\label{e:switch} \frac{1}{|F|} \int_F a \ge \eta \implies \frac{1}{|F|} \int_F \mathbbm{1}_{E} \ge \eta-\ep, \quad E = \left\{ x \in \mathbb R^d : a(x) \ge \ep \right\}.\end{equation}
    \begin{proof}[Proof of \eqref{e:switch}]
    	\[ \int_F a = \int_{F \cap E}a + \int_{F\backslash E} a \le \int_F \mathbbm{1}_E + \ep|F|.\]
    \end{proof}

    \subsection{Rescaling PLS Theorem}
    The first step is a simple rescaling of the Paneah-Logvinenko-Sereda Theorem \cite{Pan61,LS74,Kov01,Hav12} which relates the geometric property of relative density to a statement about the Fourier support of a function. 
    \begin{theorem}[PLS Theorem]\label{thm:pls}
        For each $\eta,L,\sigma>0$, there exists $C>0$ such that
            \begin{equation}\label{e:PLS} \norm{u}_{L^2} \le C \norm{au}_{L^2}\end{equation}
        holds for all $u$ satisfying $\supp \hat u \subset [\zeta,\zeta+\sigma]^d$ for any $\zeta$ and all $a$ which are $(L,\eta)$ relatively dense.
    \end{theorem}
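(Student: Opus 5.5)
The statement is a translation-, modulation- and scaling-invariant form of the classical Logvinenko--Sereda/Kovrijkine theorem, so I would reduce to that classical result rather than reprove it.

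\textbf{Step 1: remove the frequency shift.} If $\supp \hat u \subset [\zeta,\zeta+\sigma]^d$, set $v(x) = e^{-ix\cdot\zeta}u(x)$. Then $\supp \hat v \subset [0,\sigma]^d$, and $|v| = |u|$ pointwise, so both $\norm{u}_{L^2}$ and $\norm{au}_{L^2}$ are unchanged. This is why the constant can be independent of $\zeta$, and it lets us assume $\zeta = 0$.

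\textbf{Step 2: pass from $a$ to an indicator.} I would use \eqref{e:switch} with $\ep = \eta/2$. Since $a$ is $(L,\eta)$ relatively dense, the set $E = \{a \ge \eta/2\}$ satisfies $|E \cap Q| \ge \frac{\eta}{2}L^d$ for every cube $Q$ of side $L$, so $E$ is $(L,\eta/2)$ relatively dense. On $E$ we have $a \ge \eta/2$, hence
\[ \norm{\mathbbm{1}_E u}_{L^2} \le \tfrac{2}{\eta}\norm{au}_{L^2}. \]
It therefore suffices to prove $\norm{u}_{L^2} \le C\norm{\mathbbm{1}_E u}_{L^2}$, with $C$ depending only on $\eta, L, \sigma$ and uniform over all such $E$.

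\textbf{Step 3: the classical estimate.} The inequality for sets is Kovrijkine's quantitative Logvinenko--Sereda theorem \cite{Kov01}: if $E$ is $(\gamma, L)$-thick and $\supp\hat u$ lies in a cube (or parallelepiped) of side $\sigma$, then
\[ \norm{u}_{L^2} \le \left(\tfrac{C_d}{\gamma}\right)^{C_d(1+L\sigma)}\norm{u}_{L^2(E)}. \]
The constant depends only on $\gamma, L, \sigma$ and $d$, as required. Alternatively, one can rescale $x \mapsto Lx$ to reduce to $L = 1$ with frequency side $L\sigma$ and then quote \cite{Kov01}. That is the ``rescaling'' the heading alludes to.

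\textbf{Main obstacle.} The only substantive content is the classical theorem itself. If it had to be proved, the hard step is Kovrijkine's argument. One splits $\mathbb R^d$ into unit cubes and calls a cube \emph{good} if Bernstein's inequality controls all derivatives of $u$ there by $\norm{u}_{L^2}$ of that cube; the good cubes carry at least half of the mass. On each good cube, $u$ extends analytically to a complex polydisc with bounded growth. A Remez-type estimate for analytic functions (via Cartan/Jensen) then bounds $\sup_Q|u|$ by $\sup_{E\cap Q}|u|$, using that $|E \cap Q|$ is bounded below. Since this is standard and already cited, I would simply invoke it.
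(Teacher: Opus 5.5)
Your argument is correct and is essentially the paper's own approach: the paper gives no proof of Theorem~\ref{thm:pls} and simply cites the classical Paneah--Logvinenko--Sereda/Kovrijkine theorem. Your two reductions, the modulation to remove $\zeta$ and the passage from $a$ to the level set $\{a \ge \eta/2\}$ via \eqref{e:switch} (using $\mathbbm{1}_E \le \tfrac{2}{\eta} a$), are exactly the routine steps the paper leaves implicit. One small misreading, which does not affect your argument: the ``rescaling'' in the subsection heading refers to the anisotropic dilation $D_{s,t}$ in Corollary~\ref{cor:rect}, not to normalizing $L=1$.
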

    
    Recall anisotropic dilation $D_{s,t}$ as defined by \eqref{eq:dilationdef}. For a function $f$, we define 
        \begin{equation}
            f_{s,t}(z) = \left( s^{d-1}t\right)^{\frac 12} f(D_{s,t} z) \label{eq:dilation}
        \end{equation}
    and note it satisfies 
        \[ \norm{f_{s,t}}_{L^2} = \norm{f}_{L^2}, \quad \Fc( f_{s,t}) = (\hat f)_{\frac 1s,\frac 1t}.\]
    Let $\mathsf R^{L,\bot}_{s,t} = \{ D_{s,t}[z,z+L]^d : z \in \mathbb R^d\}$ denote all rectangles of the form
        \[ [\zeta_1,\zeta_1+sL] \times \cdots \times [\zeta_{d-1},\zeta_{d-1}+sL] \times [\zeta_{d},\zeta_d+tL]. \]
    These observations immediately reveal the following corollary.
    \begin{corollary}\label{cor:rect}
    Given $\eta,L,\sigma$, there exists $C>0$ such that for all $u \in L^2(\Rb^d)$ and $a:\Rb^d \ra [0,1]$ satisfying
        \begin{itemize}
            \item $\supp \hat u \subset \hat R$ for some $\hat R \in \mathsf R^{\sigma,\bot}_{\frac{1}{s},\frac{1}{t}}$,
            \item $\displaystyle \frac 1{|R|} \int_R a(x) \, dx \ge \eta$ for all $R$ in $\mathsf R^{L,\bot}_{s,t}$,
        \end{itemize}
        we have 
        \begin{equation}
            \ltwo{u} \leq C \ltwo{au}.
        \end{equation}
    \end{corollary}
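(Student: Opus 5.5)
The plan is to reduce Corollary \ref{cor:rect} to Theorem \ref{thm:pls} by the anisotropic rescaling $f \mapsto f_{s,t}$ of \eqref{eq:dilation}. Given $u$ and $a$ as in the hypotheses, set $v = u_{s,t}$ and $b = a \circ D_{s,t}$. We will check three things: that $\hat v$ is supported in a cube of side $\sigma$, that $b$ is $(L,\eta)$ relatively dense, and that both sides of the desired inequality are unchanged by the rescaling. Theorem \ref{thm:pls}, applied to $v$ and $b$, then gives $\ltwo{v} \le C\ltwo{bv}$ with $C=C(\eta,L,\sigma)$, independent of $s,t$.

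\emph{Fourier support.} We use $\Fc(u_{s,t}) = (\hat u)_{\frac1s,\frac1t}$. This gives $\supp \hat v = D_{\frac1s,\frac1t}^{-1}(\supp \hat u) = D_{s,t}(\supp\hat u)$. Now $\hat R = D_{\frac1s,\frac1t}[\zeta,\zeta+\sigma]^d$, so $D_{s,t}\hat R = [\zeta,\zeta+\sigma]^d$ is a cube of side $\sigma$.

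\emph{Relative density.} Any cube of side $L$ has the form $Q=[z,z+L]^d$. By the linear change of variables $x = D_{s,t}y$, whose Jacobian is $s^{d-1}t = |D_{s,t}Q|/|Q|$,
\[ \frac{1}{|Q|}\int_Q b(y)\,dy = \frac{1}{|D_{s,t}Q|}\int_{D_{s,t}Q} a(x)\,dx \ge \eta, \]
since $D_{s,t}Q \in \mathsf R^{L,\bot}_{s,t}$. Also $0 \le b \le 1$, so $b$ is $(L,\eta)$ relatively dense.

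\emph{Invariance of the norms.} We have $\ltwo{v}=\ltwo{u}$. Moreover $b v = (au)_{s,t}$ pointwise, because $(au)_{s,t}(z) = (s^{d-1}t)^{1/2} a(D_{s,t}z)u(D_{s,t}z)$. Hence $\ltwo{bv} = \ltwo{au}$, and the PLS inequality for $v$ transfers exactly to $\ltwo{u}\le C\ltwo{au}$.

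There is no real obstacle here. The only point needing care is keeping the dilation conventions straight: $D_{\frac1s,\frac1t}$ on the frequency side must be inverse to $D_{s,t}$ on the physical side, so that the frequency rectangle becomes a $\sigma$-cube exactly when the physical rectangles become $L$-cubes. The uniformity of $C$ in $s,t$ and $\zeta$ comes for free, because Theorem \ref{thm:pls} is uniform over all $\zeta$ and all $(L,\eta)$ relatively dense weights.
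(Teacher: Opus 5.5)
Your proposal is correct and matches the paper's proof: rescale by $D_{s,t}$ so that $a\circ D_{s,t}$ is $(L,\eta)$ relatively dense over cubes and $\widehat{u_{s,t}} = (\hat u)_{\frac1s,\frac1t}$ is supported in a $\sigma$-cube, then apply Theorem \ref{thm:pls} and use the $L^2$-invariance of $f\mapsto f_{s,t}$. You also check explicitly that $(a\circ D_{s,t})\,u_{s,t} = (au)_{s,t}$ and that $D_{s,t}\hat R$ is a cube, two steps the paper leaves implicit.
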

    \begin{proof}
        For any $z \in \Rb^d$, there is a rectangle $R \in \mathsf R^{L,\bot}_{s,t}$ such that $D_{s,t} [z,z+L]^d=R$. Then with $\tilde a = a(D_{s,t} \cdot)$, using a change of variables and applying our assumption on $a$ we have 
        \begin{equation}
            \frac{1}{L^d} \int_{[z,z+L]^d} \tilde a (x) dx = (s^{d-1}t)^{-1} \frac{1}{L^d} \int_R a(x) dx \geq \eta.
        \end{equation}
        That is $\tilde a$ is $(L,\eta)$ relatively dense.

        On the other hand since $\supp \hat{u} \subset \hat{R}$ for some $\hat{R} \in \mathsf R_{\frac{1}{s},\frac{1}{t}}^{\sigma, \bot}$, then $\supp (\hat{u})_{\frac{1}{s},\frac{1}{t}} \subset [\zeta, \zeta+\sigma]^d$ for some $\zeta \in \Rb^d$. Therefore $\tilde a$ and $u_{s,t}$ satisfy the hypotheses of Theorem \ref{thm:pls} and so there exists $C>0$ such that  
        \begin{equation}
            \ltwo{u}=\ltwo{u_{s,t}} \leq C \ltwo{\tilde a u_{s,t}} = C\ltwo{au},
        \end{equation}
        where the equalities follow since $D_{s,t}$ preserves the $L^2$ norm.
    \end{proof}
\subsection{Almost Orthogonality Lemma}
    To extend the uncertainty principle of Corollary \ref{cor:rect} from functions with Fourier support in a rectangle, to functions with Fourier support in the annulus $A_{\beta,\lambda}$, we'll use the following almost orthogonality lemma. Its proof follows \cite[Theorem 8]{GJM22} very closely, but we state it in a slightly more general form.
\begin{lemma}\label{l:almost}
Fix $0<\delta<1$ and a dimension $d$. There exists $C_{\d, d}>0$, such that for any  $E,\Sigma \subset \mathbb R^d$, $\Xi = \{\xi_j\}_{j=1}^N$ a collection of $\frac 12$ separated points, and $\mathsf S$ a cover of $\Sigma$ such that
\begin{itemize}
    \item[(a)] There exists $C_1>0$ such that $\ltwo{w} \le C_1 \ltwo{\mathbbm{1}_E w}$ whenever $\supp \hat w \subset S \cap \Sigma$ for some $S \in \mathsf S$;
    \item[(b)] there exists $T\ge 1$ such that for each $j$, there exists $S_j \in \mathsf S$ satisfying
        \[ \dist(\xi_j,\Sigma \backslash S_j) \ge T,\]
\end{itemize}
    then we have 
        \begin{equation}\label{e:almost} \norm{u}_{L^2} \le C_{\d,d}(C_1+1) \left( \ltwo{\mathbbm{1}_{E_\delta} u} + \frac{1}{T} \ltwo{u} \right)\end{equation}
    for all $u \in L^2$ satisfying $\supp \hat u \subset \Sigma \cap \Xi_1=\Sigma \cap \left(\bigcup_{j=1}^N B(\xi_j,1)\right)$.
\end{lemma}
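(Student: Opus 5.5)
The plan is to decompose $u$ in frequency into pieces adapted to the points $\xi_j$, apply hypothesis (a) to each piece, and then recombine. The error term $\frac1T\ltwo{u}$ will come from commuting frequency cutoffs with physical-space cutoffs, and $E_\delta$ (which I read as the $\delta$-neighbourhood of $E$) will absorb the spatial smearing this creates.

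\textbf{Step 1: frequency decomposition.} Since the $\xi_j$ are $\frac12$-separated, the balls $B(\xi_j,2)$ have overlap bounded by a dimensional constant. Fix $\rho\in C_c^\infty(B(0,1))$ with $\rho=1$ on $B(0,\frac12)$. Take a smooth partition of unity $\{\psi_j\}$ on $\Xi_1$ with $\supp\psi_j\subset B(\xi_j,1)$, and set $\hat u_j=\psi_j\hat u$. Then $\sum_j\ltwo{u_j}^2\simeq\ltwo{u}^2$ by bounded overlap and Plancherel. Each $\hat u_j$ is supported in $B(\xi_j,1)\cap\Sigma$. Because $T\ge1$, hypothesis (b) gives $B(\xi_j,1)\cap\Sigma\subset S_j$, so (a) yields $\ltwo{u_j}\le C_1\ltwo{\mathbbm 1_E u_j}$.

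\textbf{Step 2: spatial localization.} It remains to show
\[ \sum_j\ltwo{\mathbbm 1_E u_j}^2\lesssim_{\delta,d}\ltwo{\mathbbm 1_{E_\delta}u}^2+T^{-2}\ltwo{u}^2 .\]
Choose $\chi$ smooth with $\chi=1$ on $E$, $\supp\chi\subset E_\delta$ and $|\nabla\chi|\lesssim\delta^{-1}$. Then
\[ \mathbbm 1_E u_j=\mathbbm 1_E\,\chi\,\psi_j(D)u=\mathbbm 1_E\,\psi_j(D)(\chi u)+\mathbbm 1_E\,[\chi,\psi_j(D)]u. \]
The first term sums correctly: $\sum_j\ltwo{\psi_j(D)(\chi u)}^2\lesssim\ltwo{\chi u}^2\le\ltwo{\mathbbm 1_{E_\delta}u}^2$.

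\textbf{Step 3: the commutator.} For the commutator, the first thing I would try is to use the room provided by (b) by working with a second family of cutoffs. Let $\ti u_j$ be defined by the multiplier $\rho((\xi-\xi_j)/T)$. Its Fourier support is in $B(\xi_j,T)\cap\Sigma\subset S_j$, so (a) applies to $\ti u_j$ as well. Its kernel lives at spatial scale $T^{-1}$, so the commutator $[\chi,\rho((D-\xi_j)/T)]$ has norm $\lesssim (\delta T)^{-1}$ by the standard first-order symbol expansion. To combine the two families, I would apply (a) to $\psi_j(D)\ti u_j=u_j$, and move $\chi$ past the wide cutoff instead of the unit-scale one.

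\textbf{Main obstacle.} The delicate point is the square summation in Step 3. The wide cutoffs $\rho((\cdot-\xi_j)/T)$ overlap up to $T^d$ times, so summing $\ltwo{\mathbbm 1_E\ti u_j}^2$ naively loses a factor $T^d$. I would handle this by always keeping the narrow $\psi_j$ on the outside, so that the orthogonality comes from the $\psi_j$ while the wide cutoff is used only to make the commutator small. The error term then becomes
\[ \sum_j\ltwo{\psi_j(D)[\chi,\rho((D-\xi_j)/T)]u}^2 . \]
Controlling this by $(\delta T)^{-2}\ltwo{u}^2$ requires an almost-orthogonality (Cotlar--Stein type) bound for these commutators, exploiting that $[\chi,\rho((D-\xi_j)/T)]$ is essentially localized in frequency near $B(\xi_j,T)$. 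This is the step where I expect the real work to lie, following \cite[Theorem 8]{GJM22}. Everything combines to $\ltwo{u}\le C_{\delta,d}(C_1+1)\bigl(\ltwo{\mathbbm 1_{E_\delta}u}+T^{-1}\ltwo{u}\bigr)$.
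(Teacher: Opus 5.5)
The gap is the reduction in Step 2. After applying (a) to $u_j=\psi_j(D)u$, you must bound $\sum_j\ltwo{\mathbbm 1_E\psi_j(D)u}^2$, where the spatial cutoff sits \emph{outside} a multiplier of compact Fourier support at unit scale. The inequality you say ``remains to show'' is false, so no commutator or Cotlar--Stein estimate can close it.

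Here is a counterexample. Take $d=1$, $\xi_1=0$, $\xi_2=1$, $\Sigma=S_1=S_2=\mathbb R$ (so (b) holds for every $T\ge 1$), $E=(-1,1)$ and $\delta=\frac12$. Any partition as in Step 1 has $\psi_1=1$ on $(-\frac12,0)$ and $\psi_2=1$ on $(1,\frac32)$. Fix $u_2$ with spectrum in $(1,\frac32)$ and $\ltwo{\mathbbm 1_E u_2}=1$. Functions with spectrum in $(-\frac12,0)$ are dense in $L^2(E_\delta)$: if $g\in L^2(E_\delta)$ were orthogonal to all of them, the entire function $\hat g$ would vanish on an interval. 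So you can choose $u_1$ with spectrum in $(-\frac12,0)$ and $\norm{u_1+u_2}_{L^2(E_\delta)}\le\varepsilon$. For $u=u_1+u_2$ your left-hand side is at least $1$. Your right-hand side is $\lesssim_\delta \varepsilon^2+T^{-2}\ltwo{u}^2$, which tends to $O(\varepsilon^2)$ as $T\to\infty$. Hypothesis (a) fails for this $E$, so the lemma itself is not contradicted, but your Step 2 inequality never uses (a).

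Step 3 does not escape this. Applying (a) to $u_j=\psi_j(D)\ti u_j$ still leaves you with $\mathbbm 1_E\psi_j(D)\ti u_j$, and there is no way to keep the narrow multiplier ``outside'' the spatial cutoff. Your error term $\psi_j(D)[\chi,\rho((D-\xi_j)/T)]u$ is indeed small. However, it only arises after commuting $\chi$ past $\psi_j(D)$, which is exactly the $O(\delta^{-1})$ commutator you set out to avoid.

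The missing idea is to give up compact Fourier support in favour of compact \emph{spatial} support of the kernel, and to spend (b) on frequency leakage rather than on commutators. Take $\psi\in C_0^\infty(B(0,1))$ with $|\hat\psi|\gtrsim 1$ on $B(0,1)$. Set $\varphi=\delta^{-d}\psi(\delta^{-1}\cdot)$, $\hat\varphi_j=\hat\varphi(\cdot-\xi_j)$ and $w_j=u*\varphi_j$.
\begin{itemize}
\item Since $|\hat\varphi_j|\gtrsim_\delta 1$ on $B(\xi_j,1)$, you get $\ltwo{u}^2\lesssim_\delta\sum_j\ltwo{w_j}^2$.
\item Since $\supp\varphi_j\subset B(0,\delta)$, you have the exact identity $\mathbbm 1_E w_j=\mathbbm 1_E\bigl((\mathbbm 1_{E_\delta}u)*\varphi_j\bigr)$. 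Hence $\sum_j\ltwo{\mathbbm 1_E w_j}^2\lesssim_{\delta,d}\ltwo{\mathbbm 1_{E_\delta}u}^2$, by the Schwartz decay of $\hat\varphi$ and the $\frac12$-separation.
\item The price is that $\hat w_j$ is not supported in $S_j$. Split $\hat w_j=\mathbbm 1_{S_j}\hat w_j+\mathbbm 1_{\Sigma\setminus S_j}\hat w_j=:\hat w_j^{(1)}+\hat w_j^{(2)}$ and apply (a) to $w_j^{(1)}$.
\item On $\Sigma\setminus S_j$ one has $|\xi-\xi_j|\ge T$ by (b). Using $|\hat\varphi(\eta)|^2\lesssim_\delta(1+|\eta|)^{-(d+2)}$, a dyadic count of the $\xi_j$ gives $\sum_j\ltwo{w_j^{(2)}}^2\lesssim_{\delta,d}T^{-2}\ltwo{u}^2$. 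An exponent of $d+1$ would only give $T^{-1}$ here, hence $T^{-1/2}$ in the lemma.
\end{itemize}
Summing $\ltwo{w_j}\le C_1\ltwo{\mathbbm 1_E w_j}+(C_1+1)\ltwo{w_j^{(2)}}$ in $\ell^2$ then yields the lemma with no commutators at all. This is the paper's argument.
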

\begin{proof}
Let $\psi \in C_0^\infty( B(0,1) )$ with $\hat\psi \gtrsim 1$ on $B(0,1)$. Since $\delta$ is fixed, set $\varphi=\delta^{-d} \psi(\delta^{-1} \cdot)$. Then $\varphi$ is supported in $B(0,\delta)$ and, since $\delta <1$, $\hat \varphi \geq C_{\d}$ on $B(0,1/\d) \supset B(0,1)$.
Finally, define $\varphi_j$ as
    \[ \hat \varphi_j(\xi) = \hat \varphi( \xi - \xi_j).\]
Now setting $w_j = u * \varphi_j$, since $\supp \hat u \subset \Xi_1$ and $\hat \varphi_j \geq C_{\d}$ on $B(\xi_j,1)$, by Plancherel we have 
    \begin{equation} \label{eq:wjtriangle}
       \ltwo{u}^2 \leq C_{\d} \sum_{j=1}^N \ltwo{w_j}^2.
    \end{equation}
Further split $w_j = w_j^{(1)} + w_j^{(2)}$ where $w_j^{(1)}$ is the (rough) Fourier cut-off of $w_j$ to $S_j$
By assumption (a),
\begin{equation}\label{e:uncp-wj1} \ltwo{w_j^{(1)}} \leq C_1 \norm{\mathbbm{1}_E w_j^{(1)}}_{L^2}. 
\end{equation}
For $w_j^{(2)}$, by the smoothness of $\varphi_j$, $\hat \varphi_j$ decays in $\xi$, and we have
    \[ \norm{w_j^{(2)}}_{L^2}^2 \leq C_{\d} \int_{\xi \in \Sigma \backslash S_j } \frac{\abs{\hat u(\xi)}^2}{(1+|\xi-\xi_j|)^{d+1}} \, d\xi.\]
Using the triangle inequality twice along the lines of \eqref{e:triangle}, we combine the above two displays, to obtain
    \begin{align} 
        \ltwo{w_j} 
        &\leq C_1 \norm{\mathbbm{1}_E w_j}_{L^2} + (C_1+1)\left( C_{\d} \int_{\xi \in \Sigma \backslash S_j} \frac{\abs{\hat u(\xi)}^2}{(1+|\xi-\xi_j|)^{d+1}} \, d\xi \right)^{\frac 12}. \label{eq:wjintermed}
    \end{align}
We will sum the square of the above inequality over $j$ to achieve \eqref{e:almost}. To control the second term, we use (b). Indeed, for each $\xi \in \mathbb \supp \hat u$, if $\xi \in \Sigma \backslash S_j$, then $|\xi-\xi_j| \ge T$. Then using that the $\xi_j$ are $\frac{1}{2}$ separated to control the number of $\xi_j$ in the annuli $2^k T \leq |\xi_j-\xi| \leq 2^{k+1} T$ we have
    \begin{equation}\label{e:Rjc}\sum_{j=1}^N \frac{\mathbbm{1}_{\Sigma \backslash S_j}(\xi)}{(1+|\xi-\xi_j|)^{d+1}} \le \sum_{k=0}^\infty \sum_{\substack{j \\ 2^{k} T \le |\xi_j-\xi| \le 2^{k+1}T}} \frac{1}{(2^{k}T)^{d+1}} \le \sum_{k=0}^\infty \frac{(2^{k+1}T)^d}{(2^{k}T)^{d+1}} \leq \frac{2^{d+1}}{T}.\end{equation}
Thus 
\begin{equation}\label{eq:almostOrthogIntermed1}
    \sum_{j=1}^n \int_{\xi \in \Sigma \backslash S_j} \frac{|\hat{u}(\xi)|^2}{(1+|\xi-\xi_j|)^{d+1}} d \xi \leq \frac{2^{d+1}}{T} \|u\|_{L^2}^2.
\end{equation}
To estimate the $\mathbbm{1}_E w_j$ term, since $\varphi_j$ is supported in a ball of radius $\delta$, we have that $w_j=(u \mathbbm{1}_{E_\delta}) * \varphi_j$ on $E$. Therefore, repeating the computation in \eqref{e:Rjc} without the cutoff and with $T=1$
\begin{equation}
    \sum_{j=1}^N \frac{1}{(1+|\xi-\xi_j|)^{d+1}} \leq 2^d+ \sum_{k=0}^{\infty} \sum_{\substack{j \\ 2^{k}  \le |\xi_j-\xi| \le 2^{k+1}}} \frac{1}{2^{k(d+1)}}\leq 2^d + 2^{d+1} \leq  2^{d+2}.
\end{equation}
Therefore
    \begin{equation} \label{eq:almostOrthogIntermed2}
    \sum_{j=1}^N \norm{\mathbbm{1}_E w_j}_{L^2}^2 \le C_{\d} \sum_{j=1}^N \int_{\mathbb R^d} \frac{ \abs{\hat( u \mathbbm{1}_{E_\delta})(\xi)}^2 }{(1+|\xi-\xi_j|)^{d+1}} \, d\xi \leq C_{\d}2^{d+2} \norm{u \mathbbm{1}_{E_\delta}}_{L^2}^2.
    \end{equation}
    Applying \eqref{eq:almostOrthogIntermed1} and \eqref{eq:almostOrthogIntermed2} to \eqref{eq:wjtriangle} and \eqref{eq:wjintermed} we have 
    \begin{equation}
        \ltwo{u}^2 \leq C_{\d} \sum_{j=1}^n \|w_j\|_{L^2}^2 \leq C_{\d}(C_1+1)^2\left( 2^{d+2} \ltwo{u \mathbbm{1}_{E_\d}}^2 + \frac{2^{d+1}}{T} \ltwo{u}^2 \right).
    \end{equation}
    Taking square roots and defining $C_{\d,d}$ gives the desired inequality.
\end{proof}
Theorem \ref{thm:frac} will follow when Corollary \ref{cor:rect} and Lemma \ref{l:almost} are combined using some elementary geometry of the annulus $A_{\beta,\lambda}$.
\subsection{Proof of Sufficiency in Theorem \ref{thm:frac}}
    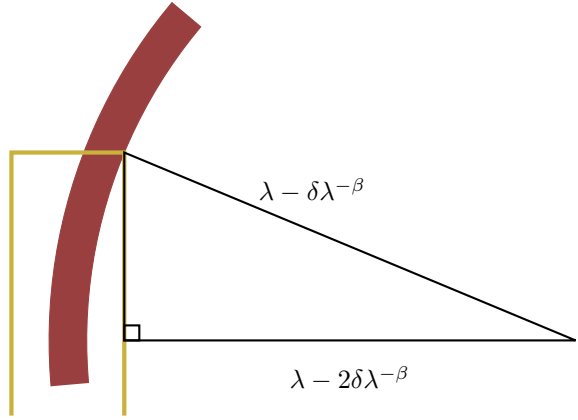
\begin{figure}[h]
        \centering
        \begin{tikzpicture}

            \draw [domain=140:185, name path=A, persred] plot ({7*cos(\x)}, {7*sin(\x)});
            \draw [domain=140:185, name path=B, persred] plot ({6.5*cos(\x)}, {6.5*sin(\x)});
            \tikzfillbetween[of=A and B]{persred};
            \draw[ultra thick, sand] (-7.5,-1) -- (-7.5,2.5) -- (-6,2.5) -- (-6,-1);
            \draw[thick] (-6,0) rectangle (-5.8,0.2);
            \node at (-3,-0.5) {$\lambda-2\d\lambda^{-\beta}$};
            \node at (-3.5,2) {$\lambda-\d \lambda^{-\beta}$};
            \draw[thick] (0,0) -- (-6,0) -- (-6,2.5) -- (0,0);
        \end{tikzpicture}
        \caption{A portion of the annulus $A_{\beta,\lambda}$ is pictured in red. The base of the triangle is $\lambda-2\d\lambda^{-\beta}$ and the hypotenuse is $\lambda-\d\lambda^{-\beta}$. Therefore the height of the yellow rectangle is $\sim \lambda^{\frac{1-\beta}{2}}$ (approaching $\infty$ for $\beta < 1$) and its width is $4\d \lambda^{-\beta}$. To verify (b), we note that if $\xi \in A_{\beta, \lambda}$ and outside the rectangle, and $\xi_j =(0,\ldots,0,-\lambda)$ is the center of the rectangle, then $|\xi-\xi_j| \gtrsim \lambda^{\frac{1-\beta}{2}}$. }
        \label{fig:annuluscover}
    \end{figure}
    Assume that $a_{\beta,L}>0$, and we will show there exists $C>0$ such that for all $u \in L^2$ with $\supp \hat{u} \subset A_{\beta,\lambda}$ for some $\lambda \geq 1$ we have 
    \begin{equation}
        \|u\|_{L^2} \leq C \|au\|_{L^2}.
    \end{equation}
    
    Consider the rectangles $\mathsf R^L_\beta$ defined at the start of Section \ref{s:fractional}. These are the rectangles in 
        \[ \mathsf R^{L,\bot}_{\lambda^\beta, \lambda^{\frac{\beta-1}{2}}}\]
    together with their rotations. So, with a view towards applying Corollary \ref{cor:rect}, define $\hat {\mathsf R}^{\sigma}_\beta$ to be the $\mathsf R^{\sigma,\bot}_{\lambda^{-\beta},\lambda^{\frac{1-\beta}{2}}}$ and their rotations; we will take $\hat{R}^{\sigma}_{\beta} = \mathsf{S}$ and $A_{\beta,\lambda}=\Sigma$.
Since $a_{\beta,L} \ge \eta$, by \eqref{e:switch} $(\mathbbm{1}_E)_{\beta,L} \ge \frac{\eta}{2}$ with $E = \{ a \ge \eta/2\}$. Thus by Corollary \ref{cor:rect}, Lemma \ref{l:almost} condition (a) is verified.

    To verify (b), we take $\{\xi_j\}$ to be a collection of $\frac 12$ separated points on the circle of radius $\lambda$. Figure \ref{fig:annuluscover} shows that the annulus $A_{\beta,\lambda}=\Sigma$ can be covered by rectangles $R_j$ with centers $\xi_j$ from $\hat{\mathsf{R}}^\sigma_{\beta}$ and that 
        \[ \dist(\xi_j,R_j^c \cap A_{\beta,\lambda}) \gtrsim \lambda^{\frac{1-\beta}{2}}.\]
Since $a$ is uniformly continuous, we can find $\delta>0$ such that $E_\delta \subset \{a \ge \frac{\eta}{4}\}$. With this $\delta$ by Lemma \ref{l:almost} for any $u \in L^2$ with $\supp \hat{u} \subset A_{\beta,\lambda}$ we have
\begin{equation}
    \ltwo{u} \leq C_{\d,d} (C_1+1) \left( \|\mathbbm{1}_{E_{\d}}u\|_{L^2} + \lambda^{\frac{\beta-1}{2}}\ltwo{u} \right),
\end{equation}
where neither $C_{\d,d}$ nor $C_1$ depends on $\lambda$. Then, since $\beta < 1$, for $\lambda_0$ large enough, for $\lambda \geq \lambda_0$ we can absorb the second term on the right hand side back into the left to obtain 
\begin{equation}
    \ltwo{u} \leq C \ltwo{au}.
\end{equation}

Now, for $\lambda$ small, we can appeal directly to the PLS Theorem (recall Theorem \ref{thm:pls} above). Indeed, for $1 \leq \lambda \le \lambda_0$, note that 
    \[ a_{\beta,L} \ge \eta\]
 implies that $a$ is relatively dense by restricting attention to rectangles in $\mathsf R^L_{\beta}$ with $\lambda=1$. Furthermore $A_{\beta, \lambda}$ can be placed inside a cube of side length $2(\lambda_0+\d)$, then by the standard PLS Theorem (Theorem \ref{thm:pls}) with $\sigma = 2(\lambda_0+\delta)$ there exists $C>0$ such that for all $u \in L^2$ with $\supp \hat{u} \subset A_{\beta,\lambda}$ we have 
 \begin{equation}
     \ltwo{u} \leq C \ltwo{au}.
 \end{equation}
Therefore we have the desired inequality for $1 \leq \lambda \leq \lambda_0$ and $\lambda \geq \lambda_0$.
 
\subsection{Proof of Necessity in Theorem \ref{thm:frac}}
Suppose there exists $C_1>0$ such that 
\begin{equation}
    \ltwo{u} \leq C_1 \ltwo{au} 
\end{equation}
for all $u \in L^2$ with $\supp \hat u \subset A_{\beta,\lambda}$ and we will show $a_{\beta,L} >0$. To do so we will use the fact that some rectangles from $\hat{\mathsf R}^{\sigma}_{\beta}$ fit inside of $A_{\beta, \lambda}$ for $\sigma$ small. In fact, let us take $\sigma = \min\{c_\beta,1\}\delta$ where $c_\beta$ is a small constant chosen according to Figure \ref{fig:annuluscontain}.
         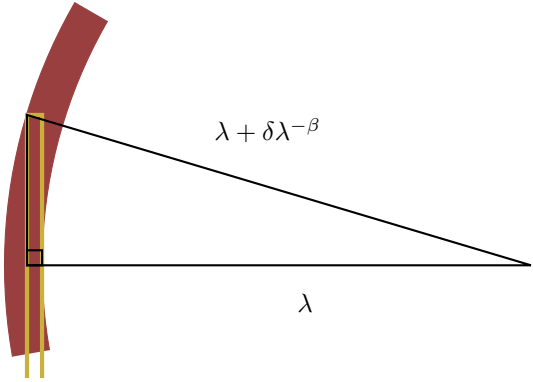
\begin{figure}
        \centering
        \begin{tikzpicture}
            \draw [domain=150:190, name path=A,persred] plot ({7*cos(\x)}, {7*sin(\x)});
            \draw [domain=150:190, name path=B, persred] plot ({6.5*cos(\x)}, {6.5*sin(\x)});
            \tikzfillbetween[of=A and B]{persred};
            \draw[ultra thick, sand] (-6.7,-1.5) -- (-6.7,2) -- (-6.5,2) -- (-6.5,-1.5);
            \draw[thick] (-6.7,0) rectangle (-6.5,0.2);
            \node at (-3,-0.5) {$\lambda$};
            \node at (-3.5,1.8) {$\lambda+\delta \lambda^{-\beta}$};
            \draw[thick] (0,0) -- (-6.7,0)--(-6.7,2) -- (0,0);
        \end{tikzpicture}
        \caption{The annulus $A_{\beta,\lambda}$ with an inscribed rectangle. The height of the rectangle is $c_\beta \delta \lambda^{\frac{1-\beta}{2}}$ and its width is $\d \lambda^{-\beta}$. The rectangle is centered at $(0,\ldots,0,-\lambda+\frac{\d}{2}\lambda^{-\beta})$}
        \label{fig:annuluscontain}
    \end{figure}

Now assume that $a_{\beta,L}=0$ and we will obtain a contradiction. Then for all $\ep>0$ there exists $R_{\ep} \in \mathsf{R}^L_{\beta}$ such that 
    \begin{equation}
        \frac{1}{|R_{\ep}|} \int_{R_{\ep}} a(x) dx < \ep.
    \end{equation}
    Fix a function $\phi \in C_0^{\infty}(B(0,\sigma))$ of unit $L^2$ norm, and let $\psi=\Fc^{-1}({\phi})$. We can find a large cube $Q$ of side length $\ell$ such that $\|\psi\|_{L^2(Q^c)} < \frac{1}{2C_1}$. Now fix an $R_{\ep}$ with $\ep$ satisfying 
    \begin{equation}
        \ep < \frac{1}{4C_1^2 \lp{\psi}{\infty}^2 |Q|}.
    \end{equation}
    Up to translating and rotating $a$ there exists $\lambda \geq 1$ such that $R_{\ep} \supset D_{s,t} Q$ with
    \begin{equation}
        s= \frac{L}{\ell} \lambda^{\frac{\beta-1}{2}}, \quad t=  \frac{L}{\ell}\lambda^{\beta}.
    \end{equation}
Therefore, applying a change of variables we obtain 
	\begin{equation}\label{e:Rloc} \int_{R_{\ep}^c} |\psi_{\frac 1s,\frac 1t}|^2 < \frac{1}{4C_1^2}.\end{equation}
Now $\phi_{s,t}$ is supported in the rectangle $D_{\frac 1s,\frac 1t} [-\sigma,\sigma]^d$. However, as pictured in Figure \ref{fig:annuluscontain}, the rectangle
    \[ \hat R_\ep \coloneq D_{\frac 1s,\frac 1t} [-\sigma,\sigma]^d - (0,\ldots,0,\mu) , \quad \mu = \lambda - \frac \delta 2 \lambda^{-\beta}\]
is contained in $A_{\lambda,\beta}$. Therefore, set
    \[ \tilde \psi(z) = e^{i\mu z} \psi_{\frac 1s , \frac 1t} (z),\]
so that $\mathcal F \tilde \psi(z) = \phi_{s,t}( z+ \mu)$ is supported on $\hat R_\ep \subset A_{\lambda,\beta}$, and $|\tilde \psi| = |\psi_{\frac 1s,\frac 1t}|$.
Then using the assumed uncertainty principle, the fact that $0 \le a \le 1$, and the decay of $\tilde \psi$ outside $R_\ep$ from \eqref{e:Rloc}, we have
	\[ \begin{aligned} 1= \norm{\tilde \psi}_{L^2} &\le C_1 \left( \ip{a \mathbbm 1_{R_\ep} \tilde \psi}{\tilde\psi}^{1/2} + \ip{a \mathbbm 1_{R_\ep^c} \tilde \psi}{\tilde\psi}^{1/2} \right)\\ 
	& \le C_1 \norm{\psi_{\frac{1}{s}.\frac{1}{t}}}_{L^\infty} \left( \int_{R_{\ep}} a \right)^{\frac 12} + \frac 12. \end{aligned}\]
Rearranging this is 
\begin{equation}
    \frac{1}{2C_1 \lp{\psi_{\frac{1}{s},\frac{1}{t}}}{\infty}} \leq \left( \int_{R_{\ep}} a \right)^{1/2}
\end{equation}
Recalling the definition of $\psi_{\frac 1s,\frac 1t}$, we have $\norm{\tilde \psi}_{L^\infty} = \norm{\psi_{\frac 1s,\frac 1t}}_\infty = \norm{\psi}_\infty (s^{d-1}t)^{-\frac 12}$. But
	\[ (s^{d-1}t)^{\frac 12}= \left( \frac{|R_{\ep}|}{|Q|} \right)^{\frac 12},\]
so we obtain a lower bound 
	\[ \frac{1}{|R_{\ep}|} \int_{R_{\ep}} a \ge \frac{1}{4C_1^2 \norm{\psi}_\infty^2 |Q|} > \ep\]
This is a contradiction, and so we must have $a_{\beta,L}>0$.

    \section{The endpoint case: Schr\"odinger observability}

In this section, we develop the propagation of singularities technique on $\mathbb R^d$ which allows us to push slightly beyond GCC, to what we term the \textit{comb GCC}. Rather than localizing the frequency to rectangles and covering the annulus with such rectangles, we localize to annular caps of width $\ep_0\lambda$, for some $\ep_0$ small. 

\subsection{Comb GCC}\label{s:combGCC2}

We first state our definitions and examples in dimension $d=2$ for clarity, but in Section \ref{s:higher} below, we will outline the extension to higher dimensions. And the proofs in Section \ref{s:comb-proof} will be in any dimension.
\begin{definition}\label{d:comb}
    Given a unit vector $\theta = (\cos \vartheta,\sin \vartheta) \subset \mathbb S^1$, we define $\theta a$ to be the counterclockwise rotation of $a$ by $\vartheta$. More precisely, we also use $\theta x$ to denote the rotation of a vector $x \in \mathbb R^2$ which sends the unit vector $(0,1)$ to $\theta$. Then, for functions $a: \mathbb R^2 \to [0,1]$, we use $\theta a$ to denote the function $a \circ \theta$. 
     Then we define
     \[ a_{\theta,M} (x) = \inf_{t \in \mathbb R} \frac{1}{M}\int_{t}^{t+M} \theta a( x ,y) \, dy, \quad x \in \mathbb R.\]
\end{definition}
    In words, this captures the lower density of $a$ at scale $M$ along the line passing through $x$ parallel to $\theta$.
Note, $a_{\theta,M}(x)$ is uniformly bounded below over all $x$, if and only if $a$ satisfies the GCC in direction $\theta$. Our comb GCC weakens this requirement to assume only that $a_{\theta,M}$ itself satisfies the GCC (or equivalently relative density) as a function on $\mathbb R$. 

\begin{definition}\label{def:combGCCdird=2}
    We say $a$ satisfies the $(M,L,\eta)$ comb GCC in direction $\theta$ if $a_M(\cdot,\theta)$ is $(L,\eta)$ relatively dense, i.e.
    \[ \inf_{x \in \mathbb R} \frac{1}{L} \int_{x}^{x+L} a_{\theta,M}(x) \, dx \ge \eta.\]
\end{definition} 
It is indeed the case that if $a$ satisfies the $(M,L,\eta)$ comb GCC in every direction, then the Schr\"odinger equation is long-time observable from $a$.  In practice however, this condition is much too strong, so we now introduce a quantitative relaxation which we call the comb GCC.

Our idea with be to use what we call an effective covering of the sphere. 

\begin{definition} Fix $L,\eta>0$. For each $\rho>0$ and $\lambda \ge 1$ we say $a$ has a $(\rho,\lambda)$ effective covering if there exists a set of directions $\Theta \subset \mathbb S^{1}$, and for each $\theta \in \Theta$ an arc width $\ep_{\theta}$ and length $M_{\theta}$,
such that 
\begin{itemize}
    \item $\displaystyle \mathbb S^1 \subset \bigcup_{\theta \in \Theta} \arc_\theta, \qquad  \text{Arc}_\theta := \left\{ \phi \in \mathbb S^{1} : \abs{ \phi - \theta } \le \ep_\theta\right\},$
    \item $a$ satisfies the $(M_\theta,L,\eta)$ comb GCC in direction $\theta$ with
    \begin{equation}\label{e:comb-eff}\ep_\theta M_\theta + (\ep_\theta \lambda)^{-1} < \rho.\end{equation}
\end{itemize}
\end{definition}
\begin{remark}
Note that the condition \eqref{e:comb-eff} actually encodes two separate conditions. First, we wish to guarantee that $M_{\theta}(\ep_{\theta} +\lambda^{-1})$ is sufficiently small, which is used to prove the high frequency resolvent estimate for functions frequency localized in $\text{Arc}_\theta$ in Proposition \ref{p:comb-uncp}. Second, we must impose $\ep_\theta \lambda$ is large to combine functions frequency localized to $\text{Arc}_\theta$, using the almost orthogonality Lemma \ref{l:almost}. There, $T \sim \ep_\theta \lambda$ must be large to absorb the error as in Theorem \ref{thm:frac}.

In checking the first condition, is it useful to note that
    \[ M_\theta(\ep_\theta + \lambda^{-1}) = \ep_\theta M_\theta + \frac{\ep_\theta M}{\ep_\theta \lambda} < \rho + \rho^2. \]
\end{remark}
\begin{remark}\label{r:rel-dense}
Note that if $a$ satisfies the comb GCC in a single direction, then by Fubini's theorem $a$ must be  relatively dense; see the proof of Lemma \ref{l:rd-comb} below.
\end{remark}
Finally, we are ready to define comb GCC.
\begin{definition}\label{def:combGCCd=2}
    We say $a$ satisfies the comb GCC if for each $\rho>0$ there exists $\lambda_0 \ge 1$ such that for each $\lambda \ge \lambda_0$, $a$ has a $(\rho,\lambda)$ effective covering.
\end{definition}

\begin{theorem}\label{thm:comb}
If $a$ is a non-negative uniformly continuous function satisfying the comb GCC, then the Schr\"odinger equation is observable by $a$.
\end{theorem}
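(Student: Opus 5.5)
The plan is to reduce observability to a high-frequency resolvent estimate for the Laplacian, i.e. an uncertainty principle on thin annuli. Then we assemble it from estimates for functions frequency localized to annular caps, following the scheme of Theorem \ref{thm:frac}. By Theorem \ref{thm:Miller} together with Lemma \ref{l:lowFreqResolve} (which handles bounded $\lambda$ via the PLS Theorem, since comb GCC implies relative density by Remark \ref{r:rel-dense}), it suffices to show the following. There exist $C,\lambda_0$ such that for $\lambda\ge\lambda_0$ and all $u$ with $\supp\hat u\subset A_{1,\lambda}=\{||\xi|-\lambda|\le\delta\lambda^{-1}\}$ we have $\ltwo{u}\le C\ltwo{au}$. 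By Lemma \ref{l:equiv} with $\gamma=2$, $\beta=1$, this is equivalent to the resolvent bound $\ltwo{u}\le C\ltwo{au}+C\ltwo{(-\Delta-\lambda^2)u}$.

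\textbf{Step 1: the directional estimate.} Fix $\rho>0$ small, to be chosen, and take $\lambda\ge\lambda_0(\rho)$ with a $(\rho,\lambda)$ effective covering $\{\theta,\ep_\theta,M_\theta\}$. For $u$ with $\hat u$ supported in $A_{1,\lambda}\cap\{\xi/|\xi|\in\arc_\theta\}$, rotate so that $\theta=(0,1)$. On this cap $\xi_2\approx\pm\lambda$ and $|\xi_1|\lesssim\ep_\theta\lambda$. Write $u(x,y)=e^{\pm i\lambda y}v(x,y)$. Then $v$ is, in $y$, almost invariant on scale $M_\theta$. Indeed, the $\xi_2$-spread of the cap is $\lesssim \ep_\theta^2\lambda+\lambda^{-1}$, so $|\partial_y v|$ is controlled by $(\ep_\theta^2\lambda+\lambda^{-1})|v|$. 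Equivalently, $-\Delta u=\lambda^2u$ gives a transport equation $\partial_y v\approx \frac{i}{2\lambda}\partial_x^2 v$, and $\partial_x$ costs $\ep_\theta\lambda$. Over length $M_\theta$ the change is therefore $\lesssim M_\theta(\ep_\theta+\lambda^{-1})\cdot\ep_\theta\lambda\cdot(\ep_\theta\lambda)^{-1}\lesssim\rho+\rho^2$ in relative $L^2$ norm. To be done carefully, this goes through a semiclassical propagation estimate: compare $u(\cdot,y)$ with $u(\cdot,y+s)$ for $|s|\le M_\theta$ via the one-dimensional Schr\"odinger-type propagator in $x$. This gives
\[\int |u(x,y)|^2\,dx\,dy\le \frac{C}{M}\int\!\!\int_t^{t+M} a(x,y')\,|u(x,y)|^2\,dy'\,dx\,dy+C(\rho+\rho^2)\ltwo{u}^2,\]
which after averaging in $y'$ yields $\ltwo{u}^2\lesssim\int a_{\theta,M}(x)\,\|u(x,\cdot)\|^2_{L^2_y}\,dx+\dots$. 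Now $x\mapsto\|u(x,\cdot)\|_{L^2_y}$ has one-dimensional frequency support in an interval of length $\lesssim\ep_\theta\lambda$. That is still large, so rather than applying PLS directly I would apply it to the $x$-Fourier pieces on unit intervals together with Lemma \ref{l:almost}. The cleaner route is to apply the one-dimensional PLS Theorem \ref{thm:pls} in $x$ to $u(\cdot,y)$ after further localizing $\xi_1$ to unit intervals: $a_{\theta,M}$ is $(L,\eta)$ relatively dense and $\delta$-uniformly continuous by the uniform continuity of $a$. Summing these pieces by orthogonality gives $\ltwo{u}\le C_1\ltwo{\mathbbm 1_Eu}+C\rho\ltwo{u}$ with $E=\{a\ge\eta/4\}$ and $C_1$ independent of $\theta,\lambda$.

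\textbf{Step 2: gluing and absorbing.} Apply Lemma \ref{l:almost} with $\Sigma=A_{1,\lambda}$, $\mathsf S$ the caps over $\arc_\theta$, and $\Xi$ a $\frac12$-separated net of the circle of radius $\lambda$. Each $\xi_j$ is assigned a cap whose center direction is within $\ep_\theta/2$, which is possible after doubling arcs. Then $\dist(\xi_j,\Sigma\setminus S_j)\gtrsim\ep_\theta\lambda>\rho^{-1}=T$. Hypothesis (a) holds up to the $C\rho$ error, which is carried along in the proof of Lemma \ref{l:almost}. The result is $\ltwo{u}\le C(\ltwo{\mathbbm 1_{E_\delta}u}+\rho\ltwo{u})$. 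Choosing $\rho$ small absorbs the error, and uniform continuity gives $E_\delta\subset\{a\ge\eta/8\}$, so $\ltwo{u}\le C\ltwo{au}$ for $\lambda\ge\lambda_0(\rho)$.

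The main obstacle is Step 1. The almost-invariance along $\theta$ must be made quantitative uniformly in $\theta$, with the constant depending only on $M_\theta(\ep_\theta+\lambda^{-1})$. The most delicate point is that the smearing error stays bounded while the cap width $\ep_\theta\lambda\to\infty$, so the exact condition \eqref{e:comb-eff} must be what drives the propagation bound. I would first attempt this with an explicit Fourier computation comparing $\hat u(\xi_1,\xi_2)$ with the parabola $\xi_2=\pm\sqrt{\lambda^2-\xi_1^2}$.
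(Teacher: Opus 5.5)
Your outer architecture is the paper's. You reduce via Theorem \ref{thm:Miller}, Lemma \ref{l:lowFreqResolve} and Lemma \ref{l:equiv} (with $\beta=1$, $\gamma=2$) to an uncertainty principle on the thin annulus, prove it on caps, and glue with Lemma \ref{l:almost}. The gap is Step 1, which carries the whole content of the theorem (the paper's Proposition \ref{p:comb-uncp}).

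\textbf{The almost-invariance claim is false.} On the cap, $\xi_2-\lambda$ ranges over an interval of length $\sim\ep_\theta^2\lambda$. So in the worst case $v(\cdot,\cdot+s)-v$ has relative $L^2$ size comparable to $\min(1,s\ep_\theta^2\lambda)$. At $s=M_\theta$ this is $(\ep_\theta M_\theta)(\ep_\theta\lambda)$: \eqref{e:comb-eff} makes the first factor $<\rho$ but the second $>\rho^{-1}$. In the GCC case ($\ep_\theta=\rho/2L$, $M_\theta=L$) the product is $\rho^2\lambda/4L\to\infty$. Your expression $M_\theta(\ep_\theta+\lambda^{-1})\cdot\ep_\theta\lambda\cdot(\ep_\theta\lambda)^{-1}$ drops the factor $\ep_\theta\lambda$ that the term $\frac{1}{2\lambda}\partial_x^2$ actually produces.

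What is true is weaker. Packets drift at most $M_\theta\ep_\theta$ in $x$, so only quadratic forms against a Lipschitz minorant $\tilde a\le a$ are transported. For example, $|\ip{\tilde a(\cdot,y+s)u(\cdot,y)}{u(\cdot,y)}-\ip{\tilde a(\cdot,y+s)u(\cdot,y+s)}{u(\cdot,y+s)}|\lesssim s(\ep_\theta\|\partial_x\tilde a\|_\infty+\lambda^{-1})\|u(\cdot,y)\|^2$. This comes from the commutator of $\tilde a$ with $\sqrt{\lambda^2+\partial_x^2}$ on the band $|\xi_1|\lesssim\ep_\theta\lambda$. Averaging over $s\in[0,M_\theta]$ gives $\ip{\tilde a_{\theta,M}u}{u}\le\ip{au}{u}+O(\rho)\ltwo{u}^2$. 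This is the propagation step, which the paper gets instead from the commutator with the bounded transfer function $A/M$ of Lemma \ref{l:extract}. It runs opposite to your display. Bounding $\ltwo{u}^2$ by the local average of $a$ is not what propagation gives. It also would not give a bound by $a_{\theta,M}$, since $a_{\theta,M}$ is the infimum of such averages.

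\textbf{The bound $\ltwo{u}^2\lesssim\ip{a_{\theta,M}u}{u}$ cannot be proved slice-by-slice in $y$.} For fixed $y$, $u(\cdot,y)$ can be any function with $\xi_1$-support in $[-\ep_\theta\lambda,\ep_\theta\lambda]$. Comb GCC allows $a_{\theta,M}$ to vanish on intervals, as in the periodic examples. A packet $\sqrt N\psi(N(x-x_0))$ with $\hat\psi\in C_0^\infty(-1,1)$, $N=\ep_\theta\lambda$, centered in such a gap, defeats any uniform constant.

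Concretely, ``summing by orthogonality'' over unit $\xi_1$-pieces discards the cross terms $\ip{a_{\theta,M}P_jw}{P_kw}$. Lemma \ref{l:almost} cannot recover them: it needs separation $T\gg1$ with a PLS constant uniform over intervals of length $T$, and one-dimensional PLS constants grow exponentially in $T$.

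The missing idea is to Fourier transform in the invariant variable $y$, which commutes with multiplication by $a_{\theta,M}$. For each dual variable $\zeta$, the slice $\mathcal F_y u(\cdot,\zeta)$ has $\xi_1$-support in two intervals of length $O(\sqrt\delta)$; this is where the width $\delta\lambda^{-1}$ of the annulus enters. So one-dimensional PLS applies uniformly, and integrating in $\zeta$ gives the bound. This is the partial Fourier argument in the paper's proof of Lemma \ref{l:almost-2}, fed into Proposition \ref{p:main-microlocal}.

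Finally, you only treat $d=2$. For $d\ge3$ the weight $a_{\theta,M}$ is merely comb GCC on $\Rb^{d-1}$, so this step becomes the $(d-1)$-dimensional resolvent estimate at all energies. That is why the paper proceeds by induction.
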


The main ingredient is the following uncertainty principle for functions supported in the Schr\"odinger annulus (henceforth we drop the subscript $\beta$ as $\beta$ will always be $1$ now)
	\[ A_\lambda \coloneq A_{\lambda,1} = \left\{ \xi \in \mathbb R^2 : \lambda - \delta \lambda^{-1} \le |\xi| \le \lambda + \delta \lambda^{-1} \right\},\]
	and a sector of width $\ep_0$, which we denote by
	\[ S_{\theta,\ep_0} = \left\{ \xi \in \mathbb R^2 : \left| \frac{\xi}{|\xi|} - \theta \right| < \ep_0 \right\}. \]
Its proof uses a microlocal propagation of singularities approach, and is proved in Section \ref{s:comb-proof} below.
\begin{proposition}\label{p:comb-uncp}
Let $L,\eta>0$ and $\omega$ be a modulus of continuity. There exists $C,\delta>0$ such that
\begin{itemize}
	\item for all $\lambda \ge 1$ and $M,\ep_0>0$ satisfying 
	\begin{equation}\label{e:Meph}  M(\ep_0 + \lambda^{-1}) < \frac{1}{C},\end{equation}
	\item all $u$ with $\supp \hat u$ contained in $A_{\lambda} \cap S_{\theta,\ep_0}$
	\item all $a$ which are $\omega$-continuous and satisfy the $(M,L,\eta)$ comb GCC in direction $\theta$,
\end{itemize}
there exists $X \subset \mathbb R^2$ such that
	\begin{equation}\label{e:comb-uncp} \norm{u}_{L^2(\mathbb R^2)} \le C\norm{u}_{L^2(X)}, \quad X_\delta \subset \{ x \in \mathbb R^2 : a(x) \ge \delta\}. \end{equation}
\end{proposition}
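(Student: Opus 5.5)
The plan is to reduce the estimate to a one-dimensional Schr\"odinger observability problem in the variable transverse to $\theta$, with observation weight $a_{\theta,M}$, and then to use the known $d=1$ result \cite{HWW22,MPS21}, which states that relative density implies observability. By rotation invariance I take $\theta=(0,1)$. Then $\supp\hat u\subset\{|\xi_1|\lesssim\ep_0\lambda,\ |\xi_2-Q(\xi_1)|\le 2\delta\lambda^{-1}\}$ with $Q(\xi_1)=\sqrt{\lambda^2-\xi_1^2}$. Set $u(x,y)=e^{i\lambda y}v(x,y)$. On $\supp\hat u$ we have $Q(\xi_1)-\lambda=-\xi_1^2/(2\lambda)+O(\ep_0^3\lambda)$, so along $y$ the profile $v(\cdot,y)$ evolves by a one-dimensional semiclassical Schr\"odinger flow:
\[ v(\cdot,y+s)=e^{is(Q(D_x)-\lambda)}v(\cdot,y)+r_s,\qquad \ltwo{r_s}\lesssim \frac{|s|\delta}{\lambda}\,\sup_y\norm{v(\cdot,y)}_{L^2}. \]
This holds because the annulus has width $\delta\lambda^{-1}$ in $\xi_2$. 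The $y$-variable therefore acts as a time, and $v$ is essentially $e^{i s(Q(D_x)-\lambda)}$ applied to a single profile, for $s$ ranging over an interval of length $\sim\lambda/\delta$. After rescaling $s=\lambda\tau$, this is the standard flow $e^{-i\tau D_x^2/2}$ for times $\tau\lesssim\delta^{-1}$. The point is that $\delta$ can be chosen small, which makes the effective one-dimensional observation time $\delta^{-1}$ as large as needed.

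The first step is a local (scale-$M$) comparison. Over a $y$-window $[t,t+M]$ the group velocity is $Q'(\xi_1)=O(\ep_0)$, so mass moves horizontally by at most $\ep_0M$. The dispersive part $\xi_1^2/(2\lambda)$ only contributes phase errors of order $M/\lambda$, since the relevant $x$-frequencies at scale one have $|\xi_1|\lesssim1$ after localization. Hypothesis \eqref{e:Meph} makes both $\ep_0M$ and $M/\lambda$ smaller than $1/C$. Combined with the $\omega$-continuity of $a$, this gives, up to errors that vanish as $C\to\infty$,
\[ \int_t^{t+M}\!\!\int a(x,y)|u(x,y)|^2\,dx\,dy \;\gtrsim\; M\int a_{\theta,M}(x)\,|v(x,t)|^2\,dx . \]
To make this rigorous I would first smooth $a_{\theta,M}$ at a fixed scale. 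This is possible because $a$, and hence $a_{\theta,M}$, is $\omega$-continuous, and it lets me pass to the set $X$: take $X=\{a\ge2\delta\}$, so that $X_\delta\subset\{a\ge\delta\}$ once $\delta$ is small relative to $\omega$. The intermediate target is therefore the displayed inequality with $a$ replaced by $\mathbbm1_X$ and $a_{\theta,M}$ replaced by its analogue for $\mathbbm 1_X$. The switch from $a$ to indicator functions is handled by \eqref{e:switch}.

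The second step sums over consecutive $M$-windows covering a $y$-interval of length $\sim\lambda/\delta$. This yields the one-dimensional quantity $\int_0^{T}\!\int a_{\theta,M}(x)\,|e^{-i\tau D_x^2/2}w(x)|^2\,dx\,d\tau$ with $T\sim\delta^{-1}$. Because $a_{\theta,M}$ is $(L,\eta)$-relatively dense, the one-dimensional observability theorem bounds this quantity below by $c\norm{w}^2_{L^2}$, and the constant is uniform since it depends only on $L$ and $\eta$. On the other side, Plancherel in $y$ together with the support width $\delta\lambda^{-1}$ in $\xi_2$ shows that $\norm{u}^2_{L^2(\mathbb R^2)}$ is comparable to an average over $y$-intervals of length $\lambda/\delta$ of $\norm{v(\cdot,y)}^2_{L^2}$. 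Summing over such intervals gives \eqref{e:comb-uncp}, provided the remainders $r_s$ (of relative size $\delta^{-1}\cdot\delta=O(1)$ in total, but with a small constant) are absorbed into the left-hand side.

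The main obstacle is choosing the order of the constants. The observation time $\delta^{-1}$ must be large enough that one-dimensional observability from $a_{\theta,M}$ holds with a fixed constant. At the same time, the accumulated annulus error over $y$-lengths $\lambda/\delta$ must remain small, and this error scales like $\delta\cdot(\text{length})/\lambda$. I would fix $\delta$ first, depending only on $L$, $\eta$ and $\omega$, using the fact that one-dimensional observability holds in arbitrarily small time \cite{SSY25}. Then I would take the $y$-length to be $\kappa\lambda/\delta$ with a small $\kappa$ that kills the remainder, and only at the end choose $C$ in \eqref{e:Meph} large to control the drift and continuity errors.
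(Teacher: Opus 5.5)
Your second step does not work as written. The evolution in $y$ is $e^{isQ(D_x)}$. After $s=\lambda\tau$ its generator is $\lambda(Q(\xi_1)-\lambda)=-\tfrac12\xi_1^2-\tfrac18\xi_1^4\lambda^{-2}-\cdots$, not $-\tfrac12\xi_1^2$. On the frequencies $|\xi_1|\sim\ep_0\lambda$ that $u$ may carry, the quartic term accumulates over $\tau\lesssim\delta^{-1}$ to phases of size $\ep_0^4\lambda^2/\delta$. Hypothesis \eqref{e:Meph} does not control this: fix $M,\ep_0$ and let $\lambda\to\infty$, or note that $\ep_\theta\sim\lambda^{-1/4}$ for the axis directions in the periodic covering. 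So $v$ is not a free Schr\"odinger solution on your time scale, and the one-dimensional theorems you cite do not apply to it.

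The order of constants is also reversed. With observation time $T=\kappa/\delta$, fixing $\delta$ and then shrinking $\kappa$ sends $T\to0$. There the one-dimensional observability constant per unit time degenerates (exponentially in $1/T$ once $a_{\theta,M}$ vanishes on an interval), so it cannot absorb a remainder of relative size $O(\kappa)$. You would have to choose $\kappa$ first, against the large-time constant, and then take $\delta\lesssim\kappa$.

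Your remainder bound in terms of $\sup_y\norm{v(\cdot,y)}_{L^2}$ is also global and cannot be summed over windows. The usable form is $\norm{\partial_y\tilde w}_{L^2(\Rb^2)}\le 2\delta\lambda^{-1}\ltwo{u}$ for $\tilde w(y)=e^{-iyQ(D_x)}u(\cdot,y)$.

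Your first step, by contrast, is sound in outline and uses a genuinely different mechanism from the paper's. The paper trades $a$ for an $x_d$-invariant $\overline a$ through the commutator $[b,-\Delta-\lambda^2]$ with $\partial_{x_d}b=(\overline a-a)/M$. Because $b$ must be bounded, it needs the minorant of Lemma \ref{l:extract}, whose transfer function does not drift. Your windows use $\tfrac1M\int_t^{t+M}a(x,y)\,dy\ge a_{\theta,M}(x)$ one window at a time, so nothing accumulates. The price is a semiclassical bound $\norm{[Q(D_x),\chi]}\lesssim \ep_0/r+(\lambda r^2)^{-1}$ for $\chi$ smooth at a scale $r$ fixed by $\omega$, after mollifying $a(\cdot,y)$ in $x$ at that scale (error $\omega(r)$).

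To finish, drop the time evolution entirely:
\begin{itemize}
\item Sum the first step over all windows covering $\Rb$. This gives $\int a|u|^2\ge \int \overline a(x)|u|^2-\ep_1\ltwo{u}^2$, with $\overline a$ a mollified $a_{\theta,M}$ and $\ep_1$ small once $r$ is small and then $C$ is large.
\item Apply Plancherel in $y$. For each fixed $\xi_2$, the slice $\{\xi_1:(\xi_1,\xi_2)\in A_\lambda\}$ is at most two intervals of length $O(\sqrt\delta)$.
\item The PLS theorem for a union of two intervals (Kovrijkine) then bounds $\ltwo{u}^2$ by $C\int\overline a|u|^2$, uniformly in $\lambda$.
\end{itemize}
This is exactly the paper's partial Fourier reduction (the $D>d$ step in Lemma \ref{l:almost-2}). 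With it, your windowed argument would prove the proposition in dimension two without Lemma \ref{l:extract}.

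One small fix remains. $X=\{a\ge 2\delta\}$ gives $X_\delta\subset\{a\ge\delta\}$ only if $\omega(\delta)\le\delta$. Instead, run the argument for $(a-\eta/2)_+$, take $X=\{a\ge\eta/2\}$, and choose $\delta$ with $\delta+\omega(\delta)\le\eta/2$.
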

Assuming this proposition we now give the proof of Theorem \ref{thm:comb} in dimension 2. The idea of the proof is to combine together the sectors from Proposition \ref{p:comb-uncp} using Lemma \ref{l:almost} to obtain an uncertainty principle like \eqref{e:uncp}. We can then apply Lemma \ref{l:equiv} to obtain a high frequency observability resolvent estimate. Then using the standard tools of Lemma \ref{l:lowFreqResolve} and Theorem \ref{thm:Miller} we convert the observability resolvent estimate into an observability estimate. 

\begin{proof}[Proof of Theorem \ref{thm:comb} in dimension 2]Let $C,\delta>0$ be the constants provided by Proposition \ref{p:comb-uncp} and, for this $\delta$, let $C_{\delta,2}$ be the constant provided by Lemma \ref{l:almost}. Let now $0<\rho<\frac{1}{6 C_{\d,2}(C+1)}$. Since $a$ satisfies the comb GCC, there exists $\lambda_0 \geq 1$ such that for all $\lambda \geq \lambda_0$, $a$ has a $(\rho, \lambda)$ effective covering $(\Theta, \{\ep_{\theta}, M_{\theta}\}$). 
     
For each $(\rho,\lambda)$ effective covering, we seek to apply Lemma \ref{l:almost}, with $\Sigma=A_{\lambda}$, $E=X$, and $\mathsf{S}=\{A_{\lambda} \cap S_{\theta, 2\ep_{\theta}}: \theta \in \Theta\}$. After potentially taking $\rho$ smaller and $\lambda_0$ larger, the conclusion of Proposition \ref{p:comb-uncp} verifies assumption a) of Lemma \ref{l:almost}. On the other hand, to verify the separation condition b), we specify $\Xi$ to be a collection of $1/2$ separated points with gaps no greater than $1$ on the circle $\lambda\mathbb S^1$. Due to the covering property of $\{\text{Arc}_\theta : \theta \in \Theta\}$, each $\xi \in \Xi$ belongs to some $S_{\theta,\ep_\theta}$ and
    \begin{equation}
        \text{dist}(\xi, A_{\lambda} \backslash (A_{\lambda} \cap S_{\theta,2\ep_{\theta}}) ) \geq \frac 1 3 \ep_{\theta} \lambda >\frac{1}{3\rho}.
    \end{equation}
Therefore by Lemma \ref{l:almost}, for all $u \in L^2$ with $\supp \hat{u} \subset A_{\lambda} \cap \Xi_1 = A_{\lambda}$ we have 
    \begin{equation}
        \|u\|_{L^2} \leq C_{\d,2}(C+1) \left(  \|\mathbbm{1}_{X_{\d}} u\|_{L^2} + 3\rho \|u\|_{L^2} \right). 
    \end{equation}
    By our assumption that $\rho<\frac{1}{6C_{\d,2}(C_1+1)}$ we may absorb the error term from the right hand side back into the left, and thereby see that for all $\lambda \geq \lambda_0$ and all $u$ with $\supp \hat{u} \subset A_{\lambda}$
    \begin{equation}
        \ltwo{u} \leq \frac{C_{\delta,2}(C+1)}{2} \ltwo{au}.
    \end{equation}
    Now we apply Lemma \ref{l:equiv} with $\beta=1$ and $\gamma=2$ to obtain the following high frequency resolvent estimate: there exists $C>0$ such that for all $u \in H^2$, and $\lambda \geq \lambda_0^2$ we have 
    \begin{equation}
        \|u\|_{L^2}^2 \leq C \ltwo{au}^2 + C\ltwo{(-\Delta-\lambda) u}^2.
    \end{equation}
    Then by Lemma \ref{l:lowFreqResolve}, since $a$ is relatively dense (Remark \ref{r:rel-dense}), there exists $C>0$ such that for all $\lambda \in \Rb$ and $u \in H^2$ we have 
    \begin{equation}
    \|u\|_{L^2}^2 \leq C \ltwo{au}^2 + C\ltwo{(-\Delta-\lambda) u}^2.
    \end{equation}
    Finally, by Theorem \ref{thm:Miller} the Schr\"odinger equation is observable from $a$.
\end{proof}

\subsection{Product-Type Model}\label{s:product}
Our first application of the comb GCC condition will be concerning product-type observation functions.
\begin{proposition}\label{p:product}
If $f_j:\mathbb R \to [0,1]$, $j=1,2$ are uniformly continuous, $(M,\delta_j)$ relatively dense functions with 
	\[ \delta_1 + \delta_2 > 1,\]
then $a(x,y)=f_1(x) f_2(y)$ satisfies the comb GCC.
\end{proposition}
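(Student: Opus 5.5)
The plan is to build, for every $\rho>0$ and every sufficiently large $\lambda$, an explicit $(\rho,\lambda)$ effective covering. It uses two kinds of arcs: fixed-width arcs around the two coordinate axes, where principle (A1) applies, and arcs of shrinking width around all other directions, where principle (A2) applies. The constants $L,\eta$ are fixed once and for all, with $L=M$ and $\eta$ depending only on $\delta_1,\delta_2$.

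\emph{Step 1 (axis directions).} Take $\theta=(0,1)$. Then
\[
a_{\theta,M}(x)=f_1(x)\,\inf_t \frac1M\int_t^{t+M} f_2(y)\,dy \ \ge\ \delta_2 f_1(x),
\]
and $\delta_2 f_1$ is $(M,\delta_1\delta_2)$ relatively dense. Hence $a$ satisfies the $(M,M,\delta_1\delta_2)$ comb GCC in the vertical direction, and symmetrically in the horizontal direction. These two directions get arcs of width $\ep_{\rm ax}=\rho/(4M)$. Condition \eqref{e:comb-eff} then holds once $\lambda\ge 8M/\rho^2$.

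\emph{Step 2 (oblique directions have the GCC).} Apply \eqref{e:switch} with a small parameter $\kappa>0$, chosen so that $\delta_1+\delta_2-2\kappa>1$. This produces sets $E=\{f_1\ge\kappa\}$ and $F=\{f_2\ge\kappa\}$ of density at least $\delta_1-\kappa$ and $\delta_2-\kappa$ on every interval of length $M$. The same lower density, up to a harmless loss, then holds on every interval of length at least $M$, by the covering remark in Section~\ref{s:fractional}.

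Consider a line $y=mx+b$ with $0<|m|<\infty$, and an $x$-interval $I$ with $|I|\ge M\max(1,|m|^{-1})$. The set $\{x\in I: mx+b\in F\}$ has relative measure at least $\delta_2-\kappa$, since $mI$ has length at least $M$. Inclusion--exclusion then gives
\[
\frac{|\{x\in I: x\in E,\ mx+b\in F\}|}{|I|}\ \ge\ \delta_1+\delta_2-1-2\kappa=:2\eta_0>0 .
\]
On that set $a\ge\kappa^2$. Converting from $x$-length to arclength, this says $a$ satisfies the GCC in direction $\theta$ with constant $\kappa^2\eta_0$ and length
\[
L_\theta\sim \frac{M}{\min(|\sin\varphi|,|\cos\varphi|)},
\]
where $\varphi$ is the angle between $\theta$ and the vertical axis. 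The GCC in direction $\theta$ with length $L_\theta$ gives $a_{\theta,L_\theta}\ge\kappa^2\eta_0$ pointwise. So $a$ satisfies the $(L_\theta,M,\kappa^2\eta_0)$ comb GCC in direction $\theta$, as in Lemma~\ref{l:rd-comb}. Set $\eta=\min(\delta_1\delta_2,\kappa^2\eta_0)$.

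\emph{Step 3 (assembling the covering).} Let $\theta$ be a direction at angular distance $\psi\ge\ep_{\rm ax}/2$ from both axes. Assign it $M_\theta=L_\theta\lesssim M/\psi$ and $\ep_\theta=c\rho\psi/M$, with $c$ small so that $\ep_\theta\le\psi/2$. Then $\ep_\theta M_\theta\lesssim c\rho<\rho/2$. Moreover $\ep_\theta\lambda\ge c\rho\,\ep_{\rm ax}\lambda/(2M)$, which exceeds $2/\rho$ once $\lambda\ge\lambda_0(\rho,M)$. The arcs $\arc_\theta$ of these oblique directions, together with the two axis arcs, cover $\mathbb S^1$. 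By compactness finitely many suffice, although finiteness is not even required by the definition.

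The main obstacle is the bookkeeping in Step 3. The shrinking arc width $\ep_\theta\propto\psi$ must exactly compensate the blow-up $L_\theta\propto\psi^{-1}$; this is the controlled failure (A2). At the same time the smallest arcs must stay of size $\gtrsim\rho^2/M$, so that $\ep_\theta\lambda\to\infty$ uniformly in $\theta$. This works precisely because the axis arcs from Step 1 have width independent of $\lambda$, which keeps the oblique arcs away from $\psi=0$.
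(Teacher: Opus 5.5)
Your architecture matches the paper's. You handle the axis directions by \eqref{e:prod-comb} and the oblique directions by level sets from \eqref{e:switch} plus an inclusion--exclusion GCC argument (the paper's Lemma \ref{l:productGCC}). You then assemble a covering with axis arcs of width $\rho/(4M)$. However, Step 2 rests on a claim that is false in the form you need.

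\textbf{The gap.} The loss in passing from windows of length exactly $M$ to intervals of length $\ell\ge M$ is not harmless. The covering remark in Section \ref{s:fractional} only preserves positivity of the density, not its value. For instance, take $E\cap[0,(2-\delta)M]=[(1-\delta)M,M]$ and let $E$ contain everything outside $[0,(2-\delta)M]$. This $E$ is $(M,\delta)$ relatively dense, yet its density on $[0,(2-\delta)M]$ is only $\delta/(2-\delta)$. For $\delta_1=\delta_2=0.6$ this is about $0.43$ for each set. The two densities then sum to less than $1$, and inclusion--exclusion yields nothing. Your condition $|I|\ge M\max(1,|m|^{-1})$ admits exactly such intervals, e.g.\ $m=1$ and $|I|=1.4M$. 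So the displayed bound $\ge\delta_1+\delta_2-1-2\kappa$ is not justified for all admissible $I$.

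\textbf{The fix.} This is what the paper does: use only segments whose $x$- and $y$-projections both have length at least $KM$, with $K$ large in terms of the margin $2\eta_0=\delta_1+\delta_2-1-2\kappa$. Then
$|E\cap I|\ge(\delta_1-\kappa)M\lfloor |I|/M\rfloor\ge(\delta_1-\kappa)(1-K^{-1})|I|$, and likewise for $F$ on $mI+b$. Inclusion--exclusion gives relative measure at least
$2\eta_0-(1+2\eta_0)K^{-1}\ge\eta_0$ once $K\ge(1+2\eta_0)/\eta_0$. This is exactly why Lemma \ref{l:productGCC} introduces $\alpha=\frac{\delta_1+\delta_2+1}{2(\delta_1+\delta_2)}$ and takes $L\ge 1/((1-\alpha)\ep)$. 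Your $L_\theta$ picks up a factor $K$ but stays proportional to $\psi^{-1}$. Step 3 then goes through after shrinking $c$.

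\textbf{Two smaller points.}
\begin{enumerate}
\item Your covering uses arcs around $(0,1)$ and $(1,0)$ only, so neighbourhoods of $(0,-1)$ and $(-1,0)$ are not covered. Add these two directions, as the paper's $\Theta_1$ does. This costs nothing, since $a_{-\theta,M}(x)=a_{\theta,M}(-x)$.
\item The direction-dependent widths $\ep_\theta\propto\psi$ are correct but unnecessary. Because the axis arcs have $\lambda$-independent width, the remaining directions form a compact set on which the GCC holds with a single length $L\sim M/\ep_{\mathrm{ax}}$. The paper simply takes the uniform width $\ep_2=\rho/(4L)$ there, which leads to $\lambda_0$ of the same order as yours.
\end{enumerate}
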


Such an $a$ immediately satisfies the $(M,M,\delta_1\delta_2)$ comb GCC in the horizontal and vertical directions. To see this in the vertical direction, $\theta=(0,1)$, note that because $g$ is $(\delta_2,M)$ relatively dense,
	\begin{equation}\label{e:prod-comb} a_{\theta,M}(x)\ge f_1(x) \delta_2.\end{equation}
Therefore $a_M$ is $(M,\delta_1 \delta_2)$ relatively dense. 

To handle the other directions, we first point out that for directions $\ep$ away from the horizontal or vertical directions, the GCC is satisfied for some uniform time that depends on $\d_1+\d_2-1$ and $\ep>0$.

\begin{lemma}\label{l:productGCC}
    Suppose $E$ is $(\d_1,1)$ relatively dense and $F$ is $(\d_2,1)$ relatively dense with $\d_1+\d_2>1$ and $0 < \ep <\frac 14$. Then, there exists $L,\eta >0$ such that for all $\theta=(s,t) \in \mathbb S^1$ with $\min\{|s|,|t|\} > \ep$, $\mathbf 1_{E \times F}$ satisfies the $(L,\eta)$ GCC in direction $\theta$.

\end{lemma}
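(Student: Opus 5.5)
The plan is to reduce the line integral to two one-dimensional density statements and combine them by inclusion–exclusion; the hypothesis $\d_1+\d_2>1$ is exactly what makes the two densities overlap. Fix $\theta=(s,t)$ with $\min\{|s|,|t|\}>\ep$, and a segment $\ell$ in direction $\theta$ of length $|\ell|\ge L$. Parametrize it by arc length, $\ell=\{(x_0+sr,\,y_0+tr): r\in[0,|\ell|]\}$; this is legitimate since $s^2+t^2=1$. Then
\[ \int_\ell \mathbbm 1_{E\times F}\,ds = \abs{G_1\cap G_2}, \quad G_1=\{r\in[0,|\ell|]: x_0+sr\in E\},\quad G_2=\{r\in[0,|\ell|]: y_0+tr\in F\}. \]

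First I bound $\abs{G_1}$ from below. The change of variables $x=x_0+sr$ gives $\abs{G_1}=|s|^{-1}\abs{E\cap I}$, where $I$ is an interval of length $|s||\ell|$. This interval contains $\lfloor |s||\ell|\rfloor$ disjoint unit intervals. By $(\d_1,1)$ relative density, each of them meets $E$ in measure at least $\d_1$. Hence
\[ \abs{G_1}\ge |s|^{-1}\d_1\bigl(|s||\ell|-1\bigr)\ge \d_1\Bigl(|\ell|-\frac1\ep\Bigr). \]
The same argument in the $y$-coordinate, using $|t|>\ep$, gives $\abs{G_2}\ge \d_2(|\ell|-\ep^{-1})$.

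Next, since $G_1,G_2\subset[0,|\ell|]$, inclusion–exclusion gives
\[ \abs{G_1\cap G_2}\ge \abs{G_1}+\abs{G_2}-|\ell| \ge (\d_1+\d_2-1)|\ell| - \frac{\d_1+\d_2}{\ep}. \]
Choose
\[ L=\frac{2(\d_1+\d_2)}{\ep(\d_1+\d_2-1)}, \qquad \eta=\frac{\d_1+\d_2-1}{2}>0. \]
For every $|\ell|\ge L$ the error term is at most $\eta|\ell|$, so $\frac1{|\ell|}\int_\ell\mathbbm 1_{E\times F}\ge\eta$. The constants $L$ and $\eta$ depend only on $\d_1,\d_2,\ep$ and not on $\theta$, which is the required uniform GCC in all directions with $\min\{|s|,|t|\}>\ep$.

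There is no real obstacle here. The only step needing care is the floor/counting estimate for $\abs{E\cap I}$: the loss of one unit interval must be converted into an additive error of size $\ep^{-1}$ in the $r$-variable. This conversion is why $L$ must blow up like $\ep^{-1}$ as $\theta$ approaches the horizontal or vertical directions, and it is the behaviour principle (A2) is meant to accommodate.
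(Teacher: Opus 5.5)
Your proof is correct and is essentially the paper's argument. Both lower-bound the two one-dimensional densities by counting unit intervals, combine them by inclusion--exclusion, and absorb the floor losses by taking $L \gtrsim \varepsilon^{-1}$; the paper arrives at exactly your constants $\eta = \frac{\delta_1+\delta_2-1}{2}$ and $L = \frac{2(\delta_1+\delta_2)}{\varepsilon(\delta_1+\delta_2-1)}$. The only cosmetic difference is that the paper does the inclusion--exclusion after projecting onto the $y$-axis (rescaling by the slope $t/s$ and reducing to $s,t>0$ by symmetry), whereas your arc-length variable handles all sign cases and all segment lengths $|\ell|\ge L$ at once.
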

\begin{proof}
We will only prove the case $s,t>0$, as the other cases follow by symmetry.
    Consider a line $\ell$ of length $L$ parallel to $\theta$ starting from the point $z_0=(x_0,y_0)$.
    Let $m=\frac{t}{s}$ be the slope of the line and $X_0=Ls$ be the $x$ displacement along its length.
    Then since $E$ is $(\d_1,1)$ relatively dense 
    \begin{equation}
        \abs{ (x_0,x_0+X_0) \cap E} \geq \d_1 \lfloor X_0 \rfloor .
    \end{equation}
    Now consider the subset of $\ell$ which project in the $x$ coordinate to $(x_0,x_0+X_0)\cap E$, that is let 
    \begin{equation}
        \ell(E)=\{(x,y) \in \ell: x \in E\}.
    \end{equation}
    Now if we project this set to $y$, 
    \begin{equation}
        \pi_y \ell(E)=\{y \in (y_0, y_0+mX_0): \exists x \in (x_0,x_0+X_0) \cap E, (x,y) \in \ell\}
    \end{equation}
    we have 
    \begin{equation}\label{eq:deltaGCCEreldense}
        \abs{ \pi_y \ell(E)} \geq \d_1 \lfloor X_0 \rfloor m.
    \end{equation}
    On the other hand by the $(\d_2,1)$ relative density of $F$ we have
    \begin{equation}\label{eq:deltaGCCFreldense}
        \abs{(y_0,y_0+mX_0) \cap F} \geq \d_2 \lfloor mX_0 \rfloor.
    \end{equation}
    Now we claim that 
    \begin{equation}
        \abs{\pi_y \ell(E)} + \abs{(y_0, y_0+mX_0) \cap F} \geq \abs{(y_0, y_0+ mX_0)} + \frac{\d_1+\d_2-1}{2} mX_0, 
    \end{equation}
    or equivalently by \eqref{eq:deltaGCCEreldense} and \eqref{eq:deltaGCCFreldense} it would be enough to show
    \begin{equation}\label{eq:deltaGCCclaim}
        \d_1 \lfloor X_0\rfloor m + \d_2 \lfloor m X_0\rfloor \geq mX_0 + \frac{\d_1+\d_2-1}{2} m X_0.
    \end{equation}
    If this claim is true, then by the inclusion-exclusion principle 
    \begin{equation}
        \abs{F \cap \pi_y \ell(E)} \geq \frac{\d_1+\d_2-1}{2} mX_0.
    \end{equation}
    The subset of $\ell$ which projects to this set in $y$ is exactly $\ell \cap (E\times F)$ so
    	\[ \abs{\ell \cap (E \times F)} = \abs{ \pi_y^{-1} (F \cap \pi_y \ell(E))} = \frac{L}{mX_0} \abs{ F \cap \pi_y \ell(E)} \ge 
    \frac{\d_1+\d_2-1}{2} L.\] 
    Therefore the $(L,\eta)$ GCC is established with $\eta = \frac{\delta_1+\delta_2-1}{2}$. It remains to choose $L$ large enough that \eqref{eq:deltaGCCclaim} holds. To this end, we divide by $mX_0$ and write $X_0=Ls$ and $mX_0=Lt$, to reduce to choosing $L$ such that
    	\[ \delta_1 \left( \frac{Ls-1}{Ls} \right) + \delta_2 \left( \frac{Lt-1}{Lt} \right) \ge \frac{\delta_1+\delta_2+1}{2}.\]
Indeed, letting $\alpha = \frac{\delta_1+\delta_2+1}{2(\delta_1+\delta_2)}$, the condition $\delta_1+\delta_2>1$ forces $\alpha<1$. Therefore, if $L \ge \frac{1}{(1-\alpha)\ep}$ then $Ls$ and $Lt$ are both $\ge \frac{1}{1-\alpha}$ and hence
	\[ \delta_1 \left( \frac{Ls-1}{Ls} \right) + \delta_2 \left( \frac{Lt-1}{Lt} \right) \ge \left( \delta_1+\delta_2\right )\alpha = \frac{\delta_1+\delta_2+1}{2}.\]
And the claim \eqref{eq:deltaGCCclaim} is proved, as desired.
\end{proof}
We now show that $a$ satisfies the comb GCC
\begin{proof}
    [Proof of Proposition \ref{p:product}]
    For any $\rho>0$, we will produce $\lambda_0 \geq 1$ such that for all $\lambda \geq \lambda_0$, $a$ has a $(\rho,\lambda)$ effective covering. 

    Let $\ep_1=\frac{\rho}{4M}$, then using \eqref{e:switch} to reduce $f,g$ to appropriate level sets, let $(L,\eta)$ be the GCC constants from Lemma \ref{l:productGCC} for directions $\theta$ at least $\frac{\ep_1}{2}$ away from horizontal or vertical directions. Let $\ep_2=\frac{\rho}{4L}$ and $\lambda_0=\frac{8}{\rho^2}\max(L,M)$. Then for all $\lambda \geq \lambda_0$ define 
    \begin{align}
       &\Theta = \Theta_1 \cup \Theta_2, \\
       & \Theta_1 = \left\{ (0,1), (1,0), (0,-1), (-1,0) \right\} \\
       &\Theta_2 = \left\{ (x,y) \in \mathbb S^1 : \min\{|x|,|y|\} \ge \ep_1/2 \right\}.
    \end{align}
    For $\theta \in \Theta_1$ we take $M_{\theta}=M$, $\ep_{\theta}=\ep_1$, noting that $a$ satisfies the $(M_{\theta}, M, \d_1\d_2)$ comb GCC in these directions as discussed in \eqref{e:prod-comb} above. For the remaining $\theta \in \Theta_2$ we take $M_{\theta}=L$, and $\ep_{\theta}=\ep_2$, noting that by Lemma \ref{l:productGCC} $a$ satisfies the $(M_{\theta},M,\eta)$ comb GCC in these directions. It is immediate that $\mathbb{S}^1 \subset \bigcup_{\theta \in \Theta} \text{Arc}_{\theta}$ and for all $\theta \in \Theta$
    \begin{equation}
        \ep_{\theta}  M_{\theta} + (\ep_{\theta} \lambda)^{-1} \leq \frac{\rho}{4}+ \frac{\rho}{2} \leq \rho.
    \end{equation}
    So this $\Theta$ is the desired $(\rho,\lambda)$ effective covering and $a$ satisfies the comb GCC.
\end{proof}

\subsection{Periodic Case}\label{s:periodic}
For periodic functions if we consider directions with rational slopes, either the GCC or the comb GCC is satisfied. 

\begin{proposition}\label{p:period}
    Let $a: \Rb^2 \ra [0,1]$ be continuous, non-trivial, and periodic. Then $a$ satisfies the comb GCC.
    
    In particular, there exists $\eta,T_0 > 0$ such that 
    the following two statements hold for all $\theta=(P,Q)/T_\theta$ where
        \[ P, Q \in \mathbb Z, \quad \gcd(P,Q)=1, \quad T = \sqrt{P^2+Q^2}.\]
    \begin{enumerate}
        \item If $T \geq T_0$, then $a$ satisfies the $(T+2,\eta)$ GCC in direction $\theta$. 
        \item If $T < T_0$ then $a$ satisfies the $(2T+4,1/T,\eta)$ comb GCC in direction $\theta$.
    \end{enumerate}
 \end{proposition}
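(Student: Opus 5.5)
Take the period lattice to be $\mathbb Z^2$ after rescaling. Since $a$ is continuous and nontrivial, it is positive on a region; by \eqref{e:switch} and continuity we may replace $a$ by the indicator of a small open disc $B=B(z_0,r)$ (mod $\mathbb Z^2$) on which $a\ge c>0$. Every estimate then reduces to counting how often a line meets the periodic set $B+\mathbb Z^2$. Fix $\theta=(P,Q)/T$ with $\gcd(P,Q)=1$. A line $\{x_0+s\theta\}$ is periodic modulo $\mathbb Z^2$ with period $T$: over one period it traces a closed geodesic on $\mathbb T^2$. Moreover, the family of parallel lines projects onto the one-dimensional transversal $\mathbb R/\frac1T\mathbb Z$. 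Indeed, perpendicular translates of a line that differ by an element of $\frac1T\mathbb Z$ in the normal coordinate, i.e. by $\theta^\perp\cdot\mathbb Z^2=\frac1T\mathbb Z$, give the same torus geodesic. Hence consecutive distinct geodesics in direction $\theta$ are spaced $1/T$ apart in the normal direction.

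For (1), fix $T\ge T_0$. The geodesic is $1/T$-dense in $\mathbb T^2$, because the $T$ parallel segments of the closed geodesic across a fundamental domain are equally spaced at distance $1/T$. If $1/T<r/2$, every such geodesic passes within $r/2$ of $z_0$, so each period meets $B$ in a chord of length at least $r$. A segment of length $T+2$ contains a full period of length $T$ (up to rounding), so its average of $a$ is at least $cr/(T+2)$. This fails to be uniform in $T$, so the count must be sharpened: the geodesic crosses $B$ essentially $\sim rT$ times per period. The $T$ strands are equally spaced by $1/T$ near $z_0$, so about $rT$ of them pass within $r/2$ of $z_0$, each contributing a chord of length $\gtrsim r$. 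This gives $\int_\ell a\gtrsim c r^2 T$ over length $T+2$, hence $\eta\sim cr^2$, uniform in $T\ge T_0:=2/r$. Here the GCC length $L=T+2$ is the $T$-dependent length allowed in the statement.

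For (2), fix $T<T_0$; there are finitely many such directions. Take $M=2T+4$. For a line through $x$, $a_{\theta,M}(x)\ge \frac{1}{M}\cdot\frac12\int_{\text{period}}a$, since any window of length $M$ contains a full period. This function of the normal coordinate $x$ is $\frac1T$-periodic and continuous. It is positive whenever the geodesic hits $\{a\ge c\}$, which holds on an interval of $x$ of length $\gtrsim r$ within each period $1/T$. Hence $a_{\theta,M}$ is relatively dense at scale $L=1/T$ with density $\gtrsim \min(1,rT)\,c\,r/M$. Taking $\eta$ to be the minimum of these finitely many constants and the constant from (1) gives a common $\eta$.

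Finally, to build the $(\rho,\lambda)$ effective covering, use Dirichlet approximation. For $N$ to be chosen, every direction lies within $\lesssim 1/(TN)$ of some rational direction $(P,Q)/T$ with $T\le N$. Assign to a rational direction $\theta$ the width $\ep_\theta=\rho/(4M_\theta)$, where $M_\theta=T+2$ in case (1) (GCC implies comb GCC with $M=L$, by Lemma \ref{l:rd-comb}) and $M_\theta=2T+4$ in case (2). These arcs have width $\sim\rho/T$, so directions approximable with error $\le c\rho/T$ are covered once $N\sim 1/(c\rho)$ is large, by Dirichlet's theorem with $N=\lceil 1/\rho\rceil$ scaled appropriately. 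Then choose $\lambda_0\gtrsim \max_\theta (\rho\,\ep_\theta)^{-1}\sim N^2/\rho^2$ so that $(\ep_\theta\lambda)^{-1}<\rho/2$. The main obstacle is the uniform lower bound in (1), i.e. showing that long closed geodesics equidistribute well enough to give $\eta$ independent of $T$. The equal-spacing argument above is what I would carry out first.
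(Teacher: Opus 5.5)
Your proof is correct. Parts (1) and (2) rest on the same fact as the paper's proof, but you organise the counting differently. The shared fact is that parallel lines in direction $\theta=(P,Q)/T$ are spaced $1/T$ apart in the normal coordinate modulo $\mathbb Z^2$, because $\theta^\perp\cdot\mathbb Z^2=\frac1T\mathbb Z$ when $\gcd(P,Q)=1$. Hence roughly $rT$ strands cross a target of size $r$.

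The paper fixes a $\delta$-square with an edge $\ell\perp\theta$ and selects the $\lfloor\delta T\rfloor$ points of $\ell$ whose normal coordinate matches the trajectory modulo $\frac1T$. It then needs the quantitative B\'ezout Lemma \ref{l:Bezout} to show that the relevant lattice translate $\zeta+m$ is actually reached for $s\in[0,T+2]$. You instead use that $s\mapsto z+s\theta$ is $T$-periodic and injective on a period modulo $\mathbb Z^2$, since $T\theta=(P,Q)$ is primitive. So any window of length $T$ traces the whole closed geodesic, and hence every strand. This removes the need for Lemma \ref{l:Bezout} and even gives the GCC at length $T$.

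The covering step is genuinely different. The paper uses a single $\lambda$-indexed family, with denominators up to $2\lambda^{1/4}$ and arcs of width $\sim\lambda^{-1/4}/T$, which is $(\rho,\lambda)$ effective for every $\rho\gtrsim\lambda^{-1/4}$. You build, for each $\rho$, a fixed finite family with denominators up to $N\sim1/\rho$ and $\ep_\theta=\rho/(4M_\theta)$, in which $\lambda$ enters only through $(\ep_\theta\lambda)^{-1}$. Yours is simpler and is all that Definition \ref{def:combGCCd=2} asks for.

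There are a few minor slips, none affecting validity:
\begin{itemize}
\item Dirichlet's theorem gives $T\le\sqrt2N$ rather than $T\le N$.
\item $\max_\theta(\rho\ep_\theta)^{-1}\sim N/\rho^2$, not $N^2/\rho^2$.
\item The effective covering fixes $L$ uniformly, so in case (2) you should pass from scale $1/T$ to scale $1$, at the cost of a factor $2$ in $\eta$. The paper also leaves this step implicit.
\end{itemize}
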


\begin{proof}[Proof of Proposition \ref{p:period}]
We fix $\theta$ as above and may assume $a$ is the indicator of a $1$-periodic square of side length $\delta>0$ oriented along $\theta$. First note that if $T=1$, then either $\theta=(1,0)$ or $(0,1)$. In this case, $a$ immediately satisfies the $(T,T,\d^2)$ comb GCC in direction $\theta$; recall again the product-type argument \eqref{e:prod-comb}. Because of this, from now on we assume $T >1$, in particular both $|P|,|Q| \geq 1$.

Consider the square $\mathcal{S}$ of side length $\frac{|P|+|Q|}{T} \leq 2$ with sides parallel to $\theta$ and $\theta^{\bot}$, with bottom centered at a point $z \in \Rb^2$; see Figure \ref{fig:periodic}. This contains a $1 \times 1$ square, and up to replacing $\d$ by $\frac{\d}{C}$ for some dimensional constant $C$, the $ 1 \times 1$ square contains one of the $\delta$ side length squares where $a=1$, which we will call $S$. Let $\ell$ denote the bottom of $S$, which is a line of length $\d$ parallel to $\theta^{\bot}$.

1) Now for $\theta$ with $T \geq \d^{-1}$, for any $z \in \Rb^2$ consider the trajectory 
    \begin{equation}\label{e:traj} 
    z + s\theta, \quad 0 \le s \le T+2.
    \end{equation}
We claim that each such trajectory \eqref{e:traj} hits $\{ \ell+m : m \in \mathbb Z^2\}$ at least $\lfloor \delta T\rfloor \ge 1$ times.  
Let us denote such points by $\Sigma$. That will indeed prove part 1) since  
	\[ \int_0^{T + 2} a(z+s\theta) \, ds \ge \sum_{\xi \in \Sigma} \int_{0}^\delta a( \xi + s\theta) \, ds  \ge \delta \lfloor\delta T \rfloor \ge \frac{\delta^2T}{2}.\]
So now to prove our claim, consider all $\zeta \in \ell$ such that the $\theta^{\bot}$ components of $z$ and $\zeta$ differ by $\frac{n}{T}$ for some $n \in \mathbb{Z}$. Since $\ell$ has length $\delta$ there are at least $\lfloor \delta T\rfloor \geq 1$ such $\zeta$. Then by the construction of the large square $\mathcal{S}$ we have 
\begin{equation}
    \left|z+ \frac{n}{T} \theta^{\bot} - z\right| \leq \frac{|P| + |Q|}{2T}.
\end{equation}
Therefore $|n| \leq \frac{|P|+|Q|}{2}$. Now since $|P|, |Q|\geq 1$ we have $|n| \leq |P||Q|$.
\begin{figure}
\tikzmath{\z1 = 0.75; \z2 =0.25; \pp=2; \qq=5; \TT={sqrt(\pp^2+\qq^2)};\del=0.34;\z3=0.56;\z4=0.5;} 
\begin{tikzpicture}[scale=2]
        \draw[rotate around={338:(1,0)}] (0,0)--(0,2)--(2,2)--(2,0)--(0,0);
        \draw (0.6,0.3)--(1.6,0.3)--(1.6,1.3)--(0.6,1.3)--(0.6,0.3);
        \filldraw[rotate around={338:(1,0)}] (1,0) circle (1pt) node[below] {$z$};
        \draw[rotate around={338:(1,0)}] (1,0) -- (1,3);
        \draw[->,rotate around={338:(1,0)}] (1,4) -- (1,5);
        \filldraw[sand, rotate around={338:(1,0)}] (1,4.5) circle (1pt) node[right] {$\zeta+m$};
        \draw[->, rotate around={338:(1,0)}] (-.2,-.2)--(.5,-.2) node[right] {$\theta^{\bot}$};
        \draw[->, rotate around={338:(1,0)}] (-.2,-.2)--(-.2,.5) node[left] {$\theta$}; 
        \draw[imayou, pattern=north east lines, pattern color=imayou, rotate around={338:(1,0)}] (\z3,\z4) rectangle (\z3+\del,\z4+\del);
        \filldraw[sand, rotate around={338:(1,0)}] (\z3+\del/2,\z4) circle (1pt) node[left] {$\zeta$};
        \draw[dotted, rotate around={338:(1,0)}] (\z3+\del/2,\z4) -- (-.7,3.8) -- (1,4.5);
    \end{tikzpicture}
    \caption{The shaded region is $S$. The edge of $S$ passing through $\zeta$ is $\ell$.}
    \label{fig:periodic}
\end{figure}
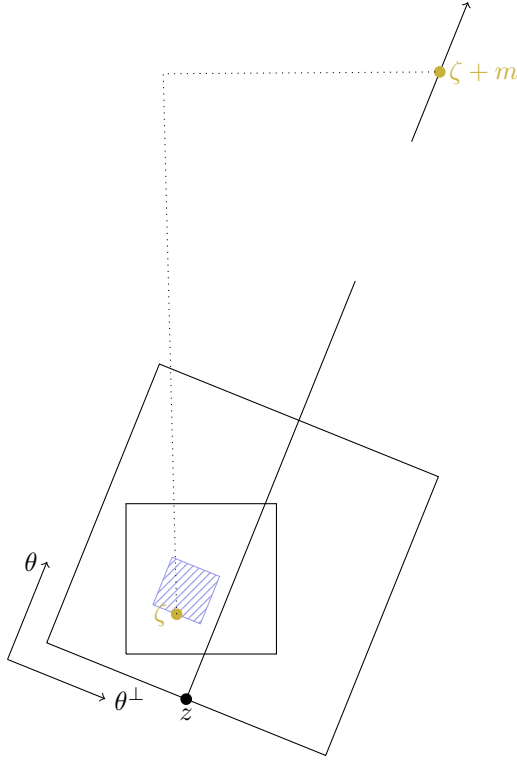
     Thus we can write 
     \begin{equation} \label{eq:periodicGCC1}
         \zeta = z + \frac{n}{T}\theta^\perp + s^* \theta, \quad |s^*| \le 2.
     \end{equation}
We will now show that $\zeta+m$ lies on the trajectory \eqref{e:traj} for some $m \in \mathbb Z^2$. Let $\mathfrak{a},\mathfrak{b}$ be integers such that $\mathfrak{a}P+\mathfrak{b}Q=n$. Since $|n| \leq |PQ|$, 
by Lemma \ref{l:Bezout} below, these $\mathfrak{a},\mathfrak{b}$ can be chosen such that $|\mathfrak{a}| \le |Q|, \quad |\mathfrak{b}| \le |P|$.
Then, one can verify
    \[ \frac{n}{T}\theta^{\bot} = \frac{n}{T^2} \begin{pmatrix} -Q \\ P \end{pmatrix} = \frac{\mathfrak{b}P-\mathfrak{a}Q}{T^2} \begin{pmatrix} P \\ Q \end{pmatrix} + \begin{pmatrix} -\mathfrak{b} \\ \mathfrak{a} \end{pmatrix} = \frac{\mathfrak{b}P-\mathfrak{a}Q}{T}\theta -m. \] 
Plugging this into \eqref{eq:periodicGCC1}, we have $\zeta = z + \left( \frac{\mathfrak{b}P-\mathfrak{a}Q}{T}+s^* \right) \theta - m$ with $m = \begin{pmatrix} \mathfrak{b} \\ -\mathfrak{a} \end{pmatrix} \in \mathbb Z^2$. Since $|\mathfrak{a}| \leq |Q|, |\mathfrak{b}|\leq |P|$ and $|s^*| \leq 2$
\begin{equation}
    \left| \frac{\mathfrak{b}P-\mathfrak{a}Q}{T}+s^* \right| \leq \frac{T^2}{T} + 2 \leq T+ 2,
\end{equation}
so indeed the trajectory \eqref{e:traj} hits $\{l+m:m\in \mathbb{Z}^2\}$ at least $\lfloor \d T \rfloor$ times. 

2) The second statement concerning the comb GCC is proved in a similar way.  
For $\theta$ with $T < \frac{1}{\d}$, for a point $x_0 \in \Rb$, let $z= \theta(x_0,0)$. Then the line segment 
    \[ \ell^* = \theta \left( \left[x_0-\frac{1}{2T},x_0+\frac{1}{2T}\right] \times \{0\} \right)\]
is a line segment of length $1/T>\delta$ parallel to $\theta^{\bot}$ and contained in the bottom of $\mathcal{S}$.

For each $\zeta \in \ell$ there exists $z^* \in \ell^*$ such that the $\theta^\perp$ components of $z^*$ and $\zeta$ differ by $\frac nT$ for some $n \in \mathbb Z$. Such $z^*$ exists due to the fact that the width of $\ell^*$ is exactly $1/T$. Moreover each $z^*$ is associated to only one $\zeta$ in this way, since $\delta < 1/T$. Now, using the construction of $\mathcal{S}$ and $l^*$ we have 
\begin{equation}
    \left|z^*+\frac{n}{T}\theta^{\bot}-z^*\right| \leq \frac{|P|+|Q|}{2T} + \frac{1}{2T}.
\end{equation}
Thus $|n| \leq \frac{|P|+|Q|+1}{2}$ and since $n \in \mathbb{Z}$, $|n| \leq |P||Q|$. Now, as shown above we can find $m \in \mathbb Z^2$ such that
    \[ \zeta +m = z^* + \theta s, \quad 0 \le s \le T+ 2.\]
Therefore, letting $Z \subset \ell^*$ be the collection of such $z^*$, we have $|Z|=\delta$ and for each $z^* \in Z$,
    \[ \int_0^{T + 2} a(z^*+\theta s) \, ds \ge \delta.\]
Now since $s \mapsto a(z^*+\theta s)$ is a $T$-periodic function on $\mathbb R$, is it also $(2T+4,\frac{\delta^2}{2T+4})$ relatively dense.
To pull this back to $a_{\theta,T}$, take $E= \pi_1(\theta^{-1} Z)$ to be the rotation of $Z$ back to the real line, so that for each $x \in E \subset [x_0-\frac{1}{2T},x_0+\frac{1}{2T}]$
    \[ a_{\theta,2T+4}(x) \ge \frac{\delta^2}{2T+4}.\]
But, since $|E|=\delta$ and $x_0$ was arbitrary, we obtain that $a_{\theta,2T+4}$ is $(1/T,\frac{T\delta^2}{2T+4})$ relatively dense, and therefore $a$ satisfies the comb GCC.

3) Now we will use 1) and 2) to generate an effective covering of $\mathbb S^1$, thereby showing that $a$ satisfies the comb GCC. Given $\rho>0$ let $\lambda_0 = (\frac{20}{\rho})^4$ and for $\lambda \geq \lambda_0$ we will define a $(\rho,\lambda)$ effective covering.

We cover the sphere $\mathbb S^1$ with neighborhoods of the following angles
	\[ \Theta=\left\{\theta = \frac{(P,Q)}{T}: \quad \gcd(P,Q)=1, \quad 1 \le T \le 2\lambda^{\gamma}\right\},\]
for some $\gamma >0$ small. Any $0<\gamma < \frac 12$ will work in what follows, so we choose $\gamma= \frac 14$.
Then consider the arcs
	\begin{equation}\label{e:neighborhood}\arc_\theta =\left\{ \phi \in \mathbb S^1 :  \abs{\phi-\theta} \le \frac{2}{\lambda^{\gamma}T}, \right\}.\end{equation}
Let us demonstrate that this is indeed an effective covering of the sphere $\mathbb S^1$. Consider $\phi=(x,y) \in \mathbb S^1$. Up to permuting $x$ and $y$, we may assume $|x| \geq |y|$. Then, by Dirichlet's theorem we can find $1 \le P \le \lambda^{\gamma}$ and $Q \in \mathbb Z$ such that
	\[ \left| \frac{y}{x}-\frac{Q}{P} \right| \le \frac{1}{P\lambda^{\gamma}}, \quad \gcd(P,Q) =1.\]
In general, the pair $(Q,P)$ does not satisfy $\gcd(Q,P) =1$, but if we do simplify it, we only decrease the denominator, so the above display still holds. Notice also that such $(P,Q)$ must satisfy $P \ge|Q|$, 
forcing $T \le \sqrt{2}P \le 2P$. Setting $\theta = \frac{(P,Q)}{T}$, then $\theta \in \Theta$ and  applying the law of cosines we have 
	\[ |\phi-\theta| \leq \left| \frac{y}{x} - \frac{Q}{P} \right| \leq \frac{1}{P\lambda^{\gamma}} \leq \frac{2}{T \lambda^{\gamma}}.\]
Therefore, we have shown the existence of a $\theta$ such that $\phi \in \arc_\theta$. 
Thus $\bigcup \arc_\theta$ covers the sphere $\mathbb S^1$. We choose, $\ep_\theta = \frac{4}{\lambda^\gamma T}$. 

For each $\theta$ we either have $T_{\theta} \geq T_0$, so $a$ satisfies the $(T+2,\eta)=(M_{\theta},\eta)$ GCC in direction $\theta$, or $T_{\theta} \leq T_0$, so $a$ satisfies the $(2T+4,\frac{1}{T},\eta) = (M_{\theta}, L,\eta)$ comb GCC in direction $\theta$.
So $M_{\theta} \leq 6T$ and by direct computation we see
\begin{equation}
    M_{\theta} \ep_{\theta} + (\ep_{\theta} \lambda)^{-1} \leq \frac{12}{\lambda^{\gamma}} + \frac{T}{4\lambda^{1-\gamma}} \leq \frac{12}{\lambda^{\gamma}} + \frac{1}{2 \lambda^{1-2\gamma}} < \rho
\end{equation}
where the last inequality follows when $\gamma=\frac{1}{4}$ and since $\lambda \geq \lambda_0=(\frac{20}{\rho})^4$. Therefore $\Theta$ is a $(\rho,\lambda)$ effective covering, so $a$ satisfies the comb GCC.
\end{proof}

\begin{lemma}\label{l:Bezout}
Let $P,Q \in \mathbb Z$ such that $\gcd(P,Q) = 1$. Then, for each 
    \[ n \in \left\{ 1,2,\cdots, |PQ| \right\},\]
there exists $a,b \in \mathbb Z$ such that
    \[ aP + bQ =n, \quad |a| \le |Q|, \quad |b| \le |P|.\]
\end{lemma}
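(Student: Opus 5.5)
The plan is to solve the congruence $aP \equiv n \pmod{Q}$ with $a$ chosen from a well-placed complete residue system, and then read off $b$. The choice of window is what makes both bounds hold at once.

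\textbf{Degenerate case.} If $P=0$ or $Q=0$, then $|PQ|=0$ and the set of admissible $n$ is empty, so the statement is vacuous. We therefore assume $|P|,|Q|\ge 1$.

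\textbf{Choice of $a$.} Consider the half-open interval $I=(n-|PQ|,\,n]$, which has length $|PQ|$. The multiples $aP$, $a\in\mathbb Z$, are spaced exactly $|P|$ apart. Hence $I$ contains exactly $|Q|$ of them, and the corresponding values of $a$ are $|Q|$ consecutive integers. Consecutive integers of that count form a complete residue system modulo $Q$. Since $\gcd(P,Q)=1$, $P$ is invertible mod $Q$, so exactly one such $a$ satisfies $aP\equiv n \pmod Q$. We fix this $a$ and set $b=(n-aP)/Q\in\mathbb Z$, so that $aP+bQ=n$.

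\textbf{Bound on $b$.} Because $aP\in I$, we have $0\le n-aP<|PQ|$. Dividing by $|Q|$ gives $|b|=|n-aP|/|Q|< |P|$.

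\textbf{Bound on $a$.} We use $1\le n\le |PQ|$. The interval $I$ lies inside $(-|PQ|+1,\,|PQ|]\subset[-|PQ|,|PQ|]$, so $|aP|\le |PQ|$, and therefore $|a|\le |Q|$.

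There is no genuine obstacle here. The only point needing care is taking the window to end exactly at $n$: this single choice controls $b$ and $a$ simultaneously. A symmetric window such as $0\le a<|Q|$ would only give $|b|<2|P|$.
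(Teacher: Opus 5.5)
Your proof is correct and complete. Choosing $a$ with $aP\in(n-|PQ|,n]$ forces both $bQ=n-aP\in[0,|PQ|)$ and $|aP|\le|PQ|$ at once. The residue-system count that produces this $a$ is valid for every sign pattern of $P,Q$.

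The paper uses the same mechanism: it slides along the solution line $(a,b)\mapsto(a-kQ,\,b+kP)$ until $a$ lands in a window of length $|Q|$. It sets this up through an explicit Bezout pair. It reduces to $P,Q>0$, takes $aP+bQ=1$ with $a>0$, scales by $n$, and chooses $k=\lceil na/Q\rceil$, so that $na-kQ\in(-Q,0]$. It then writes $nb+kP=\frac{n}{Q}+\frac{P}{Q}(na-kQ)$. The correct rearrangement has a minus sign, and so gives only $nb+kP\in[\frac{n}{Q},\frac{n}{Q}+P)$, which does not yield $|nb+kP|\le P$.

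Concretely, take $P=3$, $Q=5$, $n=14$. The paper's recipe produces $(na-kQ,\,nb+kP)=(-2,4)$ for any admissible Bezout pair, since the residue and the window determine $na-kQ$, and $4>P$. Your window produces $(3,1)$.

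Taking $k=\lfloor na/Q\rfloor$ instead, so that $na-kQ\in[0,Q)$, repairs the paper's argument. The repaired version places $bQ$ in $(n-PQ,n]$, which is the mirror image of your placing $aP$ in $(n-|PQ|,n]$. Your formulation thus gets the window right directly. It also needs no reduction to positive $P,Q$ and no explicit Bezout pair, since invertibility of $P$ modulo $Q$ suffices.

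One small correction to your closing remark: it depends on the sign of $P$. When $P>0$, the window $0\le a<|Q|$ does work, because then $bQ=n-aP\in(n-|PQ|,n]$. For $P>0$, the window that yields only $|b|<2|P|$ is $-|Q|<a\le 0$, and that is exactly the window the paper's proof ends up using.
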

\begin{proof}
    We can take both $P$ and $Q$ to be positive. Bezout's Theorem guarantees integers $a,b$ such that $aP + bQ=1$. Since either $a$ or $b$ must be positive, we may assume $a > 0$. Therefore,
    \begin{equation}\label{eq:bezoutOne}
        (na-kQ)P + (nb+kP)Q = n
    \end{equation}    
    for any integer $k$. Choose $k = \lceil \frac {na}{Q} \rceil \ge 0$ so that
        \begin{equation}\label{e:nakQ} -Q \le na-kQ \le 0.\end{equation}
    Then rearranging \eqref{eq:bezoutOne} we have 
        \[ nb+kP = \frac{n}{Q} + \frac{P}{Q} (na-kQ).\]
    However, applying \eqref{e:nakQ},  
        \[ -P < nb+kP \le \frac{n}{Q} \leq P.\]
    Thus $na-kQ$ and $nb+kP$ are the desired $a,b$.
\end{proof}
\subsection{Higher dimensions}\label{s:higher}
Let us explain how these results extend to higher dimensions as well. We will prove the main proposition, Proposition \ref{p:main} below, in every dimension $d \ge 2$. Here, we explain how to extend the definition of comb GCC inductively to higher dimensions. To introduce the ideas, given a function $a:\mathbb R^d \to [0,1]$, $\theta \in \mathbb S^{d-1}$ we denote by $\theta a$ the composition of $a$ with a rotation of $\mathbb R^d$ which maps $(0,\ldots,0,1)$ to $\theta$.  Then for $M>0$ we could say $a$ satisfies the $M$ comb GCC in direction $\theta$ if 
    \[ a_{\theta,M}(x) = \inf_{t \in \mathbb R} \frac{1}{M}\int_t^{t+M} \theta a(x,y) \, dy, \quad x \in \mathbb R^{d-1}, \]
satisfies the comb GCC (as a function on $\mathbb R^{d-1}$). As a consequence of Lemma \ref{l:rd-comb} below, comb GCC coincides with relative density in $d=1$, so this definition coincides with the definition for $d=2$, Definition \ref{def:combGCCdird=2}.

The difference in higher dimensions however is that we must impose a quantitative version of comb GCC in $\mathbb R^{d-1}$ since if $d \ge 3$ then $a_{\theta,M}$ itself may possess ``good'' and ``bad'' directions. When $d=2$, $a_{\theta,M}$ only had two (one) directions so no effective covering of the sphere was needed for $a_{\theta,M}$---all coverings were effective. In higher dimensions this will be required, and will be a bit involved. Our goal will be to define a function $\Lambda(a;\rho)$ which, given $a : \mathbb R^d \to [0,1]$, tells us we can perform an effective $(\rho,\lambda)$ covering for $\lambda \ge \Lambda(a;\rho)$. Ultimately, we will say $a$ satisfies the comb GCC if $\Lambda(a;\rho)$ is finite for every $\rho>0$.

First, when $d=0$ we interpret functions $a: \mathbb R^0 \to [0,1]$ as simply a number in $[0,1]$ and set $\Lambda(a;\rho)=\frac{1}{a}$. 

Let us assume we have defined the function $\Lambda(a;\rho)$ for functions $a:\mathbb R^{d-1} \to [0,1]$ and $\rho>0$. Then, we define it for functions $a:\mathbb R^d \to [0,1]$ as follows.
Given $\rho>0$ and $\lambda \ge 1$ we say $a$ has a $(\rho,\lambda)$ effective covering if there exists a set of effective directions $\Theta \subset \mathbb S^{d-1}$, and for each $\theta \in \Theta$ a cap width $\ep_{\theta}$ and length $M_{\theta}$, such that 
\begin{itemize}
    \item $\displaystyle \mathbb S^{d-1} \subset \bigcup_{\theta \in \Theta} \ccap_\theta, \qquad \ccap_\theta = \left\{ \phi \in \mathbb S^{d-1} : \abs{ \phi - \theta } \le \ep_\theta\right\},$
    \item
    \begin{equation}\label{e:comb-eff-2}\lambda_{\Theta}(\rho):=\sup_{\theta \in \Theta} \Lambda(a_{\theta,M_\theta};\rho)<\infty, \quad \text{ with } \ep_\theta M_\theta + (\ep_\theta \lambda)^{-1} < \rho.\end{equation}
\end{itemize}
Now, we define
    \[ \Lambda(a;\rho) = \inf\left\{ \lambda \ge \lambda_\Theta(\rho) : \text{$a$ has a $(\rho,\lambda)$ effective covering $\Theta$} \right\}.\]
We say $a$ satisfies the comb GCC if $\Lambda(a;\rho)$ is finite for every $\rho>0$.

To see this is equivalent to Definition \ref{def:combGCCd=2} when $d=2$, it suffices to show, when $d=1$, that comb GCC is equivalent to relative density. This will follows from Lemma \ref{l:rd-comb} below. But first, let us note some elementary properties of $\Lambda$, that follow directly from the definition and an inductive argument.
\begin{lemma}\label{l:Lambda}
$\Lambda$ is a decreasing function of $a$ and $\rho$. $\Lambda(\theta a;\rho) = \Lambda(a;\rho)$ for every $\theta \in \mathbb S^{d-1}$. If $\ep \in (0,1]$ then $\Lambda(\ep a;\rho)= \frac{1}{\ep} \Lambda(a;\rho)$.
\end{lemma}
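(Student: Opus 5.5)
The plan is an induction on the dimension $d$, proving all three properties simultaneously. The base case $d=0$ is read off from $\Lambda(a;\rho)=\frac1a$. This is decreasing in $a$ and independent of $\rho$, hence (weakly) decreasing in $\rho$. Rotations are trivial on $\mathbb R^0$. Finally, $\Lambda(\ep a;\rho) = \frac{1}{\ep a} = \frac1\ep \Lambda(a;\rho)$. For the inductive step, I assume the three properties for functions on $\mathbb R^{d-1}$ and use the elementary facts about the averaging operation $a \mapsto a_{\theta,M}$ to transport them to $\mathbb R^d$.

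\textbf{Monotonicity.} Suppose $a \le b$ pointwise. Then for each $\theta, M$ we have $a_{\theta,M} \le b_{\theta,M}$, since the infimum of averages is monotone. By induction, $\Lambda(b_{\theta,M};\rho) \le \Lambda(a_{\theta,M};\rho)$. So any $(\rho,\lambda)$ effective covering $\Theta$ for $a$, with the same $\ep_\theta, M_\theta$, is also one for $b$, and $\lambda_\Theta^{b}(\rho) \le \lambda_\Theta^{a}(\rho)$. The set over which the infimum defining $\Lambda(b;\rho)$ is taken therefore contains that for $a$, giving $\Lambda(b;\rho)\le\Lambda(a;\rho)$. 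For $\rho \le \rho'$, the constraint $\ep_\theta M_\theta + (\ep_\theta\lambda)^{-1} < \rho$ implies the one with $\rho'$, and inductively $\Lambda(a_{\theta,M_\theta};\rho') \le \Lambda(a_{\theta,M_\theta};\rho)$. The same containment argument then applies.

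\textbf{Rotation invariance.} If $\Theta$ is an effective covering for $a$, I would check that the rotated family $\phi^{-1}\Theta$ (or $\phi\Theta$, depending on convention), with the same $\ep, M$, is one for $\phi a$. Caps rotate to caps, so the covering property of $\mathbb S^{d-1}$ is preserved. Moreover, $(\phi a)_{\phi^{-1}\theta,M}$ equals $a_{\theta,M}$ up to a rotation of $\mathbb R^{d-1}$: the rotation taking $e_d$ to $\theta$ differs from the composite one by an element fixing $e_d$, i.e.\ an element of $O(d-1)$ acting on the $x$ variables. By the inductive rotation invariance, the $\Lambda$ values agree. Applying the same argument with $\phi^{-1}$ gives equality. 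This gauge freedom in choosing the rotation sending $e_d$ to $\theta$ is precisely why rotation invariance is needed inductively; indeed it is implicitly required for $\Lambda(a_{\theta,M};\rho)$ to be well defined at all.

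\textbf{Scaling.} This is the main obstacle. We have $(\ep a)_{\theta,M} = \ep\, a_{\theta,M}$, so inductively $\lambda^{\ep a}_\Theta(\rho) = \frac1\ep \lambda^a_\Theta(\rho)$ for every admissible $\Theta$. However, the existence of a $(\rho,\lambda)$ covering does not depend on $\ep$, while the lower bound $\lambda \ge \lambda_\Theta$ scales. The infimum is therefore taken over $\lambda \ge \frac1\ep\lambda_\Theta^a$ rather than over $\frac1\ep$ times the original set, so exact equality needs care. I would first prove $\Lambda(\ep a;\rho) \ge \frac1\ep\Lambda(a;\rho)$ carefully. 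For the reverse inequality, I would use the fact that effective coverings are monotone in $\lambda$ for fixed $\Theta$, since $(\ep_\theta\lambda)^{-1}$ decreases in $\lambda$. Consequently, the infimum is essentially $\inf_\Theta \max\{\lambda_\Theta, \lambda_{\min}(\Theta)\}$, where $\lambda_{\min}(\Theta)$ is the least $\lambda$ admitting $\Theta$. If the clean equality fails, I would settle for the two-sided comparison $\frac1\ep\Lambda(a;\rho)\ge \Lambda(\ep a;\rho)\ge \Lambda(a;\rho)$, or the bound $\Lambda(\ep a;\rho)\le \frac1\ep\Lambda(a;\rho)$, which is what the applications need.
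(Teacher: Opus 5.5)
Your induction on $d$ is the intended argument; the paper only says these properties follow from the definition by induction. Your proofs of monotonicity in $a$ and in $\rho$, and of rotation invariance, are correct. That includes your observation that inductive rotation invariance in dimension $d-1$ is what makes $\Lambda(a_{\theta,M};\rho)$ independent of the choice of rotation sending $e_d$ to $\theta$.

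The problem is the scaling identity. It is false as stated, and the inequality $\Lambda(\ep a;\rho)\ge\frac1\ep\Lambda(a;\rho)$, which you plan to prove first, is exactly the direction that fails. Your own reformulation $\inf_\Theta\max\{\lambda_\Theta,\lambda_{\min}(\Theta)\}$ shows why. By AM--GM, the constraint $\ep_\theta M_\theta+(\ep_\theta\lambda)^{-1}<\rho$ forces $\lambda>4M_\theta/\rho^2$ no matter how large $a$ is. So when this threshold dominates, shrinking $a$ does not raise $\Lambda$ by the factor $\frac1\ep$.

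Here is a concrete counterexample in $d=1$. There $a_{\pm1,M}=\underline a(M):=\inf_t\frac1M\int_t^{t+M}a$, and optimizing in $\ep_\theta$ gives
\[ \Lambda(a;\rho)=\inf_{M>0}\max\Bigl\{\frac{1}{\underline a(M)},\,\frac{4M}{\rho^2}\Bigr\}. \]
Take $a$ vanishing on an interval of length $\ell$ but with $\underline a(2\ell)\ge\frac14$ (e.g.\ a suitable continuous $4\ell$-periodic function).
\begin{itemize}
\item Since $\underline a(M)=0$ for $M\le\ell$, every admissible $M$ exceeds $\ell$, so $\Lambda(a;\rho)\ge 4\ell/\rho^2$.
\item Taking $M=2\ell$ gives $\Lambda(\ep a;\rho)\le\max\{4/\ep,\,8\ell/\rho^2\}$.
\item For $\ep<\frac12$ and $\rho^2\le 2\ell\ep$, this yields $\Lambda(\ep a;\rho)\le 8\ell/\rho^2<\frac{4\ell}{\ep\rho^2}\le\frac1\ep\Lambda(a;\rho)$.
\end{itemize}

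What is true, and what your fallback proves, is $\Lambda(a;\rho)\le\Lambda(\ep a;\rho)\le\frac1\ep\Lambda(a;\rho)$.
\begin{itemize}
\item The lower bound is monotonicity in $a$, since $\ep a\le a$.
\item The upper bound is the argument you called the reverse inequality. Take as inductive hypothesis the inequality $\Lambda(\ep b;\rho)\le\frac1\ep\Lambda(b;\rho)$ in dimension $d-1$, not the equality used in your sentence ``inductively $\lambda^{\ep a}_\Theta(\rho)=\frac1\ep\lambda^a_\Theta(\rho)$''. Then $\lambda^{\ep a}_\Theta(\rho)\le\frac1\ep\lambda^a_\Theta(\rho)$. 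A $(\rho,\lambda)$ effective covering for $a$ remains effective at $\lambda/\ep\ge\lambda$, so $\lambda/\ep$ is admissible for $\ep a$.
\end{itemize}
This one-sided bound is all the paper uses later, e.g.\ in \eqref{e:a4MLambda}, together with monotonicity and the identity $\Lambda(\eta;\rho)=\frac1\eta$ for constants, which can be checked directly. So the lemma should be weakened to this form rather than proved as written.
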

Now, we show that GCC implies comb GCC and comb GCC implies relatively density. Since the GCC is equivalent to relative density when $d=1$, this shows comb GCC is equivalent to relative density in that case. 
\begin{lemma}\label{l:rd-comb} For each $d \in \mathbb{N}_0$, and all $L,\eta,\rho>0$ and $a:\mathbb R^d \to [0,1]$,
\begin{itemize} 
    \item If $a$ satisfies the $(L,\eta)$ GCC, then 
    \[ \Lambda(a;\rho) \le \max \left\{ \frac 1{\eta}, \frac{4L}{\rho^2} \right\}.\]
    \item If $\Lambda(a;\rho)<\infty$
    then $a$ is $(L_d,\eta_d)$ relatively dense with 
    \begin{equation}
        L_d = \frac{\sqrt{d!} \rho^2(\Lambda(a;\rho)+1)}{4}, \quad \eta_d = \frac{1}{\Lambda(a;\rho)} \prod_{j=1}^d j^{-\frac{j}{2}}.
    \end{equation}
\end{itemize}

\end{lemma}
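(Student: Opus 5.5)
Both bullets will be proved by induction on the dimension $d$. The base case $d=0$ is immediate from the convention $\Lambda(a;\rho)=1/a$. For $d=0$ the GCC degenerates to $a\ge\eta$, so $\Lambda(a;\rho)\le 1/\eta$. Conversely, $\Lambda<\infty$ means $a>0$, and "relative density" of a number $a$ with $\eta_0=1/\Lambda(a;\rho)=a$ holds trivially.

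\emph{First bullet.} Assume $a:\mathbb R^d\to[0,1]$ satisfies the $(L,\eta)$ GCC and fix $\lambda\ge \max\{1/\eta,4L/\rho^2\}$. I would build the effective covering with a single uniform choice for every direction: $M_\theta=L$ and $\ep_\theta=\rho/(2L)$, with $\Theta$ a finite $\ep_\theta$-net of $\mathbb S^{d-1}$. Then the smallness condition reads
\[
\ep_\theta M_\theta+(\ep_\theta\lambda)^{-1}=\tfrac{\rho}{2}+\tfrac{2L}{\rho\lambda}\le \tfrac{\rho}{2}+\tfrac{\rho}{2},
\]
using $\lambda\ge 4L/\rho^2$. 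To get strict inequality, I would shrink $\ep_\theta$ slightly or take $\lambda$ strictly larger; the infimum defining $\Lambda$ absorbs this. It remains to bound $\lambda_\Theta(\rho)$. By the GCC, $a_{\theta,L}(x)\ge\eta$ for every $x\in\mathbb R^{d-1}$, so by monotonicity and scaling (Lemma \ref{l:Lambda}) we get $\Lambda(a_{\theta,L};\rho)\le\Lambda(\eta;\rho)$, where $\eta$ is the constant function. The constant function $\eta$ on $\mathbb R^{d-1}$ satisfies the $(L',\eta)$ GCC for every $L'$. Applying the induction hypothesis and letting $L'\to0$ gives $\Lambda(\eta;\rho)\le 1/\eta$. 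Hence $\lambda_\Theta\le 1/\eta\le\lambda$, so $\Lambda(a;\rho)\le\max\{1/\eta,4L/\rho^2\}$.

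\emph{Second bullet.} Set $\Lambda=\Lambda(a;\rho)<\infty$ and pick $\lambda\in[\Lambda,\Lambda+1]$ with a $(\rho,\lambda)$ effective covering $\Theta$. Fix any single $\theta\in\Theta$. Since $\lambda_\Theta\le\lambda$, we have $\Lambda(a_{\theta,M_\theta};\rho)\le\Lambda+1$ (up to the harmless $+1$). By induction, $a_{\theta,M_\theta}$ is $(L_{d-1},\eta_{d-1})$ relatively dense on $\mathbb R^{d-1}$. The covering condition gives $\ep_\theta\lambda>1/\rho$ and $\ep_\theta M_\theta<\rho$, so $M_\theta<\rho^2\lambda\lesssim\rho^2(\Lambda+1)$. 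This is the source of the $\rho^2(\Lambda+1)$ factor in $L_d$.

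Now take a cube $Q$ of side $L_d$, rotated to $\theta$-coordinates. By Fubini,
\[
\int_Q a=\int_{Q'}\int a(x,y)\,dy\,dx\ge \big\lfloor L_d/M_\theta\big\rfloor M_\theta\int_{Q'}a_{\theta,M_\theta}(x)\,dx,
\]
and the last integral is at least $\eta_{d-1}|Q'|$ once $Q'$ is large compared with $L_{d-1}$. The main obstacle is making the constants match exactly. In particular, a rotated cube is not a product of an axis cube and a segment, so I would inscribe a product box inside a cube of side $\sqrt{d}$ times larger. I expect that inscribed-box step to produce the factors $\sqrt{d!}$ and $\prod j^{-j/2}$, each dimension step costing $\sqrt d$ in side length and $d^{-d/2}$ in density. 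Because the constants need only be suitable, I would track them loosely and adjust the formula if the bookkeeping differs.
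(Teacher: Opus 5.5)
Your proposal follows the paper's proof essentially step for step. For the first bullet it uses the uniform covering $M_\theta=L$, $\ep_\theta=\rho/(2L)$ with $\Lambda(a_{\theta,L};\rho)\le 1/\eta$; for the second it takes one direction of a $(\rho,\Lambda+1)$ effective covering, applies the induction hypothesis to $a_{\theta,M_\theta}$, and uses the inscribed rotated cube Fubini step, which costs $\sqrt d$ in side length and $d^{-d/2}$ in density. The one bookkeeping point is the factor $\frac14$ in $L_d$: it comes from using the covering constraint jointly, $M_\theta<\rho/\ep_\theta-(\ep_\theta^2\lambda)^{-1}\le\rho^2\lambda/4$ by Young's inequality, whereas bounding the two terms separately, as you do, only gives $M_\theta<\rho^2\lambda$.
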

\begin{proof} First, if $a$ satisfies the $(L,\eta)$ GCC then for every $\theta \in \mathbb S^{d-1}$, $a_{\theta,L} \ge \eta$. Since $\Lambda(\eta;\rho)=\frac 1{\eta}$, by Lemma \ref{l:Lambda}
    \[ \Lambda(a_{\theta,L};\rho) \le \frac{1}{\eta}.\] 
Then, we may take any covering $\Theta$ with $\ep_\theta= \frac{\rho}{2L}$ and $M_\theta=L$. This covering is $(\rho,\lambda)$ effective as soon as $\lambda \ge \frac{4L}{\rho^2}$.

We will prove the second statement by induction, noting that the case $d=0$ is trivial since $\Lambda(a;\rho)=\frac{1}{a}<\infty$ implies $a>0$ and a constant $a$ is $(L,a)$ relatively dense for all $L>0$, so long as $a>0$.
When $d=1$, if $\Lambda(a;\rho) <\infty$, there exists $\lambda =\Lambda(a;\rho)+1$ such that $a$ has a $(\rho, \lambda)$ effective covering. So for some $\ep, M>0$ with $\ep M + (\ep \lambda)^{-1} <\rho$ we have 
\begin{equation}
    \frac{1}{a_{M}}=\Lambda(a_{M}; \rho) \leq \Lambda(a;\rho).
\end{equation}
That is $a$ is $(M, \frac{1}{\Lambda(a;\rho)})$ relatively dense. Rearranging the expression for $M$, and applying Young's inequality to $\frac{\rho}{\ep} = \frac{\rho \sqrt{\lambda}}{\sqrt{2}} \frac{\sqrt{2}}{\ep \sqrt{\lambda}}$ we have
\begin{equation}\label{eq:thicknessEquivM}
    M < \frac{\rho}{\ep} - \frac{1}{\ep^2 \lambda} \le \frac{\rho^2 \lambda}{4} + \frac{1}{\ep^2\lambda} - \frac{1}{\ep^2\lambda } = \frac{\rho^2\lambda}{4} = \frac{\rho^2(\Lambda(a,\rho)+1)}{4}.
\end{equation}
So $a$ is $(\frac{\rho^2(\Lambda(a;\rho)+1)}{4}, \frac{1}{\Lambda(a;\rho)})$ relatively dense.

Now, the main inductive ingredient we will use is that, by Fubini's theorem, if $a_{\theta,N}$ is $(N,\varrho)$ relatively dense, then $a$ is $(\sqrt{d} N,\varrho d^{-d/2})$ relatively dense. 
Indeed, if $Q$ is a cube of side length $\sqrt{d}N$, then it contains a rotated cube $\theta P$ of side length $N$; see Figure \ref{fig:slicing}. Then,
    \[ \int_Q a \ge \int\limits_{[x_0,x_0+N]^{d-1}} \int\limits_{ [y_0,y_0+N]} \theta a(x,y) \, dy \, dx \ge N\int\limits_{[x_0,x_0+N]^{d-1}} a_{\theta,N}(x) \, dx \ge \varrho N^{d}, \]
and $|Q| = d^{d/2}N^d$ proves the intermediate claim.

\begin{figure}[h]
    \label{fig:slicing}
    \begin{tikzpicture}
        \draw[rotate around={30:(0,0)}] (-2,-2) rectangle (2,2);
        \draw (0,0) circle (2.9);
        \draw[rotate around={-3:(0,0)}] (-2.9,-2.9) rectangle (2.9,2.9);
        \draw node at (-2.5,-2.3) {$Q$};
        \draw node at (-1.5,-0.7) {$N$};
        \draw node at (-1.2,1.4) {$N$};
        \draw node at (0,0) {$\theta P$};
        \draw node at (3.7,0) {$\sqrt{d} N$};
    \end{tikzpicture}
    \caption{The square $\theta P$ is contained within the square $Q$.}

\end{figure}
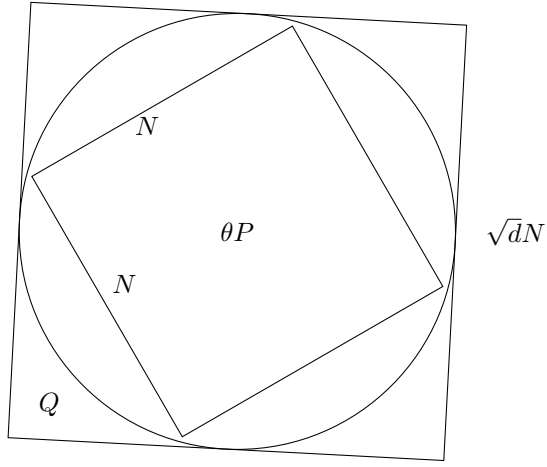

So consider $a:\Rb^d \ra [0,1]$ with $\Lambda(a; \rho)<\infty$ and assume the result for functions on $\Rb^{d-1}$. Then for $\lambda=\Lambda(a;\rho)+1$, $a$ has a $(\rho,\lambda)$ effective covering. So for some $\theta \in \mathbb{S}^{d-1}$ and $M$
\begin{equation}
    \Lambda(a_{\theta,M};\rho) \leq \Lambda(a,\rho).
\end{equation}
Then by the inductive hypothesis $a_{\theta,M}$ is $(L_{d-1},\eta_{d-1})$ relatively dense, and by \eqref{eq:thicknessEquivM}, $M \leq \frac{\rho^2(\Lambda(a,\rho)+1)}{4} \leq L_{d-1}$ so $a_{\theta, L_{d-1}}$ is relatively dense as well. Then by our Fubini argument $a$ is $(\sqrt{d}L_{d-1}, \eta_{d-1} d^{-d/2})=(L_d,\eta_d)$ relatively dense. 
\end{proof}

\section{Proof of comb GCC uncertainty principle}\label{s:comb-proof}
\subsection{Preparatory Lemmas}\label{s:prep}
The general idea of the propagation of singularities approach it to utilize a transfer function 
	\[ A_\theta(x',x_d) = \int_0^{x_d} a_{\theta,M}(x') - \theta a(x',t) \, dt, \quad x' \in \mathbb R^{d-1}, \ x_d \in \mathbb R \]
to switch between $a$ and the comb GCC function $a_{\theta,M}$ which is invariant in the direction $\theta$. This propagation approach is inspired by normal form methods, as in \cite{BZ12, Sun23} and references therein. The main technical obstacle appearing is that this function may not be bounded, even if $a$ satisfies the comb GCC in direction $\theta$. Indeed, consider a smooth function $b:\mathbb R \to [0,1]$ which has very small average, say $0.1$, on intervals $[2^k,2^k+1]$ and yet equals $1$ everywhere else. For such a function, its transfer function
	\begin{equation}\label{e:bt} \int_0^y \left[ b(t)-b_0 \right]\, dt \end{equation}
is unbounded for any $b_0 \in \mathbb R$. Indeed, the best choice is $b_0=1$ but then we have a lower bound $\gtrsim \log|y|$. Nonetheless, we can construct a minorant of $b$ which remains relatively dense and for which the transfer function \eqref{e:bt} is bounded. Simply take $b_1=0.1$ outside $[2^k,2^k+1]$ and $b_1=b$ on $[2^k,2^k+1]$. Of course such a $b_1$ will not be smooth, but in fact this can also be accomplished with a bit more care. This is the main idea of the next Lemma.

To clearly control all the constants, we introduce one more piece of notation. A function $\omega:[0,\infty) \to [0,\infty)$ which is monotone increasing, continuous at $0$ and satisfies $\omega(0)=0$ is called a modulus of continuity. A function $a$ is said to be $\omega$-continuous if
    \[ |a(x)-a(y)| \le \omega(|x-y|).\] 
Clearly an $\omega$-continuous function is uniformly continuous, and given a uniformly continuous function $a$, we may define
    \[ \omega_a(t) = \sup_{x,y :|x-y| \le t} |a(x)-a(y)|\]
which is a modulus of continuity and $a$ is $\omega_a$-continuous. Furthermore, note that $a$, $\theta a$ and all $a_{\theta,M}$ are all $\omega_a$-continuous.
\begin{proof}
    Define $a_{M}(x;t) = \frac{1}{M}\int_t^{t+M} a(x,x_d) \, d x_d $. Clearly $a_{M}(\cdot;t)$ is also $\omega_a$ continuous for each $t \in \mathbb R$. Now fix $x,y \in \mathbb R^{d-1}$. We may assume $a_M(x) \le a_M(y)$. Letting $\ep>0$, we can find $t
    _0$ such that
         \[ |a_M(x;t) - a_M(x)| < \ep, \quad a_M(x) = \inf_{t \in \mathbb R} a_M(x;t).\]
    But then,
        \[ a_M(y) \le a_M(y;t_0) \le a_M(x;t_0) + \omega_a(|x-y|)
        \le a_M(x) + \ep + \omega_a(|x-y|).\]
    Since $\ep>0$ was arbitrary and $a_M(x) \le a_M(y)$, we have \[|a_M(x)-a_M(y)| \le \omega(|x-y|).\]
\end{proof}

We now state the lemma which allows us to replace a uniformly continuous function satisfying the comb GCC by a smooth minorant that still satisfies the comb GCC, and for which the associate transfer function ($A$ in (iii) below) is bounded. We only consider the direction $\theta=(0,\ldots,0,1)$ for simplicity, but of course applying the result to $\theta a$ will handle the general case. For simplicity, we denote
    \[ a_M(x') = \inf_{t \in \mathbb R} \frac{1}{M}\int_t^{t+M} a(x',x_d) \, d x_d.\]

\begin{lemma}\label{l:extract}
Let $\omega$ be a modulus of continuity and $d \in\mathbb{N}_0$. There exists $C,\delta,c>0$ such that for all
$a':\mathbb R^d \to [0,1]$ which are $\omega$-continuous and for which $\Lambda((a')_M;\rho) \le \Lambda(a';\rho)$ there exists $a:\mathbb R^d \to [0,1]$, $\overline{a}:\Rb^{d-1} \ra [0,1]$, and $X \subset \mathbb R^{d}$ such that 
\begin{itemize} 
\item[(i)] $a\le \mathbbm{1}_X$, $\delta \mathbbm{1}_{X_\delta} \le a'$, so in particular $a \leq \d^{-1} a'$
        \item[(ii)] $\Lambda(\overline{a};\rho)  \le C \Lambda(a';c\rho)$, 
        \item[(iii)] 
        the transfer function
        \begin{equation}
        A(x',x_d) = \int_0^{x_d} \overline{a}(x') - a(x',t)\, dt,
        \end{equation}
        as well as $a$ and $\overline{a}$, satisfy
        for each $m \in \mathbb{N}_0$,
            \begin{equation}\label{e:a-smooth-bounded} |\nabla^m f| \le C^{m+1}, \quad f \in \{A/M,a,\overline{a}\}.\end{equation}
        In particular, $a$ and $\overline{a}$ are $\omega$-continuous with $\omega(t) = C^2t$.
    \end{itemize}
    
\end{lemma}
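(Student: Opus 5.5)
The construction has three stages: build $\overline a$ from the fiber averages $(a')_M$, mollify it, and then define $a$ along each vertical fiber so that its running integral tracks $\overline a$ on average. Throughout I work in $\mathbb R^d=\mathbb R^{d-1}\times\mathbb R$ with variable $(x',x_d)$.

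\emph{Step 1: level set and $\overline a$.} Following \eqref{e:switch}, fix a small $\delta>0$, depending only on $\omega$, with $\omega(2\delta)\le \delta$. Set
\[ X=\{x : a'(x)\ge 2\delta\}_{\delta}. \]
Then $X_\delta\subset\{a'\ge \delta\}$ by $\omega$-continuity, which gives the second half of (i). For the base function I take
\[ \overline a = c_0\,\big((a')_M-3\delta\big)_+ * \chi, \]
where $\chi$ is a unit-scale mollifier on $\mathbb R^{d-1}$ and $c_0$ is a small constant. The derivative bounds on $\overline a$ in \eqref{e:a-smooth-bounded} then come from the mollifier. By the lemma preceding this one, $(a')_M$ is $\omega$-continuous, so the mollification only perturbs values by $O(\omega(1))$. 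Using Lemma \ref{l:Lambda} (monotonicity and $\Lambda(\ep a;\rho)=\ep^{-1}\Lambda(a;\rho)$) together with the hypothesis $\Lambda((a')_M;\rho)\le\Lambda(a';\rho)$, I expect to get (ii) after replacing $\rho$ by $c\rho$. The loss of $3\delta$ is harmless by the switching argument \eqref{e:switch}, applied inductively inside the definition of $\Lambda$. This step needs care but is soft.

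\emph{Step 2: the fiberwise minorant with bounded transfer.} This is the main obstacle, namely the construction sketched for $b_1$ before the lemma. Fix $x'$ and cut the $x_d$-line into consecutive intervals $I_k=[kM,(k+1)M)$. On each $I_k$ the average of $\mathbbm 1_{\{a'\ge 2\delta\}}$ (and of $a'$) is at least about $(a')_M(x')$. I want a smooth $a(x',\cdot)$ with the following properties:
\begin{itemize}
\item[(a)] $a(x',\cdot)$ is supported where $a'\ge 2\delta$ (more precisely, in $X$), so that $a\le \mathbbm 1_X$;
\item[(b)] $a(x',\cdot)\le 1$;
\item[(c)] $\int_{I_k}a(x',t)\,dt=M\,\overline a(x')$ exactly, for every $k$.
\end{itemize}
Exact equality on every block gives $|A(x',x_d)|\le 2M$ uniformly, so the transfer function no longer grows. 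To achieve this, let $\psi_k(x',\cdot)=\mathbbm 1_{\{a'\ge 2\delta\}}*\chi$ restricted to $I_k$, which is smooth and has mass comparable to $M(a')_M$. Then normalize:
\[ a=\overline a(x')\,M\,\frac{\psi_k}{\int_{I_k}\psi_k}. \]
The choice of $c_0$ small ensures $a\le1$.

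\emph{Step 3: derivative bounds.} The construction in Step 2 has two sources of rough behaviour in $x_d$: the block boundaries and the smoothness in $x'$. I plan to handle both with smooth partitions of unity $\sum_k\phi(x_d/M-k)=1$, replacing the sharp cutoff to $I_k$ by multiplication by $\phi(x_d/M-k)$. The normalization denominators are bounded below by $cM\,\overline a(x')$, and they are smooth in $x'$ because $\psi$ is a mollified function. This gives $|\nabla^m a|\le C^{m+1}$ for $M\ge1$. The case $M<1$ reduces to this one since $(a')_M\ge(a')_{1}$ roughly. For $A/M$, write
\[ \nabla^m A=\int_0^{x_d}\nabla^m(\overline a-a)\,dt. \]
The block cancellation persists after differentiating in $x'$, so the integral is bounded by $C^{m+1}M$. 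The genuinely delicate point, which I would check first, is that this cancellation survives the smooth partition of unity: the exact mass condition must be imposed on each $\phi$-weighted piece rather than on each sharp block. Finally, $a\le\delta^{-1}a'$ follows from $a\le\mathbbm 1_X$ combined with $a'\ge\delta$ on $X$.
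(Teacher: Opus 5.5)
Your route is genuinely different from the paper's, and the step you flagged as delicate does work. The paper goes through the stronger inductive Lemma~\ref{l:extractInduct}:
\begin{itemize}
\item it applies the $(d-1)$-dimensional statement to $a'_M$, getting a smooth $a^*$ supported on small balls $B_j\subset\mathbb R^{d-1}$;
\item it freezes the fiber at each center $x_j'$, using $\omega$-continuity to carry the level set $Y^j$ across $B_j$, and sets $a^1=a^*(x')a_j(x_d)$ and $\overline{a^1}=a^*\eta_j$ on $B_j$;
\item the fiber functions $a_j$ come from Lemma~\ref{l:smoothing}: block endpoints $s_k$ are chosen outside the level set, so sharp blocks cost no smoothness, and the balls are shrunk by $t_k$ so every block has density exactly $\eta$;
\item Lemma~\ref{l:B} then bounds the transfer function.
\end{itemize}
You instead take $\overline a$ to be a mollified truncation of $(a')_M$. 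Then (ii) follows from monotonicity of $\Lambda$: fiber averages of $b*\chi$ dominate a mollification of those of $b$, so mollifying does not increase $\Lambda$. You absorb the $x'$-dependence into the normalization $D_k(x')=\int\phi_k\psi\,dt$. Each piece $\overline a M\phi_k\psi/D_k$ has mass exactly $M\overline a(x')=\overline a(x')\int\phi_k$ for every $x'$. Hence in $\nabla_{x'}^mA=\int_0^{x_d}\nabla_{x'}^m(\overline a-a)\,dt$ all pieces supported in $[0,x_d]$ cancel, leaving $O(1)$ boundary pieces of size $O(M)$. The paper's product structure gives fibers independent of $x'$ and needs no partition at scale $M$. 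Yours dispenses with the induction on dimension and with the auxiliary $a^2$, $X^2$ glued over all directions.

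Several steps need repair. \emph{Small $M$:} the partition $\phi(x_d/M-k)$ has $x_d$-derivatives of size $M^{-m}$, and your reduction fails. For small $M$, $(a')_M(x')$ is close to $\inf_t a'(x',t)\le (a')_1(x')$, so the inequality goes the other way. Moreover, blocks of length $\sim 1$ only give $|A|\lesssim 1$, not $|A|\le CM$; indeed (iii) with $m=1$ forces $|a-\overline a|=M|\partial_{x_d}(A/M)|\le C^2M$. The fix is easy. If $M\le r$ with $\omega(r)\le\delta/2$, then $(a')_M\ge 3\delta-\omega(r)$ on $\supp \overline a$ forces $a'(x',\cdot)\ge 2\delta$ along the whole fiber, so take $a(x',t)=\overline a(x')$ and $A\equiv 0$. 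The paper does not cover this regime either, since Lemma~\ref{l:smoothing} needs the ball radius below $M/2$.

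\emph{Constants:}
\begin{itemize}
\item No small $\delta$ with $\omega(2\delta)\le\delta$ exists for, e.g., $\omega(t)=t$ or $\omega(t)=\sqrt t$. Decouple the threshold from the neighborhood radius as in \eqref{e:choiceDeltaPrime}.
\item The mollifier must live at a scale $r$ below that radius with $\omega(r)\ll\delta$. With a unit-scale $\chi$, $\supp \psi\not\subset X$, and $\overline a(x')$ can be positive where $(a')_M(x')=0$, so $D_k(x')$ may vanish.
\item $D_k\ge cM\overline a(x')$ does not control the term $M\phi_k\psi\nabla\overline a/D_k$ near the edge of $\supp \overline a$. What you need is $D_k\ge c\delta M$ on a neighborhood of $\supp \overline a$. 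Your truncation at $3\delta$ gives this once $r$ is small, provided $\phi\ge\tfrac12$ on an interval of length $1$, since only windows of length $M$ inherit the density bound.
\end{itemize}

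Finally, removing $3\delta$ costs only a constant factor in $\Lambda$ when $\delta\lesssim 1/\Lambda((a')_M;\rho)$, but not in general, so $\delta$ cannot depend on $\omega$ alone. This is an imprecision in the statement itself, not in your argument. Replace $a'$ by $\ep a'$ with $\ep<\delta$: (i) forces $a=0$, then (iii) forces $\overline a=0$, contradicting (ii) whenever $\Lambda(a';c\rho)<\infty$. The paper's proof carries the same hidden dependence.
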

The proof is postponed until Section \ref{s:structure} below.

\subsection{Propagation of singularities}
Now, we use the structural lemma (Lemma \ref{l:extract}) to prove the following main resolvent estimate. 

\begin{proposition}\label{p:main}
Let $\omega$ be a modulus of continuity, $d \in \mathbb{N}_0$, and $\Lambda_0 \ge 1$. There exists $C_1 \ge 1$ and $c_1\in (0,1)$ such that 
\begin{itemize}
    \item for all $a':\mathbb R^d \to [0,1]$ which are $\omega$-continuous and $\Lambda(a;1) \le \Lambda_0$.
    \item all $\lambda \ge C_1\Lambda(a';c_1)$
    \item and all $v \in H^2(\mathbb R^D)$ for $D \ge d$
\end{itemize} 
there holds
    \begin{equation}\label{e:p-ind} c_1 \norm{v}_{L^2(\mathbb R^D)}^2 \le \norm{(-\Delta-\lambda^2) v}_{L^2(\mathbb R^D)}^2 + \ip{av}{v}_{L^2(\mathbb R^D)}
    .\end{equation}
\end{proposition}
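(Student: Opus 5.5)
The plan is to prove \eqref{e:p-ind} by induction on the dimension $d$ of the function $a'$. Here $a'$ is extended to $\mathbb R^D$ by making it constant in the remaining $D-d$ variables. In the base case $d=0$, $a'$ is a constant with $\Lambda(a';c_1)=1/a'$, so $a'\ge 1/\Lambda_0$ and \eqref{e:p-ind} holds trivially with $c_1\le 1/\Lambda_0$. (We read the hypothesis $\Lambda(a;1)\le\Lambda_0$ as a bound on $a'$.)

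For the inductive step we argue by microlocalization, with these steps:
\begin{enumerate}
\item Reduce to $v$ whose Fourier transform is supported near the annulus $|\xi|\approx\lambda$. Off the annulus, $\|(-\Delta-\lambda^2)v\|\gtrsim\lambda\|v\|$, so that part is controlled trivially.
\item Split the annulus with a $(\rho,\lambda)$ effective covering $\Theta$ of $a'$, using $\rho$ small, into sectors $S_{\theta,2\ep_\theta}$. Recombine the pieces with the almost orthogonality argument of Lemma \ref{l:almost}, now in its smooth-cutoff form with $\ip{av}{v}$ in place of $\|\mathbbm 1_E v\|^2$. The error there is $O((\ep_\theta\lambda)^{-1})\le\rho$, which is absorbed. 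It therefore suffices to prove \eqref{e:p-ind} for $v=w$ frequency localized to $A_\lambda\cap S_{\theta,2\ep_\theta}$.
\item Rotate so that $\theta=e_d$, and apply Lemma \ref{l:extract} to $\theta a'$ with $M=M_\theta$. This produces a smooth minorant $a\le\delta^{-1}a'$, a function $\overline a$ on $\mathbb R^{d-1}$ with $\Lambda(\overline a;\rho)\le C\Lambda(a';c\rho)$, and a transfer function $A$ with $|\nabla^m A|\le C^{m+1}M$.
\end{enumerate}

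The key step is the propagation identity. Since $\partial_{x_d}A=\overline a-a$, we have
\[ \ip{(\overline a-a)w}{w}=\ip{(\partial_{x_d}A)w}{w}.\]
On the frequency support, $\partial_{x_d}$ acts as $i\lambda(1+O(\ep_\theta))$. Write $2i\lambda\partial_{x_d}=[-\Delta-\lambda^2,\,x_d\text{-flow}]$ in the commutator form $\ip{[-\Delta,A]w}{w}=-\ip{(\Delta A)w}{w}-2\ip{\nabla A\cdot\nabla w}{w}$. Then $\nabla A\cdot\nabla w=\partial_dA\,\partial_d w+\nabla'A\cdot\nabla' w$, and the transverse frequencies are $O(\ep_\theta\lambda)$. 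This gives
\[ \bigl|\ip{(\overline a-a)w}{w}\bigr|\le \frac{C}{\lambda}\,|\ip{[-\Delta-\lambda^2,A]w}{w}|+C\bigl(M\ep_\theta+M\lambda^{-1}\bigr)\|w\|^2 . \]
The first term is bounded by $\frac{CM}{\lambda}\|(-\Delta-\lambda^2)w\|\,\|w\|$, using $\|Aw\|\le \|A\|_\infty\|w\|$. Here lies the subtlety: $A$ is only $O(M)$-Lipschitz and is not bounded. So the commutator must be written with $\nabla A$ rather than $A$, that is as $\ip{\partial_dA\,w}{w}=\lambda^{-1}\mathrm{Im}\ip{A(-\Delta-\lambda^2)w}{\partial_d w}+\dots$ after localizing $w$ in $x_d$ to windows of length $\sim\lambda$. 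Alternatively, one uses that Lemma \ref{l:extract}(iii) gives bounded $A/M$ in the actual construction. Either way, condition \eqref{e:comb-eff}, $\ep_\theta M_\theta+(\ep_\theta\lambda)^{-1}<\rho$, makes all error terms $\le C\rho\|w\|^2+C\|(-\Delta-\lambda^2)w\|^2$.

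This yields
\[ \ip{\overline a w}{w}\le \ip{aw}{w}+C\|(-\Delta-\lambda^2)w\|^2+C\rho\|w\|^2 . \]
Since $\overline a$ depends only on $x'\in\mathbb R^{d-1}$, the inductive hypothesis (applied with $D\ge d-1$) applies once $\lambda\ge C_1\Lambda(\overline a;c_1)$, which follows from $\lambda\ge C_1'\Lambda(a';c_1')$ by Lemma \ref{l:extract}(ii) after shrinking constants. It gives $c_1\|w\|^2\le\|(-\Delta-\lambda^2)w\|^2+\ip{\overline a w}{w}$. Combining this with $a\le\delta^{-1}a'$ and choosing $\rho\ll c_1$ closes the induction. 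Frequency localization introduces only commutators with Fourier multipliers of size $O(\ep_\theta^{-1}\lambda^{-1})=O(\rho)$, which are again absorbed.

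The main obstacle is the commutator estimate. We must handle the unbounded transfer function $A$ (of size $\sim M$ per unit length, growing in $x_d$) while keeping every error proportional to $M(\ep_\theta+\lambda^{-1})$, and at the same time track the constants uniformly so that the induction on $d$ closes. The first thing to try is an $x_d$-localization at scale $\sim M$ with a partition of unity, using the exact boundedness of $A/M$ and its derivatives from Lemma \ref{l:extract}(iii).
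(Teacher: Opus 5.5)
Your architecture is the paper's: induction on $d$, a sector decomposition of the thin annulus via an effective covering recombined with Lemma~\ref{l:almost}, a commutator argument with multiplication by the transfer function $A$ of Lemma~\ref{l:extract}, and the inductive hypothesis applied to $\overline a$. But the ``main obstacle'' you describe does not exist, and your proposed fallback would not work.

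Lemma~\ref{l:extract}(iii) with $m=0$, which you yourself quote in step 3, gives $\norm{A}_{L^\infty}\le CM$. Hence $\abs{\ip{[-\Delta-\lambda^2,A]w}{w}}\le 2CM\ltwo{(-\Delta-\lambda^2)w}\ltwo{w}$ directly. This is exactly the paper's argument with $b=A/M$, and no $x_d$-localization is needed. Conversely, localization cannot rescue an unbounded $A$. A cutoff $\chi$ at scale $R$ in $x_d$ costs $\ltwo{(-\Delta-\lambda^2)(\chi w)}\sim (\lambda/R)\ltwo{w}$, so the commutator term becomes of size $R^{-1}\,\mathrm{osc}_{\supp \chi}A$. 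That is $O(1)$ for $R\sim M$. It is also $O(1)$ for every $R$ when $A$ grows linearly in $x_d$, as it can for the naive choice $a=a'$, $\overline a=a'_M$. This is why the almost-periodic-density construction of Lemma~\ref{l:smoothing} is essential.

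Likewise, in the recombination step keep indicators rather than a general weight $\ip{av}{v}$. Lemma~\ref{l:almost} rests on the identity $\mathbbm 1_E(u*\varphi_j)=\mathbbm 1_E((\mathbbm 1_{E_\delta}u)*\varphi_j)$, which has no analogue for a weight. The paper uses $a\le \mathbbm 1_X$ and $\delta\mathbbm 1_{X_\delta}\le a'$, with a single set $E$ serving all sectors (e.g.\ the union of the sets $X$ over $\theta\in\Theta$).

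The genuinely missing step is the passage from $D=d$ to $D>d$. Your inductive step covers $\mathbb S^{d-1}$ and works in $\mathbb R^d$, so it only yields the estimate for $v\in H^2(\mathbb R^d)$. Yet when you invoke the hypothesis for $\overline a$, which lives on $\mathbb R^{d-1}$, you apply it to $w\in H^2(\mathbb R^d)$. That is the case $D=d>d-1$, and without it the induction does not close.

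The paper supplies this by a partial Fourier transform in the extra $D-d$ variables. Each $v_\zeta$ solves the $d$-dimensional problem with $\lambda^2$ replaced by $\lambda^2-\abs{\zeta}^2$, and one integrates in $\zeta$ using Plancherel. Be aware that this needs the $d$-dimensional bound at every shifted parameter $\lambda^2-\abs{\zeta}^2$, including values below the high-frequency threshold $C_1\Lambda(a';c_1)$. That is precisely the regime your key step uses: for $\xi_d\approx\lambda$ the transverse parameter $\lambda^2-\xi_d^2$ is only $O(\ep_\theta^2\lambda^2)$ and can be near $0$. So low and intermediate frequencies for $\overline a$ must also be controlled, e.g.\ by relative density and the PLS theorem as in Lemma~\ref{l:lowFreqResolve}, with the constants tracked through the induction.
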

Note that Theorem \ref{thm:comb} in any dimension follows immediately from this Proposition, Lemma \ref{l:lowFreqResolve} and Theorem \ref{thm:Miller}.

We prove the Proposition by induction using a positive commutator argument involving the modified $a$ constructed in Lemma \ref{l:extract}. Let us call P\ref{p:main}$(d)$ Proposition \ref{p:main} in the case of dimension $d$. Notice that the base case P\ref{p:main}$(0)$ is trivial since in that case $a$ is a positive constant and $\eta=a$. Furthermore, $C_1=1$, $c_1 = \eta = \Lambda_0^{-1}$. Supposing P\ref{p:main}$(d-1)$ holds, we will use the following proposition to lift to P\ref{p:main}$(d)$. We only state and prove the case of $\theta=(0,\ldots,0,1) \in \mathbb S^{d-1}$ but the general case can be reduced to this one simply by considering the rotation $\theta a$ rather than $a$.

\begin{proposition}\label{p:main-microlocal}
Let $\omega$ be a modulus of continuity, $d \in \mathbb{N}_0$, and $\Lambda_0' \ge 1$. There exists $C_2 \ge 1$ and $c_2,\delta \in (0,1)$ such that 
\begin{itemize}
    \item for all $a':\mathbb R^d \to [0,1]$ which are $\omega$-continuous and $\Lambda(a';1) \le \Lambda_0'$, 
    \item all $\lambda \ge C_2\Lambda(a';c_2)$ such that $\theta_0=(0,\ldots,0,1)$ is a $(c_2,\lambda)$ effective direction for $a'$ with length $M_0$ and cap width $\ep_0$,
    
    \item and all $v$ with 
	\[ \supp \hat v \subset \{ (\xi',\xi_d) \in \mathbb R^{d-1} \times \mathbb R : \ |\xi'| \le 3 \ep_0\lambda, \lambda (1-3\ep_0^2) \le |\xi_d| \le \lambda(1+3\ep_0^2) \},\]
    
\end{itemize} 
we have
    \begin{equation}\label{e:p-sector} c_2 \ltwo{v}^2 \le 
    \ltwo{(-\Delta-\lambda^2) v}^2 +
     \norm{v}_{L^2(X)}^2, \quad \delta \mathbbm{1}_{X_\delta} \le a' .\end{equation}
\end{proposition}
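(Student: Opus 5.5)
We argue by a positive commutator (normal form) computation in the $x_d$ direction, using the structural Lemma \ref{l:extract} and the inductive hypothesis P\ref{p:main}$(d-1)$ applied to the averaged function $\overline a$. First apply Lemma \ref{l:extract} to $a'$ with length $M_0$. This produces the smooth minorant $a\le \d^{-1}a'$ with $a\le \mathbbm 1_X$, the $x_d$-invariant function $\overline a(x')$ with $\Lambda(\overline a;\rho)\le C\Lambda(a';c\rho)$, and the transfer function $A$ satisfying $\partial_{x_d}A=\overline a-a$ and $|\nabla^m A|\le C^{m+1}M_0$. It then suffices to prove
\[ c_2\ltwo{v}^2 \le \ltwo{(-\Delta-\lambda^2)v}^2+\ip{av}{v}, \]
since $\ip{av}{v}\le \norm{v}_{L^2(X)}^2$.

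\textbf{Step 1 (reduction to $\overline a$).} Set $P=-\Delta-\lambda^2$ and $f=(-\Delta-\lambda^2)v$. On the Fourier support of $v$ we have $|\xi_d|\approx\lambda$, so we take $B=(2i\lambda)^{-1}\operatorname{sgn}(D_d)$-type operator, or more simply, after splitting into $\xi_d>0$ and $\xi_d<0$, $B=\lambda^{-1}$ times the symbol $\xi_d/|\xi_d|$ cut off to the support. Compute
\[ \ip{[P,A]Bv}{v} \]
in two ways. Since $[-\Delta,A]=-2\nabla A\cdot\nabla-\Delta A$, the principal part is $-2(\partial_dA)\partial_d-2\nabla'A\cdot\nabla'$. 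Acting on $v$, $\partial_d\approx i\xi_d$ with $|\xi_d|=\lambda(1+O(\ep_0^2))$ and $|\xi'|\le 3\ep_0\lambda$. Therefore
\[ \frac{1}{2\lambda}\ip{[P,AB]v}{v} = \ip{(\overline a-a)v}{v}+O\!\left(\norm{\nabla A}_\infty\ep_0+\lambda^{-1}\norm{\nabla^2A}_\infty\right)\ltwo{v}^2 . \]
The error is $O(M_0(\ep_0+\lambda^{-1}))\ltwo v^2$, which by the effective direction condition \eqref{e:comb-eff-2} is $\le C c_2\ltwo v^2$. On the other hand $\ip{[P,AB]v}{v}=\ip{ABv}{Pv}-\ip{Pv}{(AB)^*v}$. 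This is bounded by $C\norm{A}_{L^\infty}\ltwo{f}\ltwo v$, but $\norm{A}_{L^\infty}$ is not finite. This is the main obstacle: $A$ grows linearly in $x_d$, so only the derivative bounds of $A/M$ hold. The fix I would try is to first localize $v$ spatially in $x_d$ to windows of length $\sim R$ with a smooth partition of unity $\chi_k(x_d)$. Replace $A$ by $A-A(x',x_d^k)$ on each window, so that $|A|\lesssim M_0 R$ there. The commutator $[P,\chi_k]$ costs $\lambda/R$, which is acceptable against the $1/\lambda$ prefactor. Choosing $R\sim M_0$ gives
\[ \ip{\overline a v}{v}\le \ip{av}{v}+C\frac{M_0}{\lambda}\ltwo f\ltwo v+Cc_2\ltwo v^2 . \]
Since $M_0/\lambda\le c_2^2$, the middle term is harmless.

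\textbf{Step 2 (induction).} Because $\overline a$ depends only on $x'$, we apply P\ref{p:main}$(d-1)$ in the variables $x'$, treating $x_d$ as one of the extra dimensions allowed by $D\ge d$. The point is that $\overline a$ is $C^2t$-continuous and $\Lambda(\overline a;c_1)\le C\Lambda(a';cc_1)$, so the hypothesis $\lambda\ge C_2\Lambda(a';c_2)$ with $C_2$ large and $c_2\le cc_1$ makes P\ref{p:main}$(d-1)$ applicable. This gives
\[ c_1\ltwo v^2\le \ltwo{f}^2+\ip{\overline av}{v}. \]
Combining this with Step 1 and absorbing the $Cc_2\ltwo v^2$ and cross terms yields \eqref{e:p-sector}.

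I expect the boundedness issue in Step 1 to be the crux, and the localization constants must be tracked against \eqref{e:a-smooth-bounded}. A secondary subtlety is ensuring the frequency truncation $B$ commutes with $A$ up to acceptable errors. I would handle this by symbolic calculus with symbols in $x$ controlled by \eqref{e:a-smooth-bounded}.
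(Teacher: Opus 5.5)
\textbf{Overall structure.} Your architecture matches the paper's. You take a commutator of $-\Delta-\lambda^2$ with the transfer function from Lemma \ref{l:extract} in the $x_d$ direction. The errors are of size $M_0(\ep_0+\lambda^{-1})$ and are absorbed using that $\theta_0$ is a $(c_2,\lambda)$ effective direction. Then P\ref{p:main}$(d-1)$ is applied to $\overline a$, with $x_d$ treated as an extra variable.

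\textbf{The flaw in Step 1.} The problem is exactly where you place the crux, and it comes from misreading the lemma. The bound you yourself quote, \eqref{e:a-smooth-bounded}, holds for every $m\in\mathbb N_0$, so $m=0$ gives $\norm{A}_{L^\infty}\le CM_0$.

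Producing a \emph{bounded} transfer function is the whole purpose of Lemma \ref{l:extract}. The naive transfer function built from $a'$ and $a'_{M}$ can indeed be unbounded, even linearly growing. That is why Section \ref{s:structure} replaces $a'$ by a minorant with almost periodic density along $x_d$ (Lemmas \ref{l:B} and \ref{l:smoothing}).

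With $\norm{A}_{L^\infty}\le CM_0$, the two-sided commutator bound directly gives $C\frac{M_0}{\lambda}\ltwo{f}\ltwo{v}$. Your Step 1 inequality then follows with no localization at all. The paper does precisely this, using the multiplication operator $b=A/M$ with $\ltwo{bu}\le C\ltwo{u}$ and $[B,-\Delta-\lambda^2]v=2\nabla b\cdot\nabla v+(\Delta b)v$.

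\textbf{Why the windowing would fail anyway.} Suppose $A$ really were unbounded; the windowing still would not rescue the argument. Take $v_k=\chi_k v$ with $\chi_k$ adapted to windows $I_k$ of length $R$, and $A_k=A-A(x',x_d^k)$. Then $\ltwo{(-\Delta-\lambda^2)v_k}\le\ltwo{\chi_k f}+C\frac{\lambda}{R}\ltwo{\tilde\chi_k v}$. After multiplying by $\lambda^{-1}\norm{A_k}_{L^\infty(I_k)}$, the cutoff term contributes $R^{-1}\sup_k\norm{A_k}_{L^\infty(I_k)}\ltwo{v}^2$ to the error.
\begin{itemize}
\item If $A$ grows linearly, this error is of order one. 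With your bound $|A_k|\lesssim M_0R$ it is of order $M_0$.
\item Either way it is not $O(c_2)$, so it cannot be absorbed against $c_1\ltwo{v}^2$.
\end{itemize}
Said differently, the glued multiplier $\sum_k\chi_k^2A_k$ has $x_d$-derivative roughly $\mathrm{avg}_R\, a-a$. Here $\mathrm{avg}_R\, a$ is a local average of $a(x',\cdot)$ over windows of length $R$. This is not $x_d$-invariant, so the induction hypothesis cannot be applied to it unless $\mathrm{avg}_R\, a\approx\overline a$ uniformly. That is again exactly the bounded-transfer-function property. So drop the windowing and use \eqref{e:a-smooth-bounded} with $m=0$.

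\textbf{A point in your favor.} Splitting into $\xi_d\gtrless 0$ (or inserting $\operatorname{sgn}(D_d)$) correctly accounts for the factor $i$ and for the two components $\xi_d\approx\pm\lambda$ of the Fourier support. The paper's replacement of $\partial_{x_d}v$ by $\lambda v$ passes over both points.
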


We will prove the following two lemmas which by induction will establish Proposition \ref{p:main}. 
\begin{lemma}\label{l:ind} P\ref{p:main}$(d-1)$ implies P\ref{p:main-microlocal}$(d)$. \end{lemma}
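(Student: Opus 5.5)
We prove Lemma \ref{l:ind} by a positive commutator (normal form) argument in the distinguished direction $\theta_0=(0,\ldots,0,1)$. We first apply Lemma \ref{l:extract} to $a'$ with $M=M_0$. This produces a smooth minorant $a\le \delta^{-1}a'$ with $a\le \mathbbm 1_X$, a function $\overline a$ on $\mathbb R^{d-1}$ with $\Lambda(\overline a;\rho)\le C\Lambda(a';c\rho)$, and a transfer function $A$ with $|\nabla^m (A/M_0)|\le C^{m+1}$. Since $\overline a$ depends only on $x'$, we may apply the inductive hypothesis P\ref{p:main}$(d-1)$ to $\overline a$, viewed as a function on $\mathbb R^{d-1}$ acting on $v\in H^2(\mathbb R^d)$. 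This is exactly why P\ref{p:main} is stated for all $D\ge d$. It gives
\[ c_1\norm{v}^2 \le \norm{(-\Delta-\lambda^2)v}^2+\ip{\overline a v}{v}, \qquad \lambda\ge C_1\Lambda(\overline a;c_1). \]
By (ii), this range of $\lambda$ is guaranteed by $\lambda\ge C_2\Lambda(a';c_2)$ once $c_2\le c\,c_1$ and $C_2\ge CC_1$.

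It remains to trade $\ip{\overline a v}{v}$ for $\ip{av}{v}\le \norm{v}_{L^2(X)}^2$. Since $\partial_{x_d}A=\overline a-a$, we use the commutator identity
\[ [-\Delta, A]=-(\partial_{x_d}A)\,2\partial_{x_d}-2\nabla' A\cdot\nabla'-\Delta A. \]
Pair $(-\Delta-\lambda^2)v$ against $A\,\partial_{x_d}v$ (or, more symmetrically, use the operator $\tfrac{1}{2i\lambda}[-\Delta-\lambda^2,A]$). On the Fourier support of $v$ we have $\partial_{x_d}\approx \pm i\lambda$ up to a relative error $O(\ep_0^2)$, and $|\nabla'|\lesssim \ep_0\lambda$. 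To handle the sign of $\xi_d$, we split $v$ into its $\xi_d>0$ and $\xi_d<0$ parts, which are orthogonal, and treat each separately. The result is
\[ \ip{(\overline a-a)v}{v}= \frac{1}{\lambda}\,\mathrm{Re}\,\ip{(-\Delta-\lambda^2)v}{O(A\partial)v} + E, \]
where the error $E$ collects three contributions. The term $\nabla'A\cdot\nabla' v/\lambda$ has size $\lesssim M_0\ep_0\norm{v}^2$. The term $\Delta A/\lambda$ has size $\lesssim M_0\lambda^{-1}\norm{v}^2$. The deviation of $\partial_{x_d}$ from $\pm i\lambda$, weighted by $\partial_{x_d}A=O(1)$, has size $O(\ep_0^2)\norm{v}^2$. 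The main term is bounded by Cauchy--Schwarz by $C M_0\norm{(-\Delta-\lambda^2)v}\norm{v}$, using $\norm{Av}\lesssim M_0|x_d|$-type bounds.

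\textbf{Main obstacle.} $A$ is bounded only by $M_0|x_d|$-type growth unless Lemma \ref{l:extract} really gives a bounded $A/M_0$, and the main term carries a factor $M_0$. If (iii) is read as $|A|\le CM_0$ (the $m=0$ case), all terms close. The crucial bound is then
\[ |E|\lesssim M_0(\ep_0+\lambda^{-1})\norm{v}^2\le C c_2\norm{v}^2, \]
by the effectivity condition \eqref{e:comb-eff-2}. For the main term we need a better factor than $M_0$. Since $M_0\le c_2/\ep_0$ may be large, I would instead estimate $\lambda^{-1}\norm{A\partial_{x_d}v}\lesssim M_0\norm{v}$ and absorb it via $\norm{(-\Delta-\lambda^2)v}\,M_0\norm{v}\le \tfrac{c_1}{4}\norm{v}^2+CM_0^2\norm{(-\Delta-\lambda^2)v}^2$. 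The constant $M_0^2$ is acceptable only if $M_0$ is bounded in terms of $\Lambda_0'$. Otherwise, one uses that $v$ is microlocalized at frequency $\lambda$, so that $(-\Delta-\lambda^2)$ acts with weight $\lambda$ and the resolvent term may be rescaled; this bookkeeping is where I expect the real difficulty.

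Combining the two estimates, choosing $c_2$ small to absorb $E$ and the Cauchy--Schwarz remainder into $\tfrac{c_1}{2}\norm{v}^2$, and using $\ip{av}{v}\le\norm{v}_{L^2(X)}^2$ with $\delta\mathbbm 1_{X_\delta}\le a'$ from (i), gives \eqref{e:p-sector}.
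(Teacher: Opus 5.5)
You have the paper's structure: Lemma \ref{l:extract} with $M=M_0$, P\ref{p:main}$(d-1)$ applied to $\overline a$ through the $D\ge d$ clause, and a commutator with the transfer function to trade $\ip{\overline a v}{v}$ for $\ip{av}{v}$, with \eqref{e:comb-eff-2} controlling the errors. The gap is the main (resolvent) term. You overestimate it by a factor of $\lambda$ and then leave it unresolved.

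Your identity $\ip{(\overline a-a)v}{v}=\lambda^{-1}\mathrm{Re}\ip{(-\Delta-\lambda^2)v}{O(A\partial)v}+E$ mixes two normalizations.
\begin{itemize}
\item Your error terms $\lambda^{-1}\nabla'A\cdot\nabla'v$ and $\lambda^{-1}(\Delta A)v$ come from the symmetric form $\frac{1}{2i\lambda}\ip{[-\Delta-\lambda^2,A]v}{v}$. In that normalization the left-hand side equals $\frac{1}{2i\lambda}\bigl(\ip{Av}{(-\Delta-\lambda^2)v}-\ip{(-\Delta-\lambda^2)v}{Av}\bigr)$. No derivative falls on $v$, and it is at most $\lambda^{-1}\norm{A}_\infty\ltwo{(-\Delta-\lambda^2)v}\ltwo{v}\le CM_0\lambda^{-1}\ltwo{(-\Delta-\lambda^2)v}\ltwo{v}$.
\item If you pair against $A\partial_{x_d}v$ instead, the leading part of $[-\Delta,A]\partial_{x_d}$ acting on $v$ is $\approx 2\lambda^2(\overline a-a)$. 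So the correct prefactor is $\lambda^{-2}$, not $\lambda^{-1}$, and you again get $CM_0/\lambda$ rather than $CM_0$.
\end{itemize}

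With the correct factor the argument closes immediately. By \eqref{e:comb-eff-2}, $M_0/\lambda=(\ep_0M_0)(\ep_0\lambda)^{-1}<c_2^2$. Young's inequality then bounds the main term by $C^2c_2^2\ltwo{(-\Delta-\lambda^2)v}^2+c_2^2\ltwo{v}^2$, and both pieces are harmless once $c_2$ is small compared with $c_1$ and $C^{-1}$.

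This is exactly the paper's proof:
\begin{itemize}
\item it sets $b=A/M_0$, so $\norm{b}_\infty\le C$ by (iii) of Lemma \ref{l:extract} with $m=0$ (this is precisely what removes your worry about $M_0|x_d|$ growth);
\item it bounds $\abs{\ip{[b,-\Delta-\lambda^2]v}{v}}\le 2C\ltwo{(-\Delta-\lambda^2)v}\ltwo{v}$;
\item it extracts $\frac{\lambda}{M_0}\ip{(\overline a-a)v}{v}$ from the leading part $2\partial_{x_d}b\,\partial_{x_d}$ and divides by $\lambda/M_0$.
\end{itemize}

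Your fallback, a coefficient $CM_0^2$ on $\ltwo{(-\Delta-\lambda^2)v}^2$, does not yield \eqref{e:p-sector}. Nothing bounds $M_0$ in terms of $\Lambda_0'$: effectivity allows $M_0$ up to $c_2^2\lambda$, and in the periodic example $M_\theta$ grows like $\lambda^{1/4}$. A $\lambda$-dependent coefficient there could not later be absorbed using the fixed annulus width in Lemma \ref{l:almost-2}.

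Your splitting into $\xi_d>0$ and $\xi_d<0$ pieces is a sensible precaution that the paper glosses over. Recombining is harmless: $\ip{\overline a v_+}{v_-}=0$ exactly because $\overline a$ is independent of $x_d$, and $\ip{av_+}{v_-}$ is of the same order as $E$ after one integration by parts in $x_d$.
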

\begin{lemma}\label{l:almost-2} P\ref{p:main-microlocal}$(d)$ implies P\ref{p:main}$(d)$.
\end{lemma}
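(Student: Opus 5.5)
The plan is to glue the sector estimates of Proposition \ref{p:main-microlocal} together with the almost orthogonality Lemma \ref{l:almost}, exactly as in the proof of Theorem \ref{thm:comb} in dimension 2. Fix $\lambda \ge C_1\Lambda(a';c_1)$, where we will take $c_1 \le c_2$ and $C_1 \ge C_2$ (with $c_2$ possibly shrunk further below). By Lemma \ref{l:Lambda}, $a'$ then has a $(c_2,\lambda)$ effective covering $\Theta$ with widths $\ep_\theta$ and lengths $M_\theta$.

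First treat $D=d$. Split $v = v_{\rm near} + v_{\rm far}$ with $\widehat{v_{\rm near}} = \mathbbm 1_{\{||\xi|^2-\lambda^2|\le\kappa\}}\hat v$, where $\kappa>0$ is a small constant fixed later. Plancherel gives
\[ \ltwo{v_{\rm far}} \le \kappa^{-1}\ltwo{(-\Delta-\lambda^2)v}, \]
so it suffices to prove the uncertainty principle $\ltwo{w}^2 \lesssim \ip{a'w}{w}$ for $w$ Fourier supported in the thin annulus $\Sigma = \{||\xi|-\lambda| \le \kappa/\lambda\}$. The triangle inequality, as in \eqref{e:triangle}, then reassembles \eqref{e:p-ind}.

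To get that uncertainty principle, apply Lemma \ref{l:almost} with:
\begin{itemize}
\item $\Sigma$ the thin annulus above, and $\mathsf S = \{\Sigma \cap S_{\theta,2\ep_\theta}\}_{\theta\in\Theta}$;
\item $\Xi$ a $\tfrac12$-separated net on $\lambda\mathbb S^{d-1}$;
\item $E = X := \{a' \ge \delta\}$, with $\delta$ from Proposition \ref{p:main-microlocal}. This $X$ can be taken independent of $\theta$, since enlarging $X$ only weakens \eqref{e:p-sector}.
\end{itemize}

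\textbf{Condition (a).} After rotating $\theta$ to $\theta_0$, we need $\Sigma\cap S_{\theta,2\ep_\theta}$ to lie in the box $\{|\xi'|\le 3\ep_\theta\lambda,\ \lambda(1-3\ep_\theta^2)\le|\xi_d|\le\lambda(1+3\ep_\theta^2)\}$. The transversal width is fine. The radial thickness $\kappa/\lambda$, together with the curvature drop $\sim \ep_\theta^2\lambda$, fits inside $3\ep_\theta^2\lambda$ because $\ep_\theta\lambda > c_2^{-1}\ge 1$ and $\kappa\le 1$. For $w$ supported there, $\ltwo{(-\Delta-\lambda^2)w}\le\kappa\ltwo{w}$. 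Choosing $\kappa^2\le c_2/2$, \eqref{e:p-sector} absorbs this term and yields (a) with constant $(2/c_2)^{1/2}$.

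\textbf{Condition (b).} This holds with $T\ge \tfrac13\ep_\theta\lambda > (3c_2)^{-1}$, by the same geometry as in dimension 2.

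Lemma \ref{l:almost} then bounds $\ltwo{w}$ by $\ltwo{\mathbbm 1_{X_{\delta'}}w}$ plus $3c_2\ltwo{w}$, times a fixed constant. Since $c_2$ may be shrunk (this only enlarges $C_2\Lambda(a';c_2)$, which we absorb into $C_1,c_1$), the error is absorbed. Finally, $\omega$-continuity gives $\delta'$ with $X_{\delta'}\subset\{a'\ge\delta/2\}$, so $\ltwo{\mathbbm 1_{X_{\delta'}}w}^2\le \tfrac2\delta\ip{a'w}{w}$.

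\textbf{Case $D>d$.} Here $a'$ depends only on the first $d$ variables. Take the partial Fourier transform in the remaining variables $\eta\in\mathbb R^{D-d}$. This reduces to the $d$-dimensional estimate at effective frequency $\mu^2=\lambda^2-|\eta|^2$, uniformly in $\eta$, and Plancherel in $\eta$ integrates back. When $\mu^2<0$, or $\mu^2$ is small compared to $\lambda^2$, we have $|(-\Delta_d-\mu^2)|\gtrsim 1$ on most of the spectrum. For the band where $\mu$ is bounded but not large, use a low-frequency PLS-type estimate, as in Lemma \ref{l:lowFreqResolve}, which applies since $a'$ is relatively dense by Lemma \ref{l:rd-comb}.

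\textbf{Main obstacle.} I expect the main difficulty to be this intermediate regime $1\lesssim\mu\le C_1\Lambda(a';c_1)$: the constants there must remain uniform in terms of $\Lambda_0$ and $\omega$ only. I would handle it with the PLS theorem on the bounded-frequency annulus, using the relative density constants from Lemma \ref{l:rd-comb}. The rest is bookkeeping of the constants $\kappa, c_2, \delta'$.
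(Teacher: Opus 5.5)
Your argument for $D=d$ is essentially the paper's. You rotate each covering direction to $\theta_0$ and apply Proposition~\ref{p:main-microlocal} on $\Sigma\cap S_{\theta,2\ep_\theta}$; your check that this set lies in the required box is correct. You then glue with Lemma~\ref{l:almost} at separation $T\gtrsim\ep_\theta\lambda$, absorb the $T^{-1}$ error, and split off the part of $\hat v$ away from the characteristic set as in Lemma~\ref{l:equiv}.

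The paper keeps $c_2$ fixed and instead takes a finer $(c_1,\lambda)$ covering with $c_1\ll c_2$. Shrinking $c_2$, as you do, also works, but not because the prefactor is fixed: it contains your (a)-constant $(2/c_2)^{1/2}$. The error coefficient is $3C_{\delta',d}(\sqrt{2c_2}+c_2)$, which still tends to $0$. Your use of the $\theta$-independent set $\{a'\ge\delta\}$ settles a point the paper leaves implicit.

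The gap is in the case $D>d$, and the paper's text shares it: the paper simply evaluates \eqref{e:final-d} at the parameter $\lambda^2-|\zeta|^2$. You correctly see that the partial Fourier reduction needs the $d$-dimensional estimate at every $\mu^2=\lambda^2-|\zeta|^2\le\lambda^2$, with a constant depending only on $(\omega,d,\Lambda_0)$. In Lemma~\ref{l:ind} the transversal parameter really does fill out $|\mu|\lesssim\ep_0\lambda$. Only $\mu^2\le -1$ comes for free.

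Your PLS patch for $-1<\mu^2<(C_1\Lambda(a';c_1))^2$ does not deliver the required uniformity, for two reasons:
\begin{itemize}
\item Its constant grows with the frequency radius $R\approx C_1\Lambda(a';c_1)$. The growth is exponential in Kovrijkine's bound, and some growth is unavoidable once $a'$ vanishes on a ball: concentrate a function with $\supp\hat f\subset B(0,R)$ at scale $R^{-1}$ inside that ball.
\item The hypotheses bound only $\Lambda(a';1)\le\Lambda_0$; nothing controls $\Lambda(a';c_1)$.
\end{itemize}
So your $c_1$ would depend on $a'$ itself. This breaks the induction. Proposition~\ref{p:main}$(d-1)$ must hold with a single constant for every minorant $\overline a$ produced by Lemma~\ref{l:extract}, over all directions and all $\lambda$. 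Moreover, $c_2$ in \eqref{e:c2Choice} is built from that constant.

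Your patch does close when $d=1$, which is the step behind the two-dimensional theorem. There Lemma~\ref{l:rd-comb} (both parts, since in one dimension relative density gives the GCC) bounds $\Lambda(a';c_1)$ in terms of $\Lambda_0$ and $c_1$ alone. For $d\ge 2$, what is missing is either a resolvent estimate in the band $\Lambda_0\lesssim\mu\lesssim C_1\Lambda(a';c_1)$ that is uniform over the class, or a restatement of Proposition~\ref{p:main} whose constants may depend on $\Lambda(a';\rho)$ for all $\rho$. Neither your proposal nor the paper supplies this.
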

We begin with the latter, as it follows the approach in the proof of Theorem \ref{thm:comb} above when $d=2$, although we require an additional step when $a$ is invariant in at least one direction. 

\begin{proof}[Proof of Proposition \ref{p:main}(d) assuming Proposition \ref{p:main-microlocal}(d) (Lemma \ref{l:almost-2})]
First we work in the case $D=d$. We set $C_1=C_2$ and will select $c_1$ depending on $\delta$ and $c_2$ from P\ref{p:main-microlocal}$(d)$ and $C_{d,\delta}$ from Lemma \ref{l:almost}. Preliminarily, suppose
    \begin{equation}\label{e:c1FirstChoice} 0<c_1\le  \sqrt{\frac{c_2}{4C_{\d,d} (c_2+2)} }. \end{equation} 
We also assume that $\hat v$ is supported in 
    \[ A_\lambda = \left\{ \xi \in \mathbb R^d : \lambda-\varkappa \lambda^{-1} \le |\xi| \le \lambda + \varkappa \lambda^{-1} \right\},\] 
for some $\varkappa>0$ small, to be determined. Now, for each $\lambda \ge \Lambda(a';c_1)$ we can find an $(c_1,\lambda)$ effective covering of the sphere $\Theta$, which tells us the annulus $A_\lambda$ can be covered by wedges $\text{Wedge}_\theta$, of width $\ep_\theta$. Let us denote by $\text{Wedge}_\theta^*$ the same wedge but of three times the width. 
If $\hat v$ is supported in $A_\lambda \cap \text{Wedge}_\theta^*$, then for $\varkappa$ smaller than some dimensional constant, $\supp \widehat{\theta v}$ satisfies the condition of Proposition \ref{p:main-microlocal} and $\theta_0$ is an effective direction for $\theta a'$. Note $\theta a'$ is $\omega$ continuous and $\Lambda(\theta a';1)=\Lambda(a';1) \leq \Lambda_0$. Thus, by Proposition \ref{p:main-microlocal}$(d)$, for all $\lambda \ge C_2 \Lambda(a;c_2)$, we obtain
    \[ c_2 \norm{v}_{L^2}^2 \le \ltwo{(-\Delta-\lambda^2)v}^2+\ip{\mathbbm{1}_Xv}{v}, \quad \supp \hat v \subset A_\lambda \cap \text{Wedge}_\theta^*, \quad \d \mathbbm{1}_{X_{\d}} \leq a'.\]
Note we have $\ltwo{(-\Delta-\lambda^2)v}^2 \leq C \varkappa^2 \ltwo{v}^2$, so potentially taking $\varkappa$ smaller we may absorb that term back into the left hand side and obtain
\[ \frac{c_2}{2} \norm{v}_{L^2}^2 \le \ip{\mathbbm{1}_Xv}{v}.\]

Let again $\Xi$ be a collection of $1/2$ separated points on $\lambda \mathbb S^{d-1}$ with gaps no greater than 1. Notice that for any $\xi \in \Xi$, there exists $\theta \in\Theta$ such that $\xi \in \text{Wedge}_\theta$ and hence
    \[ d(\xi, A_\lambda \backslash \text{Wedge}_\theta^*) \ge (\lambda-\varkappa \lambda^{-1}) \sin(\ep_\theta) \geq  \frac{\ep_\theta \lambda}{4} > \frac{1}{4c_1}.\]
Therefore, we can apply Lemma \ref{l:almost}, and use our initial restriction on $c_1$ from \eqref{e:c1FirstChoice}, to conclude that for $\hat v$ supported in $A_\lambda$
   \[  \frac{c_2}{c_2+2} \ltwo{v}^2 \le \frac{2C_{\d,d}}{\d} \ip{a'v}{v} + \frac{c_2}{2(c_2+2)} \ltwo{v}^2. \]
Now, appealing to Lemma \ref{l:equiv}, for any $v \in H^2$ we obtain
    \begin{equation}\label{e:final-d}\frac{c_2 \delta}{4 C_{d,\d}(c_2+2)} \norm{v}_{L^2(\Rb^d)}^2 \le C \norm{(-\Delta-\lambda^2) v}_{L^2(\Rb^d)}^2 + \ip{a'v}{v}_{L^2(\Rb^d)}.\end{equation}

Thus we arrive at an expression for $c_1$ in \eqref{e:p-ind}, as long as we take into account \eqref{e:c1FirstChoice} as well.

Finally, to extend to $D > d$, we use a standard partial Fourier transform argument. Let $\mathcal F_{2}$ be the Fourier transform in the final $D-d$ variables, i.e.
	\[ \mathcal F_{2} f(x,\zeta) = \int_{\mathbb R^{D-d}}  e^{-i y \cdot \zeta} f(x,y) \, d y, \quad x \in \mathbb R^d \ ,\zeta \in \mathbb R^{D-d}.\] 
For each $\zeta \in \mathbb R^{D-d}$, define the functions, for $x \in \mathbb R^d$, $v_\zeta(x) = \mathcal F_{2} u(x,\zeta)$. By standard identities for the Fourier transform, on $\Rb^d$ we have 
	\begin{equation}\label{e:IBP}(-\Delta_{\mathbb R^d} -\lambda^2 + |\zeta|^2) v_\zeta(\cdot) = \mathcal F_2 \left[ (-\Delta-\lambda^2 )u \right]( \cdot,\zeta).\end{equation}
On the other hand, \eqref{e:final-d} states
	\[ c_1 \norm{ v_{\zeta} }_{L^2(\mathbb R^d)}^2 \le \norm{ (-\Delta_{\mathbb R^d} -\lambda^2 + |\zeta|^2) v_\zeta }_{L^2(\mathbb R^d)}^2 + \ip{a'v_\zeta}{v_\zeta} . \]
Using \eqref{e:IBP}, integrating over $\zeta \in \mathbb R^{D-d}$, and using Plancherel's theorem establishes
\[ c_1 \norm{ u }_{L^2(\mathbb R^D)}^2 \le \norm{ (-\Delta-\lambda^2)  u }_{L^2(\mathbb R^D)}^2 + \ip{a'u}{u}_{L^2(\Rb^D)} . \]

\end{proof}

\begin{proof}[Proof of Proposition \ref{p:main-microlocal}  assuming Proposition \ref{p:main} for $d-1$ (Lemma \ref{l:ind})]
Let $C,\delta,c>0$ be the constants from Lemma \ref{l:extract}. Then set $\omega'(t)=C^2t$ and $C_1,c_1$ be the constants provided by P\ref{p:main}$(d-1)$ with $\omega=\omega'$ and $\Lambda_0=C \Lambda_0'$. Then, we set
    \begin{equation}\label{e:c2Choice} c_2  = \frac{cc_1}{2(4+3C^2+C^3)}, \quad C_2 = C C_1.\end{equation}
Now, since $\theta_0$ is an $(c_2,\lambda)$ effective direction, with length $M$ and cap width $\ep_0$ then we have 
    \[ \Lambda((a')_{M};c_2) \le \Lambda(a';c_2), \quad \ep_0M + (\ep_0\lambda)^{-1} < c_2 \] 
for all $\lambda \ge \Lambda(a';c_2)$. Let now $a$, $\overline{a}$, and $X$ be those provided by Lemma \ref{l:extract} and note $\Lambda(\overline{a};c_1) \leq C \Lambda(a';c_2)$ so $a$ satisfies the hypotheses of P\ref{p:main}$(d-1)$ with modulus of continuity $\omega'$ and $\Lambda_0=C \Lambda(a';1)$. Recalling $A(x',x_d)$ from Lemma \ref{l:extract}, let 
    \begin{equation}
        b(x',x_d)=\frac{1}{M} \left( \int_0^{x_d} \overline{a}(x') - a(x',t) dt\right) = \frac{1}{M} A(x',x_d),
    \end{equation}
    and denote by $B$ the operator on $L^2$ of multiplication by $b$. By Lemma \ref{l:extract} $\ltwo{bu} \le C\ltwo{u}$. We will compute the commutator $[B,(-\Delta-\lambda^2)]$ in two different ways. First, using the self-adjointness of $B$ and $\Delta$, 
    \begin{align}\label{eq:commutatorSelfAdjoint}
       \abs{\<[B,(-\Delta-\lambda^2)] v,v\>} &= \abs{ \<Bv, (-\Delta-\lambda^2) v\> - \<(-\Delta-\lambda^2) v,Bv\>}\\
       &\le 2C \ltwo{(-\Delta-\lambda^2) v} \ltwo{v}.
    \end{align}
On the other hand, using the product rule, 
    \begin{equation}\label{eq:commutatorProductRule}
        [B,(-\Delta-\lambda^2)]v = 2\nabla b \cdot \nabla v + (\Delta b) v.
    \end{equation} 
Combining together \eqref{eq:commutatorSelfAdjoint} and \eqref{eq:commutatorProductRule}, rearranging and taking absolute values, then applying Young's inequality, we have 
\begin{align}
    |\<\p_{x_d} b \p_{x_d} v,v \>| &\leq C\ltwo{(-\Delta-\lambda^2) v}\ltwo{v}  + |\<\nabla_{x'} b \nabla_{x'} v,v \>| + \frac{1}{2}|\<(\Delta b) v, v\>|\\
    &\leq C^2\ltwo{(-\Delta-\lambda^2)v}^2 + \ltwo{\nabla_{x'}b \nabla_{x'}v}^2 + \ltwo{(\Delta b)v}^2 + \ltwo{v}^2. \label{eq:commutatorIntermed}
\end{align}
We seek to control the terms on the right hand side as errors, and to bound the term on the left hand side from below.  First, the term $\nabla_{x'} b \cdot \nabla_{x'} v$ will be treated as an error of the order $\ep_0 \lambda$ since $\nabla_{x'} b$ is bounded and by Plancherel's theorem and the frequency support of $v$,
	\begin{equation}\label{eq:xprimeDeriv}
	    \norm{\nabla_{x'} v}_{L^2} = \ltwo{\xi' \hat{v}} \le 3\ep_0\lambda \ltwo{\hat{v}} = 3\ep_0 \lambda \norm{v}_{L^2}.
	\end{equation}
For the derivative in $x_d$, we use the fundamental theorem of calculus to write
	\[ \partial_{x_d} b(x',x_d) = \frac{\overline{a}(x') - a(x',x_d)}{M}.\]
Furthermore, using the support of $\hat v$ a second time we see $\ltwo{\partial_{x_d} v -\lambda v} \le 3 \ep_0^2 \lambda \ltwo{v}$. Therefore we have
	\begin{align} \frac{\lambda}{M} \ip{\overline{a} v}{v}  &\le |\<\partial_{x_d} b \partial_{x_d} v, v\>| + \frac{\lambda}{M} \ip{a v}{v} + 3 \ep_0^2 \lambda \ltwo{v}^2.
    \end{align}
    Now combining this with \eqref{eq:commutatorIntermed} and \eqref{eq:xprimeDeriv} we obtain 
    \begin{align}
		\frac{\lambda}{M} \ip{\overline{a} v}{v} \le &C^2\ltwo{(-\Delta-\lambda^2) v}^2 + \frac{\lambda}{M} \ip{a v}{v}  \\
        &+ (1+3 \ep_0^2 \lambda  + \norm{\Delta b}_\infty + \norm{\nabla_{x'} b}_\infty 3\ep_0 \lambda )\ltwo{v}^2. \end{align}
Now note by Lemma \ref{l:extract}, $\norm{\Delta b}_\infty \le C^3$ and $\norm{\nabla_{x} b}_\infty \le C^2$. So, dividing through by $\frac{\lambda}{M}$ in the above display we obtain
\begin{align}
\label{e:step1} \ip{\overline{a} v}{v} \le &\frac{C^2M}{\lambda} \norm{(-\Delta-\lambda^2) v}^2 +  \ip{a v}{v} \\
    &+ \left[ \frac{M(1 + C^3)}{\lambda} + 3M\ep_0^2+ 3C^2M\ep_0 \right] \ltwo{v}^2.
\end{align}
To control the error, we invoke the assumption that $\theta_0$ is a $(c_2,\lambda)$ effective direction and our choice of $c_2$ in \eqref{e:c2Choice} to ensure
    \begin{equation} \label{eq:effectiveDirectionUse}
    \frac{M(1 + C^3)}{\lambda} + C^2M\ep_0 + 3M\ep_0^2 \le c_2(4+3C^2+C^3) = \frac{c_1}{2}.
    \end{equation}
Now, for all $\lambda \ge CC_1\Lambda(a';c_2) \ge C_1\Lambda(\overline{a};c_1)$, 
by the induction hypothesis applied to $\overline{a}$
, we have
\begin{equation}
    c_1 \norm{v}_{L^2(\Rb^d)}^2 \leq \ltwo{(-\Delta-\lambda^2) v}^2 + \<\overline{a} v,v\>_{L^2(\Rb^d)}.
\end{equation}
Combining this with \eqref{e:step1} and \eqref{eq:effectiveDirectionUse} we have 
    \begin{align}
    c_1 \ltwo{v}^2 
    &\le 2\ltwo{(-\Delta-\lambda^2) v}^2 + \ip{av}{v} + \frac{c_1}{2} \ltwo{v}^2.
    \end{align}
Finally, absorbing the error and noting that \eqref{e:c2Choice} implies $\frac{c_1}{4} > c_2$, we obtain \eqref{e:p-sector}.
\end{proof}

\subsection{Structure of comb GCC sets}\label{s:structure}
We will prove our main structural claim by induction. The main idea is that if $a$ satisfies the comb GCC in a direction $\theta_0=(0,\ldots,0,1)$, then in small strips $B(x_0',\delta) \times \mathbb R$, $a(x',x_d)$ can be bounded below by a product $a_1(x')a_2(x_d)$ with $a_2$ having a bounded transfer function and being relatively dense, while $a_1$ satisfies the comb GCC on $\mathbb R^{d-1}$.

\begin{definition}\label{d:b}
    We say a function $b:\mathbb R \to \mathbb R$ has almost periodic density if there exists $\varrho \in \mathbb R$ and an increasing sequence of real numbers $\{s_k\}_{k \in \mathbb Z}$ satisfying 
    \[ s_{k+1}-s_k \le M, \quad \lim_{k \to \pm \infty} s_k = \pm \infty,\]
    such that
        \[ \frac{1}{s_{k+1}-s_k} \int_{s_k}^{s_{k+1}} b(t) \, dt = \varrho, \quad k \in \mathbb Z.\]
    When we specify the parameters, we say $b$ has almost $M$-periodic density $\varrho$.
\end{definition}

\begin{lemma}\label{l:B}
If $b:\mathbb R \to [0,\infty)$ has almost $M$-periodic density $\varrho$, then
    \[ B(y) = \int_0^y \left[b(t) - \varrho\right] \, dt \]
is a bounded function with $|B| \le 4M \norm{b}_\infty$. Moreover, if $\varrho >0$ and $\inf_{k} s_{k+1}-s_k \ge m$, then $b$ is $(2M,\frac{\varrho m}{2M})$ relatively dense.
\end{lemma}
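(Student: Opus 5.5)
The plan is to exploit the fact that, by definition, $b-\varrho$ has zero integral over each interval $[s_k,s_{k+1}]$. For the boundedness of $B$, first locate the index $k_0$ with $s_{k_0}\le 0<s_{k_0+1}$, which exists since $s_k\to\pm\infty$. For $y\ge 0$ let $k_1$ be the index with $s_{k_1}\le y<s_{k_1+1}$. Then split
\[ B(y)=\int_0^{s_{k_0+1}}(b-\varrho)+\sum_{k=k_0+1}^{k_1-1}\int_{s_k}^{s_{k+1}}(b-\varrho)+\int_{s_{k_1}}^{y}(b-\varrho), \]
with the obvious modification when $k_1=k_0$. The middle sum vanishes identically by the almost periodic density assumption. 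Each boundary piece is an integral over an interval of length at most $M$.

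To estimate the boundary pieces, note that $\varrho$ is an average of $b$, so $0\le\varrho\le\norm{b}_\infty$. Hence $|b-\varrho|\le\norm{b}_\infty$ pointwise, since both quantities lie in $[0,\norm{b}_\infty]$. Each boundary piece is therefore bounded by $M\norm{b}_\infty$, giving $|B(y)|\le 2M\norm{b}_\infty\le 4M\norm{b}_\infty$. The case $y<0$ is symmetric: use the intervals to the left of $0$ and reverse orientation. The stated constant $4M$ leaves slack, so no care is needed over degenerate cases such as $y$ and $0$ lying in the same interval.

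For relative density, take any interval $I=[x,x+2M]$. Since consecutive gaps are at most $M$, $I$ contains some $s_k$ in $[x,x+M]$. Because $s_{k+1}-s_k\le M$, the whole interval $[s_k,s_{k+1}]$ then lies inside $I$. Using $b\ge0$,
\[ \int_I b\ \ge\ \int_{s_k}^{s_{k+1}} b=\varrho\,(s_{k+1}-s_k)\ \ge\ \varrho m. \]
Dividing by $|I|=2M$ gives the average $\ge \frac{\varrho m}{2M}$, which is the $(2M,\frac{\varrho m}{2M})$ relative density claimed.

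There is no real obstacle here. The only points requiring attention are the bookkeeping of the index straddling $0$ (or $y$), and the observation that $\varrho\le\norm{b}_\infty$ together with $b\ge0$ gives the pointwise bound on $|b-\varrho|$.
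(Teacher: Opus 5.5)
Your proof is correct and essentially the paper's argument: the paper also uses that $b-\varrho$ integrates to zero over each $[s_k,s_{k+1}]$ (phrased as $B(s_k)=B(s^*)$ for all $k$) and bounds the two leftover pieces of length at most $M$; for relative density it likewise places a full interval $[s_k,s_{k+1}]$ inside any window of length $2M$. The only difference is that your pointwise bound $|b-\varrho|\le\norm{b}_\infty$ (using $b\ge 0$) is sharper than the paper's $2\norm{b}_\infty$, so you get $2M\norm{b}_\infty$ instead of $4M\norm{b}_\infty$.
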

\begin{proof}
Notice that $|\varrho| \le \norm{b}_\infty$ follows immediately from the definition of $\varrho$.
Now, let $s^*= \min\{ s_k : 0 \le s_k\}$. The key property of $B$ is that $B(s_k)=B(s^*)$ for each $k \in \mathbb Z$. 
Furthermore, for any $y \in \mathbb R$ there exists a unique $k \in \mathbb Z$ such that $s_k < y \le s_{k+1}$. Therefore,
    \[ \abs{B(y)-B(s^*)} = \abs{B(y) - B(s_k)} = \abs{\int_{s_k}^{y} \left[ b(t) - \varrho\right] \, dt} 
    \le 2M \norm{b}_\infty.\] 
Finally, $|B(s^*)| \le 2M \norm{b}_\infty$ so the first claim follows from the triangle inequality. To prove the second claim, notice that if $s_{k-1} < t \le s_{k}$ then
    \[ \int_t^{t+2M} b(y) \, dy \ge \int_{s_k}^{s_{k+1}} b(y) \, dy = (s_{k+1}-s_k) \varrho \ge m\varrho.\]
\end{proof}

\begin{lemma}\label{l:smoothing}
There exists a constant $C>0$ such that the following holds for any $M,\varrho,\delta>0$ with $\delta < M/2$. If
    \[ Y = \bigcup_{j \in \mathbb Z} (y_j-\delta,y_j+\delta) \subset \mathbb R\]
is a disjoint union of open balls which is $(M,\varrho)$ relatively dense, then, we can find $a \le \mathbbm 1_{ Y}$ and $\eta \ge \frac{\varrho}{4}$ such that 
\begin{itemize}
    \item[(i)] $a$ is $(2M,\frac{\varrho}{8})$ relatively dense.
    \item [(ii)] the transfer function
        \begin{equation}
        A(y) = \int_0^y \left[a(t)- \eta \right]\, dt,
        \end{equation}
        as well as $a$ both satisfy for each $m\in \mathbb{N}_0$ we have 
            \begin{equation} |\partial^m f| \le C^{m+1} ( \rho\delta)^{-m}, \quad f \in \{A/M,a\},\end{equation}
\end{itemize}
\end{lemma}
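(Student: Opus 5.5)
The plan is to build $a$ as a sum of bumps, one on each ball of $Y$, with one common weight per block of consecutive balls. The weights will give $a$ almost $2M$-periodic density $\eta=\varrho/4$ in the sense of Definition \ref{d:b}, and Lemma \ref{l:B} will then deliver both (i) and the boundedness of $A$. We may assume $\varrho\le 1$, since $Y\subset\mathbb R$ has density at most $1$. I read the $\rho$ in (ii) as the density parameter $\varrho$.

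\emph{Step 1 (bump).} Fix once and for all $\phi_0\in C_0^\infty(-1,1)$ with $0\le\phi_0\le1$ and $\phi_0=1$ on $[-\tfrac12,\tfrac12]$. Set $\phi(y)=\phi_0(y/\delta)$. Then $\supp\phi\subset(-\delta,\delta)$, $\int\phi\ge\delta$, and $|\partial^m\phi|\le C^{m+1}\delta^{-m}$.

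\emph{Step 2 (blocks).} Build an increasing sequence $\{s_k\}_{k\in\mathbb Z}$ with no $s_k$ in $Y$. Start from any $s_0\notin Y$. Given $s_k$, let $s_{k+1}$ be the least point $\ge s_k+M$ not in $Y$, and construct backwards in the same way. Because the balls have length $2\delta<M$, this gives
\[ M\le s_{k+1}-s_k\le M+2\delta\le 2M. \]
Since no endpoint lies in $Y$, every ball of $Y$ lies inside exactly one block $[s_k,s_{k+1}]$. Let $n_k$ be the number of balls in block $k$. Relative density applied to $[s_k,s_k+M]$ gives
\[ 2\delta n_k\ge|Y\cap[s_k,s_k+M]|\ge\varrho M\ge \tfrac{\varrho}{2}(s_{k+1}-s_k). \]
Here the ball straddling $s_k+M$, if there is one, is still counted, because $s_{k+1}$ lies beyond it.

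\emph{Step 3 (weights and density).} Set $\eta=\varrho/4$ and define
\[ a=\sum_k c_k\sum_{y_j\in[s_k,s_{k+1}]}\phi(\cdot-y_j),\qquad c_k=\frac{\eta(s_{k+1}-s_k)}{n_k\int\phi}. \]
From $\int\phi\ge\delta$ and Step 2 we get $c_k\le\frac{\varrho(s_{k+1}-s_k)/4}{\varrho(s_{k+1}-s_k)/4}=1$. Hence $0\le a\le\mathbbm 1_Y$, since the balls are disjoint and $\phi\le 1$. By construction $\int_{s_k}^{s_{k+1}}a=\eta(s_{k+1}-s_k)$ for every $k$. So $a$ has almost $2M$-periodic density $\eta$, and the gaps satisfy $\inf_k(s_{k+1}-s_k)\ge M$.

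\emph{Step 4 (apply Lemma \ref{l:B}).} Lemma \ref{l:B} with period $2M$ gives $|A|\le 8M\|a\|_\infty\le 8M$, so $A/M$ is bounded. Its relative-density clause, with $m=M$, shows that $a$ is $(4M,\eta/4)$ relatively dense. Getting exactly $(2M,\varrho/8)$ requires more care. One option is to use directly that each block has length between $M$ and $2M$, so any window of length $2M$ contains a full block and hence mass at least $\eta M=\varrho M/4$, which gives density $\varrho/8$. The alternative is to tighten the block construction so that blocks have length at most $M+2\delta$ and apply the clause to that period.

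\emph{Step 5 (derivatives).} For $m\ge1$ we have $\partial^m(A/M)=\partial^{m-1}a/M$. At each point at most one bump is nonzero and $c_k\le1$, so $|\partial^m a|\le C^{m+1}\delta^{-m}\le C^{m+1}(\varrho\delta)^{-m}$. The case $m=0$ was handled in Step 4. The factor $1/M$ is harmless after enlarging $C$, because $\delta<M/2$ gives $M^{-1}\lesssim\delta^{-1}$.

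I expect the main obstacle to be the block bookkeeping in Step 2: ensuring block endpoints avoid $Y$ while block lengths stay in $[M,2M]$, so that every $c_k\le1$ and the density constant in (i) comes out as stated.
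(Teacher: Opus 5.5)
Your construction is the paper's in all essentials: the same greedy block endpoints $s_k\notin Y$ with $M\le s_{k+1}-s_k\le M+2\delta$, the same normalization of the mass of $a$ on each block to $\eta(s_{k+1}-s_k)$, and the same appeal to Lemma~\ref{l:B} for $|A|\lesssim M$. The one real difference is that you rescale the \emph{heights} of fixed bumps ($c_k\in[\eta,1]$), whereas the paper shrinks the \emph{widths} of unit-height bumps by $t_k\in[\varrho/2,1]$. Your choice gives the better bound $|\partial^m a|\lesssim\delta^{-m}$; the paper's shrinking is what produces the $(\varrho\delta)^{-m}$ in (ii). Your handling of the factor $1/M$ in $\partial^m(A/M)$ via $\delta<M/2$ is also more careful than the paper's.

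The step that fails is (i). Your first option rests on a false implication. Blocks of length in $[M,2M]$ do not force every window of length $2M$ to contain a full block: if $s_{k+2}-s_k>2M$, the window $[s_k+\epsilon',s_k+\epsilon'+2M]$ contains none for small $\epsilon'>0$. To guarantee a full block you need windows of length $2\sup_k(s_{k+1}-s_k)$, which can be close to $4M$.

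The stated constant also genuinely fails for your $a$. Take $M=1$, $\delta=0.1$, start at $s_0=0$, and let $Y\cap[0,2.39-\gamma]=(0,0.2)\cup(0.99,1.19)\cup(2.19-\gamma,2.39-\gamma)$, with all other gaps of $Y$ of length at most $1-\gamma$; then $\varrho=\gamma$ is admissible. One gets $s_1=1.19$ and $s_2=2.39-\gamma$. Each ball of the first block carries mass $0.595\,\eta$, and the bump on the third ball has height at most $12\eta$. Hence
\[ \int_{0.2}^{2.2}a\le 0.595\,\eta+12\eta\,(0.01+\gamma)<\eta=\tfrac{\varrho}{8}\cdot 2M \]
for small $\gamma$. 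Your second option does not help either: the blocks already have length at most $M+2\delta$, and Lemma~\ref{l:B} with that period only gives $(2M+4\delta,\frac{\varrho M}{8(M+2\delta)})$.

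So what your argument actually proves is the $(4M,\varrho/16)$ you already extracted from Lemma~\ref{l:B}, and you should state (i) that way. The paper's proof has exactly the same defect, since it applies Lemma~\ref{l:B} as if $s_{k+1}-s_k\le M$. The weaker constant is harmless where the lemma is used: in Lemma~\ref{l:extractInduct} it only turns $4M$ into $8M$ in \eqref{e:deltaPrime} and \eqref{e:4mCovering}.

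Finally, read $C^{m+1}$ in your Step 1 and in (ii) as an $m$-dependent constant. A nonzero compactly supported $\phi$ with $\|\phi^{(m)}\|_\infty\le C^{m+1}$ for all $m$ would extend to an entire function and hence vanish identically. The paper's property (c) of $b_I$ has the same issue. Only finitely many derivatives are used later, so nothing downstream is affected.
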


\begin{proof} 
Let $s_0=\inf\{ 0 \le y : y \not \in Y \}$. Inductively, define for $k \in \mathbb N$,
    \[s_k = \inf\{ y \ge s_{k-1} + M: y \not \in Y \} , \quad s_{-k} = \sup \{ y \le s_{-(k-1)} - M: y \not \in Y\}.\] 
Clearly $s_{\pm k} \to \pm \infty$ and $M\leq s_{k+1}-s_k \leq M + 2\delta \le 2M$ due to the fact that $Y$ consists of disjoint intervals of length $2\delta \le M$. 

Since $Y$ is $(M, \varrho)$ relatively dense, we know that
    \[ 1 \ge \frac{|Y \cap (s_k,s_{k+1})|}{|(s_k,s_{k+1})|} \ge \frac{|Y\cap (s_k,s_k+M) |}{2M} \ge \frac{\varrho }{2}.\] 
Listing out all the elements we have 
    \[ Y \cap (s_k,s_{k+1}) = \bigcup_{i=1}^{J_k} (y_{k,i}-\delta,y_{k,i}+\delta).\]
Now, for each $k$ set
    \begin{equation}\label{e:t}t_{k} = \frac{\varrho}{2} \cdot \frac{|(s_k,s_{k+1})|}{|Y\cap (s_k,s_{k+1}) |} \in \left[ \frac{\varrho}{2},1\right],\end{equation}
then we define
    \[ \tilde Y = \bigcup_{k\in \mathbb{Z}} \bigcup_{i=1}^{J} (y_{k,i}-t_{k}\delta,y_{k,i}+t_{k}\delta), \]
which satisfies $|\tilde Y \cap (s_k,s_{k+1})| = t_{k} |Y \cap (s_k,s_{k+1})|= \tfrac{\varrho}{2} |(s_k,s_{k+1})|$. 

Now note that for any interval $I \subset \Rb$ we can find a function $b_I$ in $C^\infty_0(\Rb)$ such that the following properties hold uniformly over $I$:
\begin{itemize}
    \item[(a)] $\mathbf 1_{\tfrac 12 I} \le b_I \le \mathbf 1_I$. Here $\tfrac 12 I$ is the interval concentric with $I$ of half its length.
    \item[(b)] There exists $c_1>\frac 12$ such that $\int b_I = c_1 \, |I|$.
    \item[(c)] There exists $c_2>0$ such that for each $m  \in \mathbb{N}_0$  \[ \abs{\partial^m b_I} \le c_2^m |I|^{-m}.\]
\end{itemize}
Then we define 
\begin{equation}
    a(y) = \sum_{k \in \mathbb Z} \sum_{i=1}^{J_k} b_{I_{k,i}}(y), \quad I_{k,i} = (y_{k,i}-t_{k}\delta,y_{k,i}+t_{k}\delta).
\end{equation}
By property (b) of $b_I$ and the construction of $\tilde Y$, it is clear that 
    \begin{equation}\label{e:bksk} \frac{1}{s_{k+1}-s_k}\int_{s_k}^{s_{k+1}} a(y) \, dy =  \frac{c_1 \, \varrho}{2}=: \eta \ge \frac{\varrho}{4} .\end{equation}
Hence, by the second statement in Lemma \ref{l:B} $a$ is indeed $(2M, \frac{\varrho}{8})$ relatively dense. Furthermore by property (c) and the lower bound on $t_{k}$ given in \eqref{e:t},
    \[ |\partial_y^m a(y)| \le c_2^m \left( \frac{\varrho}{2} \delta \right)^{-m}\]
Also, this guarantees $\partial_y^{m+1} A$ obeys the same bound--it remains to control $A$ itself. But this is a direct consequence of the first statement in Lemma \ref{l:B}.

\end{proof}

We prove a slightly stronger version of Lemma \ref{l:extract} which is more technical, but easier to induct upon. Lemma \ref{l:extract} is recovered by taking $\delta'=\delta$, $C = \max\{ C,C_{\delta'} \}$, $a=a^1$, $\overline{a}=\overline{a^1}$, and $X=X^1$.

\begin{lemma}\label{l:extractInduct}
Let $\omega$ be a modulus of continuity and $d\in \mathbb{N}_0$. There exists $C,\delta,c>0$ such that 
\begin{itemize}
    \item for all
$a':\mathbb R^d \to [0,1]$ which are $\omega$-continuous,
    \item all $M$ such that $\Lambda((a')_M;\rho) \le \Lambda(a';\rho)$,
    \item and all $0<\delta' \le \delta$,
\end{itemize}
there exist $C_{\delta'} >0$, $a^1,a^2:\mathbb R^d \to [0,1]$, $\overline{a^1}:\mathbb R^{d-1} \to [0,1]$, and $X^1,X^2 \subset \mathbb R^{d}$ such that 
\begin{itemize} 
\item[(i)] $a^i\le \mathbbm{1}_{X^i}$, $ \delta \mathbbm{1}_{X^i_{\delta'}} \le a'$, and $X^i$ is a union of balls of radius $\delta'$,
        \item[(ii)] $\Lambda(\overline{a^1};\rho)\le C \Lambda(a'_M;c\rho)$, $\Lambda(a^2;\rho) \le C\Lambda(a';c\rho)$,
        \item[(iii)] 
        the transfer function
        \begin{equation}
        A(x',x_d) = \int_0^{x_d} \overline{a^1}(x') - a^1(x',t)\, dt,
        \end{equation}
        as well as $a^i$ and $\overline{a^1}$ satisfy: for each $m\in \mathbb{N}_0$
            \begin{equation}\label{e:a-smooth-bounded-2} |\nabla^m f| \le C_{\delta'}^{m+1} , \quad f \in \{A/M,a^i,\overline{a^1}\}.\end{equation}
    \end{itemize}
\end{lemma}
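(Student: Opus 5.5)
We will prove Lemma \ref{l:extractInduct} by induction on $d$, following the heuristic stated at the start of Section \ref{s:structure}: in thin strips $B(x_0',\delta')\times\mathbb R$ we bound $a'$ from below by a product $a_1(x')a_2(x_d)$. Here $a_2$ comes from Lemma \ref{l:smoothing}, so it has a bounded transfer function, while $a_1$ comes from the induction hypothesis applied to $(a')_M$ on $\mathbb R^{d-1}$. The base case $d=0$ is trivial, since $a'$ is a positive constant. In that case we take $a^i=a'$, $X^i$ everything, and $\overline{a^1}=a'$, so that $A\equiv 0$.

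\textbf{Step 1 (discretize and pass to sets).} Using \eqref{e:switch} and $\omega$-continuity, we fix $\delta$ so that $\omega(4\delta)$ is much smaller than the density level $\eta\sim\Lambda(a';1)^{-1}$ furnished by Lemma \ref{l:rd-comb}. The function $(a')_M$ is $\omega$-continuous and satisfies $\Lambda((a')_M;\rho)\le\Lambda(a';\rho)$. We apply the inductive hypothesis (the $(d-1)$-dimensional version, and in particular the second output $a^2$) to $(a')_M$. This produces $\overline{a^1}:=a^2_{(d-1)}\le \mathbbm 1_{X'}$, where $X'\subset\mathbb R^{d-1}$ is a union of $\delta'$-balls $B(x'_j,\delta')$ on which $(a')_M\ge\delta$, and $\Lambda(\overline{a^1};\rho)\le C\Lambda((a')_M;c\rho)$. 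This gives the first half of (ii), together with the smoothness bounds in (iii) for $\overline{a^1}$.

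\textbf{Step 2 (fibrewise construction).} Fix one ball $B(x_j',\delta')$. Since $(a')_M(x_j')\ge\delta$, the one-variable function $a'(x_j',\cdot)$ has average at least $\delta$ on every interval of length $M$. Hence $Y_j=\{x_d: a'(x_j',x_d)\ge\delta/2\}$ is $(M,\delta/2)$ relatively dense. Thickening it via uniform continuity, we replace $Y_j$ by a disjoint union of $\delta'$-intervals $Y_j'$ that remains $(M,c\delta)$ relatively dense, with $a'\ge\delta/4$ on $B(x_j',2\delta')\times (Y_j')_{\delta'}$; this uses $\omega(3\delta')\le\delta/4$. Lemma \ref{l:smoothing} then gives $b_j\le\mathbbm 1_{Y_j'}$ with bounded transfer function around a level $\eta_j\ge c\delta$. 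We set
\[ a^1(x',x_d)=\sum_j \chi_j(x')\,\frac{\overline{a^1}(x')}{\eta_j}\, b_j(x_d)\cdot c', \]
where $\{\chi_j\}$ is a smooth partition of unity subordinate to the balls, with bounded overlap. The constant factors are chosen so that the $x_d$-average of $a^1$ is exactly $\overline{a^1}(x')$ on each fibre. Then
\[ A(x',x_d)=\sum_j \chi_j(x')\,\overline{a^1}(x')\,\eta_j^{-1}\int_0^{x_d}\big(\eta_j-b_j\big), \]
which is bounded by $CM$ by Lemma \ref{l:B}. Its derivatives are controlled by Lemma \ref{l:smoothing}(ii) together with the derivative bounds on $\chi_j$ and $\overline{a^1}$, all of which depend only on $\delta'$; this gives (iii). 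Property (i) follows because $a^1$ is supported in $\bigcup_j B(x'_j,\delta')\times Y_j'$, and this union is itself a union of $\delta'$-balls up to the constants.

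\textbf{Step 3 ($a^2$).} For the second output $a^2$, we repeat the construction without averaging: $a^2$ is a smooth minorant of $\mathbbm 1_{\{a'\ge\delta\}}$ built from $\delta'$-bumps at the points of a $\delta'$-net of $\{a'\ge 2\delta\}$. The bound $\Lambda(a^2;\rho)\le C\Lambda(a';c\rho)$ should follow from monotonicity and scaling of $\Lambda$ (Lemma \ref{l:Lambda}) combined with \eqref{e:switch}. We expect the \textbf{main obstacle} to be exactly this transfer of $\Lambda$ through the smoothing. Because $\Lambda$ is defined recursively through $(\cdot)_{\theta,M}$ in every direction, we must show that $(a^2)_{\theta,M}\gtrsim ((a')_{\theta,M'})\,$-type minorants, possibly after enlarging $M$ slightly, with the resulting loss absorbed into $c\rho$ via $\epsilon_\theta M_\theta<\rho$. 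Our plan is to prove this by a secondary induction using $a^2\ge c\,(a'-\omega(\delta'))_+$ up to $\delta$-level truncation, and then to handle the inhomogeneous truncation with \eqref{e:switch} at each level of the recursion.
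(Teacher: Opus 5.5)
Your Steps 1--2 follow the paper's construction of $a^1$: apply the $(d-1)$-dimensional case to $(a')_M$, threshold $a'$ on the fibres over the centres $x_j'$, pass to disjoint $\delta'$-intervals, apply Lemma \ref{l:smoothing} on each fibre, multiply by the lower-dimensional minorant, and bound $A/M$ with Lemma \ref{l:B}. There is one normalization slip. As written, $a^1$ can exceed $1$ because of the factor $\eta_j^{-1}$. If you shrink it by $c'$, the $x_d$-average of $a^1$ becomes $c'\overline{a^1}$, and the displayed $A$ is no longer the transfer function of the pair $(a^1,\overline{a^1})$. The paper avoids this by putting the $\eta_j$ into $\overline{a^1}=a^*\sum_j\mathbbm 1_{B_j}\eta_j$ with $a^1=a^*\sum_j\mathbbm 1_{B_j}a_j$. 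It then gets (ii) from $\overline{a^1}\ge\frac{\delta^*}{16}a^*$ and Lemma \ref{l:Lambda}.

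The genuine gap is Step 3. The bound $\Lambda(a^2;\rho)\le C\Lambda(a';c\rho)$ is only announced, but it is the whole content of the $a^2$ half of the lemma. It is also not peripheral: your Step 1 in dimension $d$ uses exactly this output in dimension $d-1$ to get $\Lambda(\overline{a^1};\rho)\le C\Lambda((a')_M;c\rho)$, so without it the induction does not close. Monotonicity and scaling (Lemma \ref{l:Lambda}) cannot supply it. Your $a^2$ vanishes where $0<a'<\delta$, so it dominates no multiple of $a'$ pointwise. The comparison has to be made on the directional averages $(a^2)_{\theta,M_\theta}$ for $\theta$ in an effective covering of $a'$, following the recursive definition of $\Lambda$. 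Your proposal does not say what these averages are compared with.

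The missing idea is to compare them with the output of the induction hypothesis for $a'_{\theta,M_\theta}$. The paper reruns the $a^1$ construction in every direction of a $(\rho/4,\lambda)$-effective covering $\Theta$ of $a'$. Then $a^\theta=(\theta a')^1$ satisfies $(a^\theta)_{4M_\theta}\ge\frac{\delta^*}{64}a^*_\theta$, where $a^*_\theta$ is the minorant of $a'_{\theta,M_\theta}$ given by the $(d-1)$-dimensional case. Hence $\Lambda((a^\theta)_{4M_\theta};\rho)\le\frac{64C^*}{\delta^*}\Lambda(a'_{\theta,M_\theta};c^*\rho)$. Since $4\ep_\theta M_\theta+(\ep_\theta\lambda)^{-1}<\rho$, the same $\Theta$ with lengths $4M_\theta$ is $(\rho,\lambda)$-effective for any smooth $a^2$ with $\sup_\theta\theta^{-1}a^\theta\le a^2\le\mathbbm 1_{X^2}$.

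Your single thresholded $a^2$ can be rescued by the same mechanism, even more directly. The condition is that the threshold sit strictly below the lower-dimensional constant $\delta^*$, for instance $a^2\ge\mathbbm 1_{\{a'\ge\delta^*/2\}}$, rather than at $2\delta$ with $\delta$ chosen from $\Lambda(a';1)$. On every fibre over $X^*_\theta$, where $a'_{\theta,M_\theta}\ge\delta^*$, \eqref{e:switch} shows that $\{a'\ge\delta^*/2\}$ fills at least a fraction $\delta^*/2$ of each segment of length $M_\theta$. So $(a^2)_{\theta,M_\theta}\ge\frac{\delta^*}{2}\mathbbm 1_{X^*_\theta}\ge\frac{\delta^*}{2}a^*_\theta$, and scaling finishes. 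Taking $\delta=\delta^*/4$ and $\omega(2\delta')\le\delta^*/4$ keeps this compatible with (i). In either form, the truncation in lower dimensions is absorbed by the $(d-1)$-dimensional case of the lemma itself. The separate induction on $(a'-\kappa)_+$ that you sketch is therefore unnecessary, and as written it is not carried out.
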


\begin{proof}
The case $d=0$ is trivial since there is nothing to prove. Let $d \ge 1$. We will use a superscript $*$ to denote objects provided by the induction step applied to $a'_{M}$. First, let $\delta^*>0$ and let $\delta'>0$ be any small parameter satisfying
    \begin{equation}\label{e:choiceDeltaPrime} \delta' \le \frac{\delta^*}{16}, \quad \omega(2\delta') \le \frac{\delta^*}{4}.\end{equation}
In this way, we have $X^* = \cup_j B_j \subset \mathbb R^{d-1}$ a collection of balls of radius $\delta'$ and $a^*$ in $C^\infty$ such that
    \begin{equation}\label{e:deltaStar}a^* \le \mathbbm 1_{X^*}, \quad \delta^* \mathbbm 1_{X^*_{\delta^*}} \le a'_{M}, \end{equation}
and $\Lambda (a^*;\rho) \le C^*\Lambda( a'_{M};c^*\rho)$ ($X^*$ and $a^*$ are $X^2$ and $a^2$ from the induction hypothesis). For each $j$, let $x_j'$ be the center of $B_j$ and define 
    \[ Y^j = \left\{ x_d \in \mathbb R : a'(x_j',x_d) \ge \tfrac{\delta^*}{2}\right\}.\] 
First, since $a'$ is $\omega$-continuous, by the second condition in \eqref{e:choiceDeltaPrime}, 
if $x \in B(x_j',2\delta') \times Y^j_{2\delta'}$ then $a'(x) \ge \frac{\delta^*}{4}$. Thus if we take
    \[ X^1 = \bigcup_j B_j \times Y^j_{\delta'},\]
we clearly have the property $\frac{\delta^*}{4} \mathbbm 1_{X^1_{\delta'}} \le a'$ from (i). 

Next, we will bound $\mathbbm 1_{X^1}$ from below by a smooth function $a^1$ satisfying the remaining properties (ii) and (iii).

Towards this end, by same logic as \eqref{e:switch}, $[Y^j]_M \ge \frac{\delta^*}{2}$ and hence the same holds for the larger quantity $[Y^j_{\delta'}]_M$. Now, we may use a standard covering algorithm to replace $Y^j_{\delta'}$ by a disjoint union of balls of radius $\delta'$. Call this set $\overline{Y^j} \subset Y^j_{\delta'}$ and note that it satisfies
    \[ [\overline{Y^j}]_{2M} \ge \frac{\delta^*}{4}.\]
For each $j$, $\overline{Y^j}$ satisfies the conditions of Lemma \ref{l:smoothing} so we now obtain $a_j$, $\eta_j$ and form
    \begin{align} &a^1(x',x_d) = a^*(x') \sum_{j} \mathbbm 1_{B_j}(x') a_j(x_d), \\
    &\overline{a^1}(x') = a^*(x') \sum_j \mathbbm 1_{B_j} \eta_j
    .\end{align}
Since $a^*$ is supported on $X^*$ and $a_j$ on $\overline{Y^j} \subset Y^j_{\delta'}$, it is clear that $a^1$ is supported on $X^1$. It is routine to check that $a^1$,$\overline{a^1}$, and $A$ satisfy the regularity conditions in (iii) using the estimates for $a^*$ provided by the induction hypothesis and the same for $a_j$ provided by Lemma \ref{l:smoothing}. 

Next we to check (ii) for $a^1$. First note by Lemma \ref{l:smoothing}, $a_j$ is $(4M, \frac{\d^*}{64})$ relatively dense, therefore 
    \begin{equation}\label{e:deltaPrime}a_{4M} \ge a^* \frac{\delta^*}{64}, \quad \overline{a^1} \ge a^* \frac{\delta^*}{16}\end{equation}
so that, relying on the decreasing nature of $\Lambda$ from Lemma \ref{l:Lambda} and the induction hypothesis, 
    \begin{align}\label{e:a4MLambda}  &\frac{\delta^*}{64} \Lambda(a_{4M};\rho) 
    \le \Lambda(a^*;\rho)  
    \le C^*\Lambda( a'_{M};c^*\rho) \\
     &\frac{\delta^*}{16} \Lambda(\overline{a^1};\rho) 
    \le \Lambda(a^*;\rho) 
    \le C^*\Lambda( a'_{M};c^*\rho) 
     \end{align}
Thus taking $C \ge \frac{64C^*}{\delta^*}$ and $\delta=\frac{\delta^*}{4}$, we have finished with $a^1$. 

Now, we must construct $X^2$ and $a^2$, but we have already done most of the work. Let now $\lambda \ge \Lambda(a';\rho/4)$ and $\Theta$ be a $(\rho/4,\lambda)$ effective covering for $a'$. Then, for each $\theta \in \Theta$, there exists $M_\theta,\ep_\theta>0$ such that 
    \[ \ep_\theta M_\theta + (\ep_\theta \lambda)^{-1} < \frac{ \rho}{4}.\] 
We look at $a'_{\theta,M_\theta}= (\theta a')_{M_\theta}$. Now by what we have proved so far, concluding with \eqref{e:a4MLambda}, we obtain $a^\theta \coloneq (\theta a')^1$ and $X^\theta \coloneq X^1(\theta a')$  satisfying
    \begin{align} &\Lambda( a^\theta_{4M_\theta};\rho) \le C \Lambda( a'_{\theta,M_\theta} ;c^*\rho), \quad a^\theta \le \mathbbm 1_{X^\theta} , \quad \delta \mathbbm 1_{X^\theta_{\delta'}} \le \theta a',\\
    & \label{e:4mCovering}4M_\theta \ep_\theta + (\ep_\theta \lambda)^{-1} < \rho. \end{align} 
Each $a^\theta$ and $X^\theta$ also depends on $\lambda$ but we suppress the dependence in our notation. The final step is to glue these functions $\theta^{-1} a^\theta$ back together in a smooth fashion to create $a^2$. First, set
    \[ X^2 = \bigcup_{\substack{\theta \in \Theta, \lambda \ge \Lambda(a';\rho/4)}} \theta^{-1} X_{\delta'}^\theta.\]
Clearly, $\delta \mathbbm 1_{X^2_{\delta'}} \le a'$. Next, set
    \[ \tilde a = \sup_{\theta,\lambda} \theta^{-1}a^\theta.\]
From the definition of $\Lambda$ and the construction of $\tilde{a}$, we have 
    \[ \Lambda((\tilde a)_{\theta,4M_\theta}; \rho) \leq \Lambda(a^\theta_{4M_\theta};\rho) \le C \Lambda(a';c^*\rho).\] 
This, together with \eqref{e:4mCovering} shows 
    \[ \Lambda(\tilde a;\rho) \le \max\{ C\Lambda(a';c^* \rho),\Lambda(a';\rho/4)\} \le C \Lambda(a';c\rho).\] 
Furthermore $\tilde a \le \mathbbm 1_{X^2}$, however, $\tilde{a}$ may not be smooth. This can be fixed rather easily since we are not worried about bounding the transfer function of $a^2$, only $a^1$. By Urysohn's Lemma, we can find a $C^\infty$ function $\tilde a \le a_2 \le \mathbbm 1_{X^2}$ since $X^2$ is a $\delta'$ neighborhood of some other set $\tilde X$, and moreover the derivatives of $a^2$ must obey \eqref{e:a-smooth-bounded-2} and since $\Lambda$ is decreasing in the first slot $\Lambda(a^2; \rho) \leq \Lambda(\tilde a;\rho) \leq C \Lambda(a';\rho)$.

\end{proof}

\appendix
\section{Resolvent Estimate to Observability Estimate}\label{s:resolventConnection}
In this appendix we state and prove two results that connect observability resolvent estimates and observability estimates. Because we are interested in observability of the Schr\"odinger equation with the standard and fractional Laplacian, 
we state these results in the abstract Hilbert space setting.

Let $X$ and $Y$ be Hilbert spaces. Let $\Ac: D(\Ac) \ra X$ be a self-adjoint operator. Equivalently, $i\Ac$ generates a strongly continuous semigroup $e^{it \Ac}$ of unitary operators on $X$. Let $X_1$ denote $D(\Ac)$ with the norm 
\begin{equation}
    \|x\|_1 = \|\Ac x\|_X + \|x\|_X.
\end{equation}
Let $\mathscr{C} \in \Lc(X_1,Y)$. We assume that $\mathscr{C}$ is admissible, in the sense that for all $T>0$, there exists $K_T>0$ such that 
\begin{equation}\label{eq:abstractAdmiss}
\int_0^T \|\mathscr{C} e^{it\Ac} x_0\|_Y^2 \leq K_T \|x_0\|_X^2.
\end{equation}
Then $x_0 \mapsto \mathscr{C}e^{it\Ac}(x_0)$ from $D(\Ac)$ to $L^2_{loc}(\Rb; Y)$ has a continuous extension to $X$.

The system
\begin{equation}\label{eq:abstractSchro}
    \dot{x}(t) - i \Ac x(t) = 0, \quad x(0)=x_0\in X, 
\end{equation}
is exactly observable in time $T$ at cost $\kappa_T$ if for all $x_0 \in X$
\begin{equation}\label{eq:abstractObs}
    \|x_0\|_X^2 \leq \kappa_T \int_0^T \|\mathscr{C} e^{it\Ac} x_0\|_Y^2 \,dt. 
\end{equation}
We define the observability resolvent estimate: there exists $M,m>0$ such that for all $x \in D(\Ac)$ and $\lambda \in\Rb$
\begin{equation}\label{eq:abstractResolvent}
    \|x\|^2_X \leq M \|(\Ac-\lambda)x\|^2_X + m \|\mathscr{C} x\|^2_Y.
\end{equation}
We will take $X=Y=L^2(\Rb^d)$, and $\Ac=-\Delta$, $(-\Delta)^{\alpha}$ or $(-\Delta)^{\alpha} +V$ for $\alpha \in [1/2,1]$ and $V \in L^{\infty}(\Rb^d)$. Then $D(\Ac)=H^2$ or $H^{2\alpha}$ respectively. We always take $\mathscr{C}$ to be multiplication by a positive $L^{\infty}$ function; either $a^{1/2} \leq 1$ or $\mathbbm{1}_E$. Because of this, and the unitary nature of $e^{it\Ac}$, the admissibility condition \eqref{eq:abstractAdmiss} is always satisfied with $K_T=T$.

We strongly relied on the equivalence between observability of the Schr\"odinger-type equations and observability resolvent estimates in our proofs of Theorems \ref{thm:gcc} and \ref{thm:comb} above. The standard result in this connection is due originally to L. Miller in \cite[Theorem 5.1]{Mil05}, which states that \eqref{eq:abstractResolvent} and \eqref{eq:abstractObs} for some large time $T$ are equivalent.  
\begin{theorem}[Miller]\label{thm:Miller}
   The system \eqref{eq:abstractSchro} is exactly observable if and only if the observability resolvent estimate \eqref{eq:abstractResolvent} holds. More precisely, for all $\ep>0$, there exists $C_{\ep}>0$ such that \eqref{eq:abstractResolvent} implies \eqref{eq:abstractObs} for all $T>\sqrt{M(\pi^2+\ep)}$ with $\kappa_T = \frac{C_{\ep} m T}{T^2-M(\pi^2+\ep)}$.
\end{theorem}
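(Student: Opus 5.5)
This is Miller's theorem. I would prove it by the standard argument: a time cutoff combined with the Fourier transform in time. The plan is first to derive observability from the resolvent estimate, and then the converse.

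\emph{Resolvent estimate implies observability.} Take $x_0 \in D(\Ac)$, set $x(t)=e^{it\Ac}x_0$, and fix $\chi\in C_0^\infty(0,T)$. Put $z(t)=\chi(t)x(t)$. Then
\[
\dot z - i\Ac z = \chi'(t)x(t).
\]
Taking the Fourier transform in $t$, $\hat z(\tau)=\int e^{-it\tau}z(t)\,dt$, gives
\[
(\Ac+\tau)\hat z(\tau) = i\widehat{\chi' x}(\tau)
\]
up to sign conventions. Apply \eqref{eq:abstractResolvent} with $\lambda=-\tau$ for each $\tau$, integrate in $\tau$, and use Plancherel in time:
\[
\int \|\chi x\|_X^2\,dt \le M\int |\chi'|^2\|x\|_X^2\,dt + m\int |\chi|^2\|\mathscr C x\|_Y^2\,dt.
\]
Unitarity makes $\|x(t)\|_X=\|x_0\|_X$, so this becomes
\[
\bigl(\|\chi\|_{L^2}^2 - M\|\chi'\|_{L^2}^2\bigr)\|x_0\|^2 \le m\int_0^T\|\mathscr C x\|_Y^2\,dt .
\]
The remaining task is to choose $\chi$ to maximise $\|\chi\|^2-M\|\chi'\|^2$. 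The optimal profile is $\chi(t)=\sin(\pi t/T)$, for which $\|\chi'\|^2=(\pi/T)^2\|\chi\|^2$. That function is not compactly supported in $(0,T)$, so I would use a smooth approximation with $\|\chi'\|^2\le(\pi^2+\ep)T^{-2}\|\chi\|^2$ and $\|\chi\|^2\sim T$. This gives the positive constant $\|\chi\|^2\bigl(1-M(\pi^2+\ep)T^{-2}\bigr)$ whenever $T^2>M(\pi^2+\ep)$, and $\kappa_T$ comes out of the form $\frac{C_\ep mT}{T^2-M(\pi^2+\ep)}$ after scaling. Density of $D(\Ac)$ together with admissibility then extends the inequality to all $x_0\in X$.

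\emph{Observability implies the resolvent estimate.} Given $x\in D(\Ac)$ and $\lambda$, set $f=(\Ac-\lambda)x$ and write Duhamel's formula
\[
x = e^{it(\Ac-\lambda)}x - i\int_0^t e^{i(t-s)(\Ac-\lambda)}f\,ds .
\]
Apply \eqref{eq:abstractObs} to the constant-in-norm trajectory $e^{it\Ac}x$, which equals $e^{it\lambda}\bigl(x+i\int_0^t\cdots\bigr)$. Admissibility \eqref{eq:abstractAdmiss} bounds the contribution of the Duhamel term by $C_T\|f\|_X^2$, and what remains is $T\|\mathscr Cx\|_Y^2$. The result is \eqref{eq:abstractResolvent} with $M\sim\kappa_T K_T T^2$ and $m\sim\kappa_T T$.

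The main obstacle is the sharp constant $\pi^2$. It requires the near-optimal cutoff: a mollified $\sin(\pi t/T)$ on a slightly shrunk interval, with Rayleigh quotient within $\ep$ of $\pi^2/T^2$. Everything else in the argument is routine.
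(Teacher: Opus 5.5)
Your argument is correct. For the direction resolvent $\Rightarrow$ observability it is the same mechanism the paper uses in Lemma~\ref{l:HighFreqArbTimeObs}(2): a time cutoff, the Fourier transform in $t$, the resolvent estimate applied at each frequency $\tau$, Plancherel, and absorption. The paper does not actually prove Theorem~\ref{thm:Miller}, though; it cites Miller. Its own cutoff ($\chi\equiv 1$ on $[T/4,3T/4]$, $|\chi'|\le 5/T$) only gives observability once $T^2$ is a large multiple of $M$ (its constants give $T^2>50M$), with $\kappa_T\sim m/T$.

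Your choice of a near-extremal profile for the Rayleigh quotient $\|\chi'\|^2/\|\chi\|^2$ is exactly what produces the threshold $T>\sqrt{M(\pi^2+\ep)}$ and the stated form of $\kappa_T$. Your Duhamel-plus-admissibility argument also supplies the converse, with $M\lesssim \kappa_T K_T T^2$ and $m\lesssim \kappa_T T$, which the paper never proves.

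Three small remarks.

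\begin{itemize}
\item No smoothing is needed. Since $\chi=\sin(\pi t/T)\mathbbm{1}_{[0,T]}$ is Lipschitz, $\chi x\in W^{1,1}(\mathbb R;X)$ for $x_0\in D(\Ac)$. The Fourier computation then goes through verbatim and gives $\kappa_T=2mT/(T^2-\pi^2M)$ with no loss of $\ep$.
\item You should say explicitly that $0\le\chi\le 1$. This is what lets you replace $m\int|\chi|^2\|\mathscr C x\|_Y^2\,dt$ by $m\int_0^T\|\mathscr C x\|_Y^2\,dt$.
\item In the converse, when $\mathscr C\in\Lc(X_1,Y)$ is unbounded, you need an admissibility bound for $\mathscr C\int_0^t e^{is(\Ac-\lambda)}f\,ds$ with $f=(\Ac-\lambda)x$ only in $X$. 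This is most cleanly obtained by first taking $x\in D(\Ac^2)$, so that $f\in D(\Ac)$ and Cauchy--Schwarz plus \eqref{eq:abstractAdmiss} apply. You then extend the final inequality by density in $X_1$, since both sides are continuous in the $X_1$ norm. For bounded $\mathscr C$, as in all of the paper's applications, this step is immediate.
\end{itemize}
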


The main tool we develop here is a method for obtaining arbitrary-time observability under the assumption that $M$ in \eqref{eq:abstractResolvent} depends on $\lambda$, and in fact decays at a rate $\lambda^{-\ep}$ for some $\ep>0$. In the case when $A$ has compact resolvent (e.g. $-\Delta_M$ on a compact manifold $M$) such a result is well-known \cite{BZ04,Mil12}, since one can observe high frequencies arbitrarily fast, and the lower frequencies can be absorbed by compactness.

\begin{theorem}\label{thm:resolventArbObs}
    Suppose 
    \begin{itemize}
        \item $\Ac$ is a semibounded, self-adjoint operator,
        \item $\mathscr C$ is bounded,
        \item there exists $M,m>0$ and $\ep \in (0,1]$ such that
            \begin{equation}\label{e:resEp}\norm{f}_X^2 \le M (1+|\lambda|)^{-\ep} \norm{(\Ac-\lambda)f}_X^2 + m \norm{\mathscr C f}_Y^2\end{equation}
        for all $\lambda \in \mathbb R$ and $f \in \mathcal D(\Ac)$.
    \end{itemize}
Then, there exists $C>0$ such that for all $T\in (0,1)$ and all $x_0 \in X$,
    \[ \norm{x_0}_X^2 \le C e^{C{T^{2-4/\ep}}} \int_0^T \norm{\mathscr C e^{it\Ac}x_0}_Y^2 \, dt.\]
\end{theorem}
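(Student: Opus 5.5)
The plan is to split $x_0$ into low and high spectral parts relative to a threshold $\Lambda$ chosen depending on $T$, observe the high part in time $T$ using a rescaled version of Miller's theorem (Theorem \ref{thm:Miller}), and handle the low part by a time-dilation or spectral inequality argument. Since $\Ac$ is semibounded, we may shift $\Ac$ by a constant (the modulus $|e^{it\Ac}x_0|$ is unchanged), so we assume $\Ac\ge 1$. Let $\Pi_{\le \Lambda}$ and $\Pi_{>\Lambda}$ be the spectral projectors of $\Ac$.

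\textbf{High frequencies.} For $f$ in the range of $\Pi_{>\Lambda}$ and any $\lambda\in\mathbb R$, we use \eqref{e:resEp} at $\lambda$ when $|\lambda|\ge \Lambda/2$. When $|\lambda|<\Lambda/2$, we instead use $\|(\Ac-\lambda)f\|\ge \frac{\Lambda}{2}\|f\|$. This gives the restricted resolvent estimate
\[ \norm{f}^2 \le M' \Lambda^{-\ep}\norm{(\Ac-\lambda)f}^2 + m\norm{\mathscr Cf}^2 .\]
The projector $\Pi_{>\Lambda}$ commutes with $\Ac$ but not with $\mathscr C$. We therefore apply Miller's theorem on the invariant subspace $\operatorname{Ran}\Pi_{>\Lambda}$, which is legitimate because his proof only uses $\Ac$ restricted to an invariant space and the observation operator. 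This yields
\[ \|\Pi_{>\Lambda}x_0\|^2\le \kappa\int_0^T\|\mathscr C e^{it\Ac}\Pi_{>\Lambda}x_0\|^2\,dt \]
with $\kappa\lesssim 1/T$, as soon as $T\ge c\Lambda^{-\ep/2}$, i.e. $\Lambda\sim T^{-2/\ep}$.

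\textbf{Low frequencies and gluing.} The main obstacle is the low-frequency part together with the cross terms produced by $\mathscr C$ mixing the two pieces. My approach is the Burq--Zworski / Miller style argument. Apply \eqref{e:resEp} with $M(1+|\lambda|)^{-\ep}\le M$ to get observability of the whole equation in some fixed time $T_0$. Then transfer it to short time $T$ for data in $\operatorname{Ran}\Pi_{\le\Lambda}$. Such data are entire in $t$: $e^{it\Ac}\Pi_{\le\Lambda}x_0$ extends holomorphically with growth $e^{\Lambda|\operatorname{Im}t|}$. A quantitative analytic-continuation (Remez/Turán-type) inequality for $t\mapsto \|\mathscr C e^{it\Ac}\Pi_{\le\Lambda}x_0\|^2$ then bounds its integral over $[0,T_0]$ by $e^{C\Lambda T_0}$, or more sharply $e^{C\Lambda^2 T\cdots}$ after optimization, times its integral over $[0,T]$. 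Choosing $\Lambda\sim T^{-2/\ep}$ and optimizing, the cost is expected to be $\exp(C\Lambda T)=\exp(CT^{1-2/\ep})$. The stated $T^{2-4/\ep}$ is the square of this, suggesting the authors instead use a Gaussian-in-time multiplier (heat-type transmutation), whose cost is $e^{C(\Lambda T)^2}$. I would follow that route: use the Kannai/transmutation bound $\exp(C\Lambda^2T^2)$ with $\Lambda = T^{-2/\ep}$, which gives exactly $e^{CT^{2-4/\ep}}$.

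Finally, to handle cross terms, write
\[ \|\mathscr Ce^{it\Ac}x_0\|^2\ge \tfrac12\|\mathscr C e^{it\Ac}\Pi_{\le}x_0\|^2-\|\mathscr Ce^{it\Ac}\Pi_{>}x_0\|^2 \]
and iterate symmetrically. The high part is absorbed using its strong observability, and the low part is absorbed by taking $\Lambda$ larger by a constant factor so that the constants close. I expect this absorption bookkeeping, specifically making the high-frequency constant beat the large low-frequency cost, to require a Lebeau--Robbiano-type iteration over dyadic frequency bands rather than a single split. That iteration is where I anticipate the main difficulty.
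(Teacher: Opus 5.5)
Your high-frequency step is sound. Restricting Miller's argument to the invariant subspace $\mathrm{Ran}\,\Pi_{>\Lambda}$ is legitimate, and it is essentially the paper's Lemma \ref{l:HighFreqArbTimeObs}(1) with the same threshold $\Lambda\sim T^{-2/\ep}$. After moving the cross term over, it gives
\[ \norm{x_0}_X^2 \le \frac{C}{T}\int_0^T \norm{\mathscr C e^{it\Ac}x_0}_Y^2\,dt + C'\norm{\Pi_{\le\Lambda}x_0}_X^2 \]
with $C'$ of order one.

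The genuine gap is removing the last term. Your analytic-continuation estimate, whatever its cost $K$, bounds $\norm{\Pi_{\le\Lambda}x_0}^2$ only by $K\int_0^T\norm{\mathscr C e^{it\Ac}\Pi_{\le\Lambda}x_0}^2dt$, i.e.\ by observations of the \emph{projected} solution. If you trade each cross term for the true observation plus $\norm{\mathscr C}^2T$ times the other piece, the loop coefficient is at least $(C\norm{\mathscr C}^2)(KT\norm{\mathscr C}^2)$. Each factor is $\ge 1$: test each estimate on its own subspace. So the bootstrap can never close, whatever $\Lambda$ or bookkeeping you choose. Since $\mathscr C$ is an arbitrary bounded operator, there is also no frequency localization to exploit.

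A Lebeau--Robbiano iteration does not rescue this. It is driven by the free decay $\norm{e^{-t\Ac}\Pi_{>\Lambda}}\le e^{-t\Lambda}$ of the heat semigroup, which the unitary group $e^{it\Ac}$ lacks. What you need is a low-frequency bound in terms of observations of the \emph{unprojected} solution with a \emph{small} remainder. Analyticity of band-limited data cannot supply this, because $t\mapsto \mathscr C e^{it\Ac}x_0$ is not analytic for general $x_0$.

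The paper gets such a bound through the heat equation, and this is where the Gaussian enters.
\begin{itemize}
\item Duyckaerts--Miller (Theorem \ref{thm:DMheat}) turn \eqref{e:resEp} into heat observability with cost $r(T)=Ce^{CT^{1-2/\ep}}$. This is where a Lebeau--Robbiano-type mechanism legitimately operates.
\item Lemmas \ref{l:inhomHeatObs} and \ref{l:backwardHeatObs} extend this to inhomogeneous backward heat equations observed through $\mathscr C\Ac$.
\item The FBI transform $\mathcal T_h\Gamma(\tau+is)=\int g_h(\tau+is+t)\Gamma(t)\,dt$ satisfies $\partial_s\mathcal T_h=-i\mathcal T_h\partial_t$ and commutes with $\mathscr C$. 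It turns $\chi(t)e^{it\Ac}f$ into a backward heat solution in $s$, with a source exponentially small in $T^2/h$. At $s=0$ it is a Gaussian time-average reproducing $e^{i\tau\Ac}f$ up to $O(\sqrt h)(\norm{\Ac f}+T^{-1}\norm{f})$.
\item This yields Lemma \ref{l:schroArbObsWeak} for \emph{every} $f\in D(\Ac)$: $\norm{f}^2\le Ch\norm{\Ac f}^2+C\frac{T^2}{h}e^{T^2/h}r(T/10)\int_0^T\norm{\mathscr C\Ac e^{it\Ac}f}^2dt$.
\item Take $f=\Ac^{-1}x_0$ and use $\norm{\Pi_{\le\Lambda}x_0}\le\Lambda\norm{\Ac^{-1}x_0}$. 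The only remainder is then $C\Lambda^2h\norm{x_0}^2$, absorbed by choosing $h\sim\Lambda^{-2}$.
\item The cost $e^{T^2/h}=e^{C\Lambda^2T^2}=e^{CT^{2-4/\ep}}$ dominates $r(T/10)$ for $T<1$ and $\ep\le 1$.
\end{itemize}
Your guess of a Gaussian transmutation with cost $e^{C(\Lambda T)^2}$ is therefore the right one. However, the proposal supplies neither the short-time heat observability nor the application of the transform to the full solution, and that is exactly the step that disposes of the cross terms.

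Separately, the Remez-type cost for band-limited data would be of order $(T_0/T)^{C\Lambda T_0}$, not $e^{C\Lambda T}$.
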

There are two main ingredients in proving Theorem \ref{thm:resolventArbObs}. The first is the main result of \cite[Theorem 1]{DM12} which states that the decaying resolvent estimate \eqref{e:resEp} suffices for final-time heat observability. 
\begin{theorem}[Duyckaerts-Miller]\label{thm:DMheat}
Under the same assumptions as Theorem \ref{thm:resolventArbObs}, there exists $C>0$ such that for all $T>0$ and all $x_0 \in X$,
    \[ \norm{e^{-T\Ac}x_0} \le C e^{CT^{1-2/\ep}} \int_0^T \norm{\mathscr C e^{-t\Ac}x_0}_Y^2 \, dt.\]
\end{theorem}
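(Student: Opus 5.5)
We plan to follow the Lebeau--Robbiano/Miller strategy. First we prove an \emph{approximate spectral inequality} for the spectral subspaces of $\Ac$. Then we feed it into Miller's telescoping (iteration) argument for the heat semigroup. The exponent in the conclusion suggests that the spectral inequality should have cost $e^{C\mu^{1-\ep/2}}$ on the spectral subspace $\mathbbm 1_{\Ac\le\mu}X$. Indeed, the iteration converts a cost $e^{C\mu^{\beta}}$ into a final-time cost $e^{CT^{-\beta/(1-\beta)}}$, and with $\beta=1-\ep/2$ this gives $T^{-\beta/(1-\beta)}=T^{1-2/\ep}$. So the whole proof is organized around producing $\beta=1-\ep/2$.

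\textbf{Step 1 (wave packets).} Take $x=\mathbbm 1_{I}(\Ac)x$ with $I=[\lambda-\sigma,\lambda+\sigma]$ and $\sigma=c(1+|\lambda|)^{\ep/2}$. Then $\norm{(\Ac-\lambda)x}\le\sigma\norm{x}$. Plugging this into \eqref{e:resEp} and choosing $Mc^2\le\frac12$ lets us absorb the resolvent term, which gives
\[ \norm{x}_X^2\le 2m\norm{\mathscr Cx}_Y^2 . \]
This is the abstract analogue of Lemma \ref{l:equiv}: the window width grows like $\lambda^{\ep/2}$.

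\textbf{Step 2 (approximate spectral inequality).} Cover $[\inf\Ac,\mu]$ by $N\sim\mu^{1-\ep/2}$ such windows $I_1,\dots,I_N$. The aim is to show
\[ \norm{x}^2\le e^{C N}\norm{\mathscr C x}^2+\text{(small error)},\qquad x\in \mathbbm 1_{\Ac\le\mu}X . \]
First I would try a Schr\"odinger-time argument: apply Theorem \ref{thm:Miller} to $\Ac$ restricted to the ranges $\mathbbm 1_{\Ac\ge\nu}X$ with $M_\nu=M\nu^{-\ep}$. This gives Schr\"odinger observability in time $\sim\nu^{-\ep/2}$ for data of frequency $\gtrsim\nu$. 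We would then transmute to the heat equation on each band through a window-Fourier (Kannai-type) formula, as in Miller's transmutation method; the number of band-to-band interactions is what produces the factor $e^{CN}$. If that is too lossy, the fallback is a direct induction over the windows: the coupling between $I_j$ and the neighbouring windows is controlled by \eqref{e:resEp} applied at a $\lambda$ on the boundary of $I_j$, losing a fixed constant factor per window. I expect this step to be the main obstacle. Since $\Ac$ has no compact resolvent, there is no genuine spectral inequality available, and the only gain comes from the width $\lambda^{\ep/2}$ of the windows. Keeping the per-window loss uniform, so that the total loss is $e^{CN}$ rather than $e^{CN\log N}$, is exactly what is needed to reach $\beta=1-\ep/2$.

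\textbf{Step 3 (iteration).} With the cost $e^{C\mu^{1-\ep/2}}$ in hand, we run the standard Miller/Lebeau--Robbiano telescoping. Split $[0,T]$ into dyadic pieces of length $T_k\sim 2^{-k}T$ and choose frequency cut-offs $\mu_k$. On each piece, the low frequencies $\mathbbm 1_{\Ac\le\mu_k}$ are observed at cost $e^{C\mu_k^{1-\ep/2}}$. The high frequencies are damped by the factor $e^{-\mu_kT_k}$; semiboundedness ensures this damping is available, and the boundedness of $\mathscr C$ controls the observation term. Choosing $\mu_k\sim T_k^{-2/\ep}$ balances $\mu_kT_k$ against $\mu_k^{1-\ep/2}$, and summing the resulting geometric series gives
\[ \norm{e^{-T\Ac}x_0}^2\le Ce^{CT^{1-2/\ep}}\int_0^T\norm{\mathscr Ce^{-t\Ac}x_0}^2\,dt . \]
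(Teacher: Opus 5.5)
The paper does not prove this statement itself. It quotes \cite[Theorem 1]{DM12}, which is stated for positive definite $\Ac$, and reaches semibounded $\Ac$ by applying it to $\Ac+RI$ and using $e^{-t\Ac}=e^{tR}e^{-t(\Ac+RI)}$. The outer parts of your reproof are fine: Step 1 is correct, and the bookkeeping $\beta=1-\ep/2\Rightarrow\beta/(1-\beta)=2/\ep-1$ in Step 3 is right. The gap is Step 2, and the problem is worse than its being unproved: the inequality it aims at is \emph{false} under the hypotheses of Theorem \ref{thm:resolventArbObs}. Take $X=\mathbb C^2$, $Y=\mathbb C$, $\Ac=\mathrm{diag}(0,D)$ and $\mathscr C(\alpha,\beta)=\alpha-\beta$. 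With $M=m=4$, the estimate \eqref{e:resEp} at a given $\lambda$ says that $(p+3)|\alpha|^2+(q+3)|\beta|^2-8\,\mathrm{Re}(\alpha\bar\beta)\ge0$, where $p=4(1+|\lambda|)^{-\ep}\lambda^2$ and $q=4(1+|\lambda|)^{-\ep}(D-\lambda)^2$. This holds whenever $\max(p,q)\ge 7/3$, and that is true for every real $\lambda$ once $D\ge5$ and $\ep\in(0,1]$. Yet $x=(1,1)\in\mathbbm 1_{\Ac\le D}X$ has $\mathscr Cx=0$. Now take the orthogonal sum of these blocks over $D=5,10,20,\dots$. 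This is a single semibounded $\Ac$ with bounded $\mathscr C$ satisfying \eqref{e:resEp}. For it, no bound $\norm{x}^2\le K\norm{\mathscr Cx}^2+(\text{error}\ll\norm{x}^2)$ holds on $\mathbbm 1_{\Ac\le\mu}X$, for any $\mu\ge5$ and any $K$. So Step 3 has nothing to work with.

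The example also shows why both of your suggested routes must fail, and what is missing. Every eigenvector is observed with constant $1$, so all the window estimates of Step 1 hold. But $\mathscr C$ couples spectral windows that are arbitrarily far apart. Meanwhile \eqref{e:resEp} at a given $\lambda$ says nothing about components of $x$ spectrally far from $\lambda$, since those are simply charged to $\norm{(\Ac-\lambda)x}$. So there is no ``neighbouring window'' structure to induct on. Schr\"odinger observability on $\mathbbm 1_{\Ac\ge\nu}X$ only concerns the high frequencies, which are the harmless ones for the heat flow. What makes the heat equation observable in the example is the time evolution: $\mathscr Ce^{-t\Ac}(1,1)=1-e^{-tD}\neq0$, because different windows decay at different rates. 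A correct Lebeau--Robbiano-type proof must replace your fixed-time spectral inequality by a time-integrated low-frequency estimate,
\[ \norm{e^{-T\Ac}\mathbbm 1_{\Ac\le\mu}x}^2\le K(\mu,T)\int_0^T\norm{\mathscr C e^{-t\Ac}\mathbbm 1_{\Ac\le\mu}x}_Y^2\,dt , \]
which is the input of Miller's direct Lebeau--Robbiano strategy. A bound such as $\log K(\mu,T)\lesssim\mu^{1-\ep/2}+T^{1-2/\ep}$ would feed an iteration like your Step 3 (observing on the second half of each time interval) and give the stated cost. The real content of the theorem is deriving such a bound from \eqref{e:resEp}. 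That means quantifying how the different decay rates of windows of width $\sim\mu^{\ep/2}$ separate their contributions to $\mathscr Ce^{-t\Ac}x$, and your proposal does not supply this.
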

One small technicality is that \cite[Theorem 1]{DM12} is only stated for $\Ac$ which are positive definite. However, by applying that result to $\Ac+RI$ for $R>0$ large, we can obtain Theorem \ref{thm:DMheat}, modifying $C$ appropriately, in terms of $R$; see the reduction in the beginning of the proof of Theorem \ref{thm:resolventArbObsplusHeat} in Section \ref{s:appMainProof} below.

The second main ingredient in the proof of Theorem \ref{thm:resolventArbObs} is an abstract version of the argument in \cite{SSY25}, where it is shown that the decaying resolvent estimates plus arbitrary time heat observability yields arbitrary time Sch\"odinger observability.
\begin{theorem}\label{thm:resolventArbObsplusHeat}
    Assume that $\Ac$ has semi-bounded spectrum, $\sigma(\Ac)\subset [-R,\infty)$, and $\mathscr{C} \in \Lc(X,Y)$.
    Assume there exist $m, \lambda_0>0$ and $M(\lambda)$ a decreasing function satisfying $\lim\limits_{\lambda \ra \infty} M(\lambda)=0$ such that 
    \begin{equation}\label{eq:abstractObsResolveDecay}
        \|f\|_X^2 \leq M(\lambda) \|(\Ac-\lambda) f \|^2_X + m \|\mathscr{C} f\|_Y^2,
    \end{equation}
    for all $f \in D(\Ac)$ and $\lambda \geq \lambda_0$. Assume further that there exists $r:(0,1) \to (0,\infty)$ a decreasing function such that 
    \begin{equation}\label{eq:abstractHeatObs}
        \|e^{-T \Ac} u_0\|_X^2 \leq r(T) \int_0^T \| \mathscr{C} e^{-t \Ac} u_0\|_Y^2 dt,
    \end{equation}
    for all $u_0 \in X$ and $T \in (0,1)$. Then there exists $C>0$ such that for all $T\in(0,1)$ we have the observability estimate 
    \begin{equation}\label{eq:abstractSchrodingerArbObs}
        \|u_0\|_X^2 \leq C(T,r,R,M,m) \int_0^T \|\mathscr{C} e^{it \Ac} u_0 \|^2_Y dt 
    \end{equation}
    for all $u_0 \in X$. The precise form of the constant in \eqref{eq:abstractSchrodingerArbObs} is given below in \eqref{e:preciseConstantHeatObs}.
\end{theorem}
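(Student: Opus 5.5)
The plan is to follow the strategy of \cite{SSY25}: transfer heat observability to Schr\"odinger observability by splitting the data into low and high spectral parts. Fix $T\in(0,1)$ and a threshold $\Lambda\ge\lambda_0$, to be chosen depending on $T$. Let $\Pi_{\le\Lambda}=\mathbbm 1_{(-\infty,\Lambda]}(\Ac)$ and $\Pi_{>\Lambda}=I-\Pi_{\le\Lambda}$ be spectral projectors. Write $u_0=u_{\rm low}+u_{\rm high}$ with $u_{\rm low}=\Pi_{\le\Lambda}u_0$ and $u_{\rm high}=\Pi_{>\Lambda}u_0$.

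\textbf{High frequencies.} On the range of $\Pi_{>\Lambda}$, the resolvent estimate \eqref{eq:abstractObsResolveDecay} gives a constant $M(\lambda)\le M(\Lambda)$ for all relevant $\lambda\ge\Lambda$. For $\lambda<\Lambda$ we have $\|(\Ac-\lambda)f\|\ge(\Lambda-\lambda)\|f\|$, so the estimate is trivial for $\lambda$ far below $\Lambda$. Near $\Lambda$ we use the decreasing $M$, after possibly shrinking the window. This yields a uniform resolvent estimate for the restricted operator $\Ac\Pi_{>\Lambda}$ with constant $\approx M(\Lambda)$ over all $\lambda\in\Rb$.

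Applying Miller's Theorem \ref{thm:Miller} to the restricted system, with observation $\mathscr C$ composed with the projection, gives observability of $u_{\rm high}$ in any time $T>\sqrt{M(\Lambda)(\pi^2+1)}$, with cost $\lesssim mT/(T^2-2\pi^2M(\Lambda))$. Choose $\Lambda=\Lambda(T)$ so that $M(\Lambda)\le T^2/(8\pi^2)$; this is possible since $M\to0$. The technical point is that Miller's theorem must be applied to the projected operator, with $\mathscr C\Pi_{>\Lambda}$ bounded; this is fine since $\Pi_{>\Lambda}$ commutes with $e^{it\Ac}$.

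\textbf{Low frequencies.} For $u_{\rm low}$ the heat semigroup is invertible on the range, with $\|e^{T\Ac}\Pi_{\le\Lambda}\|\le e^{T\Lambda}$. Heat observability \eqref{eq:abstractHeatObs} gives
\[
\|u_{\rm low}\|^2\le e^{2T\Lambda}r(T)\int_0^T\|\mathscr Ce^{-t\Ac}u_{\rm low}\|^2\,dt .
\]
To convert heat observation into Schr\"odinger observation, I would use the transmutation/Cauchy-integral idea from \cite{SSY25}. On spectrally localized data, $e^{-t\Ac}$ can be represented as an integral of $e^{is\Ac}$ against a kernel supported in $s\in[0,T]$, up to errors controlled by the spectral bound $\Lambda$ and the semiboundedness $-R$.

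Concretely, I would pick a kernel $k_t(s)$ with $\int k_t(s)e^{is\mu}\,ds\approx e^{-t\mu}$ for $\mu\in[-R,\Lambda]$. This can be done, for example, by a Gevrey-class cutoff approximation, losing a factor of order $e^{C(\Lambda+R)T+C/T}$. This gives
\[
\int_0^T\|\mathscr Ce^{-t\Ac}u_{\rm low}\|^2\,dt\le K(T,\Lambda)\int_0^T\|\mathscr Ce^{is\Ac}u_{\rm low}\|^2\,ds+\text{small}\cdot\|u_{\rm low}\|^2 .
\]
The small error is then absorbed.

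\textbf{Combining and the main obstacle.} Combine the two parts with the triangle inequality on the observation. The cross terms can be handled because $\mathscr Ce^{is\Ac}u_0=\mathscr Ce^{is\Ac}u_{\rm low}+\mathscr Ce^{is\Ac}u_{\rm high}$. I would first absorb the high-part error using the resolvent estimate on all of $u_0$: apply Miller's theorem to $u_0$ itself, with the low frequencies handled as a compact-like perturbation via the bound above. The main obstacle is this low-frequency transfer from heat to Schr\"odinger observability with explicit constants, which is where the form \eqref{e:preciseConstantHeatObs} of the constant arises. If the kernel approach is too lossy, a fallback is the Phung-style argument: control the low part via the resolvent estimate at finitely many $\lambda\in[-R,\Lambda]$ combined with heat observability.
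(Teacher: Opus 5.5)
\textbf{Verdict: there is a genuine gap, in how you glue the low- and high-frequency estimates.} Your high-frequency step is fine and is essentially the paper's Lemma~\ref{l:HighFreqArbTimeObs}(1); splitting at $\Lambda/2$, the restricted resolvent constant is $\max\{M(\Lambda/2),4\Lambda^{-2}\}$ rather than $M(\Lambda)$.

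The problem is that your low-frequency inequality observes only $\mathscr Ce^{is\Ac}u_{\mathrm{low}}$. Since $\mathscr C$ does not commute with $\Pi_{\le\Lambda}$, replacing it by $\mathscr Ce^{is\Ac}u_0$ costs a multiple of $K_{\mathrm{lo}}T\norm{\mathscr C}^2\norm{u_{\mathrm{high}}}^2$. Symmetrically, the high-frequency estimate costs a multiple of $K_{\mathrm{hi}}T\norm{\mathscr C}^2\norm{u_{\mathrm{low}}}^2$. Closing the loop requires the product of these two cross-coefficients to be $<1$. But testing either observability inequality on any nonzero datum gives $\norm{u}^2\le K\int_0^T\norm{\mathscr Ce^{is\Ac}u}^2\le KT\norm{\mathscr C}^2\norm{u}^2$, so $KT\norm{\mathscr C}^2\ge 1$. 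Hence no triangle-inequality combination can close, however sharp the two pieces are.

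Your fallbacks do not help:
\begin{itemize}
\item There is no compactness. $\Pi_{\le\Lambda}$ has infinite rank for $-\Delta$ on $\Rb^d$, which is exactly the case this theorem is for.
\item Miller's Theorem~\ref{thm:Miller} applied to $u_0$ on all of $X$ would need a resolvent estimate at every $\lambda$ with constant $\lesssim T^2$.
\item The resolvent estimate is not assumed below $\lambda_0$ at all, which also rules out the Phung-style variant on $[-R,\Lambda]$.
\end{itemize}

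What you need is a low-frequency inequality that observes the \emph{full} solution, with an error carrying a parameter that can be tuned after $\Lambda$ is fixed. The paper gets this from Lemma~\ref{l:schroArbObsWeak}. After shifting to $\Ac\ge 1$, the Gaussian FBI transform maps $\chi(t)e^{it\Ac}f$ to an \emph{exact} solution $W$ of a backward heat equation with an $e^{-4T^2/h}$-small source. Inhomogeneous backward heat observability (Lemma~\ref{l:backwardHeatObs}) then yields, for every $f\in D(\Ac)$ and all small $h$,
\[ \norm{f}_X^2\le Ch\norm{\Ac f}_X^2+C\tfrac{T^2}{h}e^{T^2/h}r(T/10)\int_0^T\norm{\mathscr C\Ac e^{it\Ac}f}_Y^2\,dt . \]
Applied to $U_0=\Ac^{-1}u_0$, this observes $\mathscr Ce^{it\Ac}u_0$ itself, with error $Ch\norm{u_0}^2$. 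Combined with $\norm{\Pi_\lambda u_0}^2\le\lambda^2\norm{U_0}^2$, it controls the cross term, and choosing $h\ll\lambda^{-2}$ absorbs it.

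There is a second issue with your error accounting. In the FBI argument the approximation error $W(-\tau)-F(\tau)$ enters without the factor $r(T)$; only the cutoff error is multiplied by $r(T)$. In your chain, the whole transmutation error is multiplied by $e^{2T\Lambda}r(T)$, so your kernel's cost must depend on $r$. The $r$-independent loss $e^{C(\Lambda+R)T+C/T}$ you state cannot be right. This $r$-dependence is what produces the factor $(r(T/C)+1)^C$ in \eqref{e:preciseConstantHeatObs}.

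Your route can be repaired in the same spirit:
\begin{itemize}
\item Apply heat observability on $[0,T/4]$ to $v=e^{is_0\Ac}e^{-h\Ac^2/2}u_0$, with $s_0=T/2$.
\item Use the exact identity $\int_{\Rb}g_h(s-s_0-it)e^{is\mu}\,ds=e^{is_0\mu-t\mu-h\mu^2/2}$, where $g_h(z)=(2\pi h)^{-1/2}e^{-z^2/(2h)}$. The observation is then of the full $\mathscr Ce^{is\Ac}u_0$, up to Gaussian tails outside $[0,T]$ of size $e^{-cT^2/h}$.
\item Bound $\norm{e^{-T\Ac/4}v}$ below by $e^{-T\Lambda/4-h\max(R,\Lambda)^2/2}\norm{\Pi_{\le\Lambda}u_0}$. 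This constant stays bounded as $h\to0$, so $h$ can be chosen after $\Lambda$ to absorb the tail error.
\end{itemize}
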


Clearly Theorem \ref{thm:resolventArbObs} is an immediate consequence of Theorems \ref{thm:resolventArbObsplusHeat} and \ref{thm:DMheat}. The latter is already proved, so the remainder of this appendix is mostly dedicated to proving Theorem \ref{thm:resolventArbObsplusHeat} in Section \ref{s:appMainProof} below.

Before moving to that, we point out that if we have a high frequency observability resolvent estimate for the fractional Schr\"odinger equation, then relative density is sufficient to prove an observability resolvent estimate for the low frequencies. In other words, estimates like \eqref{e:resEp} automatically extend from high frequencies to low when $\Ac = (-\Delta)^\alpha$, $\alpha>0$. 
\begin{lemma}\label{l:lowFreqResolve}
    Suppose $\alpha>0$ and $a:\mathbb R^d \to [0,1]$ is relatively dense. If there exists $M(\lambda),m, \lambda_0 >0$ such that for all $u \in H^{2\alpha}(\Rb^d)$ and $\lambda \geq \lambda_0$ we have 
    \begin{equation}
        \ltwo{u}^2 \leq M(\lambda) \ltwo{\left[ (-\Delta)^{\alpha}-\lambda\right]u}^2 + m \ip{au}{u},
    \end{equation}
    then there exists $C>0$ such that for all $u \in H^{2\alpha}(\Rb^d)$ and all $\lambda \in \Rb$ we have 
    \begin{align}
        &\ltwo{u}^2 \leq M^*(\lambda) \ltwo{\left[ (-\Delta)^{\alpha}-\lambda\right]u}^2 +  C\ip{au}{u},\\
        &M^*(\lambda) = \left\{ \begin{array}{ll} |\lambda|^{-2}, & \lambda \le -1; \\ C, & -1 < \lambda < \lambda_0; \\ M(\lambda), & \lambda \ge \lambda_0. \end{array} \right. 
    \end{align}
\end{lemma}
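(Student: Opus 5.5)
We split the real line into the three ranges in the definition of $M^*$ and treat each one separately. The range $\lambda \ge \lambda_0$ is exactly the hypothesis, with $m \le C$ after enlarging $C$. So the work is in the two remaining ranges.

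\emph{Range $\lambda \le -1$.} Since $(-\Delta)^\alpha \ge 0$, Plancherel gives, for every $\xi$,
\[ \bigl| |\xi|^{2\alpha} - \lambda \bigr| \ge |\lambda|. \]
Hence $\ltwo{u} \le |\lambda|^{-1}\ltwo{[(-\Delta)^\alpha-\lambda]u}$, which is the claim with $M^*(\lambda)=|\lambda|^{-2}$. No observation term is needed here.

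\emph{Range $-1<\lambda<\lambda_0$.} Here we use relative density through the PLS Theorem (Theorem \ref{thm:pls}). Fix $R = (2\lambda_0+2)^{1/(2\alpha)}$ and split $u = g + f$, where $\hat g = \mathbbm 1_{\{|\xi| \le R\}}\hat u$ and $f=u-g$.

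For $|\xi|>R$ and $\lambda<\lambda_0$ we have $|\xi|^{2\alpha}-\lambda \ge \lambda_0+2 \ge 1$, so by Plancherel
\[ \ltwo{f} \le \ltwo{[(-\Delta)^\alpha-\lambda]u}. \]
The support of $\hat g$ lies in a cube of side $2R$. First pass from $a$ to an indicator of a relatively dense set: by \eqref{e:switch}, the set $E=\{a \ge \eta/2\}$ is relatively dense, and $\mathbbm 1_E \le (2/\eta)a$. Applying Theorem \ref{thm:pls} to $\mathbbm 1_E$ then gives
\[ \ltwo{g} \le C\ltwo{\mathbbm 1_E g} \le C\ltwo{a^{1/2}g}. \]
Now put the pieces together with the triangle inequality, as in \eqref{e:triangle}:
\[ \ltwo{g} \le C\ltwo{a^{1/2}u} + C\ltwo{a^{1/2}f}, \qquad \ltwo{a^{1/2}f} \le \ltwo{f}. \]
Squaring yields
\[ \ltwo{u}^2 \le C\ltwo{[(-\Delta)^\alpha-\lambda]u}^2 + C\ip{au}{u}, \]
which is the claim in this range.

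The only subtle point is keeping the constants uniform in $\lambda\in(-1,\lambda_0)$. This holds because the cutoff radius $R$ depends only on $\lambda_0$, not on $\lambda$. The uncertainty principle is not needed on the annulus itself, since on bounded frequencies PLS already controls all of $u$.
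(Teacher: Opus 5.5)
Your proposal is correct and follows essentially the same route as the paper. The ranges $\lambda \le -1$ and $\lambda \ge \lambda_0$ follow immediately from positivity of $(-\Delta)^\alpha$ and from the hypothesis; for $-1<\lambda<\lambda_0$ you split $\hat u$ at a radius depending only on $\lambda_0$ and $\alpha$, apply PLS to the low-frequency part, control the high-frequency part by the resolvent, and combine with the triangle inequality. The differences are only cosmetic: your radius $(2\lambda_0+2)^{1/(2\alpha)}$ replaces the paper's $(\lambda_0+2)^{1/(2\alpha)}$, and your detour through the indicator of $\{a \ge \eta/2\}$ is harmless but unnecessary, since the paper's PLS statement already applies to relatively dense functions and $a \le a^{1/2}$.
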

\begin{proof}
    Due to positivity of $(-\Delta)^\alpha$ and the assumed resolvent bound, it suffices to show the desired resolvent estimate for $-1 < \lambda <\lambda_0$. Notice that if $\hat g = \mathbbm 1_{B(0,\mu)} \hat u$ with $\mu=(\lambda_0+2)^{\frac 1{2\alpha}}$ then by the PLS Theorem (Theorem \ref{thm:pls}) we obtain the existence of $C$ depending on $a$, $\alpha$, and $\lambda_0$ such that
        \[ \norm{g}_{L^2} \le C \norm{a^{1/2}g}_{L^2}.\]
    On the other hand, if $\xi$ is outside $B(0,\mu)$ then
        \[ ||\xi|^{2\alpha}-\lambda| \ge \mu^{2\alpha} -|\lambda| \ge 1.\] 
    And therefore, setting $\hat f = \mathbbm 1_{B(0,\mu)^c}\hat u$ then
        \[ \norm{f}^2_{L^2} = \int_{B(0,\mu)^c} \abs{\hat u}^2 \le \norm{\left[ (-\Delta)^\alpha -\lambda\right]u }_{L^2}^2. \]
    Finally, by the triangle inequality, as in \eqref{e:triangle},
        \begin{align} \ltwo{u} &\le \ltwo{f} + \ltwo{g} \le C\ltwo{a^{1/2} u} + (C+1)\ltwo{f} \\ 
        &\le C\ltwo{a^{1/2} u} + (C+1)\norm{\left[ (-\Delta)^\alpha -\lambda\right]u }_{L^2}.\end{align}
    
\end{proof}

\subsection{Proof of Theorem \ref{thm:resolventArbObsplusHeat}}\label{s:appMainProof}
First, note that it suffices to prove Theorem \ref{thm:resolventArbObsplusHeat} in the case $\Ac \ge 1$ since we can replace $\Ac$ by $\Ac + (R + 1)I$. Indeed, the resolvent estimate \eqref{eq:abstractObsResolveDecay} implies
    \begin{align} &\norm{f}_X^2 \le M^*(\lambda) \norm{[\Ac + (R+1)I - \lambda]f }_X^2 + m \norm{\mathscr{C} f}_Y^2, \\ 
    &M^*(\lambda) = M(\lambda-R-1), \quad \lambda \ge \lambda_0 + R+1. \end{align} 
Furthermore, writing $e^{-t\Ac} = e^{t(R+1)} e^{-t(\Ac +(R+1)I)}$, the heat observability inequality \eqref{eq:abstractHeatObs} implies 
\begin{equation}\norm{e^{-T(\Ac + (R+1)I)}u_0}_X \le r(T)e^{(R+1)T} \int_0^T \ \norm{\mathscr C e^{-t(\Ac + (R+1)I)}u_0 }_Y^2 \, dt,\end{equation} 
for all $u_0 \in X$.
Therefore, the conditions of Theorem \ref{thm:resolventArbObsplusHeat} are satisfied with a positive operator and $r^*(T) = e^{R+1} r(T)$, and hence we obtain $C^*>0$ such that for all $u_0 \in X$,
    \begin{equation}
        \|u_0\|_X^2 \leq 
        C^*(T,r^*,-1,M^*,m)\int_0^T \|\mathscr{C} e^{it(\Ac+(R+1)I)} u_0\|^2 dt.
    \end{equation}
However, rewriting $\mathscr{C} e^{it(\Ac+(R+1)I)} = e^{it(R+1)} \mathscr C e^{it\Ac}$ and 
    \[ C(T,r,R,M,m) = C^*(T,r^*,-1,M^*,m),\] 
yields \eqref{eq:abstractSchrodingerArbObs}. We now turn to proving Theorem \ref{thm:resolventArbObsplusHeat} when $\Ac \ge 1$. The first ingredient toward this end is extending the heat observability \eqref{eq:abstractHeatObs} to solutions of the following inhomogeneous heat equation,
\begin{equation}\label{eq:abstractInhomogHeat}
    (\p_t + \Acr) u = F \in L^2((0,\infty): X), \quad u|_{t=0} =u_0 \in D(\Ac).
\end{equation}
\begin{lemma}\label{l:inhomHeatObs}
    Assume that 
    \begin{itemize}
        \item $\Ac \ge 1$ 
        \item $\|\mathscr{C}\|_{\Lc(X,Y)} \leq \ti{C}$,
        \item there exists $r:[0,\infty) \to (0,\infty)$ 
    such that 
    \begin{equation}\label{eq:abstractHeatObs2}
        \|e^{-T \Ac} u_0\|_X^2 \leq r(T) \int_0^T \| \mathscr{C} e^{-t \Ac} u_0\|_Y^2 dt,
    \end{equation}
    for all $u_0 \in X$ and $T>0$. 
    \end{itemize}
    Then there exists $C>0$ such that for any solution $u$ of \eqref{eq:abstractInhomogHeat} we have 
\begin{equation}
    \|u(T)\|_X^2 \leq 12 \ti{C} r(T) \int_0^T \left( \|\mathscr{C}\Acr u(t)\|^2_Y + \|F(t)\|_X^2 \right) dt. 
\end{equation}
\end{lemma}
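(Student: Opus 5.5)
The plan is to combine the Duhamel formula with the heat observability inequality \eqref{eq:abstractHeatObs2}. The key choice is to apply that inequality not to $u_0$ but to $\Acr u_0$; this is what puts $\mathscr C\Acr u$ on the right-hand side. For $u_0\in D(\Ac)$ and $F\in L^2((0,\infty);X)$, write
\[ u(t)=e^{-t\Ac}u_0+w(t),\qquad w(t)=\int_0^t e^{-(t-s)\Ac}F(s)\,ds. \]
Then $\|u(T)\|_X^2\le 2\|e^{-T\Ac}u_0\|_X^2+2\|w(T)\|_X^2$, and I estimate the two pieces separately.

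\textbf{Homogeneous piece.} Since $\Ac\ge 1$, the functional calculus gives $\|e^{-T\Ac}u_0\|_X\le \|\Ac e^{-T\Ac}u_0\|_X=\|e^{-T\Ac}(\Ac u_0)\|_X$. Applying \eqref{eq:abstractHeatObs2} to the data $\Ac u_0\in X$ yields
\[ \|e^{-T\Ac}u_0\|_X^2\le r(T)\int_0^T\|\mathscr C\Ac e^{-t\Ac}u_0\|_Y^2\,dt. \]
Next use $\Ac e^{-t\Ac}u_0=\Ac u(t)-\Ac w(t)$ together with $\|\mathscr C\|\le\ti C$ to get
\[ \int_0^T\|\mathscr C\Ac e^{-t\Ac}u_0\|_Y^2\,dt\le 2\int_0^T\|\mathscr C\Ac u(t)\|_Y^2\,dt+2\ti C^2\int_0^T\|\Ac w(t)\|_X^2\,dt. \]
The remaining term is controlled by $L^2$ maximal parabolic regularity for the nonnegative self-adjoint operator $\Ac$:
\[ \int_0^T\|\Ac w\|_X^2\,dt\le\int_0^T\|F\|_X^2\,dt. \]
To prove this, pass to the spectral representation. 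For each spectral value $\mu\ge1$, the function $\mu\int_0^t e^{-(t-s)\mu}f(s)\,ds$ is the convolution of $f$ with $\mu e^{-\mu\cdot}\mathbbm 1_{[0,\infty)}$, a kernel of $L^1$ norm at most $1$. Young's inequality then gives the bound for each $\mu$, and integrating over the spectral measure gives the estimate.

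\textbf{Duhamel piece at time $T$.} Apply Cauchy--Schwarz for each spectral value $\mu$:
\[ \Bigl|\int_0^T e^{-(T-s)\mu}f(s)\,ds\Bigr|^2\le\frac{1}{2\mu}\int_0^T|f|^2\,ds\le\tfrac12\int_0^T|f|^2\,ds. \]
Hence $\|w(T)\|_X^2\le\frac12\int_0^T\|F\|_X^2\,dt$. This term carries no factor $r(T)$, so it must be absorbed into $\ti C r(T)\int\|F\|^2$. For that I need a lower bound on $\ti C r(T)$, which I would extract from \eqref{eq:abstractHeatObs2} itself. Test it on $x$ spectrally localized in $[\inf\sigma(\Ac),\inf\sigma(\Ac)+1]$. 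Then $\|e^{-T\Ac}x\|\ge e^{-T(\inf\sigma+1)}\|x\|$, while the right-hand side is at most $r(T)\ti C^2T\|x\|^2$. This gives a lower bound on $r(T)$ in terms of $\ti C$ and $T$.

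\textbf{Assembling and the main obstacle.} Collecting the pieces gives
\[ \|u(T)\|^2\le 4r(T)\int_0^T\|\mathscr C\Ac u\|_Y^2\,dt+\bigl(4\ti C^2r(T)+1\bigr)\int_0^T\|F\|_X^2\,dt. \]
The step I expect to be the obstacle is precisely this bookkeeping of constants: turning the factors $4$, $\ti C^2$ and the stray $+1$ into the clean form $12\,\ti C\,r(T)$. The stray $+1$ is handled by the lower bound on $r(T)$ from the previous paragraph. The mismatch between $\ti C^2$ and $\ti C$ is handled by normalizing $\ti C\ge1$, or equivalently by rescaling $\mathscr C$ and $r$ together, which leaves \eqref{eq:abstractHeatObs2} unchanged. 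If the lower bound on $r$ degenerates as $T$ grows, I would restrict to $T\in(0,1)$, which is the only range used in Theorem \ref{thm:resolventArbObsplusHeat}; there $e^{-2T}$ is bounded below. Finally, the estimate extends from $u_0\in D(\Ac)$ to general solutions by density where needed.
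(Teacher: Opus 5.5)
Your argument is correct up to the estimate
\[
\|u(T)\|_X^2\le 4r(T)\int_0^T\|\mathscr C\Ac u\|_Y^2\,dt+\bigl(4\ti C^2r(T)+1\bigr)\int_0^T\|F\|_X^2\,dt ,
\]
and up to that point it is essentially the paper's proof. You use the same splitting into a homogeneous part and a zero-data part, and the same key move of applying \eqref{eq:abstractHeatObs2} to $\Ac u_0$ using $\Ac\ge1$. The only difference is how the zero-data part is handled. The paper pairs $\partial_tu_2+\Ac u_2=F$ with $\partial_tu_2$ to get $\|u_2(T)\|_X^2+\int_0^T\|\partial_tu_2\|_X^2\,dt\le\int_0^T\|F\|_X^2\,dt$, and writes $\Ac u_1=\Ac u+\partial_tu_2-F$. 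You obtain the equivalent maximal-regularity and endpoint bounds from Duhamel's formula, Young's inequality and Cauchy--Schwarz in the spectral representation.

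The final step cannot be carried out, because the inequality with the literal constant $12\ti C r(T)$ is false.

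\textbf{Your lower bound on $r(T)$ is too weak.} Your test actually gives $r(T)\ge e^{-2T(\mu_0+1)}/(\ti C^2T)$ with $\mu_0=\inf\sigma(\Ac)$, not something involving only $e^{-2T}$. Since $\mu_0\ge1$ is otherwise arbitrary, this gives no lower bound independent of $\Ac$. In particular it does not give the $r(T)\ge 1/8$ you would need to absorb the stray $+1$ when $\ti C=1$, even for $T\in(0,1)$.

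\textbf{No argument can supply such a bound.} Take $X=Y=\mathbb C$, $\Ac=\mu$ and $\mathscr C=1$, so $\ti C=1$. Then $r(T)=2\mu/(e^{2T\mu}-1)$ satisfies \eqref{eq:abstractHeatObs2} with equality. With $u_0=0$ and $F=\mathbbm 1_{[T-1/\mu,T]}$ one gets $|u(T)|^2=(1-e^{-1})^2\mu^{-2}$, while the right-hand side is at most $32/(e^{2T\mu}-1)$. For $T=1/2$, $\mu=10$ the right-hand side is smaller.

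\textbf{The normalization of $\ti C$ is not legitimate either.} The target inequality is not invariant under $(\mathscr C,\ti C,r)\mapsto(s\mathscr C,s\ti C,s^{-2}r)$. For instance, take $\Ac=1$, $\mathscr C=\ti C<1/12$, $r(T)=2/(\ti C^2(e^{2T}-1))$, $u_0=1$ and $F=0$. Then the claimed bound reads $e^{-2T}\le 12\ti C e^{-2T}$, which is false.

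The paper's proof has exactly the same hole. It bounds $2\|u_2(T)\|_X^2\le 2\int_0^T\|F\|_X^2\,dt$ and never absorbs this term, and it writes $\ti C$ where $\ti C^2$ arises. What is actually true is your intermediate estimate, for example in the form $\|u(T)\|_X^2\le\bigl(4\max\{1,\ti C^2\}r(T)+1\bigr)\int_0^T\bigl(\|\mathscr C\Ac u\|_Y^2+\|F\|_X^2\bigr)\,dt$. That suffices downstream: in Lemma \ref{l:schroArbObsWeak} it only replaces $r(T)$ by a multiple of $r(T)+1$, which is harmless for \eqref{e:preciseConstantHeatObs}.
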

\begin{proof}
    We decompose the solution into $u(t)=u_1(t)+u_2(t)$ where $u_1, u_2$ solve the homogeneous problem with initial data $u_0$, and the inhomogeneous problem with trivial initial data, respectively. That is 
    \begin{equation}
        \begin{cases}
            \p_t u_1 + \Acr u_1 = 0, \quad u_1|_{t=0}=u_0 \\
            \p_t u_2 + \Acr u_2 = F, \quad u_2|_{t=0}=0.
        \end{cases}
    \end{equation}
    Since $\Acr \geq 1$ and using the assumed heat observability \eqref{eq:abstractHeatObs2} we have 
    \begin{equation}
        \|u_1(T)\|_X^2 \leq \|\Acr u_1(T)\|_X^2 \leq r(T) \int_0^T \|\mathscr{C}\Acr u_1(t)\|^2_Y dt.
    \end{equation}
    We can rewrite $\Acr u_1= \Acr u + \p_t u_2 - F$, and use that $\mathscr{C}: Y \ra X$ is bounded to obtain
    \begin{equation}\label{eq:abstractHeatImhomogIntermed1}
        \|u_1(T)\|_X^2 \leq 3r(T)\int_0^T \|\mathscr{C}\Acr u(t)\|^2_Y dt + 3 \ti{C} r(T) \int_0^T \left( \|\p_t u_2(t)\|_X^2 + \|F(t)\|_X^2 \right) dt.
    \end{equation}
    Now to estimate $u_2$ terms, we pair the equation for $u_2$ with $\p_t u_2$ in $X$ then take real parts to obtain
    \begin{equation}
        \|\p_t u_2\|_X^2 + \Re \<(\Ac+R)u_2, \p_t u_2\>_X = \Re \<F, \p_t u_2\>_X.
    \end{equation}
    We can rewrite the second term 
    \begin{equation}
        \|\p_t u_2\|_X^2 + \frac{1}{2}\p_t\<(\Ac+R)u_2,  u_2\>_X = \Re \<F, \p_t u_2\>_X.
    \end{equation}
    Now integrating in $t$ from $0$ to $T$, using that $(\Ac+R) \geq 1$ and $u_2|_{t=0}$, we obtain
    \begin{equation}
        \int_0^T \|\p_t u_2(t)\|_X^2 dt + \frac 12 \|u_2(T)\|_X^2 \leq \int_0^T \Re \<F(t),\p_t u_2(t)\>_X dt.
    \end{equation}

    Estimating $2|\ip{F(t)}{\p_t u_2(t)}| = \norm{F(t)}^2_X + \norm{\p_tu_2(t)}_X^2$, yields \begin{equation}\label{eq:abstractHeatImhomogIntermed2}
        \|u_2(T)\|_X^2 + \int_0^T \|\p_t u_2(t)\|_X^2 dt \leq \int_0^T \|F(t)\|^2_X dt.
    \end{equation}
    Finally combining \eqref{eq:abstractHeatImhomogIntermed1} and \eqref{eq:abstractHeatImhomogIntermed2} with Cauchy-Schwarz we have 
    \begin{align}
        \|u(T)\|_X^2 &\leq 2 \left( \|u_1(T)\|_X^2 + \|u_2(T)\|_X^2 \right) \\
        &\leq 12 \ti{C} r(T) \int_0^T \left( \|\mathscr{C}(\Ac+R) u(t)\|_Y^2  + \|F(t)\|_X^2 \right) dt,
    \end{align}
    which is the desired inequality.
\end{proof}

Consider the abstract inhomogeneous backward heat equation: 
\begin{equation}\label{eq:backwardHeat}
    \p_t u -\Acr u = F \in L^{2}((0,\infty); X), \quad u|_{t=T} = u_T \in D(\Ac).
\end{equation}
By reversing time $t$ to $T-t$ we obtain a solution to \eqref{eq:abstractInhomogHeat} and hence we can extend Lemma \ref{l:inhomHeatObs} in the following way.
\begin{lemma}\label{l:backwardHeatObs}
Under the assumptions of Lemma \ref{l:inhomHeatObs}, there exists $C>0$ such that for any $T>0$ and any $u$ solving \eqref{eq:backwardHeat} we have 
\begin{equation}
\|u(0)\|_X^2 \leq 12 \ti{C} r(T) \int_0^T \left( \| \mathscr{C}\Acr u(t)\|^2_Y + \|F(t)\|_X^2 \right) dt.
\end{equation}
\end{lemma}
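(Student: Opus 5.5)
The plan is a time reversal that reduces Lemma \ref{l:backwardHeatObs} directly to Lemma \ref{l:inhomHeatObs}. Given a solution $u$ of the backward problem \eqref{eq:backwardHeat} on $[0,T]$, set
\[ w(s) = u(T-s), \qquad G(s) = -F(T-s), \qquad 0 \le s \le T. \]
Then $\partial_s w(s) = -(\partial_t u)(T-s) = -\left[\Acr u(T-s) + F(T-s)\right] = -\Acr w(s) + G(s)$. Hence $(\partial_s + \Acr) w = G$ with $w|_{s=0} = u_T \in D(\Ac)$, so $w$ solves the forward inhomogeneous problem \eqref{eq:abstractInhomogHeat}. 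Here $G \in L^2((0,T);X)$; we extend it by zero past $T$ to fit the $L^2((0,\infty);X)$ framework, which does not affect anything on $[0,T]$.

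Next we apply Lemma \ref{l:inhomHeatObs} to $w$ at time $T$. Its hypotheses are exactly those assumed here: $\Ac \ge 1$, $\mathscr C$ bounded by $\ti C$, and the heat observability \eqref{eq:abstractHeatObs2}. This gives
\[ \|w(T)\|_X^2 \le 12\ti C\, r(T) \int_0^T \left( \|\mathscr C \Acr w(s)\|_Y^2 + \|G(s)\|_X^2 \right) ds. \]
Now undo the substitution. We have $w(T) = u(0)$ and $\|G(s)\|_X = \|F(T-s)\|_X$. The change of variables $t = T-s$ maps $[0,T]$ onto itself with unit Jacobian, so the right-hand side becomes $12\ti C\, r(T)\int_0^T \left(\|\mathscr C\Acr u(t)\|_Y^2 + \|F(t)\|_X^2\right)dt$. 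This is the claimed bound.

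The only point needing care is well-posedness of the backward problem as stated. A backward heat equation with final data is not solvable in general, so the lemma should be read as an a priori estimate for any $u$ that does solve \eqref{eq:backwardHeat} on $[0,T]$. The time-reversed function $w$ is then the mild/strong solution of the forward problem in the sense used in Lemma \ref{l:inhomHeatObs}, with $w(0) = u_T \in D(\Ac)$. With that understood, we expect no real obstacle: the estimate transfers verbatim, with the same constant.
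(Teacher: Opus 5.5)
Your proof is correct and is the same argument as the paper: there, Lemma~\ref{l:backwardHeatObs} is obtained by reversing time $t \mapsto T-t$, which turns \eqref{eq:backwardHeat} into \eqref{eq:abstractInhomogHeat} with initial data $u_T$ and forcing $-F(T-\cdot)$, and then applying Lemma~\ref{l:inhomHeatObs}. Your sign computation, the identification $w(T)=u(0)$, the change of variables, and your reading of the lemma as an a priori estimate for solutions that exist are all sound.
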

Next we can convert the inhomogeneous backwards heat observability result to an observability result for the Schr\"odinger equation, with additional factors of $\Ac$.
\begin{lemma}\label{l:schroArbObsWeak}
Assume that $\Ac \ge 1$ 
and $\|\mathscr{C}\|_{\Lc(X,Y)} \leq \ti{C} $.
    Assume further that there exists  
    $r:(0,\infty) \to (0,\infty)$ such that
    \begin{equation}\label{eq:abstractHeatObs3}
        \|e^{-T \Ac} u_0\|_X^2 \leq r(T) \int_0^T \| \mathscr{C} e^{-t \Ac} u_0\|_Y^2 dt,
    \end{equation}
    for all $u_0 \in X$ and $T>0$. 
    Then there exists an absolute constant $C \le 10^4 \ti{C}$ and such that for any $T \in (0,1)$, $h \in \left(0, \frac{T^2}{C\ln(C r(T/10)T^{-1})}\right)$, and $f \in D(\Ac)$ we have 
    \begin{equation}\label{e:SchroObswithA}
        \|f\|_X^2 \leq {Ch} \|\Acr f\|_X^2 + C \frac{T^2}{h}e^{\frac{T^2}{h}}r(T/10) \int_0^T \|\mathscr{C}\Acr e^{it\Acr }f\|_Y^2 dt. 
    \end{equation}
\end{lemma}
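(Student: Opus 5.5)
The plan is a transmutation argument in the spirit of \cite{SSY25}: from the Schr\"odinger solution $e^{it\Acr}f$ we build a solution of the inhomogeneous backward heat equation \eqref{eq:backwardHeat} on a short time interval, and then apply Lemma \ref{l:backwardHeatObs}. Concretely, let $g_h(z)=(4\pi h)^{-1/2}e^{-z^2/(4h)}$, which is entire in $z$. Fix a cutoff $\chi\in C_0^\infty((0,T))$ with $\chi\equiv 1$ on $[T/4,3T/4]$, $|\chi'|\lesssim T^{-1}$, and centre $\tau_0=T/2$. Set $T'=T/10$ and, for $t\in[0,T']$, define
\[ u(t)=\int_{\mathbb R}\chi(\tau)\,g_h\bigl(\tau-\tau_0-i(T'-t)\bigr)\,e^{i\tau\Acr}f\,d\tau . \]
Formally $\partial_t$ of the kernel equals $-i\partial_\tau$ of the kernel. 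Integrating by parts in $\tau$ and using $-i\partial_\tau e^{i\tau\Acr}=\Acr e^{i\tau\Acr}$, we get $\partial_t u-\Acr u=F$ with
\[ F(t)=i\int\chi'(\tau)\,g_h\bigl(\tau-\tau_0-i(T'-t)\bigr)\,e^{i\tau\Acr}f\,d\tau . \]
So $u$ solves \eqref{eq:backwardHeat}, with a source supported where $\chi'\neq0$, that is, at distance $\gtrsim T$ from $\tau_0$.

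First I would identify $u(0)$. Without the cutoff, a contour shift (valid since $\Acr\ge1$, using the spectral theorem on each spectral value $\mu\ge1$) gives
\[ \int g_h\bigl(\tau-\tau_0-iT'\bigr)e^{i\tau\mu}\,d\tau=e^{i\tau_0\mu}e^{-T'\mu}e^{-h\mu^2}. \]
This looks like the wrong sign of damping, so I would instead place the imaginary shift so that the transform at $t=0$ is $e^{i\tau_0\mu}e^{-h\mu^2}$ up to a harmless factor. Equivalently, I would choose the kernel so that $\widehat{\text{kernel}}(\mu)=e^{i\tau_0\mu}e^{-h\mu^2}e^{(t-0)\mu}$-type growth is compensated by the Gaussian; this bookkeeping is routine but must be done carefully. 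Then $\|u(0)-e^{i\tau_0\Acr}f\|\le \|(1-e^{-h\Acr^2})f\|+\text{cutoff error}$. Using $\Acr\ge1$, I would bound the first piece by $\sqrt h\|\Acr f\|$, which produces the term $Ch\|\Acr f\|^2$ in \eqref{e:SchroObswithA}.

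Next I would estimate the cutoff errors. On $\operatorname{supp}\chi'$ we have $|\tau-\tau_0|\ge T/4$, and the imaginary shift is at most $T'$. Hence
\[ \bigl|g_h\bigl(\tau-\tau_0-is\bigr)\bigr|\le (4\pi h)^{-1/2}e^{-((T/4)^2-(T/10)^2)/(4h)}, \]
and unitarity gives $\|F\|_{L^2(0,T';X)}^2+\|u(0)-(\text{uncut})\|^2\lesssim h^{-1}T e^{-cT^2/h}\|f\|^2$. For the observation term, Cauchy--Schwarz in $\tau$ against the kernel, whose $L^1$ norm grows at most like $e^{(T')^2/(4h)}$ because of the imaginary shift, gives
\[ \int_0^{T'}\|\mathscr C\Acr u(t)\|_Y^2\,dt\lesssim e^{T^2/h}\int_0^T\|\mathscr C\Acr e^{i\tau\Acr}f\|_Y^2\,d\tau . \]

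Plugging these into Lemma \ref{l:backwardHeatObs} with time $T'=T/10$ yields
\[ \|f\|^2\le Ch\|\Acr f\|^2+C\frac{T^2}{h}e^{T^2/h}r(T/10)\int_0^T\|\mathscr C\Acr e^{it\Acr}f\|^2\,dt+C\,r(T/10)\,h^{-1}Te^{-cT^2/h}\|f\|^2 . \]
The restriction $h<T^2/(C\ln(Cr(T/10)T^{-1}))$ is exactly what makes the last coefficient at most $\tfrac12$, so that term can be absorbed into the left-hand side. The main obstacle is the kernel design in the second step. We need simultaneously: an exact backward-heat identity, so that $F$ lives only on $\operatorname{supp}\chi'$; a value $u(0)$ equal to $e^{i\tau_0\Acr}f$ up to an $O(\sqrt h\,\Acr)$ error; and growth from the imaginary shift of only $e^{O(T^2/h)}$, which is beaten by the Gaussian decay at distance $T/4$. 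If the sign issue cannot be resolved with a pure Gaussian, I would first try replacing $g_h$ by $g_h(z)e^{\alpha z}$ with a suitable complex $\alpha$, or running the construction with the forward equation and then invoking Lemma \ref{l:inhomHeatObs} instead.
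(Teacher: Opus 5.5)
Your overall route is the paper's own: the transform of \cite{SSY25}, where the paper's $W(\tau+is)=\mathcal T_h(\chi e^{it\Ac}f)$ is your $u$ with $s$ playing the role of your $t$. As written, however, there is a gap exactly at the step you call the main obstacle. With $k(t,\tau)=g_h(\tau-\tau_0-i(T'-t))$ the Gaussian is unshifted at $t=T'$, but Lemma \ref{l:backwardHeatObs} controls $u(0)$. By your own contour computation, $u(0)$ is, up to the cutoff, $e^{-T'\Ac}e^{i\tau_0\Ac}e^{-h\Ac^2}f$. This admits no lower bound by $\|f\|_X$, so the argument cannot close. You then only describe the kernel you want, and you doubt that a pure Gaussian can realize it. There is also a sign slip: for your kernel $\partial_t k=+i\,\partial_\tau k$, not $-i\,\partial_\tau k$. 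With $-i$ you would get $\partial_t u+\Ac u$, a forward equation. Integration by parts correctly gives $\partial_t u-\Ac u=-i\int\chi'(\tau)k\,e^{i\tau\Ac}f\,d\tau$. The sign of the source is harmless, but the direction of the heat flow is precisely what decides whether the construction works.

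The repair needs nothing beyond a pure Gaussian: put the unshifted point at the observed endpoint, taking $k(t,\tau)=g_h(\tau-\tau_0+it)$ for $t\in[0,T']$. Then:
\begin{itemize}
\item $\partial_t k=i\,\partial_\tau k$, so $u$ solves \eqref{eq:backwardHeat} with source $-i\int\chi' k\,e^{i\tau\Ac}f\,d\tau$.
\item $\int g_h(\tau-\tau_0+it)e^{i\tau\mu}\,d\tau=e^{i\tau_0\mu}e^{t\mu}e^{-h\mu^2}$. So the uncut $u(t)$ is $e^{t\Ac}$ applied to $e^{i\tau_0\Ac}e^{-h\Ac^2}f$: the flow smooths towards $t=0$, and $u(0)=e^{i\tau_0\Ac}e^{-h\Ac^2}f$ up to a Gaussian tail.
\item $|k(t,\tau)|=(4\pi h)^{-1/2}e^{(t^2-(\tau-\tau_0)^2)/(4h)}$. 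This gives the growth $e^{T'^2/(4h)}$, and the decay $e^{-((T/4)^2-(T/10)^2)/(4h)}$ on $\supp\chi'$, that your remaining estimates use.
\end{itemize}
The forward variant, with $g_h(\tau-\tau_0+i(T'-t))$ and Lemma \ref{l:inhomHeatObs}, is just the time reversal of this, and the modulation $e^{\alpha z}$ is unnecessary.

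With this kernel the rest of your outline goes through and coincides with the paper's proof. One genuine difference is how $u(0)-e^{i\tau_0\Ac}f$ is estimated. The paper uses the mean value theorem with $\|\partial_t(\chi e^{it\Ac}f)\|_X\le\|\Ac f\|_X+\frac2T\|f\|_X$, which gives $\sqrt h\,(\|\Ac f\|_X+T^{-1}\|f\|_X)$. Your spectral bound $1-e^{-h\mu^2}\le\sqrt h\,\mu$, plus a Gaussian tail for $1-\chi$, avoids the $T^{-1}\|f\|_X$ term. The other difference is that your choice $\tau_0=T/2$, $T'=T/10$ yields $r(T/10)$ and $\int_0^T$ directly, without the paper's final rescaling $T\mapsto T/10$.
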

\begin{proof}
Following \cite{SSY25} we introduce the following FBI-type transform $\mathcal T_h$. Let $g$ be a normalized Gaussian function
    \[ g(z) = \frac{1}{\sqrt{2\pi}}e^{-\frac{z^2}{2}}, \quad g_h(z) = h^{-1/2} g(zh^{-1/2}), \quad 0 < h < 1.\] 
The key property is that $\int_{\mathbb R} g_h = 1$ for all $h$.
For $0<h<1$, $z = \tau+is \in \mathbb{C}$ and $\Gamma(t)$ a bounded $X$-valued function, we define 
\begin{equation}
    \mathcal{T}_h \Gamma(z) = \frac{1}{\sqrt{2\pi h}} \int_{\Rb} e^{-\frac{(z+t)^2}{2h}} \Gamma(t) dt = \int_{\mathbb R} g_h(z+t) \Gamma(t) \, dt.
\end{equation}
Differentiating the kernel and integrating by parts reveals the crucial property, $\partial_s \mathcal {T}_h = -i\mathcal{T}_h \partial_t$. Therefore $\mathcal {T}_h$ will allow us to connect the backward heat equation \eqref{eq:backwardHeat} and the Schr\"odinger equation \eqref{eq:abstractSchro}.

Now, fix $T>0$ and choose $\chi \in C^1([0,10T]:[0,1])$ with 
\begin{equation}
    \chi\equiv 1 \text{ on } [2T, 8T], \quad |\chi'| \leq \frac{2}{T}.
\end{equation}
For $f \in D(\Ac)$ we denote $F=e^{it\Acr} f$ and $\ti{F}=\chi F$. Then 
\begin{equation}
    i \p_t \ti{F} + \Acr \ti{F} = i \chi'(t) F.
\end{equation}
Setting $W=\mathcal{T}_h (\ti{F})$ and $G=-\mathcal{T}_h(i \chi' F)$ we obtain
\begin{equation}
    \p_s W - \Acr W = G,
\end{equation}
i.e. for each $\tau \in \mathbb R$, $s \mapsto W(\tau + is)$ satisfies the inhomogeneous backward heat equation \eqref{eq:backwardHeat}. The proof will now consist of two main steps. First, we will observe $\norm{W(\tau)}_X$ by $\mathscr{C}\Acr F$ for suitable $\tau$ using Lemma \ref{l:backwardHeatObs}. Then, we will control $\norm{W(\tau)}_X$ from below by $\norm{f}$.

\subsection*{Observing $W(\tau)$}
By Lemma \ref{l:backwardHeatObs}, for any $\tau \in \Rb$ we have 
\begin{equation}\label{eq:interMedHeat}
    \|W(\tau)\|_X^2 \leq 12 \ti{C} r(T) \int_0^T \left( \|\mathscr{C}\Acr W(\tau+is) \|^2_Y + \|G(\tau+is)\|_X^2 \right) ds.
\end{equation}
Some elementary estimates of the kernel of $\mathcal {T}_h$ will we used to further estimate each of the terms of the right-hand side.
From the definition of the FBI transformation and $W(\tau+is)$ we have 
\begin{equation}
    \mathscr C \Acr W(\tau+is) = \frac{1}{\sqrt{2\pi h}} e^{\frac{s^2}{2h}} \int_{0}^{10T} e^{-\frac{(\tau+t)^2}{2h}} e^{-i\frac{s(\tau+t)}{h}} \chi(t) \mathscr C\Acr F(t) dt.
\end{equation}
Taking the $Y$ norm of both sides, then exchanging the order of the norm and integral, and applying Cauchy-Schwarz in the $t$ integral we obtain
\begin{equation}\label{eq:abstractAWinequality}
    \|\mathscr{C}\Acr  W(\tau+is)\|^2_Y \leq \frac{10
    T}{2 \pi h} e^{\frac{s^2}{h}} \int_0^{10T} \|\mathscr{C} \Acr F(t)\|^2_Y dt.
\end{equation}
On the other hand, by the definition of the FBI transform
\begin{equation}
    G(\tau+is) = -i \frac{1}{\sqrt{2\pi h}} e^{\frac{s^2}{2h}} \int_{\Rb} e^{-\frac{(\tau+t)^2}{2h}} e^{-i \frac{s(\tau+t)}{h}} \chi'(t) F(t) dt.
\end{equation}
Since $\chi'=0$ outside of the intervals $[0,2T]$ and $[8T,10T]$, for $\tau \in [-6T,-4T]$ and $t$ in the support of $\chi'$, we have $|\tau+t| \ge 4T$.

Combining this with the fact that $e^{it\Acr}$ is unitary on $X$, 
\begin{equation}\label{eq:abstractGinequality}
    \max_{\tau \in [-6T,-4T]} \|G(\tau+is)\|_X^2 \leq \frac{64}{2\pi h} e^{\frac{s^2}{h}} e^{-\frac{4T^2}{h}} \|f\|_X^2.
\end{equation}
Now combining \eqref{eq:abstractAWinequality} and \eqref{eq:abstractGinequality} with \eqref{eq:interMedHeat}, we obtain 
\begin{align} \label{eq:abstractWIntermed}
    \max_{\tau \in [-6T,-4T]} \|W(\tau)\|_X^2 \leq &\frac{384 \ti{C}T}{\pi h} e^{-\frac{3T^2}{h}} r(T) \|f\|_X^2 \\
    &+\frac{30 \ti{C} T^2}{\pi h} e^{\frac{T^2}{h}}r(T) \int_0^{10T} \|\mathscr{C}\Acr F(t)\|^2_Y dt.
\end{align}
\subsection*{Controlling $\norm{W(\tau)}_X$ from below}
Plugging $s=0$ to the definition of the FBI transform, and recalling $W = \mathcal T_h \tilde F$, we can express $W(-\tau) = \tilde F * g_h(\tau)$. 

Therefore, as $h \to 0$, we expect $W(-\tau) \to \tilde F(\tau)$, which in turn equals $F(\tau)$ for $\tau \in [4T,6T]$. To bound $W(\tau)$ from below, we will precisely control the error in this approximation; see \eqref{e:approxError} below. Using the fact that $\int g_h=1$,
    \begin{align} W(-\tau) - \tilde F(\tau) &= \int \left[ \tilde F(t) - \tilde F(\tau) \right] g_h(\tau-t) \, dt. \end{align}
We estimate the difference $\|\tilde F(t)-\tilde F(\tau)\|_X$ using the Mean Value Theorem and the observation that
    \begin{align} \norm{\partial_t \tilde F(t)}_X &\le \norm{\partial_t F(t)}_X + \norm{F(t)}_X |\chi'(t)| \\
    &\le \norm{\Acr f}_X + \frac{2}{T}\norm{f}_X .
    \end{align}
In this way,
    \[ \int_{\mathbb R} \left\| \tilde F(t) - \tilde F(\tau) \right\|_X |g_h(\tau-t)| \, dt \le \left( \norm{\Acr f}_X {+ \frac{2}{T}\norm{f}_X}\right) \int_{\mathbb R} |t| g_h(t) \, dt .\]
However the final integral equals $\frac{2\sqrt{h}}{\sqrt{2\pi}}$.
Therefore, we have shown for any $\tau \in \mathbb R$,
    \begin{equation}\label{e:approxError}\norm{W(-\tau)-\tilde F(\tau)}_X \le \frac{2 \sqrt{h}}{\sqrt{2\pi}} \left(\norm{\Acr f}_X {+ \frac{2}{T} \norm{f}_X} \right).\end{equation}
\subsection*{Conclusion}
Now, for any $\tau \in [4T,6T]$, say $\tau=5T$, we apply \eqref{eq:abstractWIntermed} and \eqref{e:approxError} to obtain
    \begin{align} \frac{1}{2}\norm{f}_X^2 &= \frac{1}{2}\norm{\tilde F(\tau)}_X^2 \\
    & \le \norm{W(-\tau)}^2 + \norm{W(-\tau) - \tilde F(\tau)}_X^2 \\
    & \le \frac{4h}{\pi} \norm{\Acr f}_X^2 + \frac{30 \ti{C}T^2}{\pi h} e^{\frac{T^2}{h}}r(T) \int_0^{10T} \|\mathscr{C}\Acr F(t)\|^2_Y dt \\
    & \qquad + \left( \frac{384 \ti{C} T}{\pi h} e^{-\frac{3T^2}{h}} r(T) {+ \frac{16h}{\pi T^2} }\right)\|f\|_X^2.\end{align}
Finally, we take $h$ small enough that the final term can be absorbed in the left-hand side. Indeed, if 
    \[ h \le h_0 \coloneq \frac{2T^2}{\ln\left( \frac{{8} r(T) 384 \ti{C}}{ \pi T} \right)} {\frac{\pi}{8\cdot 16}}\]
then {$\frac{16h}{\pi T^2} \leq \frac{1}{8}$}. Furthermore, 
$e^{-\frac{2T^2}{h}} \le \frac{\pi T}{8r(T) \cdot 384\ti{C}}$, and hence
    \[ \frac{384\ti{C}T}{\pi h} e^{-\frac{3T^2}{h}} r(T) = \frac{384\ti{C} r(T)}{ \pi T} \left[ \frac{T^2}{h} e^{-\frac{3T^2}{h}} \right] \le \frac{384\ti{C} r(T)}{ \pi T}e^{-\frac{2T^2}{h}} \le \frac{1}{8}. \]
Summarizing with less precise constants, we obtain
    \[ \norm{f}_X^2 \le 40 \left( {h} \norm{\Acr f}_X + \frac{\ti{C} T^2}{h} e^{\frac {T^2}{h}} r(T) \int_0^{10T} \norm{\mathscr{C} \Acr e^{-i\Acr t} f}_Y \, dt \right), \quad 0< h < h_0.\]
Finally, replacing $T$ by $T/10$ (since $T$ was arbitrary) and similarly adjusting $h$ yields \eqref{e:SchroObswithA}.

\end{proof}
Since $\Acr:D(\Ac) \ra X$ is self-adjoint with $\sigma(A) \subset [1,\infty)$ 
it has a spectral measure $dE_{\rho}$ such that  
\begin{equation}
    \Acr u = \int_{1}^{\infty} \rho \, dE_{\rho}(u),
\end{equation}
Then, define the spectral projection operator as  
\begin{equation}
    \Pi_{\lambda }f = \int_1^{\lambda} d E_{\rho}(f).
\end{equation}
We now use the spectral projector, to show that the assumed resolvent estimate implies a ``high frequency" Schr\"odinger observability estimate. Because the proofs are similar, we also prove that observability resolvent estimates \eqref{eq:abstractResolvent} imply observability \eqref{eq:abstractObs}. 

\begin{lemma}\label{l:HighFreqArbTimeObs}
Assume that $\sigma(\Ac)\subset [1,\infty)$, and $\mathscr{C}$ is admissible. 
\begin{enumerate} 
\item 
    Assume there exist $m, \lambda_0>0$ and $M(\lambda)$ a decreasing function satisfying $\lim\limits_{\lambda \ra \infty} M(\lambda)=0$ such that 
    \begin{equation}
        \|f\|_X \leq M(\lambda) \|(\Ac-\lambda) f \|^2_X + m \|\mathscr{C} f\|_Y^2,
    \end{equation}
    for all $f \in D(\Ac)$ and $\lambda \geq \lambda_0$. 

Then, for any $T>0$, and
    \[ \lambda > \lambda_1 \coloneq \max\left\{ \lambda_0, 2M^{-1}\left(\tfrac{T^2}{200} \right), \tfrac{40}{T}\right\},\]
and any $f \in X$ there holds
\begin{equation}
    \|(I-\Pi_{\lambda})f\|_X^2 \leq \frac{4m}{T} \int_0^T
 \|\mathscr{C} e^{it\pAcr}(I-\Pi_{\lambda}) f \|^2_Y dt.
\end{equation}

\item Assume that there exist $M,m>0$ such that for all $x \in D(A)$ and $\lambda \in \Rb$
\begin{equation}
    \|x\|_X^2 \leq M \|(\Ac-\lambda)x\|_X^2 + m \|\mathscr{C} x\|_Y^2.
\end{equation}
Then there exists $C,T>0$ such that for all $x_0 \in X$
\begin{equation}
    \|x_0\|_X^2 \leq C \int_0^T \|\mathscr{C} e^{it\Ac} x_0\|_Y^2 dt.
\end{equation}
\end{enumerate}
\end{lemma}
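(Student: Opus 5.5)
Both parts use the spectral calculus of $\Ac$ together with a time cutoff. The idea is to turn the observability integral into a resolvent quantity by Fourier transforming in time.

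\textbf{Part (2).} Fix $\chi\in C_0^\infty((0,1))$ and set $\chi_T(t)=\chi(t/T)$. For $x_0\in D(\Ac)$ let $x(t)=e^{it\Ac}x_0$ and $z(t)=\chi_T(t)x(t)$. Then $\dot z-i\Ac z=\chi_T' x$. Taking the Fourier transform in time, $\hat z(\tau)$ satisfies $(\Ac-\lambda)\hat z(\tau)=i\widehat{\chi_T' x}(\tau)$, with $\lambda=-\tau$ up to sign conventions. Apply the resolvent estimate \eqref{eq:abstractResolvent} to $\hat z(\tau)$ for each $\tau$ and integrate in $\tau$. Plancherel then gives
\[ \int\norm{\chi_T x}^2 \le M\int \norm{\chi_T' x}^2 + m\int\norm{\mathscr C \chi_T x}^2 .\]
Unitarity makes $\norm{x(t)}=\norm{x_0}$, so the left side equals $T\norm{\chi}_{L^2}^2\norm{x_0}^2$ and the first term on the right equals $M T^{-1}\norm{\chi'}_{L^2}^2\norm{x_0}^2$. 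For $T$ large, $MT^{-1}\norm{\chi'}^2<\tfrac12 T\norm{\chi}^2$, so this term is absorbed, leaving $\norm{x_0}^2\le C\int_0^T\norm{\mathscr C e^{it\Ac}x_0}^2$. Density in $X$ together with admissibility extends the bound to all $x_0\in X$.

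\textbf{Part (1).} Repeat the argument with $x_0=(I-\Pi_\lambda)f$ and a cutoff at scale $T$. The Fourier variable now ranges over all of $\mathbb R$, but the resolvent estimate holds only for $\lambda'\ge\lambda_0$ and has a $\lambda'$-dependent constant. I would split the $\tau$-integral into three regions.
\begin{itemize}
\item[(i)] $\lambda'\ge\lambda/2$. Here the decaying estimate applies with constant $M(\lambda/2)\le T^2/200$.
\item[(ii)] $\lambda'<\lambda/2$. Since $\hat z(\tau)$ is spectrally supported in $[\lambda,\infty)$, we have $\norm{(\Ac-\lambda')\hat z}\ge \tfrac{\lambda}{2}\norm{\hat z}$. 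Hence $\norm{\hat z}^2\le 4\lambda^{-2}\norm{(\Ac-\lambda')\hat z}^2\le (T^2/400)\norm{\widehat{\chi_T'x}}^2$, using $\lambda>40/T$.
\end{itemize}
In both regions we obtain $\norm{\hat z(\tau)}^2\le \tfrac{T^2}{200}\norm{\widehat{\chi_T'x}(\tau)}^2+m\norm{\mathscr C\hat z(\tau)}^2$. Integrating in $\tau$ and using $\norm{\chi_T'}_{L^2}^2\sim T^{-1}$ gives an error term of size roughly $\tfrac{T}{200}\norm{\chi'}^2\norm{x_0}^2$. This is absorbed provided the cutoff is chosen appropriately: take $\chi$ essentially equal to $1$ on the middle of $[0,1]$ with $|\chi'|\le 4$ (say). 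Then $\int\chi_T^2\ge T/2$, and the remaining error is small compared to $T/4$. The result is $\norm{x_0}^2\le \tfrac{4m}{T}\int_0^T\norm{\mathscr C e^{it\Ac}x_0}^2$.

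The main obstacle is the constant bookkeeping in region (i) versus (ii). One must make sure that $\lambda_1$ as defined (the $2M^{-1}(T^2/200)$ threshold and $40/T$) really gives a uniform factor $T^2/200$ over all $\tau$. A related check is whether $M$ should be evaluated at $\lambda$ or at $\lambda/2$ where regions (i) and (ii) meet. The factor $2$ in $2M^{-1}(\cdot)$ is what I expect handles this.
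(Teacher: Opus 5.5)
Your proposal is correct and follows the paper's proof essentially step for step. The shared steps are: a time cutoff equal to $1$ on the middle half with $|\chi'|\lesssim 1/T$; a Fourier transform in $t$; the resolvent estimate for $\tau\ge\lambda/2$ with constant $M(\lambda/2)\le T^2/200$, which is what the factor $2$ in $2M^{-1}(T^2/200)$ is for; the spectral gap $\|(\Ac-\tau)\hat z\|\ge\frac{\lambda}{2}\|\hat z\|$ for $\tau<\lambda/2$; and then Plancherel and absorption. Part (2) is the same argument with constant $M$ and $T$ large.

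One caveat, which the paper shares (its proof says ``first take $\lambda\ge 2\lambda_0$''): applying the hypothesis on all of $[\lambda/2,\infty)$ needs $\lambda\ge 2\lambda_0$, not just $\lambda>\lambda_0$. So the $\lambda_0$ entry in $\lambda_1$ should really be $2\lambda_0$.
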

\begin{proof}
    Let $f \in D(\Ac)$ and let $F$ be the solution of the Schr\"odinger equation 
    \begin{equation}
        i\p_tF +\Ac F = 0, \quad F|_{t=0} \in D(\Ac). 
    \end{equation}
    Fix a cutoff function $\chi \in C^2([0,T]:[0,1])$ such that $\chi \equiv 1$ on $[T/4,3T/4]$ and $|\chi'| \leq 5/T$. Then for $T>0$ consider the $X$ valued function $ \Psi(t) = \chi\left(t\right) F(t)$
    which solves 
    \begin{equation}
        i\p_t \Psi +\Acr \Psi = i \chi'F.
    \end{equation}
    Then the Fourier transform of $\Psi$ with respect to $t$ satisfies for each $\tau \in \mathbb R$,
    \begin{equation}\label{eq:highFreqObsFourierDef}
        (\Acr-\tau) \hat{\Psi}(\tau) = i\Fc\left( \chi' F\right)(\tau).
    \end{equation}
    \subsubsection*{Proof of (1)} Let $F|_{t=0}= (1-\Pi_{\lambda}) f \in D(\Ac)$. First take $\lambda \geq 2\lambda_0$. Then by \eqref{eq:abstractObsResolveDecay} for $\tau>\frac 12 \lambda \ge \lambda_0$ we have 
    \begin{align}
        \|\hat{\Psi}(\tau)\|_X^2 &\leq M(\tau)\|(\Acr-\tau)\hat{\Psi}(\tau)\|_X^2 + m \|\mathscr{C}\hat{\Psi}(\tau)\|^2_Y\\
        &\leq M(\lambda/2)\|\Fc(\chi' F)(\tau)\|_X^2 + m \|\mathscr{C} \hat{\Psi}(\tau)\|^2_Y. \label{eq:highFreqObsIntermed1}
    \end{align}
    On the other hand, since $\Pi_\lambda \hat \Psi(\tau)=0$, using the spectral measure we have 
    \begin{equation}
        (\Acr-\tau) \hat{\Psi}(\tau) = \int_{\lambda}^{\infty}(\rho-\tau) dE_{\rho} \hat{\Psi}(\tau).
    \end{equation}
    Now note that for $\rho \geq \lambda$ and $\tau \le \frac 12 \lambda$, $\frac{\lambda}{2}<\rho - \tau$. 
    Therefore, when $\tau \leq \frac 12 \lambda$, 
    \begin{align}
        \frac{\lambda}{2} \|\hat{\Psi}(\tau)\|^2 &\leq \int_{\lambda}^{\infty}(\rho - \tau) (dE_{\rho} \hat{\Psi}(\tau), \hat{\Psi}(\tau))_X \\
        &= \langle(\Acr-\tau) \hat{\Psi}(\tau), \hat{\Psi}(\tau) \rangle_X.
    \end{align}
    Then using Cauchy-Schwarz and applying \eqref{eq:highFreqObsFourierDef}, for $\tau \leq \frac{\lambda}{2}$ we have 
    \begin{equation}\label{eq:highFreqObsIntermed2}
        \|\hat{\Psi}(\tau)\|_X^2 \leq \frac{4}{\lambda^2} \|\Fc(\chi' F)(\tau)\|_X^2.
    \end{equation}
    Combining \eqref{eq:highFreqObsIntermed1} and \eqref{eq:highFreqObsIntermed2}, then integrating in $\tau$ over $\Rb$ we have
    \begin{equation}
        \int_{\Rb} \|\hat{\Psi}(\tau)\|_X^2 d\tau \leq \left(M(\lambda/2) + \frac{4}{\lambda^2} \right) \int_{\Rb} \|\Fc(\chi' F)(\tau)\|_X^2 d \tau + m \int_{\Rb} \|\mathscr{C} \hat{\Psi}(\tau)\|_Y^2 d \tau. 
    \end{equation}
    Using Plancherel, recalling the form of $\Psi$ and $\chi$, and using the unitarity of $F(t)$,
    \begin{align}
        \frac{T}{2} &\norm{(I-\Pi_\lambda)f}_X^2  \le \int_{\Rb} \|\chi(t) F(t)\|_X^2 d t \\
        &\leq \left(M(\lambda/2) + \frac{4}{\lambda^2} \right) \int_{\Rb} \|\chi'(t) F(t)\|_X^2 dt + m \int_{\Rb} \|\mathscr{C} F(t)\|^2_Y d t \\
        &\leq \left(M(\lambda/2) + \frac{4}{\lambda^2} \right) \frac{25}{T} \|(I-\Pi_\lambda)f\|_X^2 + m \int_{0}^T \|\mathscr{C} e^{it\pAcr}(I-\Pi_\lambda)f\|^2_Y d t.
    \end{align}
    Taking $\lambda$ large enough, we may absorb the error term back into the left hand side to obtain the desired inequality for $f \in D(\Ac)$. More precisely, if $\lambda \ge 2 M^{-1}(T^2/200)$ then $M(\lambda/2)\frac{50}{T^2} \le \frac 14$. Furthermore, if $\lambda \ge 20\sqrt{2}/T$ then $\frac{4}{\lambda^2} \frac{50}{T^2} \le \frac 14$. Therefore, slightly relaxing the constants, if
        \[ \lambda \ge 2 \max\left\{ M^{-1}\left(\tfrac{T^2}{200} \right), \tfrac{20}{T}\right\}, \] 
    then we obtain the desired inequality. A density argument completes the proof for $f \in X$.
     \subsubsection*{Proof of (2)} Let $F|_{t=0}=x_0$ and apply \eqref{eq:highFreqObsFourierDef} and \eqref{eq:abstractResolvent} to obtain for all $\tau \in \Rb$
    \begin{align}
        \|\hat{\Psi}(\tau)\|_X^2 
        &\leq M\|\Fc(\chi' F)(\tau)\|_X^2 + m \|\mathscr{C} \hat{\Psi}(\tau)\|_Y^2.
    \end{align}
    Now following the second half of the Proof of (1), we integrate in $\tau$ over $\Rb$, then use that the Fourier transform is unitary to obtain 
    \begin{equation}
        \|x_0 \|_X^2 \leq \frac{2m}{T}\int_0^T \| \mathscr{C} e^{it\Ac}x_0 \|^2_Y dt + 50  \frac{M}{T^2}\|x_0\|_X^2. 
    \end{equation}
    Taking $T$ large enough, we may absorb the error term back into the left hand side to obtain the desired inequality for $x_0 \in D(\Ac)$. A density argument completes the proof for $x_0 \in X$.
\end{proof}

Now we can conclude the proof of Theorem \ref{thm:resolventArbObs}.
\begin{proof}[Proof of Theorem \ref{thm:resolventArbObs}]
    For any $u_0 \in X$ let $U_0=\pAcr^{-1}u_0 \in D(\Ac)$. Then by Lemma \ref{l:schroArbObsWeak}, and noting that $\pAcr^{-1}$ commutes with $e^{it\pAcr}$, we have 
    \begin{equation}\label{eq:abstractResolvObsWeak1}
        \|U_0\|_X^2 \leq {C h} \|u_0\|_X^2 + C \frac{T^2}{h}e^{\frac{T^2}{h}}r(T/10) \int_0^T \|\mathscr{C}e^{it\pAcr }u_0\|_Y^2 dt.
    \end{equation}
    for any $T \in (0,1)$, $h \in  \left(0, h_0\right)$.

    To work towards a lower bound of the left hand side of 
    \eqref{eq:abstractResolvObsWeak1} we use the high frequency Schr\"odinger observability from Lemma \ref{l:HighFreqArbTimeObs} and the triangle inequality, 
    to obtain 
    \begin{equation}\label{eq:abstractResolvObsIntermed2}
        \|u_0\|_X^2 \leq \frac{8m}{T} \int_0^T \|\mathscr{C} e^{it\pAcr}u_0\|_Y^2 dt + 2 \|\Pi_{\lambda} u_0\|_X^2 
    \end{equation}
    for all $T>0$ and $\lambda > \lambda_1$. 

    Next, from the definition of $U_0$ and $\Pi_{\lambda}$ we have 
    \begin{equation}
        \|\Pi_{\lambda} u_0\|^2 = \int_{0}^{\lambda} (dE_{\rho}u_0,u_0)_X =  \int_0^{\lambda} \rho^2 \rho^{-2} (dE_{\rho}f,f)_X \leq \lambda^2 \|U_0\|^2_X.
    \end{equation}
    This provides the link between \eqref{eq:abstractResolvObsIntermed2} and \eqref{eq:abstractResolvObsWeak1}. Therefore, 
    \begin{equation}
        \|u_0\|_X^2 \leq C\left( \lambda^2 \frac{T^2}{h}e^{\frac{T^2}{h}} r(T/10)+\frac{1}{T}\right) \int_0^T \|\mathscr{C} e^{it\pAcr}u_0\|_Y^2 dt + C \lambda^2 {h} \|u_0\|_X^2.
    \end{equation}
    Taking $\lambda = \lambda_1$ and $h=\ep\min\left({\lambda_1^{-2}}, h_0 \right)$ for $\ep>0$ small enough, we can absorb the second term on the right hand side back. To control the constant with this choice of $h$, we consider all four cases 
        \[ \frac{T^2}{h} \in \left\{ C\lambda_1^2{T^2},C \ln\left( \frac{C r(T/C)}{T}\right) \right\}, \quad \lambda_1 \in \left\{ CM^{-1}\left(\frac{T^2}{C}\right), C/T \right\}.\]
    By considering each one, we obtain
        \begin{equation}\label{e:preciseConstantHeatObs} \lambda^2 e^{CT^2/h} r(T/10) + 1/T \le \exp \left(  C\left( {T} M^{-1}\left( T^2/C \right) \right)^2 \right) \left( r(T/C) + 1\right)^C. \end{equation}
        {Plugging in $M(\lambda)=\lambda^{-\ep}$ and $r(T)=Ce^{CT^{1-2/\ep}}$ gives the control cost of $Ce^{C T^{2-\frac{4}{\ep}}}$ in Theorem \ref{thm:resolventArbObs}}.
\end{proof}
\bibliographystyle{abbrv}
\bibliography{prod}
    \end{document}